\def\0D{\Delta^{(0)}}
\def\1D{\Delta^{(1)}}
\newcommand{\pr}{\circledast}
\newcommand{\Cc}{\mathcal{C}}
\newcommand{\Mc}{\mathcal{M}}
\newcommand{\Zc}{\mathcal{Z}}
\newcommand{\Ac}{\mathcal{A}}
\newcommand{\Dc}{\mathcal{D}}
\newtheorem{theorem}{Theorem}[section]
\newtheorem{remark}[theorem]{Remark}
\newtheorem{proposition}[theorem]{Proposition}
\newtheorem{lemma}[theorem]{Lemma}
\newtheorem{corollary}[theorem]{Corollary}
\newtheorem{definition}[theorem]{Definition}
\def\build#1_#2^#3{\mathrel{\mathop{\kern 0pt#1}\limits_{#2}^{#3}}}
\newcommand{\vect}{\text{Vec}}
\newcommand{\bimod}{\mathbb{B}\text{md}}
\newcommand{\la}{\triangleright}
\newcommand{\ra}{\triangleleft}
\newcommand{\Bc}{\mathcal{B}}
\numberwithin{equation}{section}
\def\ot{\otimes}
\def\part{\partial}
\def\ot{\otimes}
\renewcommand{\mod}{\text{-mod}}
\def\build#1_#2^#3{\mathrel{
\mathop{\kern 0pt#1}\limits_{#2}^{#3}}}
\numberwithin{equation}{section}
\newcommand{\comment}[1]{\relax}
\newcommand{\zp}{\mathbb{Z}/p}
\begin{document}

\title{Hopf-cyclic coefficients in the braided setting}
\author{Ilya Shapiro}

\date{}
\maketitle

\begin{abstract}
Considering the monoidal category $\mathcal{C}$ obtained as modules over a Hopf algebra $H$ in a rigid braided category $\mathcal{B}$,  we prove decomposition results for the Hochschild and cyclic homology categories  $HH(\mathcal{C})$ and $HC(\mathcal{C})$ of $\mathcal{C}$.  This is accomplished by defining a notion of a (stable) anti-Yetter-Drinfeld module with coefficients in a (stable) braided module over $\mathcal{B}$.  When the stable braided module is $HH(\mathcal{B})$, we recover $HH(\mathcal{C})$ and $HC(\mathcal{C})$.  The decomposition of $HC(\mathcal{C})$ now follows from that of $HH(\mathcal{B})$.

\end{abstract}

\medskip
{\it Mathematics Subject Classification:}  16E35,  16E40, 16T05, 18D10, 18G90, 18M15, 18N25, 19D55.

\tableofcontents
\section{Introduction}
Cyclic (co)homology for associative algebras was introduced independently by Boris Tsygan and Alain Connes in the 1980s.  The original ideas have since been significantly extended and branched out into many fields.  Our investigations in this paper focus on the equivariant flavour that began with Connes-Moscovici \cite{conmos} and was generalized into Hopf-cyclic cohomology by Hajac-Khalkhali-Rangipour-Sommerh\"{a}user \cite{HKRS1, HKRS2} and Jara-Stefan \cite{JS}.   Roughly speaking, the original theory defines cohomology groups for an associative algebra that play the role of the de Rham cohomology in the noncommutative setting.  The equivariant version considers an algebra with a compatible action of a Hopf algebra.  It turns out that analogous to $D$-modules in the de Rham cohomology, one has coefficients  in the Hopf setting; it is an interesting fact that unlike the de Rham setting, Hopf-cyclic cohomology requires coefficients, i.e., there is no canonical trivial coefficient.  These coefficients are known as stable anti-Yetter-Drinfeld modules, due to their similarity to the usual Yetter-Drinfeld modules, with an important distinction being that stability is an extra condition exclusive to the former.

This paper concerns itself with the case of a Hopf algebra $H$ in a rigid braided category $\Bc$, braiding being required for the notion of a Hopf algebra to exist.  The question of what should be the Hopf-cyclic coefficients, i.e., (stable) anti-Yetter-Drinfeld modules in this setting has first been examined in  \cite{masoud}.  The work therein was recently refined and clarified in \cite{balanced}; the latter should also be compared to \cite{majidribbon}. The approach consists of imitating the classical definition, i.e., that of modules and comodules over $H$ with a compatibility condition between the two structures, plus stability. These investigations can be considered as the anti-center counterpart to those in \cite{relcenter} that concern the center, see \ref{cor:aydbs}.  

Our approach is from a different perspective, motivated by the example in Section \ref{sec:blah}.  Namely, a usual Hopf algebra $T$ is simplified when replaced by a Hopf algebra $H$ in a braided $\Bc$.  If one wants to understand the usual (stable) anti-Yetter-Drinfeld modules for $T$, could one do so in terms of $H$?  To answer this question, let $\Cc=H_\Bc\mod$ be the category of $H$-modules; it is monoidal, see Section \ref{sec:b1}.    The classical definition of  (stable) anti-Yetter-Drinfeld modules has been generalized, and in particular, it is now possible to talk about them for a general monoidal category $\Cc$. Denote anti-Yetter-Drinfeld modules and their stable variants by $HH(\Cc)$  and $HC(\Cc)$  respectively, see Section \ref{sec:b3}.  We now need to describe $HC(\Cc)$ as modules and comodules over $H$.  Surprisingly, the answer to our question is different from the existing literature and in particular  it strictly subsumes \cite{balanced}.  

Namely, in Definition \ref{def:saYD}, we describe (stable) anti-Yetter-Drinfeld modules with coefficients in a (stable) braided module $\Mc$ over $\Bc$, see Definition \ref{def:stablebraided}.  Theorems \ref{thm:aYDHH} and \ref{thm:localization} then complete the description of $HC(\Cc)$ in terms of modules and comodules over $H$ in the \emph{stable braided module} $HH(\Bc)$.  Note that    $HH(\Bc)$ itself consists of anti-Yetter-Drinfeld modules for $\Bc$.  As a stable   braided module $HH(\Bc)$, in certain cases such as in Section \ref{sec:decompo}, admits a decomposition which in turn decomposes $HC(\Cc)$, see Corollary \ref{cor:hcdecomp}.   If $\Bc$ is balanced, as is the case considered in \cite{balanced}, then  $HH(\Bc)$ admits a summand, as a stable braided module, isomorphic to $\Bc$ itself.  We can then summarize \cite{balanced} and \cite{masoud} by stating that the definitions given there recover some of the piece of $HC(\Cc)$ that corresponds only to $\Bc$ in the full $HH(\Bc)$, see Lemma \ref{lem:modpair}.  Note that such a piece need not exist, in fact it exists if and only if $\Bc$ is balanced, see Lemma \ref{class:lem}.   The choice of the twist in the balancing  affects the  category of stable anti-Yetter-Drinfeld modules that one gets.

The notion of a braided module is equivalent to a module over $HH(\Bc)$ where the latter is given the ``cylinder stacking" product from factorization homology \cite{quantum}.  This point of view is surveyed in \cite{survey} and will be ignored in this paper as we are mainly focused on the module/comodule description of $HC(\Cc)$ and the explicit calculations of Section \ref{sec:blah}.

\subsection{Conventions} We fix an algebraically closed ground field $k$, of characteristic $0$; $\vect$ denotes the category of finite dimensional $k$-vector spaces.  Our monoidal categories are linear over $k$.  All algebras $A$ in monoidal categories $\Cc$ are assumed to be unital associative; we say that $A\in Alg(\Cc)$.  We let $A_\Cc\mod$ stand for left $A$-modules in $\Cc$ which is a right $\Cc$-module category.  Furthermore, if $\Mc$ is a $\Cc$-module category then $A_\Mc\mod$ denotes the category of $A$-modules in $\Mc$.  Finally, if $\Ac$ is a monad on a category $\Cc_0$ then $\Ac_{\Cc_0}\mod$ denotes the category of $\Ac$-modules in $\Cc_0$.

\subsection{Organization of the paper}
After the preliminaries of Section \ref{sec:prelim} we  introduce stable braided modules over a braided category in Section \ref{sec:stablebraidedmodules}.  They are an essential ingredient for defining the diagram over the Connes' cyclic category $\Lambda$ that yields the notion of (stable) anti-Yetter-Drinfeld modules in the braided setting.  Braided modules are in fact our third attempt at the right concept, with the first  being centered around the notion of a twist in a braided category \eqref{eq:twist1}, the second being the $L$ of Definition \ref{def:ocross}.  The main examples of stable braided modules are $HH(\Bc)$ and $\Bc_\varsigma$ ($\varsigma$ is an anti-twist on $\Bc$), with the decomposition of Section \ref{sec:decompo}: $$HH(\Bc)=\bigoplus_\varsigma\Bc_\varsigma$$ relating the two \emph{in a special case}.

Section \ref{sec:stablemodsandcyclic} justifies the preceding definitions by demonstrating that $$\Cc_n=H^{\ot^\tau(n+1)}_\Mc\mod$$ is indeed a diagram of categories over $\Lambda$.  This yields the monadic description of the limit which in turn forces (abridged version of Definition \ref{def:saYD}):
\begin{definition}\label{def1}
Suppose that $\Mc$ is a braided $\Bc$-module.  Let  $aYD^H_\Mc$ denote the category of $M\in\Mc$ such that $$M\in H_\Mc\mod\quad\text{and}\quad M\in {}^*H_\Mc\mod.$$   And the two actions are compatible as follows: \begin{equation*}
\includegraphics[height=1in]{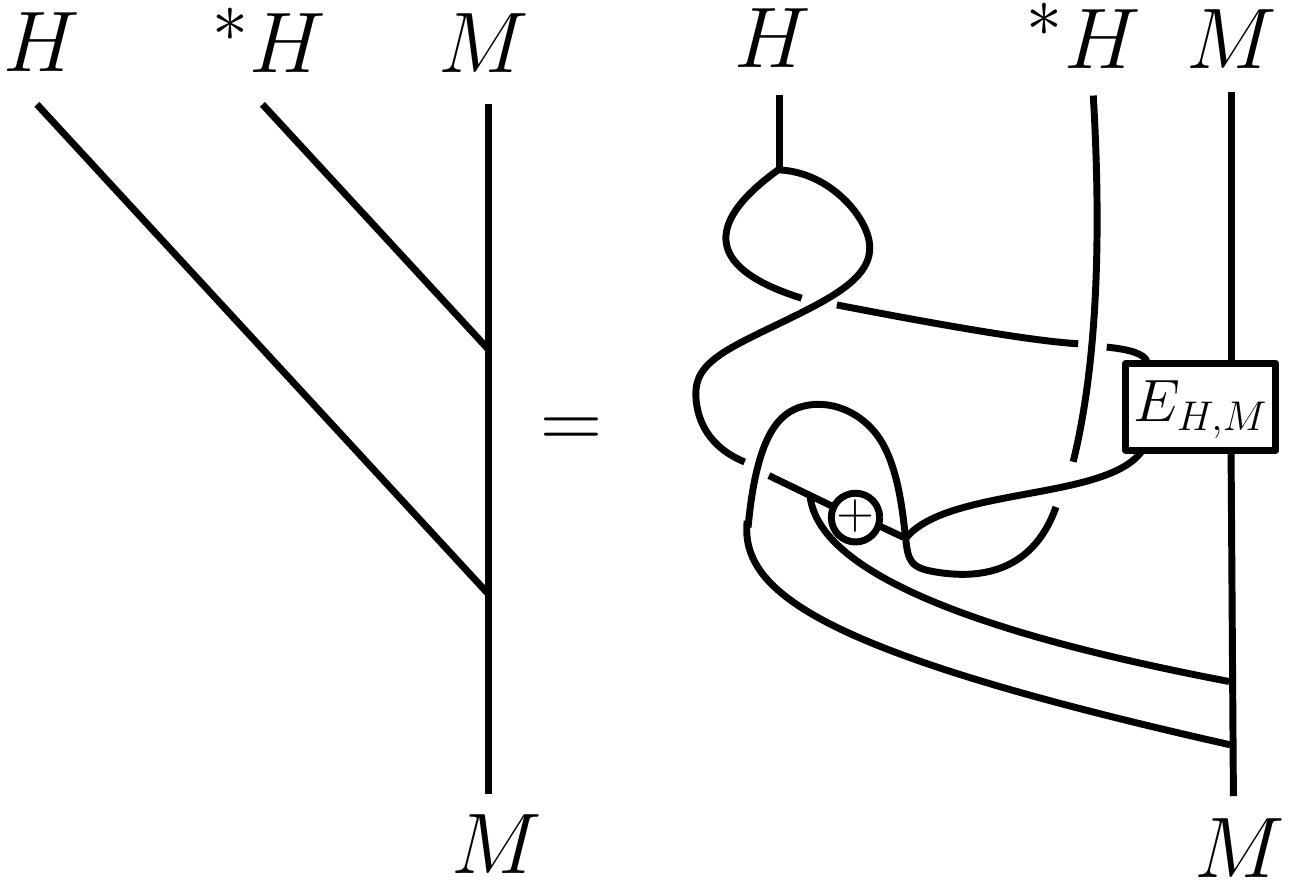}
\end{equation*}   
\end{definition} Here $E_{H,M}$ is the braided module structure of $\Mc$.  Section \ref{sec:frommtocoh} explains how to go from an $M$ above to a cohomology theory for algebras in $H_\Bc\mod$.  This section is useful to us here because it helps prove the main result of the paper (a version of Theorem \ref{thm:localization}): 
\begin{theorem}\label{thm1}
Let $H\in\Bc$ be a Hopf algebra, let $\Cc=H_\Bc\mod$, then $$aYD^H_{HH(\Bc)}\simeq HH(\Cc)$$ and the isomorphism identifies $\varsigma^H$ on LHS with  $\varsigma$ on RHS.
\end{theorem}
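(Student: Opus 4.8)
The plan is to realize both sides as limits over Connes' cyclic category $\Lambda$ and to compare the two resulting diagrams through their monadic limit descriptions. On the one hand, by the construction recalled in Section \ref{sec:b3}, $HH(\Cc)\simeq\projliml\Dc_\bullet$, where $\Dc_\bullet$ is the cyclic nerve of the monoidal category $\Cc$; since $\Cc=H_\Bc\mod$ and the external tensor product of module categories is computed by that of the underlying algebras, its $n$-th term is $\Dc_n\simeq\bigl(H^{\boxtimes(n+1)}\bigr)_{\Bc^{\boxtimes(n+1)}}\mod$, with faces given by the tensor product, degeneracies by the unit, and the cyclic operator by rotation. On the other hand, specializing the construction of Section \ref{sec:stablemodsandcyclic} to $\Mc=HH(\Bc)$ produces a $\Lambda$-diagram $\Cc_\bullet$ with $\Cc_n=H^{\ot^\tau(n+1)}_{HH(\Bc)}\mod$ whose limit is, by the very computation forcing Definition \ref{def1}, the category $aYD^H_{HH(\Bc)}$. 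The theorem thus amounts to the assertion $\projliml\Dc_\bullet\simeq\projliml\Cc_\bullet$.

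These two diagrams are not equivalent levelwise: $\Cc_\bullet$ already carries $HH(\Bc)$ — itself a $\Lambda$-limit of $\Bc$ — in the coefficient slot where $\Dc_\bullet$ carries the external powers $\Bc^{\boxtimes(\bullet+1)}$, and they differ even in degree $0$, where $\Dc_0=H_\Bc\mod$ while $\Cc_0=H_{HH(\Bc)}\mod$. Accordingly I would not seek a levelwise equivalence but instead compute $\projliml\Dc_\bullet$ in two stages, factoring the monad that presents it. The key point is that the descent data imposed by $\projliml\Dc_\bullet$ on an object of $\Dc_0=H_\Bc\mod$ splits into two commuting pieces: a purely $\Bc$-level part, which is exactly the anti-Yetter--Drinfeld structure whose imposition upgrades the underlying object of $\Bc$ to an object of $HH(\Bc)=\projliml\Bc^{\boxtimes(\bullet+1)}$; and a transverse part supplying, via rigidity of $\Bc$, a second action by the dual ${}^*H$ (equivalently an $H$-comodule structure) together with its compatibility with the ambient $H$-action. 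Matching modules over the factored monad against Definition \ref{def1} then identifies $\projliml\Dc_\bullet$ with $aYD^H_{HH(\Bc)}$; in the course of this the braiding of $\Bc$ is what converts the plain external power $H^{\boxtimes(n+1)}$ of $\Dc_n$ into the $\tau$-twisted power $H^{\ot^\tau(n+1)}$ of $\Cc_n$, the twist being precisely the braided module structure $E_{H,M}$ of $HH(\Bc)$.

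The step I expect to be the main obstacle is the compatibility of this factorization with the cyclic operator $t_n$. The faces and degeneracies only involve the associative and unital structures and match readily; it is the rotation that simultaneously (i) forces the use of the braiding — so that the rotated ``last'' tensor factor closes up into the twisted power $H^{\ot^\tau(n+1)}$ rather than the naive product — and (ii) generates, as its descent/cocycle condition in the limit, exactly the compatibility between the $H$-action and the ${}^*H$-action demanded in Definition \ref{def1}. I would discharge this by writing the two cyclic structures as $2$-cells of the respective $\Lambda$-diagrams and checking that the stagewise comparison assembles into a pseudonatural equivalence; the graphical calculus developed in Section \ref{sec:frommtocoh}, which already encodes the braided rotation for algebras in $H_\Bc\mod$, is the natural bookkeeping device for this verification.

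Finally, to match the twists I would observe that on each side the relevant automorphism is the endomorphism of the limit induced by the generating self-symmetry of the cyclic object in degree $0$. On $HH(\Cc)$ this endomorphism is by definition $\varsigma$, whose trivialization singles out $HC(\Cc)$; on $aYD^H_{HH(\Bc)}$ the same degree-$0$ self-symmetry is induced by the canonical stability datum $\varsigma$ of $HH(\Bc)$ as a stable braided module (Definition \ref{def:stablebraided}), which is by construction $\varsigma^H$. Because the comparison above is a map of $\Lambda$-diagrams, it is equivariant for these self-symmetries by naturality, so the induced equivalence of limits carries $\varsigma^H$ to $\varsigma$, completing the identification.
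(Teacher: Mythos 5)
Your overall strategy --- compute $HH(\Cc)$ as a two-stage limit, first contracting the $\Bc^{\boxtimes(\bullet+1)}$ direction to $HH(\Bc)$ and then reading off the residual $H$/${}^*H$ descent data --- is a reasonable heuristic for why the theorem should hold, and it is genuinely different from the paper's proof, which never compares the two $\Lambda$-diagrams at all: the paper builds explicit mutually inverse functors, $\Phi$ from the $\mathfrak{z}$-trace machinery applied to $K=Tr(1)$ (Proposition \ref{half:prop}) and $\Theta$ by restricting the anti-center structure $\tau^\bullet$ to $\Bc\subset\Cc$ and extracting the ${}^*H$-action from $\tau^\bullet_{M,H^*}$ precomposed with the unit $1\to H$ (Definition \ref{def:mainstr}). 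As written, however, your argument has a genuine gap: every step carrying the actual content of the theorem is asserted rather than proved. The claim that the descent data ``splits into two commuting pieces'' is where all the work lives, and it is not even correctly stated --- the $H$- and ${}^*H$-actions do not commute, they satisfy the braided compatibility \eqref{main:eq} involving $E_{H,M}$, and the ${}^*H$-action must moreover be an action \emph{in} $HH(\Bc)$, i.e.\ compatible with the anti-center structure in the sense of \eqref{ahhb}; these are the paper's Theorem \ref{thm:compat} and Lemma \ref{lem:hstr}, and nothing in your outline produces them. Equally essential and absent is the converse direction: that $\tau^\bullet_{M,V^*}$ for \emph{arbitrary} $V\in\Cc$ is recovered from its restriction to $\Bc$ together with the ${}^*H$-action (Lemma \ref{bullet}, Proposition \ref{prop:recover}) --- note that $\tau^\bullet_{M,V^*}\neq\tau^\circ_{M,\underline{V}^*}$ in general, so the two ``pieces'' do not act independently. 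Without this, your comparison is at best a functor in one direction, not an equivalence. Finally, a ``factor the monad / Fubini for iterated limits'' argument must justify that the limit of $\Cc^{\boxtimes(\bullet+1)}$ can be computed by first taking a partial limit in the coefficient slot; no such justification is offered, and this is precisely the difficulty the introduction warns about when it says a direct proof ``would increase significantly in difficulty.''

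On the twist, your last paragraph misidentifies $\varsigma^H$ with the stability datum $\varsigma$ of $HH(\Bc)$. By \eqref{varsigma:eq}, $\varsigma^H_M=\rho_{{}^*\!H,M}\circ\rho_{H,M}\circ coev_{{}^*H,H}\circ\varsigma_M$: the stability datum is only one factor, and the other factor (the braided analogue of $\phi(-)\mapsto(-)^2\phi((-)^1)$, cf.\ Lemma \ref{centralact}) arises from the comparison $\varsigma'$ in the monad, not from $\varsigma$ alone. So even granting an equivalence of diagrams, matching the two twists requires the explicit computation of the $\mathfrak{z}$-action (Lemmas \ref{unitcentral}--\ref{centralact}), which a bare appeal to naturality does not supply.
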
 This  describes $HC(\Cc)$ as modules/comodules \emph{in $HH(\Bc)$} over $H$.  See also Corollary \ref{cor:aydbs} that analogously describes $aYD^H_{\Bc_\varsigma}$. Note that, after the fact, one could use Definition \ref{def1} specifically for $\Mc=HH(\Bc)$, and then prove Theorem \ref{thm1} directly.  This avoids any discussion of diagrams of categories, limits, and monads.  The trade-off is that the proofs would increase significantly in difficulty and the motivation behind the definition would be shrouded in mystery.  Namely, it would  sever the connection to the conceptual framework that produced these notions naturally and connects them to each other and the wider mathematical setting.

Section \ref{sec:blah} applies the machinery developed in this text to the Taft algebra $T_p(\xi)$.  The monoidal category  is $$\Cc=T_p(\xi)\mod=H_\Bc\mod$$ for an appropriate $H$ in an appropriate $\Bc$.  The category of the classical stable anti-Yetter-Drinfeld modules for $T_p(\xi)$ thus exhibits a decomposition into packets; these are explicitly identified via calculations.

Finally,  the appendix contains some material that is likely to be general knowledge and is mostly included to fix notation.

\medskip

\noindent\textbf{Acknowledgements}: This research was supported in part by an NSERC Discovery Grant.

\section{Some preliminaries}\label{sec:prelim}
Here we collect some background material that facilitates the reading of this text.  We aim to give references, but avoid precise definitions, rather we hope to impart a working understanding of the concepts sufficient for following the arguments in the paper.

\subsection{Rigid braided categories and Hopf algebras}\label{sec:b1}

Roughly speaking, a monoidal category $\Cc$ \cite{braidedjoyal} is a category equipped with a bifunctor  $\ot:\Cc\times\Cc\to\Cc$ written as $(X,Y)\mapsto X\ot Y$ and an associativity constraint $$\alpha_{X,Y,Z}:X\ot (Y\ot Z)\simeq (X\ot Y)\ot Z$$ that satisfies a coherence (pentagon) axiom.  There is also a monoidal unit $1\in\Cc$ with natural requirements.  The main point of the pentagon axiom is to ensure the following.  Take a finite number of objects in $\Cc$ and use the $\ot$ iteratively to construct a single object.  This object depends on the particular choice of bracketing, but any two choices are isomorphic using successive  applications  of the associator $\alpha$.   Now the isomorphism could in principle depend on the particular sequence of the applications of the associator, but it doesn't.  A left or right $\Cc$-module category $\Mc$ is defined similarly, i.e., replace one of the $\Cc$'s by an $\Mc$ in the definitions.  This and other concepts can be found in \cite{egno}.

A monoidal category is rigid if any object $X$ has a right ${}^*X$ and a left dual $X^*$.  Note that in general $X^{**}\ncong X$, we use $X^\#$ to denote the former.  We have evaluation and coevaluation maps: $X^*\ot X\to 1$, $X\ot {}^*X\to 1$, $1\to X\ot X^*$, and $1\to {}^*X\ot X$ such  that the functor pairs $(-\ot X,-\ot X^*)$ and $(X\ot -,{}^*X\ot -)$ are adjoint pairs; this is best understood in terms of straightening strings (see below for graphical calculus).  A category is pivotal if it is equipped with a pivot: a monoidal isomorphism $\rho\in Nat^\ot(Id, (-)^\#)$, i.e.,  $\rho_X: X\to X^\#$ with $$\rho_{XY}=\rho_X\ot\rho_Y.$$

Recall that a monoidal category $\Bc$ is braided \cite{braidedjoyal} if there is a braiding isomorphism $$\tau_{X,Y}:X\ot Y\simeq Y\ot X.$$  The braiding is to satisfy certain coherence conditions that ensure that braid diagrams can be used to perform calculations; thus, this structure gives rise to braid group actions on powers $X^{\ot n}$.  The braided category $\Bc$ is symmetric if $$\tau^2_{X,Y}=\tau_{Y,X}\tau_{X,Y}=Id_{X\ot Y};$$ the braid action then factors through the symmetric group.  On the opposite side of the spectrum, we say that $\Bc$ is non-degenerate if $\tau^2_{X,Y}=Id_{X\ot Y}$  for all $Y$, implies that $X=1$.

A braided category is balanced if it is equipped with a twist $\theta\in Nat(Id, Id)$, i.e., an isomorphism $\theta_X:X\to X$ such that \begin{equation}\label{eq:twist1}\theta_{XY}=\tau^2_{X,Y}\theta_X\ot\theta_Y.\end{equation} Thus, a balanced category, though it is not necessarily symmetric, is so up to a ``boundary".  A balanced category is a ribbon category if ${}^*\theta_{X^*}=\theta_X$ for all $X$.

A bialgebra $A$ in $\Bc$ \cite{majidbraidhopf} is an algebra and a coalgebra in a compatible manner; to express the compatibility we need the braiding $\tau$.  More precisely, $(A,m,u,\Delta, \epsilon)$ is a bialgebra if $(m,u)$ is an algebra structure, $(\Delta,\epsilon)$ a coalgebra structure and $$\Delta m=(m\ot m)(Id_A\ot \tau_{A,A}\ot Id_A)(\Delta\ot \Delta),$$ $\Delta u=u\ot u$, $\epsilon m=\epsilon\ot\epsilon$, and $\epsilon u=Id_1$.  Namely,  recall (or see Definition \ref{algprod}) that we can form the algebra $A\ot^\tau A$ and we require that $\Delta: A\to A\ot^\tau A$ and $\epsilon: A\to 1$ are algebra maps.

In particular, given $V,W\in A_\Bc\mod$, we can form $V\ot^\tau W\in (A\ot^\tau A)_\Bc\mod$, and then use $\Delta^*:(A\ot^\tau A)_\Bc\mod\to A_\Bc\mod$ to define a monoidal structure on $A_\Bc\mod$: \begin{equation}\label{productinh}V\pr W:=\Delta^*(V\ot^\tau W).\end{equation}

A Hopf algebra $H$ in $\Bc$ is a bialgebra with the additional property that it has an (invertible, for us) antipode $S: H\to H$ that satisfies: $$m(S\ot Id_H)\Delta=u\epsilon=m(Id_H\ot S)\Delta.$$  It follows that $S$ is both anti-multiplicative and anti-comultiplicative, in particular: $$Sm=m(S\ot S)\tau_{H,H},\quad\Delta S=\tau_{H,H}(S\ot S)\Delta.$$  Such an $S$ is unique if it exists and $S$ ensures the existence of duals for $H$-modules.

In this paper we will consider a Hopf algebra $H$ in a rigid braided category $\Bc$.  It is immediate by the above that $$\Cc=H_\Bc\text{-mod},$$ the category of  modules over $H$ in $\Bc$, is a rigid monoidal category.  That is our main object of interest.

\subsection{Categories $HH(\Cc)$, $HC(\Cc)$ and Hopf cyclic coefficients}\label{sec:b3}
Let $\Cc$ be a monoidal category.  Denote the product by \begin{equation}\label{tensor1}\Delta^*:\Cc\boxtimes\Cc\to\Cc\end{equation} and the unit by \begin{equation}\label{unit}\epsilon^*:\vect\to\Cc.\end{equation}  Categorifying the construction of a simplicial or cyclic object \cite{loday} associated to an algebra, we have a diagram, see Section \ref{lim:sec}, over Connes' cyclic category $\Lambda$, with $$\Cc_n=\Cc^{\boxtimes(n+1)}.$$  It is constructed from three key structures:  \eqref{tensor1}, \eqref{unit}, and $\sigma:\Cc^{\boxtimes 2}\to\Cc^{\boxtimes 2}$ which flips the two copies.  We then have $$HH(\Cc)=\varinjlim_{\Delta^{op}}\Cc_\bullet\quad\text{and}\quad HC(\Cc)=\varinjlim_{\Lambda^{op}}\Cc_\bullet$$ where $\Delta$ is the simplex category.

The  Hochschild homology category $HH(\Cc)$ has a much simpler description that is essentially a copy of  the classical center construction $\Zc(\Cc)$.  Namely, $HH(\Cc)$ consists of $M\in\Cc$ equipped with the structure of isomorphisms: $$\tau^\bullet_{M,X}:M\ot X\to {}^\#X\ot M$$ satisfying $\tau^\bullet_{M,XY}=\tau^\bullet_{M,Y}\tau^\bullet_{M,X}$.  Note that, unlike $\Zc(\Cc)$, there is also a $\varsigma\in Aut(Id_{HH(\Cc)})$: $$\varsigma_M=(Id_M\ot ev_{M^\#,M^*})\tau^{\bullet -1}_{M,M}(Id_M\ot coev_{M,M^*}),$$ also see \eqref{centerhhbraid}.  The cyclic homology category $HC(\Cc)$ is the full subcategory of $HH(\Cc)$ consisting of objects $M$ with $\varsigma_M=Id_M$;  we say that $HC(\Cc)=HH(\Cc)^\varsigma$.  

In \cite{chern}  we provide a description of $HH(\Cc)$, that is applicable to our case here, in terms of a monad on $\Cc$.  Namely, $$\Ac(M)=\Delta^*\sigma\Delta_*M$$ where $\Delta_*$ is the right adjoint to the product \eqref{tensor1}.  It is shown that the limit can be identified with $\Ac_\Cc\mod$.  The category $HC(\Cc)$ is obtained using the action of $\mathfrak{z}\in\Zc(\Ac)$ that produces $\varsigma$.  The monadic approach is central to this paper.

Let $H$ be a usual Hopf algebra, i.e., in vector spaces.  Let $\Cc=H\mod$, so that $$\Cc_n=H^{\ot (n+1)}\mod$$ and the three structures are literally $\Delta^*$ (where $\Delta:H\to H^{\ot 2}$ is the coproduct), $\epsilon^*$ (where $\epsilon: H\to k$ is the counit), and $\sigma=\sigma^*$ (where $\sigma(x\ot y)=y\ot x$ for $x,y\in H$).  Having defined the diagram over $\Lambda$ we obtain that $HH(\Cc)$ is exactly the usual anti-Yetter-Drinfeld modules.  The path from the limit to the module/comodule description lies through the monad $\Ac$ which in this case is isomorphic to $Hom_k(H,-)$.

The main difficulty, as far as the limit description is concerned, in passing from $H$ in $\vect$ to $H$ in a rigid braided $\Bc$ is the absence of $\sigma$. Keep in mind that $HH(H_\Bc\mod)$ is still perfectly well defined according to the above discussion.  The natural thing to try is $\tau_{H,H}:H^{\ot 2}\to H^{\ot 2}$ but that is not an algebra map in $\Bc$ (unless $\Bc$ is symmetric) so that in general $\Cc_n=H^{\ot (n+1)}_\Bc\mod$ is not even a diagram over $\Delta$.   This problem is rectified by replacing $\Bc$ with a stable braided module $\Mc$, see Definition \ref{def:stablebraided}.

\subsection{Graphical calculus: string diagrams}\label{sec:b4}
Most of the computations in this paper are done using string diagrams.  These depict the compositions of various structures available in rigid braided categories and drastically simplify their manipulations. Our string diagrams are read top to bottom.  The following is a compact summary of the notation:\begin{equation*}
\includegraphics[height=2in]{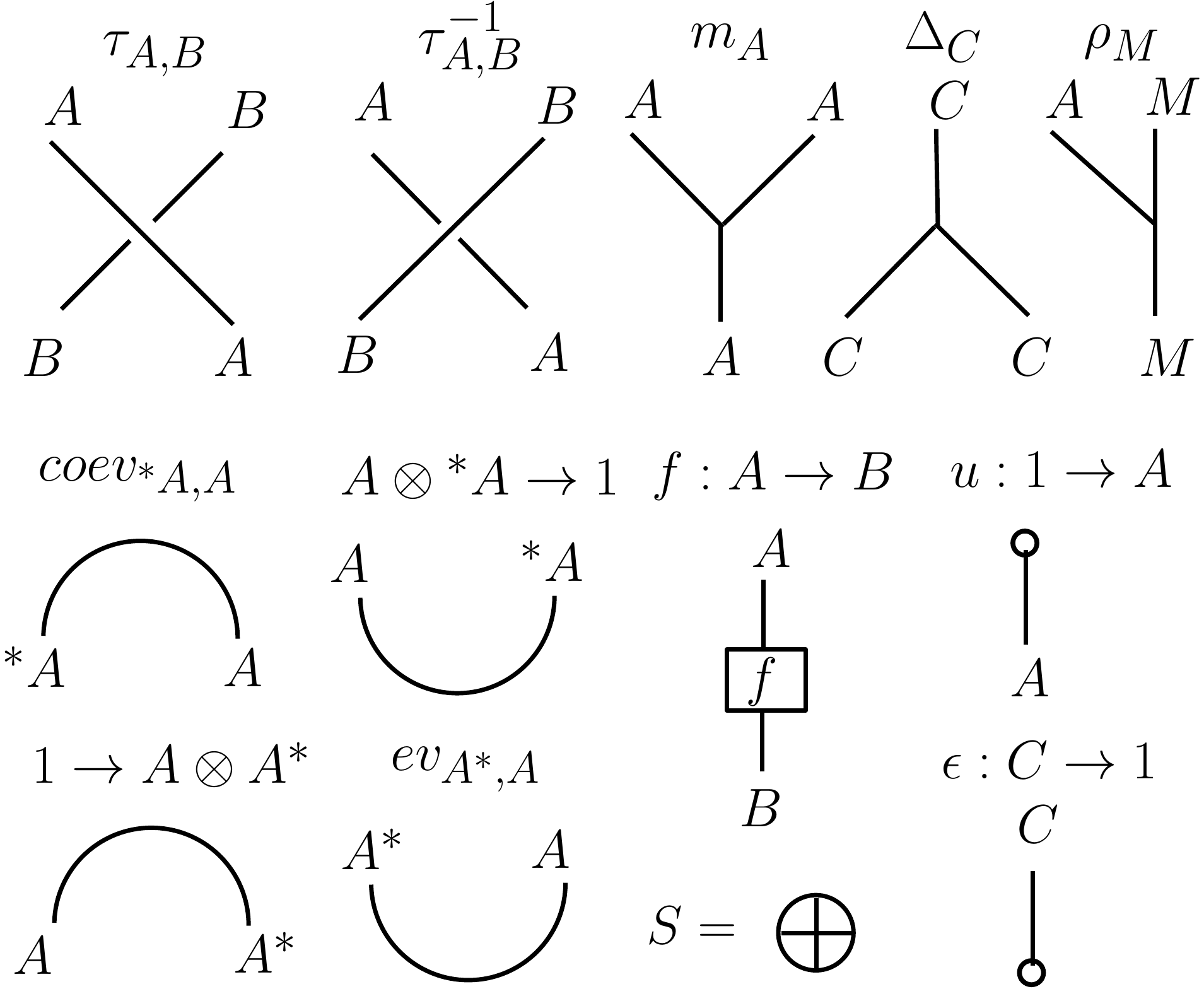}
\end{equation*} 

In the above, $\tau_{A,B}$ is the braided structure with $\tau^{-1}_{A,B}=(\tau_{B,A})^{-1}$.  Thus, a string crossing over another is ``doing all the work".  This is important to keep in mind when dealing with multiple crossing structures where the type of crossing will be marked accordingly; see Definition \ref{def:ocross} and Proposition \ref{half:prop}.  

If $A$ is an algebra then $m_A$ denotes its multiplication, while if $C$ is a coalgebra then $\Delta_C$ is its comultiplication.  If $M$ is an $A$-module then $\rho_M$ is the module structure.  The arcs above denote the evaluation and the co-evaluation  maps of the rigid structure.  The boxed $f$ marks a morphism in the category, with the circled plus denoting the antipode $S$ of a Hopf algebra.  Finally, we have the unit and counit maps.

\section{(Stable) braided modules}\label{sec:stablebraidedmodules}
Let $\Bc$ be a rigid braided category.  Let $A, B$ be  algebras in $\Bc$.  Recall (or see Definition \ref{algprod}) that we can construct, using the braiding,  new algebras: $A\ot^{\tau^{-1}} B$ and $B\ot^{\tau} A$.  Note that $$\tau_{A,B}:A\ot^{\tau^{-1}} B\to B\ot^\tau A$$ is an isomorphism of algebras.  This is not going to work for the cyclic shift because the two algebras use different $\tau$'s.  We need something more sophisticated.

The following notion is essentially identical to the one in \cite{braided}, with differences that make our definition  more suitable to our case.  It allows us to define a diagram over the simplex category $\Delta$, and thus, the Hochschild homology category. 

\begin{definition}
A $\Bc$-module category $\Mc$ is braided if it is equipped with the data of a natural isomorphism  $E_{X,M}$ of the action functor ($\Bc\times\Mc\to\Mc$) satisfying:\begin{enumerate}
\item[C1:] $E_{Y,XM}=\tau^{-1}_{X,Y}E_{Y,M}\tau^{-1}_{Y,X}$
\item[C2:]  $E_{YX,M}=E_{Y,XM}E_{X,M}$
\end{enumerate} 
\end{definition}

We need the following enhancement of the definition above, in order to obtain a diagram over the Connes' cyclic category $\Lambda$, and thus, cyclic homology:
\begin{definition}\label{def:stablebraided}
A braided $\Bc$-module $\Mc$ is stable if it is equipped with the data of a natural isomorphism $\varsigma: Id_\Mc\to Id_\Mc$ satisfying $$E_{X,M}=\varsigma_{XM}\varsigma^{-1}_{M}.$$
\end{definition}

Note that given a braided structure, a stable structure compatible with it is not unique. On the other hand, $\varsigma$ determines $E$ and any $E$ that comes from a $\varsigma$ satisfies $C2$ automatically.

\begin{remark}
Unless $\Bc$ is symmetric, $\Bc$ need not be a braided module over itself.  When it is, there will be usually more than one natural braided structure and even more stable structures.  On the other hand, if $\Bc$ is symmetric then any $\Bc$-module $\Mc$ can be endowed with a stable braided structure $\varsigma_M=Id_M$ for $M\in\Mc$.
\end{remark}

\begin{definition}
A $\Bc$-equivariant functor $F:\Mc\to\Mc'$ between braided modules is braided if the following diagram commutes:$$\xymatrix{F(X\cdot M)\ar[r]\ar[d]^{F(E_{X,M})} & X\cdot F(M)\ar[d]^{E_{X,F(M)}}\\
F(X\cdot M)\ar[r] & X\cdot F(M)
}$$ where the horizontal arrows are part of the equivariant structure.  If $\Mc,\Mc'$ are stable, then $F$ is stable if $$F(\varsigma_M)=\varsigma_{F(M)}.$$ Note that stable implies braided. 
\end{definition}

\subsection{First example: $HH(\Bc)$}\label{firstex}

Let $\Cc$ be a rigid category, then the  center of $\Cc$ is a rigid braided category $\Zc(\Cc)$ and it has a natural stable braided module $HH(\Cc)$ (the anti-center, i.e., the Hochschild homology of $\Cc$).  Namely, for $Z\in\Zc(\Cc)$ and $M\in HH(\Cc)$ we have $Z\cdot M\in HH(\Cc)$ as follows ($X\in \Cc$):  
\begin{equation}\label{centerhh}
	\includegraphics[height=.7in]{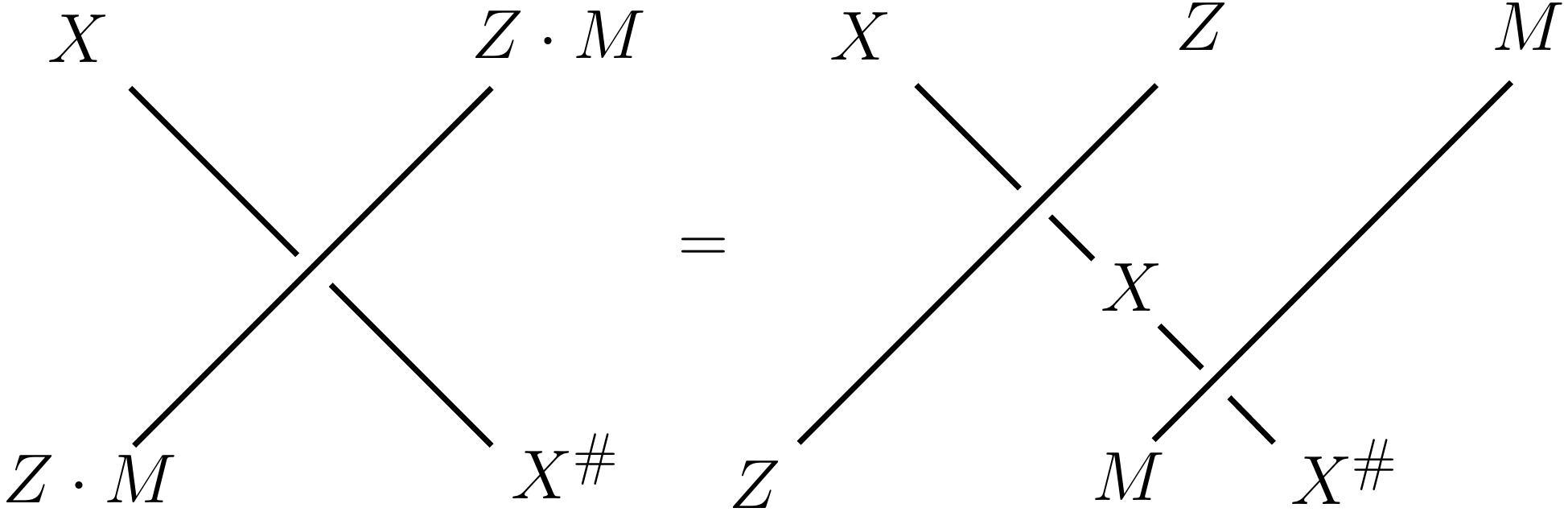} 
\end{equation}  The stable and induced braided structures are 
\begin{equation}\label{centerhhbraid}
	\includegraphics[height=.8in]{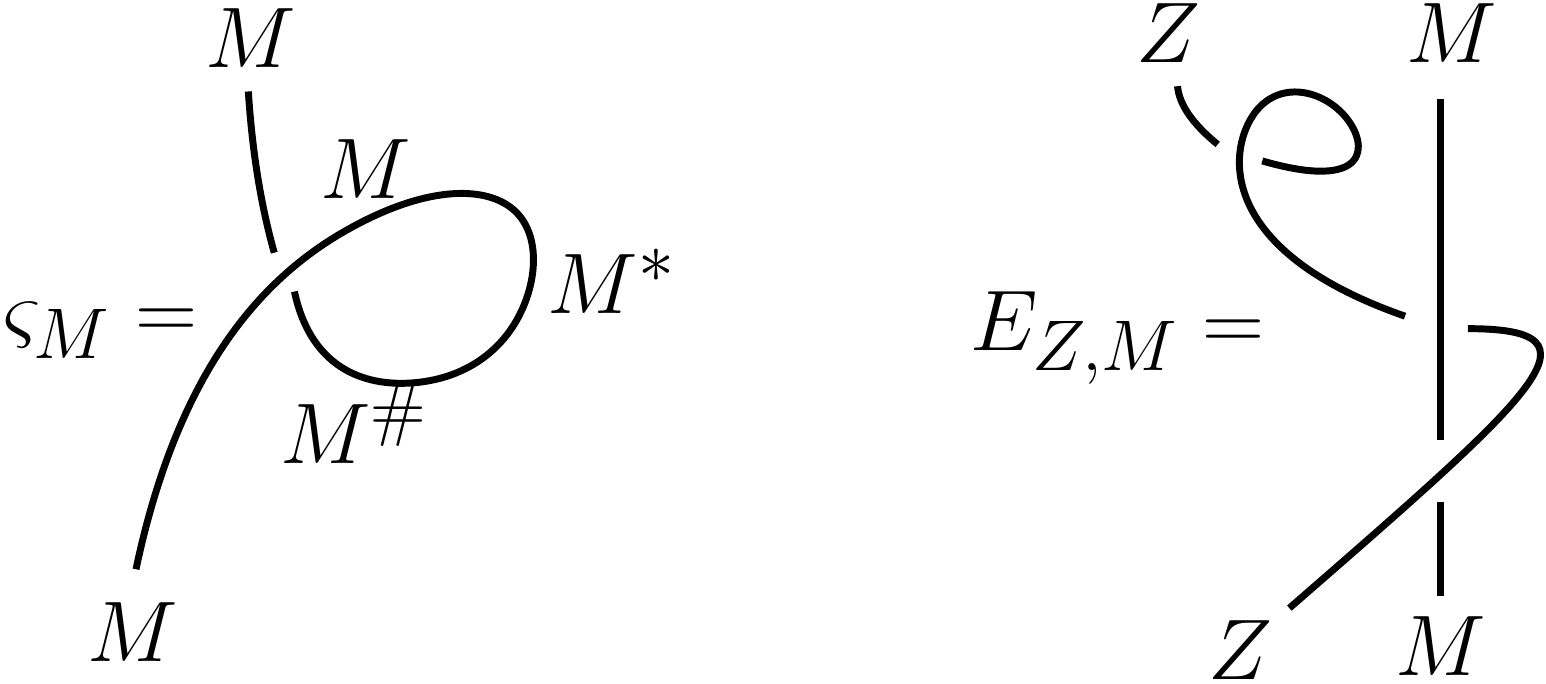} 
\end{equation} the verification of the conditions is immediate.  Namely, we check that $E_{Z,M}\varsigma_M=\varsigma_{ZM}$, the map $\varsigma_M$ is invertible with its inverse being its reflection in the vertical axis, and while $C2$ is automatic, $C1$ is immediate by inspection.  To summarize:
\begin{lemma}
Let $\Cc$ be a rigid category, then $HH(\Cc)$ is a stable braided $\Zc(\Cc)$-module via \eqref{centerhh} and \eqref{centerhhbraid}.
\end{lemma}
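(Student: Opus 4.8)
The plan is to read all the needed data off the string diagrams \eqref{centerhh} and \eqref{centerhhbraid} and then dispatch the axioms one at a time, using the remark after Definition~\ref{def:stablebraided} to make axiom C2 free once $\varsigma$ is in place.

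First I would verify that \eqref{centerhh} really produces an object of $HH(\Cc)$, i.e.\ that the structure isomorphism $\tau^\bullet_{Z\cdot M,X}$ obtained from \eqref{centerhh} is multiplicative, $\tau^\bullet_{Z\cdot M,XY}=\tau^\bullet_{Z\cdot M,Y}\tau^\bullet_{Z\cdot M,X}$. Graphically this is the statement that sliding $X$ and then $Y$ down past the composite object $Z\cdot M$ may be carried out one factor at a time: past $M$ using the assumed multiplicativity of $\tau^\bullet_M$, and past $Z$ using the half-braiding of $Z\in\Zc(\Cc)$. The only genuine inputs are that the half-braiding of $Z$ is itself multiplicative and natural in its second slot, applied here to the double-dual objects ${}^\#X,{}^\#Y$ thrown off by $\tau^\bullet_M$, together with the monoidality of $(-)^\#$ so that ${}^\#(X\ot Y)$ matches ${}^\#X\ot{}^\#Y$. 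The remaining $\Zc(\Cc)$-module-category coherences (associativity and unit constraints for the action) are inherited verbatim from $\Cc$ and are routine.

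Next I would install the stable and braided structures of \eqref{centerhhbraid}. I would first check that $\varsigma_M=(Id_M\ot ev_{M^\#,M^*})\tau^{\bullet -1}_{M,M}(Id_M\ot coev_{M,M^*})$ is a natural automorphism of $Id_{HH(\Cc)}$: invertibility follows by checking that its reflection in the vertical axis is a two-sided inverse, which is a zig-zag (snake) identity of the rigid structure combined with $\tau^\bullet_M$ being an isomorphism, while naturality follows from naturality of $\tau^\bullet_M$, $ev$, and $coev$. Then the heart of the matter is the defining relation of Definition~\ref{def:stablebraided}, $E_{Z,M}=\varsigma_{Z\cdot M}\varsigma_M^{-1}$, which I would rewrite as $E_{Z,M}\varsigma_M=\varsigma_{Z\cdot M}$ and verify by a single planar isotopy: composing the drawn $E_{Z,M}$ with $\varsigma_M$ pulls the $Z$-strand through the cup-and-cap of $\varsigma$ via the half-braiding and reassembles it into $\varsigma_{Z\cdot M}$. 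Once this holds, $E$ is literally of the form $\varsigma_{Z\cdot M}\varsigma_M^{-1}$, so axiom C2 is automatic by the remark after Definition~\ref{def:stablebraided}; and axiom C1, which relates $E_{Z',\,Z\cdot M}$ to $E_{Z',M}$ by conjugation with the braiding of $\Zc(\Cc)$, is then immediate by inspection, since passing the $Z$-strand across both sides via the half-braiding is the same string isotopy.

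I expect the stability relation $E_{Z,M}\varsigma_M=\varsigma_{Z\cdot M}$ to be the one step with any real content --- not because it is deep, but because it is the only identity that simultaneously involves all three pieces of data (the action \eqref{centerhh}, the twist $\varsigma$, and the induced braiding $E$), and so forces them to be mutually consistent; the multiplicativity of $\tau^\bullet_{Z\cdot M,-}$ runs a close second for sheer number of strands to track. Everything else is either automatic (C2), a reflection of a diagram (invertibility of $\varsigma$), or a direct visual match (C1).
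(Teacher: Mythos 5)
Your proposal is correct and follows essentially the same route as the paper, which likewise verifies $E_{Z,M}\varsigma_M=\varsigma_{ZM}$ directly, notes that $\varsigma_M$ is invertible with inverse its reflection in the vertical axis, gets $C2$ for free from the existence of $\varsigma$, and checks $C1$ by inspection. Your added check that \eqref{centerhh} lands in $HH(\Cc)$ is a step the paper leaves implicit, and your identification of the stability relation as the one identity tying all three pieces of data together is accurate.
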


\begin{remark}
If $F:\Bc\to\Bc'$ is a braided functor between braided categories and $\Mc'$ is a (stable) braided $\Bc'$-module, then $F^*\Mc'$ is a (stable) braided $\Bc$-module, via $F$, i.e., $X\cdot M=F(X)\cdot M$.
\end{remark}

\begin{corollary}
Let $\Bc$ be a rigid braided category then $HH(\Bc)$ is a stable braided $\Bc$-module via the embedding $\Bc\to\Zc(\Bc)$ given by $\tau$, the braiding of $\Bc$.
\end{corollary}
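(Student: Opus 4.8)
The plan is to obtain the statement formally, by combining the Lemma and the Remark immediately above, once the braiding of $\Bc$ is recognized as a braided functor $i\colon\Bc\to\Zc(\Bc)$. Indeed, applying the Lemma to the rigid category $\Cc=\Bc$ already gives that $HH(\Bc)$ is a stable braided $\Zc(\Bc)$-module via \eqref{centerhh} and \eqref{centerhhbraid}, while the Remark asserts that the pullback of a (stable) braided module along a braided functor is again a (stable) braided module. So the entire content is to produce $i$ and to check that it is braided.

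First I would define $i$ on objects by $i(X)=(X,\tau_{X,-})$, equipping $X\in\Bc$ with the half-braiding $\tau_{X,Y}\colon X\ot Y\to Y\ot X$ supplied by the braiding of $\Bc$, and by the identity on morphisms. That $i(X)$ is a genuine object of $\Zc(\Bc)$ is immediate: naturality of the half-braiding in $Y$ is naturality of $\tau$, and the required identity $\tau_{X,Y\ot Y'}=(Id_Y\ot\tau_{X,Y'})(\tau_{X,Y}\ot Id_{Y'})$ is exactly one of the hexagon axioms for $\tau$ in $\Bc$.

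Next I would verify that $i$ is monoidal and braided. Monoidality amounts to checking that the half-braiding attached to $X\ot Y$ coincides with the one prescribed for the tensor product $i(X)\ot i(Y)$ in $\Zc(\Bc)$, i.e.\ that $\tau_{X\ot Y,W}=(\tau_{X,W}\ot Id_Y)(Id_X\ot\tau_{Y,W})$; this is again a hexagon identity for $\tau$, and the unit and coherence conditions are routine. For braidedness, recall that the braiding of $\Zc(\Bc)$ evaluated on $(Z,\gamma),(Z',\gamma')$ is $\gamma_{Z'}$; applied to $i(X)$ and $i(Y)$ this returns the half-braiding of $i(X)$ at $Y$, namely $\tau_{X,Y}$, which is precisely the braiding of $\Bc$. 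Hence $i$ transports $\tau$ to the center's braiding and is a braided functor.

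Combining the two ingredients, $i^*HH(\Bc)$ is a stable braided $\Bc$-module with $X\cdot M=i(X)\cdot M$, and with $\varsigma$ and the induced $E$ inherited from \eqref{centerhhbraid}; substituting the half-braiding $\tau_{X,-}$ into the string diagrams \eqref{centerhh} and \eqref{centerhhbraid} produces the explicit $\Bc$-module formulas. I expect no substantive obstacle: the only genuine computation is the hexagon axiom underlying the braided-functor check, and the lone point demanding care is the bookkeeping of conventions, ensuring that it is the $\tau$-embedding, rather than the $\tau^{-1}$-embedding $X\mapsto(X,\tau^{-1}_{-,X})$, that preserves the braiding (the latter reverses it, and so would be braided only for the reverse braiding on $\Bc$).
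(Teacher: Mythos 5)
Your proposal is correct and follows exactly the route the paper intends: the corollary is stated as an immediate consequence of the preceding lemma (that $HH(\Cc)$ is a stable braided $\Zc(\Cc)$-module) combined with the remark on pulling back along a braided functor, with the only content being the standard fact that $X\mapsto(X,\tau_{X,-})$ is a braided embedding $\Bc\to\Zc(\Bc)$. Your verification of that embedding via the hexagon axioms, and your caution about distinguishing the $\tau$- from the $\tau^{-1}$-embedding (the latter being what the paper later uses for $\overline{\Bc}\to\Zc(\Cc)$), supply precisely the details the paper leaves implicit.
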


\begin{remark}\label{tau:rem}
Let $H\in\Bc$ be a Hopf algebra.  Endow an $X\in\Bc$ with the trivial $H$-module structure via $\epsilon$, the counit of $H$.  Let  $V\in\Cc=H_{\Bc}\mod$ then  while $\tau^{-1}_{X,V}$ is $H$-linear, $\tau_{X,V}$ is not.
\end{remark}

 Note that by the above remark we have a braided embedding $$\overline{\Bc}\to\Zc(\Cc),$$   where $\overline{\Bc}$ denotes the braided structure with $\tau^{-1}$ replacing $\tau$.   Thus, the central structure of $X$, viewed as an object in $\Cc$ via $\epsilon$, is given by $\tau^{-1}_{X,-}$.
\begin{corollary}
Let $H\in\Bc$ be a Hopf algebra, let $\Cc=H_{\Bc}\mod$.  Then $HH(\Cc)$ is a stable braided $\overline{\Bc}$-module.
\end{corollary}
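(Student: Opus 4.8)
The plan is to derive this corollary by composing two results already in hand, so that almost no new computation is required. The previous corollary states that for any rigid braided category $\Bc$, the anti-center $HH(\Bc)$ is a stable braided $\Bc$-module via the embedding $\Bc\to\Zc(\Bc)$ induced by the braiding $\tau$. The remark immediately preceding the statement observes that when $H\in\Bc$ is a Hopf algebra and $\Cc=H_\Bc\mod$, endowing $X\in\Bc$ with the trivial $H$-action via $\epsilon$ makes $\tau^{-1}_{X,-}$ (rather than $\tau_{X,-}$) the $H$-linear central structure. This is exactly the assertion that the functor $X\mapsto (X,\epsilon)$ together with the central data $\tau^{-1}_{X,-}$ defines a braided embedding $\overline{\Bc}\to\Zc(\Cc)$, where $\overline\Bc$ is $\Bc$ with braiding $\tau^{-1}$. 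So the two ingredients are: (i) $HH(\Bc)$ is a stable braided $\Bc$-module, equivalently a stable braided $\overline{\overline{\Bc}}$-module; and (ii) there is a braided functor $\overline\Bc\to\Zc(\Cc)$.

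First I would invoke the earlier Remark (the one labelled in the excerpt as the statement ``If $F:\Bc\to\Bc'$ is a braided functor\ldots then $F^*\Mc'$ is a (stable) braided $\Bc$-module''), which lets one pull back a stable braided module structure along any braided functor. The natural way to assemble the proof is therefore: apply the previous Corollary to the braided category $\overline\Bc$ in place of $\Bc$, obtaining that $HH(\overline\Bc)$ is a stable braided $\overline\Bc$-module. But $HH(\overline\Bc)$ and $HH(\Bc)$ agree as plain anti-centers up to the sign of the braiding bookkeeping, and more to the point the target we care about is $\Zc(\Cc)$, not $\Zc(\overline\Bc)$. The cleaner route is: take the previous Corollary's output $HH(\Zc(\Cc))$ is a stable braided $\Zc(\Cc)$-module (this is just the Lemma applied to the rigid category $\Cc$), then pull back along the braided embedding $\overline\Bc\to\Zc(\Cc)$ of the remark to conclude that $HH(\Cc)$ is a stable braided $\overline\Bc$-module.

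In slightly more detail, the steps in order are: (1) By the Lemma, since $\Cc=H_\Bc\mod$ is rigid (as noted at the end of Section \ref{sec:b1}), $HH(\Cc)$ is a stable braided $\Zc(\Cc)$-module via \eqref{centerhh} and \eqref{centerhhbraid}. (2) By Remark \ref{tau:rem} and the paragraph following it, the assignment $X\mapsto(X,\epsilon)$ with central structure $\tau^{-1}_{X,-}$ is a braided functor $\overline\Bc\to\Zc(\Cc)$; here one checks that $\tau^{-1}$ is the correct braiding because $\tau^{-1}_{X,V}$ is the $H$-linear map while $\tau_{X,V}$ fails to be, and that this assignment respects the braiding of $\overline\Bc$ precisely because reversing the central structure to $\tau^{-1}$ forces the source braiding to be $\tau^{-1}$ as well. (3) Apply the pullback Remark to the braided functor of step (2) and the stable braided $\Zc(\Cc)$-module of step (1), yielding that $HH(\Cc)$ is a stable braided $\overline\Bc$-module via $X\cdot M = (X,\epsilon)\cdot M$.

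The one place that deserves genuine care, and which I expect to be the main obstacle, is verifying in step (2) that the central structure $\tau^{-1}_{X,-}$ really does land in $\Zc(\Cc)$ and is \emph{braided} with respect to $\overline\Bc$ rather than $\Bc$; that is, one must confirm the half-braiding axioms for $\tau^{-1}_{X,-}$ hold at the level of $H$-modules (monoidality in the $\Cc$-variable and compatibility with the $H$-action), and that composing two such central structures reproduces the braiding of $\overline\Bc$. Everything else is formal functoriality, but this sign-of-the-braiding check is exactly the subtle point flagged by Remark \ref{tau:rem}, and getting it right is what makes the module structure come out over $\overline\Bc$ and not $\Bc$.
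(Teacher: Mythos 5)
Your proposal is correct and follows essentially the same route the paper intends: apply the Lemma to the rigid category $\Cc$ to get that $HH(\Cc)$ is a stable braided $\Zc(\Cc)$-module, use Remark \ref{tau:rem} and the subsequent discussion to obtain the braided embedding $\overline{\Bc}\to\Zc(\Cc)$ via $X\mapsto(X,\epsilon)$ with central structure $\tau^{-1}_{X,-}$, and pull back along it using the earlier Remark. The point you flag as needing care --- that the half-braiding must be $\tau^{-1}$ and hence the source of the embedding is $\overline{\Bc}$ rather than $\Bc$ --- is exactly the content the paper isolates in Remark \ref{tau:rem}.
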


\subsection{More (stable) braided modules: $L_\Bc\mod$}
So far the only stable braided module that $\Bc$ is guaranteed to have is $HH(\Bc)$.  In this section we define a notion of an algebra $L$ in $HH(\Bc)$ so that $L_\Bc\mod$ is a (stable) braided $\Bc$-module.  The justification for calling $L$ an algebra in $HH(\Bc)$ is that $HH(\Bc)$ does indeed possess a monoidal structure  with respect to which $L$ is indeed an algebra.  We do not pursue this here.

\begin{remark}
In this section we need to start distinguishing crossings, something that was not necessary in Section \ref{firstex} as the central or anti-central structure was the only structure an object processed there.  Here, an object in $HH(\Bc)$ can be crossed over another object in $\Bc$ in two ways, as an object in $\Bc$ or as an object in $HH(\Bc)$.  We distinguish them as indicated. 
\end{remark}

\begin{definition}\label{def:ocross}
Let $L$ be an algebra in $\Bc$, we say that $L$ is an algebra in $HH(\Bc)$ if $L\in HH(\Bc)$,  in particular $L$ is equipped with the structure $\tau^\circ_{X,L}$ (the hollow dot indicates that it is not the $L\in\Bc\subset\Zc(\Bc)$ structure):
\begin{equation}\label{cross}
	\includegraphics[height=.7in]{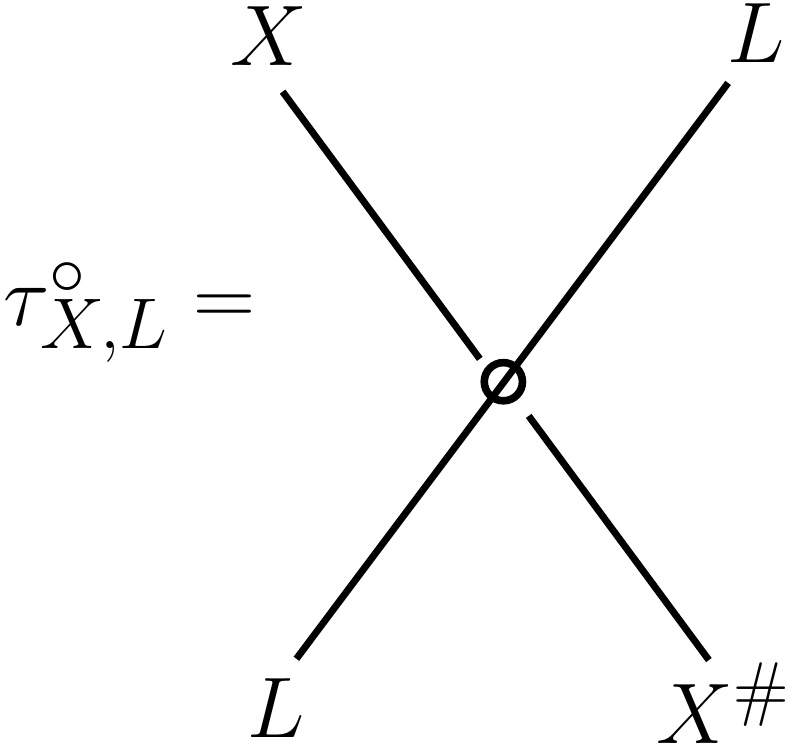} 
\end{equation}
for $X\in\Bc$, that satisfies:
\begin{equation}\label{triplecond}
	\includegraphics[height=.8in]{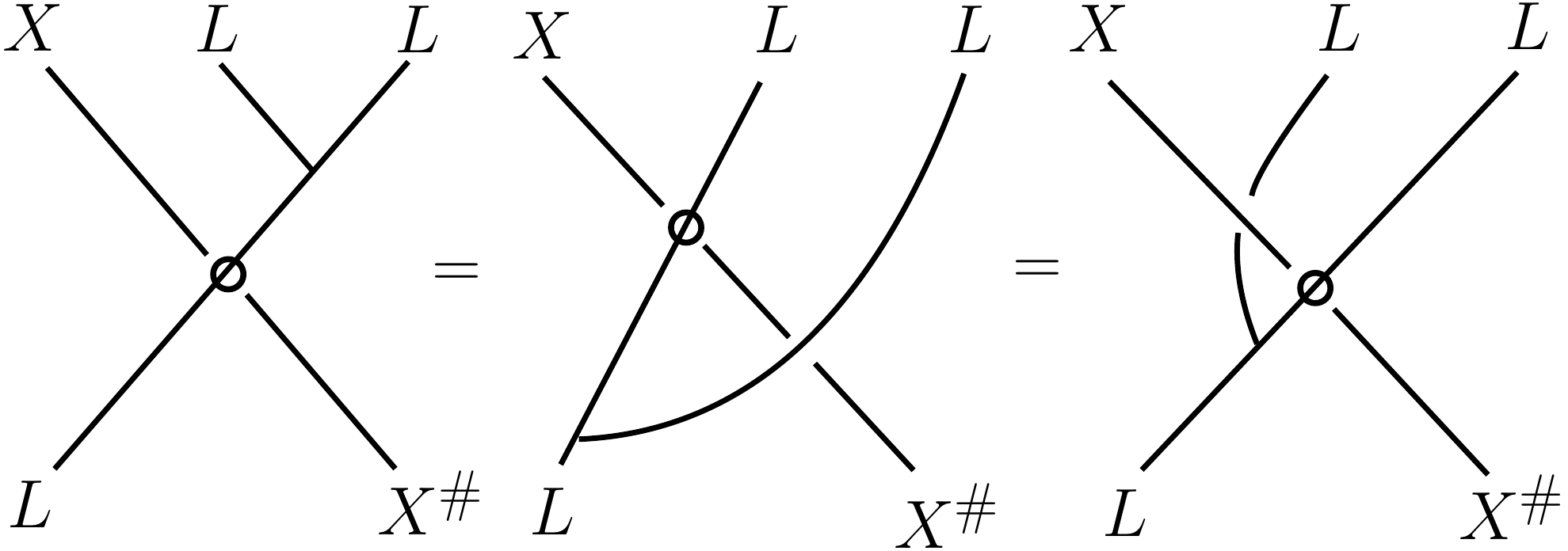} 
\end{equation}
We say that $L$ is stable if it is so as an object of $HH(\Bc)$, i.e., see \eqref{centerhhbraid} we have $\varsigma_L=Id$.

\end{definition}

\begin{definition}\label{Udef}
Let $U: L_\Bc\mod\to HH(\Bc)$ be given by $U(M)=M$ with the the structure:
\begin{equation}\label{hhbdef}
	\includegraphics[height=.8in]{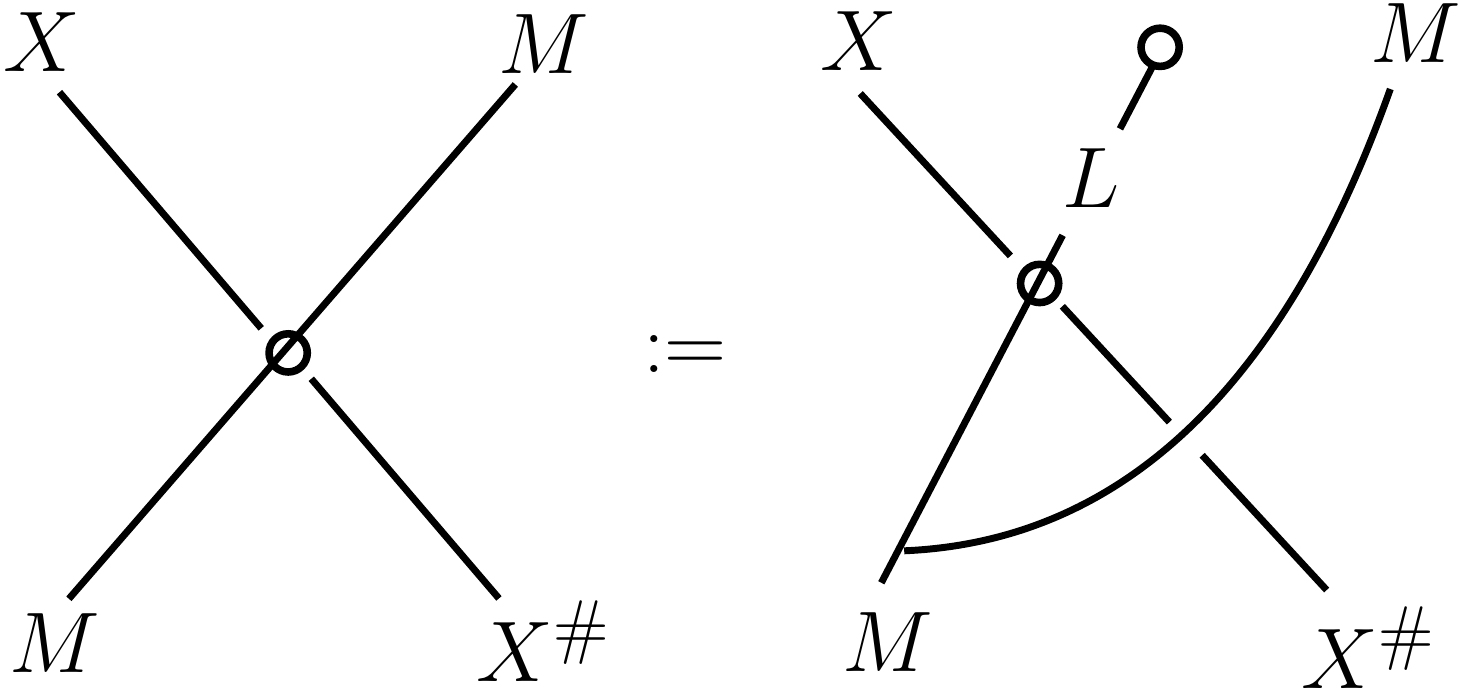} 
\end{equation}
\end{definition}

When considered as a module over itself $L$ recovers its original $HH(\Bc)$ structure.  The following lemma is immediate.

\begin{lemma}\label{LMlem}
Let $L\in HH(\Bc)$ be an algebra. Let $M\in L_\Bc\mod$, then with the structure from Definition \ref{Udef} we have:
\begin{equation}\label{triplecondM}
	\includegraphics[height=.8in]{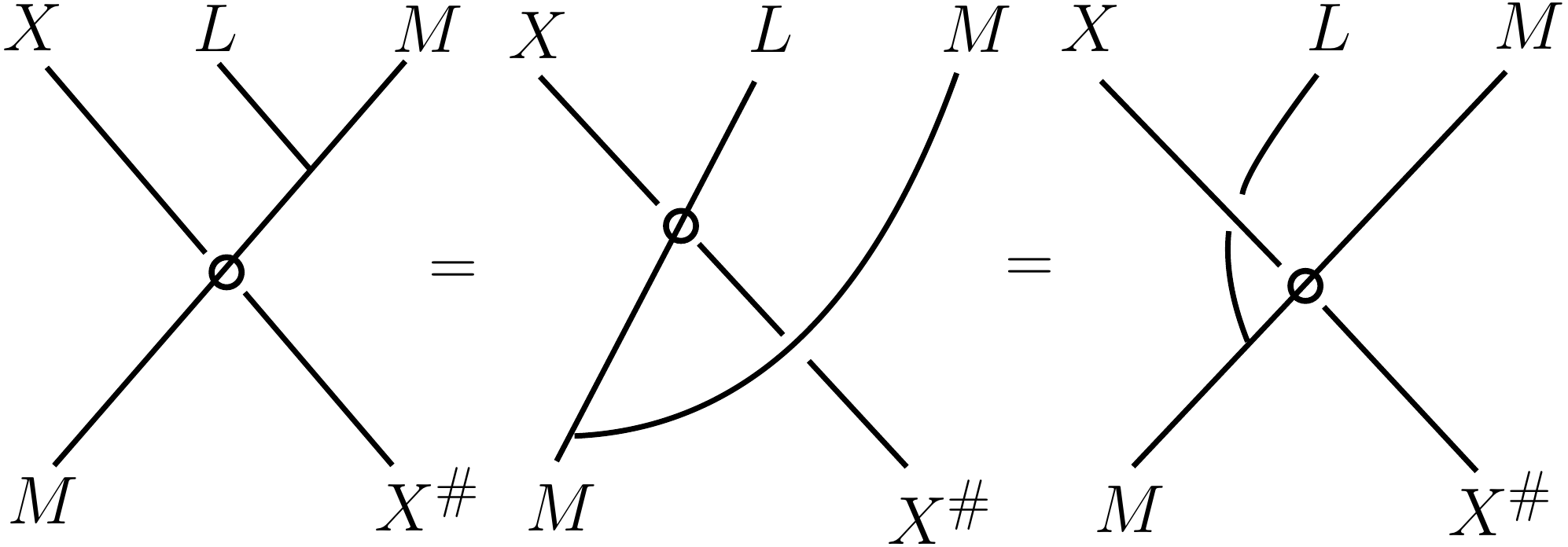} 
\end{equation}
\end{lemma}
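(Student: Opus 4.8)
The plan is to read \eqref{triplecondM} as the $\rho_M$-analogue of the $m_L$-identity \eqref{triplecond}, and to obtain it by substituting the $HH(\Bc)$-structure on $M$ furnished by Definition \ref{Udef}. Concretely, I would expand every half-braiding appearing in \eqref{triplecondM} using its definition \eqref{hhbdef}, so that both sides become string diagrams written entirely in terms of the $L$-action $\rho_M$, the hollow-dot crossing $\tau^\circ_{-,L}$ of $L\in HH(\Bc)$ from \eqref{cross}, the ordinary braiding $\tau$, and the (co)evaluations of the rigid structure. The target is then to convert the left-hand diagram into the right-hand one using only isotopy of strings together with: (i) associativity of the action, $\rho_M(m_L\ot Id_M)=\rho_M(Id_L\ot\rho_M)$; (ii) the compatibility \eqref{triplecond} of $m_L$ with $\tau^\circ$; and (iii) naturality of $\tau$.

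The decisive move is to bring the two $L$-inputs that feed into $\rho_M$ past the strand of an arbitrary $X\in\Bc$. After rewriting by associativity (i), the two stacked occurrences of \eqref{hhbdef} present a pair of $\tau^\circ$-crossings on the $L$-legs of $m_L$; condition \eqref{triplecond} is exactly what is needed to merge these into a single $\tau^\circ$-crossing of the combined $L$-strand, at which point the diagram collapses to one copy of \eqref{hhbdef} threaded through $\rho_M$. The remaining gap between the two sides is then closed by naturality of the braiding (iii) and an isotopy of the diagram, yielding \eqref{triplecondM}.

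I expect the only genuine subtlety to be the bookkeeping of crossing types, since $L$ carries two distinct crossings: its $\Bc\subset\Zc(\Bc)$ braiding $\tau$ and its $HH(\Bc)$-crossing $\tau^\circ$. One must check that, when $\rho_M$ is transported across $X$ through \eqref{hhbdef}, the crossing that emerges on the $L$-strand is precisely $\tau^\circ$, so that \eqref{triplecond} applies verbatim; conflating it with the plain braiding would break the argument. Once the crossings are correctly matched there is no further content, which is why the lemma is immediate: the module structure of Definition \ref{Udef} was designed so that $\rho_M$ inherits the behaviour of $m_L$, and \eqref{triplecondM} is then just \eqref{triplecond} post-composed with the action.
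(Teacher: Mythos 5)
Your proposal is correct and matches the paper's intent: the paper declares this lemma immediate, and the verification it has in mind is exactly what you describe — expand the $HH(\Bc)$-structure of $M$ from \eqref{hhbdef}, apply associativity of $\rho_{L,M}$ together with the defining identity \eqref{triplecond} for $L$, and finish by naturality of the braiding, while keeping the $\tau^\circ$-crossing distinct from the plain braiding on the $L$-strand. Nothing further is needed.
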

The following corollary is immediate using both of the equalities in Lemma \ref{LMlem}.
\begin{corollary}\label{LMcor}
With $L,M$ as above, we have: $$\varsigma_M\rho_{L,M}=\rho_{L,M}\varsigma_L\ot\varsigma_M.$$

\end{corollary}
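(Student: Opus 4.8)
The plan is to derive the claimed identity directly from the two equalities packaged in Lemma \ref{LMlem}, treating them as rewrite rules on string diagrams. Recall that $\varsigma_M$ is defined (via \eqref{centerhhbraid}) as the composite using a coevaluation into $M\ot M^*$, the inverse braided structure $\tau^{\bullet-1}$ of $M$ as an object of $HH(\Bc)$, and an evaluation; so the statement $\varsigma_M\rho_{L,M}=\rho_{L,M}\,\varsigma_L\ot\varsigma_M$ is an assertion about how the self-crossing structure on $M$ interacts with the module action $\rho_{L,M}$ of $L$. The natural approach is to start from the diagram for $\varsigma_M\rho_{L,M}$, insert a coevaluation/evaluation pair to expose the $\tau^{\bullet-1}_{M,M}$ that computes $\varsigma_M$, and then use the structure of $U(M)$ from Definition \ref{Udef} together with the two moves of Lemma \ref{LMlem} to slide the $L$-strand through the self-crossing.

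First I would write both sides as string diagrams, expanding $\varsigma_L$ and $\varsigma_M$ using \eqref{centerhhbraid} so that everything is expressed in terms of $\tau^\circ$, the braiding $\tau$, the action $\rho_{L,M}$, the multiplication $m_L$, and the rigidity maps. Next I would take the left-hand side $\varsigma_M\rho_{L,M}$ and apply the first equality of Lemma \ref{LMlem} to push the action past the crossing that defines the anti-central structure on $M$; this should peel off a factor acting on the $L$-strand. Then I would apply the second equality of Lemma \ref{LMlem} to resolve the remaining crossing of $L$ over the $M$-loop created by the coevaluation, collapsing it into the self-crossing $\tau^\circ_{L,L}$ (equivalently $\tau^{\bullet}$ on $L$) that assembles into $\varsigma_L$. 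The output should be exactly $\rho_{L,M}$ precomposed with $\varsigma_L\ot\varsigma_M$, up to the naturality of $\tau^\bullet$ and the defining relation $\varsigma_L=Id$ being \emph{not} invoked (the corollary is stated for general $L$, so I must keep $\varsigma_L$ honest and not assume stability).

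The main obstacle I anticipate is bookkeeping the two distinct crossings: $M$ is both an object of $\Bc$ (carrying $\tau$) and an object of $HH(\Bc)$ (carrying $\tau^\bullet$, depicted with the hollow dot $\tau^\circ$), and the two equalities in Lemma \ref{LMlem} correspond to moving the $L$-strand relative to \emph{different} crossings. I would need to be careful that when I open the coevaluation $coev_{M,M^*}$ to form $\varsigma_M$, the $L$-strand passes the resulting cap and loop using the correct one of the two Lemma \ref{LMlem} moves at each stage, and that the cup/cap cancellation (the rigidity zig-zag) is applied only after the crossings have been straightened. In practice this is a short computation once the diagrams are drawn: the corollary is indeed ``immediate using both of the equalities in Lemma \ref{LMlem}'' precisely because one equality handles the slide through $\tau^\bullet_{M,M}$ and the other handles the slide past the reflected $\tau^\bullet$ appearing in $\varsigma_M^{-1}$ or in the evaluation leg, so that the two combine to reconstruct $\varsigma_L\ot\varsigma_M$ on the nose.
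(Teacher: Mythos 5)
Your proposal is correct and follows the same route as the paper, which simply declares the corollary immediate from both equalities of Lemma \ref{LMlem}: one expands $\varsigma_M\rho_{L,M}$ via \eqref{centerhhbraid} and uses the two moves of \eqref{triplecondM} to slide the $L$-strand through the self-crossing and the ev/coev pair, reassembling $\varsigma_L\ot\varsigma_M$. You are also right not to invoke $\varsigma_L=Id$, since stability of $L$ is not assumed here.
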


\begin{definition}\label{Bactdef}
Define a $\Bc$-module structure on $L_\Bc\mod$ as follows: for $X\in\Bc$ and $M\in L_\Bc\mod$ we set $X\cdot M=X\ot M$ with $$\rho_{L,XM}=\rho_{L,M}\tau^{-1}_{L,X}.$$
\end{definition}
The following lemma is immediate.
\begin{lemma}\label{Beqlem}
The functor $U: L_\Bc\mod\to HH(\Bc)$ is $\Bc$-equivariant.
\end{lemma}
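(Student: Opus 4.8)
The plan is to exhibit $U$ as a \emph{strict} $\Bc$-module functor: take the equivariance isomorphism $U(X\cdot M)\to X\cdot U(M)$ to be the identity on underlying objects, and then reduce the lemma to the single assertion that this identity is in fact a morphism in $HH(\Bc)$. This is legitimate because $U$ is the identity on underlying objects and morphisms, the $\Bc$-action on $L_\Bc\mod$ has $X\cdot M=X\ot M$ by Definition \ref{Bactdef}, and the $\Bc$-action on $HH(\Bc)$ (through the central embedding $\Bc\to\Zc(\Bc)$ given by $\tau$) also has $X\cdot M=X\ot M$ on underlying objects by \eqref{centerhh}. Hence $U(X\cdot M)$ and $X\cdot U(M)$ are the same object $X\ot M$ equipped with two a priori distinct $HH(\Bc)$-structures, and naturality of the identity map in both $X$ and $M$ is immediate since $U$ is the identity on morphisms.

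The entire content is therefore the comparison of these two $HH(\Bc)$-structures on $X\ot M$. I would compute the half-braiding $\tau^\bullet_{X\ot M,Y}\colon (X\ot M)\ot Y\to {}^\#Y\ot(X\ot M)$ in each case from the definitions. For $U(X\cdot M)$ one feeds the twisted module structure $\rho_{L,XM}=\rho_{L,M}\tau^{-1}_{L,X}$ of Definition \ref{Bactdef} into the formula \eqref{hhbdef} defining $U$; for $X\cdot U(M)$ one combines the central structure $\tau_{X,-}$ of $X$ with the $HH(\Bc)$-structure $U(M)$, again via \eqref{centerhh}. Drawing both string diagrams, the discrepancy reduces to the position of a single crossing of the $X$-strand relative to the $L$-strand, which I would resolve by sliding it through using naturality of the braiding together with the compatibility relation \eqref{triplecondM} of Lemma \ref{LMlem} (equivalently Corollary \ref{LMcor}). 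The twist $\tau^{-1}_{L,X}$ built into the action on $L_\Bc\mod$ is precisely what is needed to absorb the central crossing of $X$ coming from the $HH(\Bc)$-action, so the two diagrams become literally equal.

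Finally I would verify the coherence (associativity and unit) axioms for a module functor. Since the equivariance datum is the identity and both module structures are inherited strictly from the tensor product of $\Bc$ on underlying objects, these axioms collapse to the pentagon and triangle coherence already valid in $\Bc$, and so hold automatically.

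The main obstacle I expect lies in the middle step: making the two diagrams for $\tau^\bullet_{X\ot M,Y}$ coincide exactly. The delicate bookkeeping is to keep straight which crossing is the ``$\Bc$-crossing'' and which is the ``$HH(\Bc)$-crossing'' in the sense of the Remark preceding Definition \ref{def:ocross}, and to confirm that the twist $\tau^{-1}_{L,X}$ of Definition \ref{Bactdef} was chosen with exactly the right variance to cancel the central crossing of $X$; an orientation error there would break the match.
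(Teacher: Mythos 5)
Your proposal is correct and is exactly the verification the paper has in mind: the paper offers no written argument (it declares the lemma immediate), and your unpacking — both sides are the same underlying object $X\ot M$, the two $HH(\Bc)$-structures coincide because the twist $\tau^{-1}_{L,X}$ in Definition \ref{Bactdef} is precisely what lets the $L$-strand slide past the central crossing of $X$ by naturality of the braiding, and the coherence axioms collapse since the equivariance datum is the identity — is the intended content of ``immediate.'' (The appeal to \eqref{triplecondM} is not actually needed here; naturality of $\tau$ alone suffices, with \eqref{triplecondM} reserved for the braided-module axioms in Theorem \ref{forget:thm}.)
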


\begin{theorem}\label{forget:thm}
Let $L\in HH(\Bc)$ be an algebra then:
\begin{itemize}
\item $L_\Bc\mod$ is naturally a braided $\Bc$-module.
\item If $L$ is stable then so is $L_\Bc\mod$.
\item $U$ is a (stable) braided functor.
\end{itemize}
\end{theorem}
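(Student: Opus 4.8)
The plan is to transport all structure from the stable braided $\Bc$-module $HH(\Bc)$ along the functor $U$ of Definition~\ref{Udef}, and to reduce every axiom to a statement already verified in $HH(\Bc)$. Concretely, for $X\in\Bc$ and $M\in L_\Bc\mod$ I would define the braided structure $E_{X,M}$ to be the morphism underlying $E_{X,U(M)}$ in $HH(\Bc)$, and, in the stable case, $\varsigma_M$ to be $\varsigma_{U(M)}$. Two observations make this work. First, $U$ is faithful, being the identity on underlying objects and morphisms, so any identity between morphisms of $L_\Bc\mod$ may be checked after applying $U$. Second, $U$ is $\Bc$-equivariant by Lemma~\ref{Beqlem}, so that the $\Bc$-action and the braiding entering C1 and C2 agree under $U$.

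The one genuinely new point is that the morphisms pulled back from $HH(\Bc)$ actually lie in $L_\Bc\mod$, i.e.\ are $L$-linear for the action $\rho_{L,XM}=\rho_{L,M}\tau^{-1}_{L,X}$ of Definition~\ref{Bactdef}. For $E_{X,M}$ I would use the stability relation $E_{X,M}=\varsigma_{XM}(X\cdot\varsigma_M)^{-1}$ valid in $HH(\Bc)$ together with Corollary~\ref{LMcor} applied both to $M$ and to the $L$-module $X\cdot M$. Each of $\varsigma_{XM}$ and $(X\cdot\varsigma_M)^{-1}$ commutes past $\rho_{L,XM}$ at the cost of a factor $\varsigma_L$, respectively $\varsigma_L^{-1}$, on the $L$-leg, using naturality of the braiding to slide $\varsigma_L$ through $\tau^{-1}_{L,X}$; in the composite these two contributions cancel, so that $E_{X,M}\rho_{L,XM}=\rho_{L,XM}(\Id_L\ot E_{X,M})$ holds with no hypothesis on $L$. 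This is exactly why $L_\Bc\mod$ is braided unconditionally. For stability one instead needs each $\varsigma_M$ to be $L$-linear on its own, and Corollary~\ref{LMcor} gives $\varsigma_M\rho_{L,M}=\rho_{L,M}(\varsigma_L\ot\varsigma_M)$, which reduces to $L$-linearity precisely when $\varsigma_L=\Id$, i.e.\ when $L$ is stable.

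With $L$-linearity in hand the remaining axioms follow formally: naturality of $E$ and $\varsigma$ in $L_\Bc\mod$, the conditions C1 and C2, and the stability relation $E_{X,M}=\varsigma_{XM}(X\cdot\varsigma_M)^{-1}$ are all equalities of $L$-linear morphisms whose images under the faithful, equivariant $U$ are the corresponding facts already established in the stable braided module $HH(\Bc)$. Finally, $U$ is braided because $U(E_{X,M})=E_{X,U(M)}$ by construction, and in the stable case $U(\varsigma_M)=\varsigma_{U(M)}$, so $U$ is a stable functor as well. I expect the $L$-linearity of $E_{X,M}$ to be the main obstacle: it is the only step that is not a formal consequence of faithfulness and equivariance, and it is where the cancellation of the $\varsigma_L$ factors, the mechanism distinguishing the braided statement from the stable one, must be made precise, most cleanly via the graphical identities of Lemma~\ref{LMlem}.
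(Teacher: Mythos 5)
Your proposal is correct, and its architecture coincides with the paper's: define the structure on $L_\Bc\mod$ by pulling back the $HH(\Bc)$-structure of $U(M)$, check that the resulting morphisms are $L$-linear, and then use the faithfulness and $\Bc$-equivariance of $U$ (Lemma \ref{Beqlem}) to reduce C1, C2 and the stability relation to the already-established stable braided module $HH(\Bc)$; the stable case is handled, as in the paper, by Corollary \ref{LMcor}, which shows $\varsigma_M$ is $L$-linear exactly when $\varsigma_L=\Id$. The one step where you genuinely diverge is the $L$-linearity of $E_{X,M}$: the paper verifies it directly from the two graphical identities of Lemma \ref{LMlem} together with the definition $\rho_{L,XM}=\rho_{L,M}\tau^{-1}_{L,X}$, whereas you factor $E_{X,M}=\varsigma_{XM}\circ(X\cdot\varsigma_M)^{-1}$ using the stability of $HH(\Bc)$ (which holds irrespective of any hypothesis on $L$), apply Corollary \ref{LMcor} to both $M$ and $X\cdot M$, and observe that the two $\varsigma_L$-factors cancel. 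Your computation is valid --- the application of Corollary \ref{LMcor} to $X\cdot M$ is licensed by Lemma \ref{Beqlem}, and the slide of $\varsigma_L$ through $\tau^{-1}_{L,X}$ is just naturality of the braiding --- and it buys a conceptual explanation that the paper leaves implicit: the braided statement needs no hypothesis on $L$ because the $\varsigma_L$ contributions cancel in the composite, while the stable statement needs $\varsigma_L=\Id$ because $\varsigma_M$ must be $L$-linear on its own. The paper's direct graphical check is marginally more self-contained (it does not route through the stable structure of $HH(\Bc)$), but the two arguments rest on the same underlying identities.
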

\begin{proof}
For $X\in\Bc$, $M\in L_\Bc\mod$ consider \begin{equation}\label{Lexm}
	\includegraphics[height=.8in]{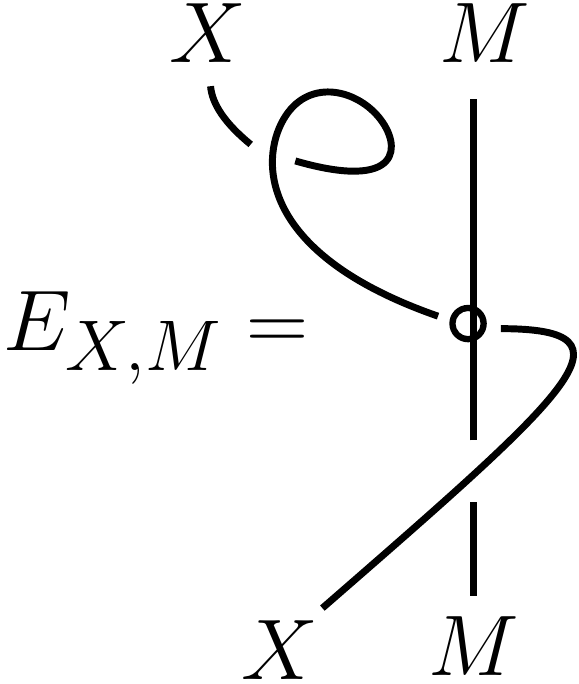} 
\end{equation}then by Lemma \ref{LMlem} and Definition \ref{Bactdef} We have that $E_{X,M}$ is a map of $L$-modules.  Observe that $C1$ follows from Lemma \ref{Beqlem}, and this lemma also reduces $C2$ to the case of $HH(\Bc)$.  Namely, to prove $E_{Y,XM}E_{X,M}=E_{YX,M}$ we need only observe that \begin{equation}\label{Ehhb}
	\includegraphics[height=.9in]{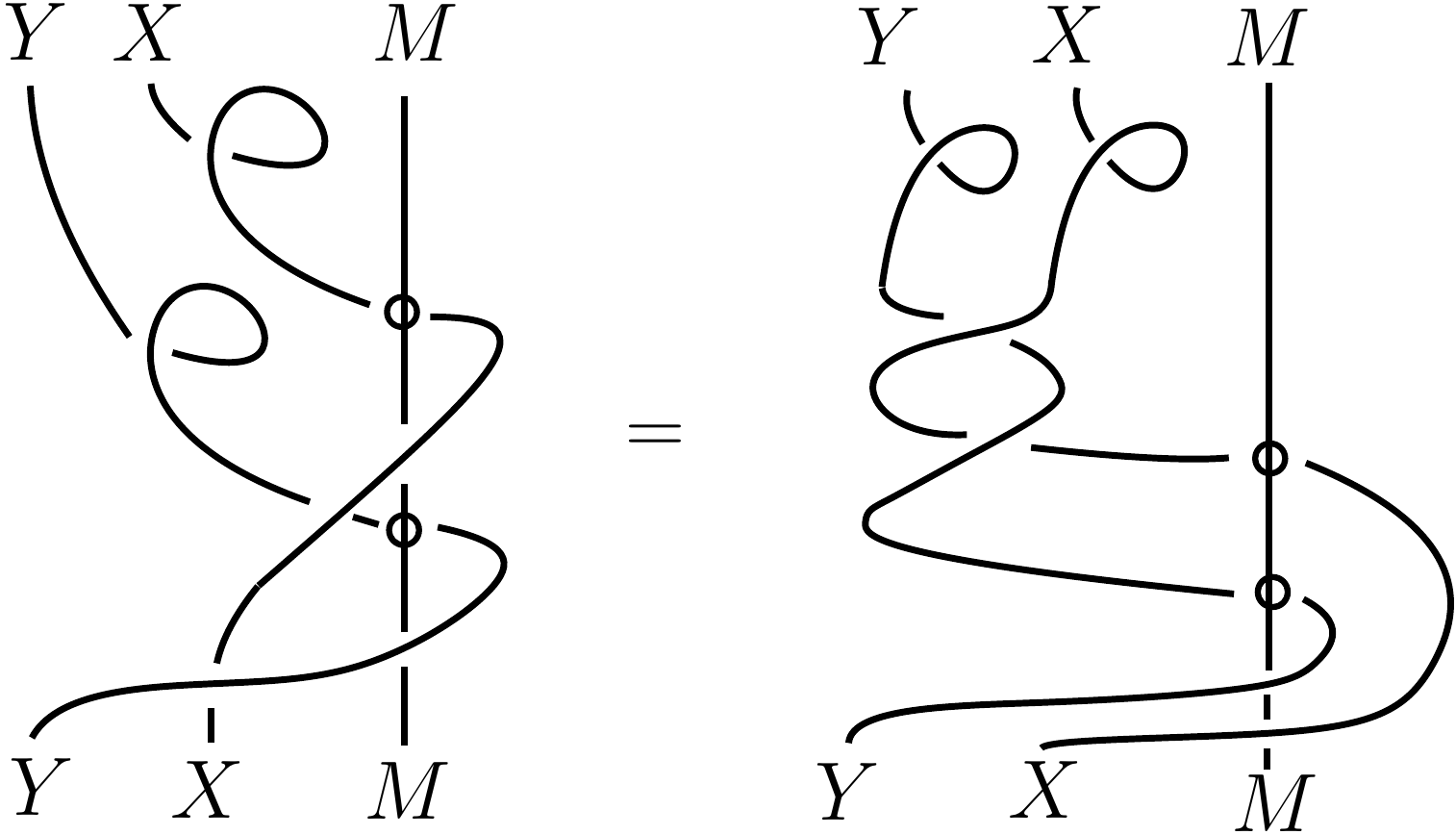} 
\end{equation} which proves that $HH(\Bc)$ is a braided $\Bc$-module; a fact that previously followed from the existence of a stable structure.

If $L$ is stable then by Corollary \ref{LMcor} the map $\varsigma_M$ is $L$-linear, and we again have $E_{X,M}\varsigma_M=\varsigma_{XM}$, proving the second statement.

The last statement  is immediate; the definitions were made to make it so.
\end{proof}

\subsection{Categorified modular pairs in involution: $\Bc_\varsigma$}
We now specialize to the case when $L=1$, but with a necessarily non-trivial $HH(\Bc)$ structure.  The latter is equivalent to a pivotal structure, i.e., a monoidal natural isomorphism $$\rho_X:X\to X^\#$$ for all $X\in\Bc$.  More precisely,  we have $$\tau^\circ_{X,1}=\rho_X,$$ where $\tau^\circ$ denotes the $HH(\Bc)$ structure of $1$.  It is not hard to see that $L=1$ is a stable algebra in $HH(\Bc)$.

Observe that in this case $L_\Bc\mod=\Bc$ as left $\Bc$-modules.  Thus, if $M\in\Bc$ then it acquires $HH(\Bc)$ structure:

\begin{equation}\label{hhstr1}
	\includegraphics[height=.7in]{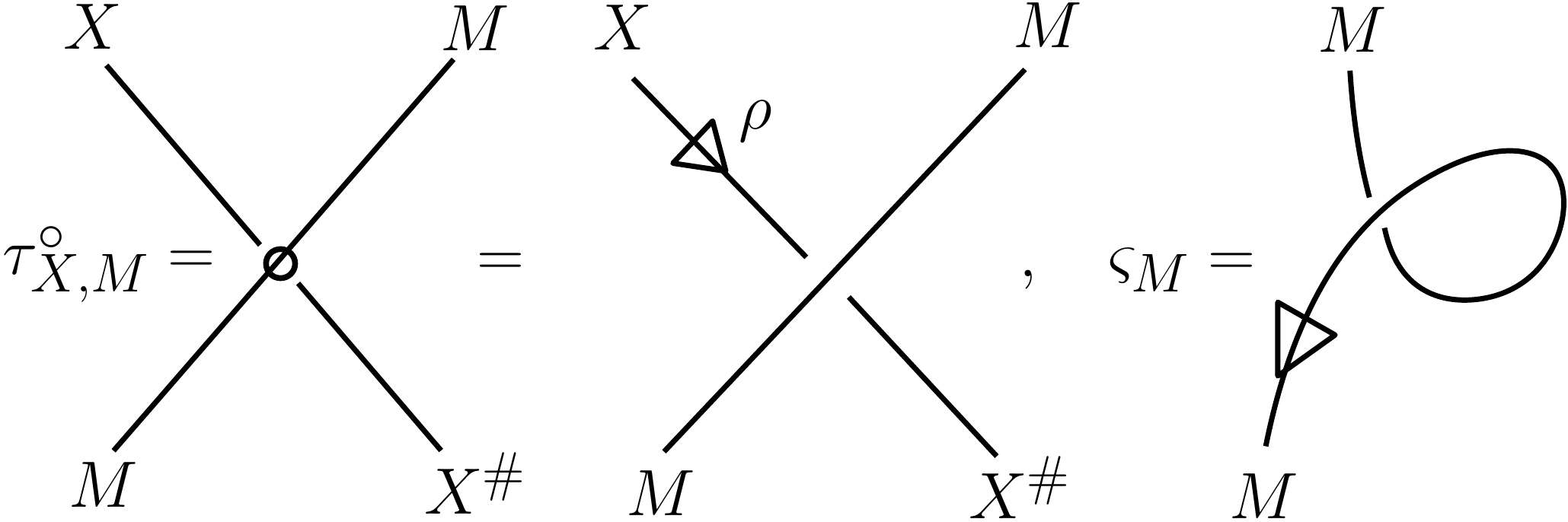} 
\end{equation}

Note that $\theta=\varsigma^{-1}$ is  a twist on $\Bc$, i.e., $$\theta_{XY}=\tau_{Y,X}\tau_{X,Y}\theta_X\theta_Y$$ so that $\Bc$ is balanced.  The structures $\theta,\varsigma$, and $\rho$ are equivalent.   Explicitly, the braided structure on $\Bc$ is  $$E_{X,M}=\tau^{-2}_{X,M}\varsigma_X.$$

\begin{lemma}\label{class:lem}
Consider $\Bc$ as a module over itself under left multiplication.  Then braided structures on this module are in bijective correspondence with anti-twists, i.e.,  $\varsigma\in Aut(Id_\Bc)$ satsifying: $$\varsigma_{XY}=\tau^{-2}\varsigma_X\varsigma_Y.$$ 
\end{lemma}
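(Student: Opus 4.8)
The plan is to set up mutually inverse assignments between braided structures $E$ on the left regular module $\Bc$ and anti-twists $\varsigma\in Aut(Id_\Bc)$. For the forward assignment, given a braided structure $E$ I put $\varsigma_X:=E_{X,1}$, identifying $X\cdot 1=X\ot 1\cong X$ by the unitor; naturality of $E$ in the $\Bc$-variable makes $\varsigma$ a natural automorphism of $Id_\Bc$. The decisive observation is that C1 already determines $E$ from $\varsigma$: specialising C1 to $M=1$ gives $E_{X,Y}=\tau^{-1}_{Y,X}\,(Id_Y\ot\varsigma_X)\,\tau^{-1}_{X,Y}$, and moving $\varsigma_X$ across the braiding by naturality collapses this, via $\tau^{-1}_{Y,X}\tau^{-1}_{X,Y}=\tau^{-2}_{X,Y}$, to $$E_{X,Y}=\tau^{-2}_{X,Y}(\varsigma_X\ot Id_Y).$$

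Feeding this into C2 at $M=1$ yields the anti-twist relation: since $E_{XY,1}=\varsigma_{XY}$ while C2 reads $E_{XY,1}=E_{X,Y}\,(Id_X\ot E_{Y,1})=\tau^{-2}_{X,Y}(\varsigma_X\ot Id_Y)(Id_X\ot\varsigma_Y)$, one reads off $\varsigma_{XY}=\tau^{-2}_{X,Y}(\varsigma_X\ot\varsigma_Y)$. For the backward assignment, given an anti-twist $\varsigma$, I define $E_{X,M}:=\tau^{-2}_{X,M}(\varsigma_X\ot Id_M)$, which is exactly the structure $\Bc_\varsigma$ of the preceding discussion. Condition C1 follows from the hexagon decomposition $\tau^{-2}_{Y,X\ot M}=(\tau^{-1}_{X,Y}\ot Id_M)(Id_X\ot\tau^{-2}_{Y,M})(\tau^{-1}_{Y,X}\ot Id_M)$ together with the naturality of $\tau$ against $\varsigma_Y$. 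The two assignments are then visibly inverse: starting from $\varsigma$ and applying the forward map returns $E_{X,1}=\tau^{-2}_{X,1}(\varsigma_X\ot Id_1)=\varsigma_X$ because braiding with $1$ is trivial, while starting from $E$ the displayed formula shows that the $\varsigma$ produced by the forward map rebuilds $E$.

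The one genuinely computational step, and the main obstacle, is verifying C2 for the backward assignment. After substituting the formula for $E$ and the anti-twist relation and cancelling the common right factor $\varsigma_X\ot\varsigma_Y\ot Id_M$, C2 reduces to the purely braiding statement $$\tau^{-2}_{X\ot Y,M}\,(\tau^{-2}_{X,Y}\ot Id_M)=\tau^{-2}_{X,Y\ot M}\,(Id_X\ot\tau^{-2}_{Y,M}),$$ that is, the compatibility of the double braiding (monodromy) with $\ot$ in its first slot. This is a standard consequence of the hexagon axioms applied twice; informally it is the isotopy invariance of the underlying tangle, in which on either side $X$ links $Y$, $X$ links $M$, and $Y$ links $M$ exactly once. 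Once this is in hand, C1, C2, and the bijection are all established, and since $E_{X,1}=\varsigma_X$ the correspondence identifies the braided structure $E$ with the anti-twist $\varsigma$ as claimed.
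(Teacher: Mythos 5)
Your proposal is correct and follows essentially the same route as the paper: extract $\varsigma_X=E_{X,1}$, observe that C1 at $M=1$ forces $E_{X,M}=\tau^{-2}_{X,M}(\varsigma_X\ot Id_M)$, derive the anti-twist relation from C2 at $M=1$, and check the two assignments are mutually inverse. The only cosmetic difference is that you verify C1 and C2 for $E^\varsigma$ directly from the hexagon axioms (reducing C2 to the pure-braid identity $\tau^{-2}_{X\ot Y,M}(\tau^{-2}_{X,Y}\ot Id_M)=\tau^{-2}_{X,Y\ot M}(Id_X\ot\tau^{-2}_{Y,M})$), whereas the paper delegates this to the preceding discussion of $L=1$ as a stable algebra in $HH(\Bc)$ via Theorem \ref{forget:thm}.
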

\begin{proof}
Given $\varsigma$, let $E^\varsigma_{X,M}=\tau^{-2}_{X,M}\varsigma_X$, then it is a stable braided structure by the preceding discussion.  Conversely, given $E_{X,M}$, let $\varsigma^E_X=E_{X,1}$. Then $$\varsigma^E_{XY}=E_{XY,1}=E_{X,Y1}E_{Y,1}=E_{X,Y}\varsigma^E_Y=\tau^{-1}_{Y,X}E_{X,1}\tau^{-1}_{X,Y}\varsigma_Y^E=\tau^{-2}_{X,Y}\varsigma_X^E\varsigma_Y^E,$$ so that $\varsigma^E$ is an anti-twist.  Furthermore, $$\varsigma^{E^\varsigma}_X=E^\varsigma_{X,1}=\tau^{-2}_{X,1}\varsigma_X=\varsigma_X$$ and $$E^{\varsigma^E}_{X,M}=\tau^{-2}_{X,M}\varsigma_X^E=\tau^{-2}_{X,M}E_{X,1}=\tau^{-1}_{M,X}E_{X,1}\tau^{-1}_{X,M}=E_{X,M}.$$
\end{proof}

\begin{remark}
Lemma \ref{class:lem} shows that any braided module structure on $\Bc$ is automatically stable in a canonical way: $\varsigma_X=E_{X,1}$.  Note, however, that here stable structures form a torsor over $Aut_\Bc(1)$ and so are not unique.
\end{remark}

\begin{definition}
If $\varsigma$ is an anti-twist on $\Bc$, let $\Bc_\varsigma$ denote the corresponding stable braided module.
\end{definition}

\begin{remark}\label{rem:decomp}
With the assistance of  Theorem \ref{forget:thm} we have that $$\bigoplus_{\varsigma\in\text{anti-twists}}\Bc_\varsigma\to HH(\Bc)$$ is a fully faithful stable braided embedding.  
\end{remark}

\begin{definition}
 Let $$G=(\overline{\Bc^\times},\ot)$$ be the abelian group of isomorphism classes of invertible objects in $\Bc$.  Let $$\hat{G}=(Aut^\ot(Id_\Bc),\circ)$$ be the abelian group of monoidal natural automorphisms of $Id_\Bc$.  If $y\in\Bc^\times$, i.e., $y$ is an invertible object of $\Bc$ then let $\phi^y\in Aut^\ot(Id_\Bc)$ be given by \begin{equation}\label{phidef}
	\includegraphics[height=.8in]{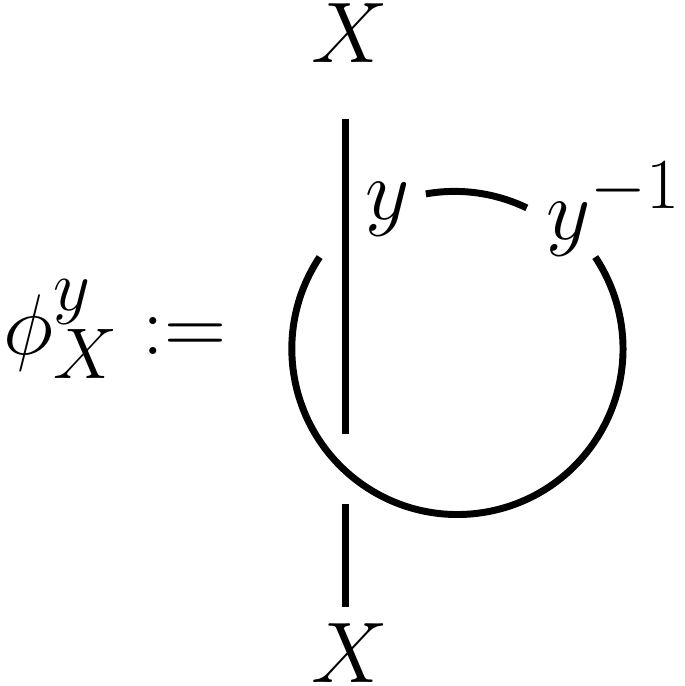} 
\end{equation} Thus,  $\phi:\overline{\Bc^\times}\to Aut^\ot(Id_\Bc)$ is a group homomorphism. 
\end{definition}

\begin{lemma}\label{classbr:lem}
If $F:\Bc_\varsigma\to\Bc_{\varsigma'}$ is a braided equivalence, then $F(M)=M\ot y$ for some $y\in\Bc^\times$ and $$\varsigma'=\phi^y\varsigma.$$ Furthermore, $F$ is stable if $$\varsigma'_y=Id_y.$$
\end{lemma}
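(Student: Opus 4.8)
The plan is to determine the underlying functor $F$ using only that it is a $\Bc$-equivariant equivalence, and then to read off the two stated identities from the braided and the stable compatibilities in turn. \emph{First step.} Since $\Bc_\varsigma$ and $\Bc_{\varsigma'}$ have the same underlying $\Bc$-module category, namely $\Bc$ acting on itself by left multiplication, $F$ is in particular a $\Bc$-module equivalence of the regular module. Setting $y=F(1)$ and using the equivariant structure $F(X\cdot M)\simeq X\cdot F(M)$ with $M=1$ gives $F(X)\simeq X\ot y$, and more generally $F(M)\simeq M\ot y$; as $F$ is an equivalence, $y$ must be invertible, i.e.\ $y\in\Bc^\times$. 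This is the standard identification of module endofunctors of the regular module with right tensoring. Invoking coherence we may suppress the associators and the equivariant structure isomorphism, so that on objects $F$ is $N\mapsto N\ot y$ and on morphisms $F(f)=f\ot Id_y$.

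\emph{Key identity.} After suppressing the coherence isomorphisms, the braided-functor square (whose horizontal arrows are the equivariant structure) reduces to $F(E^\varsigma_{X,M})=E^{\varsigma'}_{X,F(M)}$, that is $\tau^{-2}_{X,M}\varsigma_X\ot Id_y=\tau^{-2}_{X,M\ot y}\varsigma'_X$. The crux is a factorization of the double braiding across the factor $y$: applying the two hexagon axioms to $\tau^2_{X,M\ot y}=\tau_{M\ot y,X}\tau_{X,M\ot y}$, then naturality of $\tau$, together with the defining property of $\phi^y$ from \eqref{phidef}, namely $\tau^2_{X,y}=\phi^y_X\ot Id_y$ (available precisely because $y$ is invertible), yields
$$\tau^2_{X,M\ot y}=(\phi^y_X\ot Id_{M\ot y})(\tau^2_{X,M}\ot Id_y).$$
Substituting the inverse of this into the braided condition and cancelling the isomorphism $\tau^{-2}_{X,M}\ot Id_y$ leaves $\varsigma_X=(\phi^y_X)^{-1}\varsigma'_X$ for every $X$, i.e.\ $\varsigma'=\phi^y\varsigma$, as claimed.

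\emph{Stable case.} The stable structure of $\Bc_\varsigma$ is the anti-twist $\varsigma$ itself, and likewise $\varsigma'$ for $\Bc_{\varsigma'}$, so stability of $F$ means $F(\varsigma_M)=\varsigma'_{F(M)}$, that is $\varsigma_M\ot Id_y=\varsigma'_{M\ot y}$. Expanding the right-hand side by the anti-twist relation $\varsigma'_{M\ot y}=\tau^{-2}_{M,y}(\varsigma'_M\ot\varsigma'_y)$, and substituting $\varsigma'_M=\phi^y_M\varsigma_M$ together with $\tau^{-2}_{M,y}=(\phi^y_M)^{-1}\ot Id_y$, the factors $\phi^y_M$ cancel and the condition collapses to $\varsigma_M\ot Id_y=\varsigma_M\ot\varsigma'_y$ for all $M$. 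This holds exactly when $\varsigma'_y=Id_y$, which gives the final assertion (in fact as an equivalence, the ``if'' being the direction stated).

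\emph{Main obstacle.} The only routine bookkeeping is the careful tracking of associators and of the equivariant structure isomorphism in the non-strict setting, which coherence renders harmless. The genuine content lies in the double-braiding factorization across the invertible object $y$, and underlying it the fact that for invertible $y$ the double braiding $\tau^2_{X,y}$ has the form $\phi^y_X\ot Id_y$ with $\phi^y\in Aut^\ot(Id_\Bc)$; this is exactly what \eqref{phidef} provides, and it is the place where invertibility of $y$ is indispensable.
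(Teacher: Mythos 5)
Your proposal is correct and follows essentially the same route as the paper: identify $F$ with $-\ot y$ for invertible $y$, factor the double braiding across $y$ via the hexagon and the identity $\tau^2_{X,y}=\phi^y_X\ot Id_y$ to extract $\varsigma'=\phi^y\varsigma$ from the braided condition, and then expand $\varsigma'_{My}$ by the anti-twist relation to reduce stability to $\varsigma'_y=Id_y$. The paper's computation \eqref{eq:newsigma} is exactly your stable-case calculation (it converts to $\varsigma$ before applying the anti-twist relation rather than after, which is immaterial), and your observation that the stability condition is in fact an equivalence matches what the paper's derivation yields.
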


\begin{proof}
Any equivariant equivalence $F$ is of the form $F(M)=My$ for some $y\in\Bc^\times$.  $F$ braided needs $E_{X,M}=E'_{X,My}$, so $\tau^{-2}_{X,M}\varsigma_X=\tau^{-2}_{X,My}\varsigma'_X$, so $\tau^{-1}_{M,X}\varsigma_X\tau^{-1}_{X,M}=\tau^{-1}_{M,X}\tau^{-2}_{X,Y}\varsigma'_X\tau^{-1}_{X,M}$, so $\varsigma_X=\tau^{-2}_{X,Y}\varsigma'_X$, so $\varsigma'=\phi^y\varsigma$.  In addition, for stability of $F$, we must have $\varsigma'_{My}=\varsigma_M$.  Thus, \begin{equation}\label{eq:newsigma}\varsigma_M=\varsigma'_{My}=\phi^y_{My}\varsigma_{My}=\phi^y_M\phi^y_{y}\tau^{-2}_{M,y}\varsigma_M\varsigma_y=\varsigma_M(\phi^y_M\tau^{-2}_{M,y})(\phi^y_y\varsigma_y)=\varsigma_M\varsigma'_y,\end{equation} so $\varsigma'_y=Id_y.$
\end{proof}

Note that Lemma \ref{classbr:lem} classifies braided structures on $\Bc$ together with their equivalences. It also accounts for their stable equivalences if the canonical stable structures are considered.    If we set $$C=Aut_\Bc(1),$$ then $C$ is canonically isomorphic, as an abelian group, to $Aut_\Bc(y)$ for every $y\in\Bc^\times$.  
\begin{remark}\label{rem:ctorsor}
The set of stable structures on $\Bc_\varsigma$ is canonically isomorphic to $C$.  Furthermore, if $-\ot y:\Bc_\varsigma\to\Bc_{\varsigma'}$ is a braided equivalence then by \eqref{eq:newsigma} the new stable structure on $\Bc_\varsigma$ that it inherited from $\Bc_{\varsigma'}$ is $$\varsigma^{new}=\varsigma\varsigma'_y,\quad \varsigma'_y\in C.$$
\end{remark}

The set of anti-twists of $\Bc$ is a $\hat{G}$-torsor.  Let $\mathcal{G}$ denote the action groupoid of $G$ on anti-twists via $\phi$.  Define $\eta:\mathcal{G}\to C$   by $\eta(y,\varsigma)=\phi^y_y\varsigma_y$ then $\eta$ is  a homomorphism.  The following is immediate:

\begin{corollary}\label{class:cor}
The groupoid of braided structures on $\Bc$ is isomorphic to $\mathcal{G}$.  The groupoid of canonically stable braided structures is isomorphic to $ker(\eta)$.
\end{corollary}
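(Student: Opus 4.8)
The plan is to exhibit an explicit isomorphism of groupoids, building directly on Lemmas \ref{class:lem} and \ref{classbr:lem}, which together already classify the relevant objects and morphisms. Let $\mathcal{B}r$ denote the groupoid whose objects are braided structures on $\Bc$ (viewed as a module over itself) and whose morphisms are natural-isomorphism classes of braided equivalences. First I would define a functor $\mathcal{B}r\to\mathcal{G}$: on objects it sends a braided structure to the corresponding anti-twist $\varsigma$, a bijection by Lemma \ref{class:lem}; on morphisms it sends the class of a braided equivalence $F:\Bc_\varsigma\to\Bc_{\varsigma'}$ to the element $y\in G=\overline{\Bc^\times}$ supplied by Lemma \ref{classbr:lem}, which guarantees $F\cong -\ot y$ and $\varsigma'=\phi^y\varsigma$, so that $y$ is precisely a morphism $\varsigma\to\varsigma'$ of the action groupoid $\mathcal{G}$.

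Next I would verify that this assignment is well defined and functorial. Well-definedness on morphisms is the fact that naturally isomorphic functors $-\ot y$ and $-\ot y'$ force $y\cong y'$ in $\Bc^\times$ (evaluate at $1$), hence the same class in $G$. Functoriality reduces to the observation that composing $-\ot y$ with $-\ot y'$ yields $-\ot(y\ot y')$ up to the associator, matching the group law of $G$; on the $\mathcal{G}$ side the composite of $y:\varsigma\to\phi^y\varsigma$ with $y':\phi^y\varsigma\to\phi^{y\ot y'}\varsigma$ is $y\ot y'$, using that $\phi$ is a homomorphism and $G$ is abelian. The functor is a bijection on objects by Lemma \ref{class:lem}, so it remains to prove full faithfulness. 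Surjectivity on each hom-set is the converse direction of the computation in Lemma \ref{classbr:lem}: every $y$ with $\phi^y\varsigma=\varsigma'$ makes $-\ot y$ a braided equivalence, since $y$ is invertible and each step of that computation is reversible. Injectivity is immediate from well-definedness, and one checks in particular that the automorphism group of each object is $\{y:\phi^y=Id\}=\ker(\phi)$ on both sides.

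For the second statement I would restrict this isomorphism to the wide subgroupoid of stable morphisms, equipping each $\Bc_\varsigma$ with its canonical stable structure $\varsigma_X=E_{X,1}$ (the Remark following Lemma \ref{class:lem}). By Lemma \ref{classbr:lem} a braided equivalence $-\ot y:\Bc_\varsigma\to\Bc_{\varsigma'}$ is stable precisely when $\varsigma'_y=Id_y$. Evaluating $\varsigma'=\phi^y\varsigma$ at the object $y$ and using the canonical isomorphism $Aut_\Bc(y)\cong C$ gives $\varsigma'_y=\phi^y_y\varsigma_y=\eta(y,\varsigma)$, so stability of $-\ot y$ is equivalent to $(y,\varsigma)\in\ker(\eta)$. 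Hence the isomorphism of the first statement carries the stable morphisms bijectively onto the morphisms of $\ker(\eta)$; since the objects are unchanged and $\eta$ is a homomorphism (so $\ker(\eta)$ is genuinely a wide subgroupoid of $\mathcal{G}$), this restriction is the desired isomorphism onto $\ker(\eta)$.

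I expect the main obstacle to be bookkeeping rather than anything conceptual: fixing the correct notion of morphism in $\mathcal{B}r$, namely natural-isomorphism classes of braided equivalences, so that the hom-sets reproduce the isomorphism classes $G$ rather than honest invertible objects of $\Bc^\times$. In particular one must confirm that for $g:y\to y'$ the natural isomorphism $-\ot g$ is compatible with the braided structures, so that $y\cong y'$ really does identify the two braided equivalences. Granting this, together with the already-established fact that $\eta$ is a homomorphism, everything else is a direct translation of Lemmas \ref{class:lem} and \ref{classbr:lem}.
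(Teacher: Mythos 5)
Your proposal is correct and follows exactly the route the paper intends: the paper states this corollary as ``immediate'' from Lemma \ref{class:lem} (bijection on objects), Lemma \ref{classbr:lem} (classification of braided and stable equivalences), and the observation that $\eta$ is a homomorphism, and your write-up is simply the careful expansion of that argument, including the correct identification $\varsigma'_y=\phi^y_y\varsigma_y=\eta(y,\varsigma)$ for the stable case. The only point you add beyond the paper is the explicit bookkeeping that morphisms of $\mathcal{B}r$ must be taken up to natural isomorphism so that hom-sets land in $G=\overline{\Bc^\times}$ rather than $\Bc^\times$, which is a reasonable and harmless clarification.
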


\subsection{$HH(\Bc)$ decomposes into $\Bc_\varsigma$'s}\label{sec:decompo}
Here we examine in detail a case when the fully faithful embedding of Remark \ref{rem:decomp} is an equivalence; we will use this in Section \ref{sec:blah} for explicit computations. Let $G$ be a finite  abelian group. One may consider $\vect_G$, the monoidal category of $G$-graded vector spaces.  More precisely, with $V_x\in\vect$ $$V\in\vect_G\,\,\text{is given by}\,\,V=\oplus_{x\in G} V_x\,\,\text{and}\,\,(V \ot W)_x=\oplus_{y\in G}V_y\ot W_{y^{-1}x}.$$ 

Let $\chi:G\times G\to k^\times$ be a bi-character of $G$.  Then $\Bc=(Vec_G,\chi)$ is a ribbon category with the braiding: \begin{align*}V_x\ot W_y&\to W_y\ot V_x\\ v\ot w&\mapsto \chi(x,y)w\ot v\end{align*} and the ribbon element $\theta(v)=\chi(x,x)v$ for $v\in V_x$.  Define $$\omega(x,y)=\chi(x,y)\chi(y,x)$$ and let $\varsigma=\theta^{-1}$ be the canonical anti-twist, so that $$\varsigma(x)=\chi(x,x^{-1}).$$  Note that any anti-twist can be obtained from $\varsigma$ via $\varsigma\lambda$ with $\lambda\in\hat{G}.$  

\begin{lemma}
Set $\Bc_\lambda=\Bc_{\varsigma\lambda}$, the stable braided $\Bc$-module $\Bc$ with stable braided structure given by the anti-twist $\varsigma\lambda$.  Then  $$HH(\Bc)=  \bigoplus_{\lambda\in \hat{G}}\Bc_\lambda$$ as stable braided $\Bc$-modules.
\end{lemma}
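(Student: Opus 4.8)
The plan is to show that the fully faithful embedding we already have is essentially surjective, and then to finish by a counting argument using semisimplicity. Recall from Remark~\ref{rem:decomp} that
$$\Phi:\bigoplus_{\varsigma'\in\text{anti-twists}}\Bc_{\varsigma'}\to HH(\Bc)$$
is a fully faithful stable braided embedding. Since every anti-twist is uniquely of the form $\varsigma\lambda$ with $\lambda\in\hat{G}$, this is exactly the map $\Phi:\bigoplus_{\lambda\in\hat{G}}\Bc_\lambda\to HH(\Bc)$ we wish to prove is an isomorphism. It therefore suffices to prove that $\Phi$ is essentially surjective: then $\Phi$ is an equivalence, and being a stable braided functor, it is an equivalence of stable braided $\Bc$-modules.

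Next I would classify the simple objects of $HH(\Bc)$ directly. Unwinding the description of $HH(\Bc)$ from Section~\ref{sec:b3}, an object is $M\in\Bc=\vect_G$ equipped with isomorphisms $\tau^\bullet_{M,k_x}:M\ot k_x\to {}^\#k_x\ot M$ (where $k_x$ denotes the line in degree $x$) satisfying $\tau^\bullet_{M,k_xk_y}=\tau^\bullet_{M,k_y}\tau^\bullet_{M,k_x}$. Since $G$ is abelian the associator is trivial and ${}^\#k_x\cong k_x$ canonically, so both source and target are the grading shift of $M$ by $x$; hence, after these canonical identifications, each $\tau^\bullet_{M,k_x}$ becomes a grading-preserving automorphism $\phi_x\in\Aut_{\vect_G}(M)$, and the multiplicativity condition reads $\phi_{xy}=\phi_y\phi_x$. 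Thus an $HH(\Bc)$-structure on $M$ is precisely a grading-compatible action of the finite abelian group $G$, i.e.\ a homomorphism $G\to\Aut_{\vect_G}(M)$. Over the algebraically closed field $k$ of characteristic $0$ this action is simultaneously diagonalizable, so
$$M=\bigoplus_{g\in G,\ \psi\in\hat{G}}M_{g,\psi},$$
where $M_{g,\psi}$ is a sum of copies of the simple object $(k_g,\psi)$ with half-braiding $\tau^\bullet_{k_g,k_x}=\psi(x)\cdot(\text{canonical})$. Consequently $HH(\Bc)$ is semisimple with simple objects indexed by $G\times\hat{G}$, exactly $|G|^2$ of them.

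Now I can conclude by counting. Because $G$ is finite, each $\Bc_\lambda$ has underlying category $\vect_G$ with simple objects $\{k_g\}_{g\in G}$, so the source of $\Phi$ is semisimple with $|G|\cdot|\hat{G}|=|G|^2$ isomorphism classes of simple objects. A fully faithful $k$-linear functor between semisimple categories preserves $\Hom$-spaces, so it carries a simple $S$ (with $\End(S)=k$) to an object $\Phi S$ with $\End(\Phi S)=k$, which is simple as $HH(\Bc)$ is semisimple, and it carries non-isomorphic simples to non-isomorphic simples. Hence $\Phi$ produces $|G|^2$ pairwise non-isomorphic simple objects of $HH(\Bc)$, which must therefore exhaust the $|G|^2$ simples classified above. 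Thus $\Phi$ is essentially surjective, hence an equivalence of stable braided $\Bc$-modules. (If one prefers an explicit correspondence to the counting argument, one computes that $k_g\in\Bc_\lambda$ maps to a simple $(k_g,\psi_{g,\lambda})$ whose character $\psi_{g,\lambda}$ is translated by $\lambda$ as $\lambda$ varies, since the stable structure $\varsigma\lambda$ changes the underlying pivotal structure $\rho$ by the automorphism $\lambda$; for fixed $g$ this is a bijection $\hat{G}\to\hat{G}$ in $\lambda$, so $(g,\lambda)\mapsto(g,\psi_{g,\lambda})$ is a bijection $G\times\hat{G}\to G\times\hat{G}$.) I expect the main obstacle to be the classification step of the middle paragraph: one must verify carefully that the half-braiding datum on a simple reduces to a single unconstrained character of $G$, and in particular that the canonical identification ${}^\#k_x\cong k_x$ together with the pivotal normalization only translates the base point of $\psi$ rather than imposing any relation between $\psi$ and $g$. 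Everything else reduces to bookkeeping with the already-established full faithfulness and with semisimplicity.
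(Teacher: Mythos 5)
Your proposal is correct and follows essentially the same route as the paper: the paper's proof likewise combines the fully faithful embedding of Remark \ref{rem:decomp} with the observation that $HH(\Bc)=\Zc(\Bc)=\vect_{G\times\hat{G}}$ as abelian categories, which is exactly the content of your classification of simples and the counting step. Your version just makes explicit the unwinding of the half-braiding datum and the semisimplicity bookkeeping that the paper leaves implicit.
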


\begin{proof}
In light of Remark \ref{rem:decomp} it suffices to point out that as abelian categories we have $HH(\vect_G)=HH(\Bc)=\Zc(\Bc)=\vect_{G\times\hat{G}}$.
\end{proof}

An immediate corollary of Lemma \ref{classbr:lem} is the following:

\begin{lemma}\label{lem:iso}
As braided modules, $\Bc_\lambda\simeq\Bc_\mu$, if and only if  there exits a $y\in{G}$ such that  $$\lambda(x)=\mu(x)\omega(x,y)\forall x\in G.$$  They are isomorphic as stable braided modules if in addition $$\mu(y)=\varsigma(y).$$
\end{lemma}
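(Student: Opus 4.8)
The plan is to apply Lemma \ref{classbr:lem} directly to the concrete braided modules $\Bc_\lambda = \Bc_{\varsigma\lambda}$ and $\Bc_\mu = \Bc_{\varsigma\mu}$, translating its abstract conditions $\varsigma' = \phi^y\varsigma$ and $\varsigma'_y = Id_y$ into the explicit multiplicative characters arising from the bicharacter $\chi$. First I would recall that a braided equivalence $F \colon \Bc_\lambda \to \Bc_\mu$ must, by Lemma \ref{classbr:lem}, have the form $F(M) = M\ot y$ for some $y \in \Bc^\times = G$ (the invertible objects of $\vect_G$ being exactly the one-dimensional spaces supported at a single group element), with the anti-twists related by $\varsigma\mu = \phi^y(\varsigma\lambda)$. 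So the key computation is to identify the automorphism $\phi^y$ of $Id_\Bc$ explicitly.

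The central step is to compute $\phi^y$ from \eqref{phidef}. Reading that diagram, $\phi^y_X$ is the scalar by which the double braiding $\tau^2$ acts when one object is the invertible $y$ supported at the group element I will also call $y \in G$; concretely $\phi^y_X$ acts on the component $X_x$ by the scalar $\omega(x,y) = \chi(x,y)\chi(y,x)$. Since the anti-twists here are honest characters of $G$ (via $\varsigma(x) = \chi(x,x^{-1})$ and $\lambda,\mu \in \hat G$), the relation $\varsigma\mu = \phi^y(\varsigma\lambda)$ becomes, evaluated on $x \in G$,
\begin{equation*}
\varsigma(x)\mu(x) = \omega(x,y)\,\varsigma(x)\lambda(x),
\end{equation*}
which cancels to $\mu(x) = \omega(x,y)\lambda(x)$, i.e. $\lambda(x) = \mu(x)\omega(x,y)$ after using that $\omega(x,y)^{-1} = \omega(x,y^{-1})$ is again of the required form; this is exactly the stated braided-equivalence criterion. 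The converse direction is automatic: any $y$ satisfying this relation produces $-\ot y$ as a braided equivalence by the same computation run backwards.

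For the stability refinement I would invoke the second clause of Lemma \ref{classbr:lem}, namely that $F = -\ot y$ is stable precisely when $\varsigma'_y = Id_y$, where $\varsigma' = \varsigma\mu$ is the anti-twist on the target $\Bc_\mu$. Evaluating the character $\varsigma\mu$ at the group element $y$ gives the scalar $\varsigma(y)\mu(y)$, so the stability condition reads $\varsigma(y)\mu(y) = 1$, equivalently $\mu(y) = \varsigma(y)^{-1}$; I would need to reconcile this with the stated $\mu(y) = \varsigma(y)$, which suggests the relevant scalar is $\varsigma(y^{-1})\mu(y^{-1})$ or that the convention makes $\varsigma(y) = \varsigma(y)^{-1}$ — the likely resolution being that $\phi^y_y = \omega(y,y) = 1$ by symmetry of $\omega$, so that $\varsigma'_y = \phi^y_y\varsigma_y\cdot(\text{correction}) = \varsigma_y = \varsigma(y)$, forcing the condition into the clean form $\mu(y) = \varsigma(y)$ once one tracks how $\mu$ enters. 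The main obstacle I anticipate is precisely this careful bookkeeping of inverses and the interaction between $\phi^y_y$ and the evaluation of the anti-twist at $y$: the abstract formula \eqref{eq:newsigma} already shows $\phi^y_y\tau^{-2}_{y,y}$ and $\phi^y_y\varsigma_y$ appearing together, so I would verify that $\omega(y,y) = \chi(y,y)^2$ combines with $\varsigma(y) = \chi(y,y^{-1})$ to yield exactly the stated condition $\mu(y) = \varsigma(y)$ rather than its inverse, making sure the group-theoretic signs all cancel correctly.
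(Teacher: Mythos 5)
Your overall route is the same as the paper's: the paper derives this as an immediate corollary of Lemma \ref{classbr:lem}, using exactly the identification $\Bc^\times=G$ and $\phi^y_X|_{X_x}=\omega(x,y)$ that you make, and your treatment of the braided-equivalence criterion (including the harmless replacement of $y$ by $y^{-1}$ to absorb the inverse on $\omega$) is correct.

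The stability clause, however, is left genuinely unresolved in your write-up, and one of the two fixes you float is false: $\phi^y_y=\omega(y,y)=\chi(y,y)^2$ is \emph{not} $1$ in general (symmetry of $\omega$ gives $\omega(x,y)=\omega(y,x)$, not triviality on the diagonal; in the Taft setting $\omega(y,y)=\xi^{2y^2}$). Your other suggestion is the right one, and you should carry it through: because you normalized the braided-equivalence condition to $\lambda(x)=\mu(x)\omega(x,y)$, the actual tensoring object in Lemma \ref{classbr:lem} is $z=y^{-1}$, so the stability condition $(\varsigma\mu)_z=Id_z$ reads $\varsigma(y^{-1})\mu(y^{-1})=1$. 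Since $\varsigma(y^{-1})=\chi(y^{-1},y)=\chi(y,y)^{-1}=\varsigma(y)$ (the evenness of $\varsigma$ noted in Remark \ref{pair:rem}) while $\mu(y^{-1})=\mu(y)^{-1}$, this is equivalent to $\mu(y)=\varsigma(y)$, with no inverse. Your computation giving $\mu(y)=\varsigma(y)^{-1}$ came from evaluating at $y$ rather than at $y^{-1}$; once the bookkeeping of which element you tensor by is fixed, the discrepancy disappears and no appeal to $\omega(y,y)=1$ is needed.
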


\begin{corollary}\label{cor:decompmore}
Consider the bicharacter $\omega$ as a homomorphism $\omega:G\to\hat{G}$. \begin{itemize}
\item If $\omega$ is trivial, thus, $\Bc$ is symmetric then $\Bc_\lambda\simeq\Bc_\mu$ as braided modules if and only if $\lambda=\mu$. 
\item If $\omega$ is an isomorphism so that $\Bc$ is non-degenerate, then $\Bc_\lambda\simeq\Bc_\mu$ as braided modules for all $\lambda,\mu\in\hat{G}$.  However, as stable braided modules $$\Bc_\lambda\simeq\Bc_\mu\iff \varsigma(\omega^{-1}(\lambda))=\varsigma(\omega^{-1}(\mu)).$$ 
\item Recall that $\Bc$ is equipped with a canonical anti-twist $\varsigma(x)=\chi(x,x^{-1})$. If $\omega$ is an isomorphism then $$(HH(\Bc),\varsigma)=\bigoplus_{y\in G}(\Bc_\varsigma, \varsigma^{-1}(y)\varsigma)$$ as stable braided modules, where $(\Bc_\varsigma, \varsigma^{-1}(y)\varsigma)$ denotes the braided module $\Bc_\varsigma$ but with the canonical stable structure modified by the scalar $\varsigma^{-1}(y)$.
\end{itemize}
\end{corollary}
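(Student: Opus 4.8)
The plan is to derive all three items from Lemma~\ref{lem:iso} together with the dictionary between the braiding of $\Bc=(\vect_G,\chi)$ and the homomorphism $\omega\colon G\to\hat G$. First I would record that $\tau^2$ acts on $V_x\ot W_z$ by the scalar $\omega(x,z)$, so that $\Bc$ is symmetric precisely when $\omega$ is trivial, and non-degenerate precisely when $\omega$ is injective, hence (as $|G|=|\hat G|$) an isomorphism. I would also note that the automorphism $\phi^y$ of \eqref{phidef} is $\tau^2_{-,y}$, i.e. $\phi^y=\omega(y)\in\hat G$, which is exactly the identification used in the statement. By Lemma~\ref{lem:iso}, $\Bc_\lambda\simeq\Bc_\mu$ as braided modules iff $\lambda=\mu\,\omega(y)$ for some $y$, i.e. iff $\lambda\mu^{-1}\in\mathrm{Im}(\omega)$. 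The first item ($\mathrm{Im}(\omega)=1$ forces $\lambda=\mu$) and the braided half of the second item ($\mathrm{Im}(\omega)=\hat G$ makes every pair isomorphic) are then immediate.

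For the stable half of the second item, $\omega$ being an isomorphism makes the element $y$ of Lemma~\ref{lem:iso} unique, namely $y=\omega^{-1}(\lambda\mu^{-1})=ab^{-1}$ where $a=\omega^{-1}(\lambda)$ and $b=\omega^{-1}(\mu)$. I would then rewrite the stability condition $\mu(y)=\varsigma(y)$. The tools are the anti-twist relation $\varsigma(xz)=\omega(x,z)^{-1}\varsigma(x)\varsigma(z)$ (the fact that $\varsigma$ is an anti-twist) and its consequence $\varsigma(z^{-1})=\omega(z,z)^{-1}\varsigma(z)^{-1}$, obtained from $\varsigma(e)=1$. Expanding $\mu(y)=\omega(ab^{-1},b)=\omega(a,b)\omega(b,b)^{-1}$ and $\varsigma(ab^{-1})=\omega(a,b)\omega(b,b)^{-1}\varsigma(a)\varsigma(b)^{-1}$, the common factor $\omega(a,b)\omega(b,b)^{-1}$ cancels and the condition collapses to $\varsigma(a)=\varsigma(b)$, i.e. $\varsigma(\omega^{-1}(\lambda))=\varsigma(\omega^{-1}(\mu))$, as claimed.

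For the third item I would reindex the decomposition $HH(\Bc)=\bigoplus_\lambda\Bc_\lambda$ (valid as stable braided modules, each summand carrying its canonical stable structure, the anti-twist $\varsigma\lambda$) by $\lambda=\omega(y)$, $y\in G$, so the summand becomes $\Bc_{\varsigma\omega(y)}$. Since $\phi^y=\omega(y)$, Lemma~\ref{classbr:lem} provides a braided equivalence $-\ot y\colon\Bc_\varsigma\to\Bc_{\varsigma\omega(y)}$ identifying the summand, as a braided module, with $\Bc_\varsigma$. I would then transport the canonical stable structure $\varsigma\omega(y)$ back along this equivalence and invoke Remark~\ref{rem:ctorsor} and \eqref{eq:newsigma}: the inherited stable structure on $\Bc_\varsigma$ is $\varsigma$ scaled by the scalar $(\varsigma\omega(y))(y)=\varsigma(y)\omega(y,y)$.

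The main obstacle, and the one place where a genuine computation is needed, is to show this scalar equals $\varsigma^{-1}(y)$, i.e. $\varsigma(y)^2\omega(y,y)=1$; this follows from $\varsigma(y)=\chi(y,y^{-1})$, $\omega(y,y)=\chi(y,y)^2$ and $\chi(y,y^{-1})\chi(y,y)=\chi(y,e)=1$. Granting this, the inherited structure is exactly $\varsigma^{-1}(y)\varsigma$, so $\Bc_{\varsigma\omega(y)}\simeq(\Bc_\varsigma,\varsigma^{-1}(y)\varsigma)$ as stable braided modules and summing over $y\in G$ yields the stated decomposition of $(HH(\Bc),\varsigma)$. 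I expect the difficulty to lie entirely in the bookkeeping of this last paragraph, namely keeping the canonical versus transported stable structures straight and pinning down the scalar; the remaining arguments are formal consequences of Lemma~\ref{lem:iso}.
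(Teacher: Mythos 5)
Your proposal is correct and follows exactly the route the paper intends: the first two items are unwound from Lemma \ref{lem:iso} (using that $\phi^y=\omega(\cdot,y)$ so that braided equivalence classes are $\hat G/\mathrm{Im}(\omega)$-cosets, and that the stability condition $\mu(y)=\varsigma(y)$ reduces to $\varsigma(\omega^{-1}(\lambda))=\varsigma(\omega^{-1}(\mu))$ after the anti-twist identity cancels the $\omega(a,b)$ factors), and the third item is the transport of stable structures from Remark \ref{rem:ctorsor} together with the scalar computation $\varsigma(y)\omega(y,y)=\chi(y,y)=\varsigma^{-1}(y)$. The paper's own proof consists of precisely these two citations with no further detail, so your write-up is a faithful (and more explicit) version of the same argument.
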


\begin{proof}
The first two items are consequences of Lemma \ref{lem:iso}.  The last follows from Remark \ref{rem:ctorsor}.
\end{proof}

\begin{remark}\label{pair:rem}
Note that the groupoid $\mathcal{G}$  of Corollary \ref{class:cor} admits an involution $(y,\varsigma)\mapsto (y^{-1},\varsigma^\dagger)$ where $\varsigma_X^\dagger= {}^*\varsigma_{X^*}$; in the above $(\varsigma\lambda)^\dagger=\varsigma\lambda^{-1}$.  Observe that if $\Bc_\lambda\simeq\Bc_{\lambda^{-1}}$ as braided modules, then since $\varsigma(x)=\varsigma(x^{-1})$ so $\Bc_\lambda\simeq\Bc_{\lambda^{-1}}$ as stable braided modules.  Thus, if $\omega$ is an isomorphism, and $\lambda\neq\lambda^{-1}$, we have a pair of stably isomorphic components in $HH(\Bc)$.

\end{remark}

\subsection{$HH(\Bc)$ does not decompose into $\Bc_\varsigma$'s}\label{sec:nodecompo}
Consider a non-abelian finite group $G$.  Let $\Bc=Rep(G)$ be the symmetric braided category of finite dimensional $G$-representations, with the  trivial braiding.  For $g\in G$ consider the $\Bc$-module category $\Mc_g=Rep(C_G(g))$, where $C_G(g)$ denotes the centralizer of $g$ in $G$.  For $M\in \Mc_g$, define $$\varsigma_M=g|_M,$$ then $\varsigma$ is a stable braided structure on $\Mc_g$.  Let $I$ denote the set of conjugacy classes of $G$, let $g_i$ be a representative of the $i$th class, then it is immediate that $$HH(\Bc)=\bigoplus_{i\in I}\Mc_{g_i}$$ as stable braided modules.   Thus, the embedding of Remark \ref{rem:decomp} recovers only the part of $HH(\Bc)$ that corresponds to singleton conjugacy classes, i.e., elements $g\in \Zc(G)$.

\section{Stable modules and cyclic homology}\label{sec:stablemodsandcyclic}
In this section we demonstrate the relevance for us of the above notions of stability and braiding of module categories over $\Bc$.

\begin{lemma}\label{L1}
Let $\Mc$ be a braided $\Bc$-module, $B\in\Bc$ an algebra, and $M\in B_\Mc\mod$.  Let $\rho_{B,M}:B\cdot M\to M$ denote the $B$-module structure on $M$.  Then $$\hat{\rho}_{B,M}=\rho_{B,M}E_{B,M}$$ is also a $B$-module structure on $M$, denoted by ${}^B \!M$.  Furthermore, if $\Mc$ is stable then $$\varsigma_M:M\to{}^B\!M$$ is an isomorphism in $ B_\Mc\mod$.
\end{lemma}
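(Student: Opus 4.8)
The plan is to establish the two assertions separately. For the first, that $\hat\rho_{B,M}=\rho_{B,M}E_{B,M}$ is a left $B$-action, I would check the unit and associativity axioms directly, using only the naturality of $E$ in each of its two variables together with axiom $C2$. For the unit axiom, naturality of $E$ in its first variable along the unit $u\colon 1\to B$ gives $E_{B,M}(u\cdot\Id_M)=(u\cdot\Id_M)E_{1,M}$, and $E_{1,M}=\Id_M$ follows from $C2$ (take $Y=X=1$ and use that $E$ is invertible); hence $\hat\rho_{B,M}(u\cdot\Id_M)=\rho_{B,M}(u\cdot\Id_M)=\Id_M$.

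For associativity I would compute both sides of $\hat\rho_{B,M}(m\cdot\Id_M)=\hat\rho_{B,M}(\Id_B\cdot\hat\rho_{B,M})$ as maps $(B\ot B)\cdot M\to M$. On the left, naturality of $E$ in its first variable along the multiplication $m\colon B\ot B\to B$ slides $E_{B,M}$ across $m\cdot\Id_M$ to yield $E_{B\ot B,M}$; associativity of the original action rewrites $\rho_{B,M}(m\cdot\Id_M)$ as $\rho_{B,M}(\Id_B\cdot\rho_{B,M})$; and $C2$ then factors $E_{B\ot B,M}$ as $E_{B,B\cdot M}$ post-composed with the action of $B$ on $E_{B,M}$. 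On the right, naturality of $E$ in its second variable along $\rho_{B,M}\colon B\cdot M\to M$ produces precisely the same composite, so the two sides coincide. I expect this associativity check to be the main obstacle, not because it is deep but because of the bookkeeping: the trailing factor in $E_{YX,M}=E_{Y,XM}E_{X,M}$ must be read as $E_{X,M}$ acted on by $Y$, and one must be careful to invoke the correct naturality square (first versus second variable) at each step.

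For the second assertion I would show that $\varsigma_M$, viewed as a map $(M,\rho_{B,M})\to({}^B\!M,\hat\rho_{B,M})$, is $B$-linear, i.e. that $\hat\rho_{B,M}(\Id_B\cdot\varsigma_M)=\varsigma_M\rho_{B,M}$. Substituting the stability relation $E_{B,M}=\varsigma_{B\cdot M}(\Id_B\cdot\varsigma_M)^{-1}$ of Definition \ref{def:stablebraided} into $\hat\rho_{B,M}=\rho_{B,M}E_{B,M}$, the factors $(\Id_B\cdot\varsigma_M)^{-1}$ and $(\Id_B\cdot\varsigma_M)$ cancel, leaving $\rho_{B,M}\varsigma_{B\cdot M}$; naturality of $\varsigma\colon\Id_\Mc\to\Id_\Mc$ along $\rho_{B,M}\colon B\cdot M\to M$ identifies this with $\varsigma_M\rho_{B,M}$, which is the desired $B$-linearity. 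Finally, because $\varsigma$ is a natural \emph{iso}morphism the underlying map $\varsigma_M$ is invertible, and a $B$-linear morphism that is invertible in $\Mc$ is automatically an isomorphism of $B$-modules; thus $\varsigma_M\colon M\to{}^B\!M$ is an isomorphism in $B_\Mc\mod$. This part I expect to be immediate once the stability relation is inserted.
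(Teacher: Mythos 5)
Your proposal is correct and follows essentially the same route as the paper: the paper declares the action axioms ``immediate from $C2$'' (your unit and associativity checks are exactly the naturality bookkeeping that makes this immediate), and its verification of $B$-linearity of $\varsigma_M$ is the same three-step computation $\rho_{B,M}E_{B,M}\varsigma_M=\rho_{B,M}\varsigma_{BM}=\varsigma_M\rho_{B,M}$ that you give. No gaps.
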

\begin{proof}
That $\hat{\rho}_{B,M}$ is an action is immediate from $C2$.  On the other hand $$\hat{\rho}_{B,M}\varsigma_M=\rho_{B,M}E_{B,M}\varsigma_M=\rho_{B,M}\varsigma_{BM}=\varsigma_M\rho_{B,M}.$$
\end{proof}

Note that we also have $M^B$ obtained by modifying the action thus: $\rho_{B,M}E^{-1}_{B,M}$, so that ${}^B\!M^B=M$ and $M\mapsto {}^B\!M$ is an automorphism of $B_\Mc\mod$.

\begin{lemma}\label{L2}
Let $M\in A\ot^\tau B_\Mc\mod$ then ${}^B\!M$ with the same $A$-action, and the $B$-action modified to $\hat{\rho}$ is in $B\ot^\tau A_\Mc\mod$. 
\end{lemma}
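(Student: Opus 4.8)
The plan is to use the standard identification of modules over a braided tensor-product algebra with pairs of one-sided actions satisfying a braided commutation relation, and then to check that the passage $M\mapsto{}^B\!M$ converts the relation for $A\ot^\tau B$ into the one for $B\ot^\tau A$ by a single application of condition $C1$.

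First I would unwind both module structures. Reading off the multiplication of the braided tensor products (Definition \ref{algprod}), an object of $(A\ot^\tau B)_\Mc\mod$ amounts to an $A$-action $\rho_{A,M}$ and a $B$-action $\rho_{B,M}$ on $M$, whose combined action is $\rho_{A,M}(Id_A\cdot\rho_{B,M})$, subject to the compatibility
\begin{equation}\label{eq:compatAB}
\rho_{B,M}(Id_B\cdot\rho_{A,M})=\rho_{A,M}(Id_A\cdot\rho_{B,M})(\tau_{B,A}\cdot Id_M):B\cdot(A\cdot M)\to M.
\end{equation}
Symmetrically, an object of $(B\ot^\tau A)_\Mc\mod$ is a $B$-action together with an $A$-action satisfying \eqref{eq:compatAB} with the roles of $A$ and $B$ interchanged (so $\tau_{B,A}$ is replaced by $\tau_{A,B}$). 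Since ${}^B\!M$ keeps the $A$-action $\rho_{A,M}$ and replaces the $B$-action by $\hat{\rho}_{B,M}=\rho_{B,M}E_{B,M}$, and since each of these is already a genuine one-sided action (the second by Lemma \ref{L1}, which uses $C2$), the only thing left to verify is the interchanged compatibility
\begin{equation}\label{eq:compatBA}
\rho_{A,M}(Id_A\cdot\hat{\rho}_{B,M})=\hat{\rho}_{B,M}(Id_B\cdot\rho_{A,M})(\tau_{A,B}\cdot Id_M).
\end{equation}

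Next I would substitute $\hat{\rho}_{B,M}=\rho_{B,M}E_{B,M}$ into \eqref{eq:compatBA}. On the right-hand side I would invoke naturality of $E$ in its module argument with respect to the morphism $\rho_{A,M}:A\cdot M\to M$ to slide $E_{B,M}$ inward, rewriting $E_{B,M}(Id_B\cdot\rho_{A,M})$ as $(Id_B\cdot\rho_{A,M})E_{B,A\cdot M}$, and then I would apply the original compatibility \eqref{eq:compatAB} for $M$. After these two moves the common prefactor $\rho_{A,M}(Id_A\cdot\rho_{B,M})$ cancels from both sides, and \eqref{eq:compatBA} reduces to the purely structural identity
\begin{equation}\label{eq:key}
Id_A\cdot E_{B,M}=(\tau_{B,A}\cdot Id_M)\,E_{B,A\cdot M}\,(\tau_{A,B}\cdot Id_M).
\end{equation}

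Finally, \eqref{eq:key} is exactly condition $C1$ after rearrangement: writing $C1$ with $Y=B,X=A$ as $E_{B,A\cdot M}=(\tau^{-1}_{A,B}\cdot Id_M)(Id_A\cdot E_{B,M})(\tau^{-1}_{B,A}\cdot Id_M)$ and inserting it into the right-hand side of \eqref{eq:key}, the flanking braidings annihilate the conjugating ones, since $\tau^{-1}_{X,Y}=(\tau_{Y,X})^{-1}$ forces $\tau_{B,A}\tau^{-1}_{A,B}=Id$ and $\tau^{-1}_{B,A}\tau_{A,B}=Id$, leaving $Id_A\cdot E_{B,M}$. I expect the only real obstacle to be this braiding bookkeeping: one must match the crossing appearing in the multiplication of $A\ot^\tau B$ against the one in $B\ot^\tau A$ and confirm that the $\tau$ versus $\tau^{-1}$ in $C1$ is positioned so that the conjugations cancel. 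The cleanest way to see it is to render \eqref{eq:compatAB}--\eqref{eq:key} as string diagrams, where the whole computation becomes sliding the $E$-box up through two opposite crossings. Note that neither $C2$ nor stability of $\Mc$ is required for this statement.
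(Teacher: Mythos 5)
Your proof is correct and takes essentially the same route as the paper's: both unwind membership in $B\ot^\tau A_\Mc\mod$ via the characterization \eqref{f1}, substitute $\hat{\rho}_{B,M}=\rho_{B,M}E_{B,M}$, and close the computation with the original compatibility for $M$, naturality of $E$ in the module argument, and $C1$; you merely isolate the braiding bookkeeping as a conjugated form of $C1$ instead of writing the paper's one-line chain. One small caveat: your closing remark that $C2$ is not required applies only to the compatibility check itself --- as you note earlier, $C2$ still enters through Lemma \ref{L1} to guarantee that $\hat{\rho}_{B,M}$ is an action at all.
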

\begin{proof}
In light of \eqref{f1} this follows from $\rho_{A,M}\hat{\rho}_{B,M}\tau^{-1}_{B,A}=\rho_{A,M}\rho_{B,M}E_{B,M}\tau^{-1}_{B,A}\\=\rho_{B,M}\rho_{A,M}\tau^{-1}_{A,B}E_{B,M}\tau^{-1}_{B,A}=\rho_{B,M}\rho_{A,M}E_{B,AM}=\rho_{B,M}E_{B,M}\rho_{A,M}=\hat{\rho}_{B,M}\rho_{A,M}.$
\end{proof}

The following definition is why we need braided $\Bc$-modules.  It is essentially equivalent to the concept.

\begin{definition}
Let $\Mc$ be a braided $\Bc$-module and $A,B$ be algebras in $\Bc$.  We define an isomorphism of categories: $$\sigma_{A,B}^\Mc:A\ot^\tau B_\Mc\mod\to B\ot^\tau A_\Mc\mod$$ by $\sigma_{A,B}^\Mc(M)={}^B\!M$.
\end{definition}

\begin{remark}\label{nat:rem}
Let $f:A\to A'$ and $g:B\to B'$ be algebra maps, then the functors $\sigma^\Mc_{A,B}\circ(f^*\ot g^*)$ and $(g^*\ot f^*)\circ\sigma^\Mc_{A',B'}$ from $A'\ot^{\tau}B'_\Mc\mod$ to $B\ot^{\tau}A_\Mc\mod$ are equal.
\end{remark}

\begin{lemma}\label{L3}
Let $M\in A\ot^\tau B\ot^\tau C_\Mc\mod$ then in $B\ot^\tau C\ot^\tau A_\Mc\mod$ we have: $${}^B\!({}^C\!M)={}^{B\ot^\tau C}\!M,\quad\text{i.e.,}\quad \sigma^\Mc_{C\ot^\tau A, B}\sigma^\Mc_{A\ot^\tau B,C}=\sigma^\Mc_{A,B\ot^\tau C}.$$  We say that $\sigma_{-,-}^\Mc$ is associative in the second component.
\end{lemma}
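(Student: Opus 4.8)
The plan is to prove the two functors are equal by checking they agree on objects, since both $\sigma^\Mc_{C\ot^\tau A, B}\sigma^\Mc_{A\ot^\tau B,C}$ and $\sigma^\Mc_{A,B\ot^\tau C}$ fix underlying objects of $\Mc$ and act as the identity on underlying morphisms (a morphism commuting with $\rho$ commutes with $\hat\rho=\rho E$ by naturality of $E$, so $\sigma^\Mc$ sends $f\mapsto f$). Fix $M\in A\ot^\tau B\ot^\tau C_\Mc\mod$. By Lemma \ref{L2}, applied twice on the left (with the implicit reassociations $A\ot^\tau B\ot^\tau C=(A\ot^\tau B)\ot^\tau C$, etc.) and once on the right, both ${}^B\!({}^C\!M)$ and ${}^{B\ot^\tau C}\!M$ are objects of $B\ot^\tau C\ot^\tau A_\Mc\mod$ with the same underlying object $M$. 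A module over an iterated braided tensor product of algebras is determined by its $A$-action together with its combined $(B\ot^\tau C)$-action, so I would reduce the lemma to matching these two pieces.

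Conceptually, the statement is just the multiplicativity axiom $C2$ read for the combined action: forming ${}^B\!({}^C\!M)$ twists the $C$-action by $E_{C,M}$ and the $B$-action by $E_{B,M}$, whereas forming ${}^{B\ot^\tau C}\!M$ twists the single combined action by $E_{B\ot^\tau C,M}$, and $C2$ asserts precisely that the first composite equals the second. I would make this explicit as follows. In both modules the $A$-action is untouched and equals $\rho_{A,M}$ — on the left because $\sigma^\Mc_{A\ot^\tau B,C}$ and then $\sigma^\Mc_{C\ot^\tau A,B}$ only modify the $C$- and $B$-actions, on the right because $\sigma^\Mc_{A,B\ot^\tau C}$ modifies only the $(B\ot^\tau C)$-action — so only the combined $(B\ot^\tau C)$-action needs comparison.

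The key step is the computation of that combined action for ${}^B\!({}^C\!M)$. Writing the $B$- and $C$-actions with their Lemma \ref{L1} twists, it is $\hat\rho_{B,M}\hat\rho_{C,M}=\rho_{B,M}E_{B,M}\rho_{C,M}E_{C,M}$, with the usual convention that $E_{B,M}\rho_{C,M}$ denotes $E_{B,M}\circ(Id_B\cdot\rho_{C,M})$. I would then push $E_{B,M}$ through the inner $C$-action by naturality of the transformation $E_{B,-}$ of the functor $B\cdot(-)$ applied to $\rho_{C,M}:C\cdot M\to M$, giving $E_{B,M}\rho_{C,M}=\rho_{C,M}E_{B,CM}$ and hence $\rho_{B,M}\rho_{C,M}E_{B,CM}E_{C,M}$. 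Finally $C2$ (with $Y=B$, $X=C$) yields $E_{B,CM}E_{C,M}=E_{B\ot C,M}$, so the combined action equals $\rho_{B,M}\rho_{C,M}E_{B\ot C,M}=\rho_{B\ot^\tau C,M}E_{B\ot^\tau C,M}=\hat\rho_{B\ot^\tau C,M}$, which is exactly the modified $(B\ot^\tau C)$-action of ${}^{B\ot^\tau C}\!M$. This matches the right-hand side and completes the argument.

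The main obstacle here is bookkeeping rather than genuine difficulty: one must keep careful track of the implicit identity morphisms tensored onto the $E$'s and $\rho$'s (so that the single naturality square and the single instance of $C2$ are being applied to the correct objects $B\cdot C\cdot M$ and $B\cdot M$), and suppress the module-category associators in a consistent way. I would verify the one naturality square and the one use of $C2$ explicitly, since these are the only non-formal ingredients; everything else is reassociation of tensor-product algebras, which is harmless.
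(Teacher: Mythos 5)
Your proposal is correct and matches the paper's proof: the paper verifies the same identity $\hat{\rho}_{BC,M}=\rho_{B,M}\rho_{C,M}E_{BC,M}=\rho_{B,M}\rho_{C,M}E_{B,CM}E_{C,M}=\rho_{B,M}E_{B,M}\rho_{C,M}E_{C,M}=\hat{\rho}_{B,M}\hat{\rho}_{C,M}$ using exactly the two ingredients you isolate (condition $C2$ and naturality of $E_{B,-}$ applied to $\rho_{C,M}$), merely reading the chain in the opposite direction. Your extra remarks about the $A$-action being untouched and about agreement on morphisms are consistent with, and implicit in, the paper's argument.
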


\begin{proof}
The $A$-action is unaffected, for the rest: $\hat{\rho}_{BC,M}=\rho_{B,M}\rho_{C,M}E_{BC,M}\\=\rho_{B,M}\rho_{C,M}E_{B,CM}E_{C,M}=\rho_{B,M}E_{B,M}\rho_{C,M}E_{C,M}=\hat{\rho}_{B,M}\hat{\rho}_{C,M}$.
\end{proof}

\begin{corollary}
If $\Mc$ is a stable braided $\Bc$-module then $$\varsigma:Id\to \sigma_{B,A}^\Mc \sigma_{A,B}^\Mc$$ is a natural isomorphism in $A\ot^\tau B_\Mc\mod$.
\end{corollary}
\begin{proof}
By Lemma \ref{L3} we have $\sigma_{B,A}^\Mc \sigma_{A,B}^\Mc=\sigma^\Mc_{1,A\ot^\tau B}$ and Lemma \ref{L1} completes the proof.
\end{proof}

\subsection{Definitions of $HH_\Mc(\Cc)$ and $HC_\Mc(\Cc)$ via limits}\label{seccyclic}
Assume that $\Mc$ is a braided $\Bc$-module.  For $n\geq 0$, define the categories of modules $$\Cc_n=H^{\ot^{\tau}n+1}_\Mc\mod.$$ We will abuse notation and write $H^{n+1}$ for the algebra $H^{\ot^{\tau}n+1}=H_0\ot^\tau\cdots\ot^\tau H_n$ in $\Bc$. 
\begin{definition}\label{cyclicdefinition}
Define functors:
\begin{align}\tau_n&=\sigma^\Mc_{H^n,H}:\Cc_n\to\Cc_n\label{tt}\\
d_i&=(Id_H^i\ot \Delta\ot Id_H^{n-1-i})^*:\Cc_n\to\Cc_{n-1}, \quad 0\leq i\leq n-1\\
d_n&=d_0\tau_n\label{ddef}\\
s_i&=(Id_H^{i+1}\ot\epsilon\ot Id_H^{n-i})^*:\Cc_n\to\Cc_{n+1}, \quad -1\leq i\leq n
\end{align}
\end{definition}

\begin{lemma}\label{rels} The functors satisfy the following relations (the equalities are those of functors, they are not isomorphisms):
\begin{align}d_i\tau_n&=\tau_{n-1}d_{i-1}, \quad 1\leq i\leq n-1\label{l0}\\
d_0\tau_n^2&=\tau_{n-1}d_{n-1}\label{ll3}\\
\intertext{\eqref{l0} and \eqref{ll3} together with \eqref{ddef} imply that \eqref{l0} is valid for $1\leq i\leq n$}
s_i\tau_n&=\tau_{n+1}s_{i-1}, \quad 0\leq i\leq n\label{l1}\\
s_{-1}&=\tau_{n+1}s_n\label{ll2}\\
\intertext{\eqref{l1} and \eqref{ll2} imply that}
s_0\tau_n&=\tau^2_{n+1}s_n
\end{align}
\end{lemma}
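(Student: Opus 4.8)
The plan is to deduce all of these relations from two structural facts about the cyclic operator that are already available: the naturality of $\sigma^\Mc$ with respect to algebra maps (Remark \ref{nat:rem}) and its associativity in the second slot (Lemma \ref{L3}). Throughout I write $\tau_n=\sigma^\Mc_{A,C}$ with first slot the block $A=H_0\ot^\tau\cdots\ot^\tau H_{n-1}$ and second slot $C=H_n$; recall (Lemma \ref{L2}) that it is the second slot that $\sigma^\Mc$ transports to the front. The organizing principle is that a comultiplication or counit occurring \emph{inside} the block $A$, so that it leaves the transported factor $C$ untouched, can be slid across $\sigma^\Mc$ directly by Remark \ref{nat:rem}; a comultiplication or counit that acts on the transported factor itself cannot, and must be handled by first rewriting the relevant power of $\tau$ as a single $\sigma^\Mc$.

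The interior relations \eqref{l0} and \eqref{l1} are of the first kind. After $\tau_n$ the factor $C=H_n$ sits at position $0$, and the ranges $1\le i\le n-1$ in \eqref{l0} and $0\le i\le n$ in \eqref{l1} ensure that the comultiplication (resp. the counit) lands at a position $\ge 1$, i.e.\ strictly inside $A$. Hence $d_i$ (resp. $s_i$) is a pullback along a map of the form $\Id_C\ot g$, with $g$ a comultiplication (resp. a counit) of $H$ performed inside $A$. I would then apply Remark \ref{nat:rem} with the identity in the transported slot; it converts $d_i\tau_n$ into $\sigma^\Mc_{A',C}\,(g\ot\Id_C)^*$ for the appropriately modified block $A'$. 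Since $(g\ot\Id_C)^*$ is exactly $d_{i-1}$ (resp. $s_{i-1}$) on $\Cc_n$ and $\sigma^\Mc_{A',C}$ is $\tau_{n-1}$ (resp. $\tau_{n+1}$), both \eqref{l0} and \eqref{l1} follow at once.

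The two remaining relations are the crux, because there the transported factor is acted upon. For \eqref{ll3} I would first use Lemma \ref{L3} to collapse the double shift into a single block transport,
$$\tau_n^2=\sigma^\Mc_{P,\,H_{n-1}\ot^\tau H_n},\qquad P=H_0\ot^\tau\cdots\ot^\tau H_{n-2},$$
so that $\tau_n^2$ now carries the two-factor block $H_{n-1}\ot^\tau H_n$ to the front. The face $d_0$ comultiplies the factor at position $0$, namely $H_{n-1}$, which lies in the transported block; but since $\Delta\colon H\to H_{n-1}\ot^\tau H_n$ is an algebra map, Remark \ref{nat:rem} still applies, now with $g=\Delta$ in the transported slot and the identity on $P$. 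This turns $d_0\tau_n^2$ into $\sigma^\Mc_{P,H}\,(\Id_P\ot\Delta)^*$, and since $(\Id_P\ot\Delta)^*=d_{n-1}$ and $\sigma^\Mc_{P,H}=\tau_{n-1}$, this is exactly \eqref{ll3}. For \eqref{ll2} the transported factor is the trivial one created by $s_n$ via $\epsilon\colon H_{n+1}\to 1$; applying Remark \ref{nat:rem} with $g=\epsilon$ in the transported slot rewrites $\tau_{n+1}s_n=\sigma^\Mc_{H^{n+1},H_{n+1}}(\Id\ot\epsilon)^*$ as $(\epsilon\ot\Id)^*\,\sigma^\Mc_{H^{n+1},1}$. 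The decisive point here is that $\sigma^\Mc_{-,1}=\Id$, because $E_{1,M}=\Id$ (put $X=Y=1$ in $C2$ and use that $E$ is invertible); hence the right-hand side is just the front insertion $(\epsilon\ot\Id)^*=s_{-1}$, giving \eqref{ll2}.

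Finally, the two parenthetical extensions are purely formal. Using $d_n=d_0\tau_n$ from \eqref{ddef}, relation \eqref{l0} at $i=n$ reads $d_n\tau_n=d_0\tau_n^2=\tau_{n-1}d_{n-1}$ by \eqref{ll3}, so \eqref{l0} is valid for $1\le i\le n$; and combining \eqref{l1} at $i=0$ with \eqref{ll2} gives $s_0\tau_n=\tau_{n+1}s_{-1}=\tau^2_{n+1}s_n$. I expect the main obstacle to be exactly \eqref{ll3} and \eqref{ll2}: unlike the interior relations, the naive sliding of $\sigma^\Mc$ fails because the comultiplication (resp. counit) acts on the moved factor, so one must instead invoke the associativity of $\sigma^\Mc$ (Lemma \ref{L3}) to view $\tau_n^2$ as a single block transport, and the normalization $\sigma^\Mc_{-,1}=\Id$ to see that transporting a counit-trivialized factor is harmless.
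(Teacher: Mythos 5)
Your proposal is correct and follows essentially the same route as the paper: the interior relations \eqref{l0} and \eqref{l1} via Remark \ref{nat:rem} with the identity in the transported slot, \eqref{ll3} by first collapsing $\tau_n^2=\sigma^\Mc_{H^{n-1},H^2}$ via Lemma \ref{L3} and then sliding $\Delta$ across with Remark \ref{nat:rem}, and \eqref{ll2} via $\sigma^\Mc_{-,1}=\Id$ together with Remark \ref{nat:rem} applied to $\epsilon:H\to 1$. Your explicit justification that $E_{1,M}=\Id$ (hence $\sigma^\Mc_{-,1}=\Id$) is a small but welcome addition that the paper leaves implicit.
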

\begin{proof}
The identity \eqref{l0} follows from Remark \ref{nat:rem} by considering $$f=Id^{i-1}\ot\Delta\ot Id^{n-1-i}:H^{n-1}\to H^n\quad\text{and}\quad g=Id:H\to H.$$ Indeed, $d_i\tau_n=(g\ot f)^*\sigma^\Mc_{H^n,H}$ and $\tau_{n-1}d_{i-1}=\sigma^\Mc_{H^{n-1},H}(f\ot g)^*$.  Similarly, \eqref{l1} follows from the same remark with $$f=Id^i\ot\epsilon\ot Id^{n-i}: H^{n+1}\to H^n\quad\text{and}\quad g=Id:H\to H.$$  And \eqref{ll2} follows, again by Remark \ref{nat:rem}, from $$s_{-1}=(\epsilon\ot Id^{n+1})^*=(\epsilon\ot Id^{n+1})^*\sigma^\Mc_{H^{n+1},1}=\sigma^\Mc_{H^{n+1},H}(Id^{n+1}\ot\epsilon)^*=\tau_{n+1}s_n.$$  Finally, \eqref{ll3} is obtained by following Lemma \ref{L3} with Remark \ref{nat:rem}, i.e., $$d_0\tau_n^2=(\Delta\ot Id^{n-1})^*(\sigma^{\Mc}_{H^n,H})^2=(\Delta\ot Id^{n-1})^*\sigma^\Mc_{H^{n-1}, H^2}=\sigma^\Mc_{H^{n-1},H}(Id^{n-1}\ot \Delta)^*=\tau_{n-1}d_{n-1}.$$
\end{proof}

\begin{remark}\label{taun}
Note that in the above $\tau^{n+1}_n\neq Id$, but $\tau^{n+1}_n=\sigma^\Mc_{1,H^{n+1}}$ and so (we need that $\Mc$ is stable here), by Lemma \ref{L1} \begin{equation}\label{taucycliciso}\varsigma:Id\simeq \tau^{n+1}_n.\end{equation} Furthermore, observe that as endofunctors of $\Cc_0$ we have $d_i s_j=Id$ except for \begin{equation}\label{center}d_1s_{-1}=\tau_0=\sigma^\Mc_{1,H},\end{equation} since $d_1s_{-1}=d_0\tau_1^2s_0=d_0s_0\tau_0=\tau_0$.
\end{remark}

\begin{proposition}\label{fullcyc}
Let $\Mc$ be a stable braided $\Bc$-module.  The categories $\Cc_n$ together with the functors of Definition \ref{cyclicdefinition} and the isomorphism \eqref{taucycliciso} form a cyclic object in categories.  If stability is not assumed then we only get a simplicial object.
\end{proposition}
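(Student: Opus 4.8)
The plan is to verify that the data $(\Cc_n, d_i, s_i, \tau_n, \varsigma)$ satisfies the defining relations of Connes' cyclic category $\Lambda$, in the generators-and-relations presentation of \cite{loday}, with the single caveat that the cyclicity relation $\tau_n^{n+1}=Id$ is witnessed by the isomorphism $\varsigma$ rather than by an equality. Since $\Lambda$ contains the simplex category $\Delta$, I would first establish that $\Cc_\bullet$ with the faces $d_i$ ($0\le i\le n$) and the degeneracies $s_0,\dots,s_n$ is a simplicial object, and then layer the cyclic operators $\tau_n$ on top, the auxiliary $s_{-1}$ being determined by \eqref{ll2}.

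For the underlying simplicial structure I would separate the faces into the \emph{inner} ones $d_0,\dots,d_{n-1}$ and the top face $d_n=d_0\tau_n$ of \eqref{ddef}. The inner faces and all degeneracies $s_0,\dots,s_n$ are pullbacks $(-)^*$ along the cosimplicial algebra maps assembled from the comultiplication $\Delta$ and counit $\epsilon$ of $H$. By the contravariant functoriality of $(-)^*$, each simplicial identity among these reduces to the corresponding cosimplicial identity among the underlying algebra maps, which holds because $(H,\Delta,\epsilon)$ is a coalgebra in $\Bc$; the braiding does not intervene, as these insertions occur at separated or coassociatively related positions, exactly as in the classical cyclic nerve. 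The identities involving the top face $d_n$ are then reduced, by rewriting $d_n=d_0\tau_n$ and invoking \eqref{l0} and \eqref{ll3} together with the inner identities, to Lemma \ref{rels}.

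The cyclic layer is essentially already supplied. Lemma \ref{rels} furnishes precisely the relations of $\Lambda$ governing the interaction of $\tau_n$ with faces and degeneracies, namely \eqref{l0}, \eqref{ll3}, \eqref{l1}, \eqref{ll2}, and $s_0\tau_n=\tau_{n+1}^2 s_n$; and Remark \ref{taun} records the only remaining relations, the cyclicity $\tau_n^{n+1}=\sigma^\Mc_{1,H^{n+1}}$ together with \eqref{center} on $\Cc_0$. Stability enters exactly here: by \eqref{taucycliciso}, $\varsigma$ provides a natural isomorphism $Id\simeq\tau_n^{n+1}$, closing the cyclic relation up to coherent isomorphism.

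The step I expect to be the main obstacle is not any single identity but the coherence of $\varsigma$: because the cyclicity is witnessed by an isomorphism and not an equality, one must check that $\varsigma$ is genuinely part of the cyclic datum, i.e., that it is natural and compatible with the face and degeneracy functors, so that $\Cc_\bullet$ assembles into a (pseudo)functor over $\Lambda$ rather than merely satisfying the relations objectwise. This I would deduce from the naturality of the stable structure $\varsigma$ of $\Mc$ (Lemma \ref{L1}) together with Remark \ref{nat:rem}, using that each $d_i$ and $s_i$ is a pullback and hence leaves underlying objects of $\Mc$ unchanged. Finally, for the last assertion, I would observe that none of the simplicial identities require $\varsigma$: in its absence $\tau_n^{n+1}=\sigma^\Mc_{1,H^{n+1}}$ is a nontrivial autoequivalence and the cyclic relation cannot close, yet the simplicial object survives intact.
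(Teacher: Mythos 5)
Your proposal is correct and follows essentially the same route as the paper: reduce the simplicial identities not involving $d_n$ to the classical cosimplicial identities for $(H,\Delta,\epsilon)$ via $(-)^*$, handle the identities involving $d_n$ by rewriting $d_n=d_0\tau_n$ and invoking Lemma \ref{rels}, and let Remark \ref{taun} supply the cyclicity via $\varsigma$. The paper's proof is simply a more explicit rendering of the same computations (e.g.\ $d_id_n=d_id_0\tau_n=d_0d_{i+1}\tau_n=d_0\tau_{n-1}d_i=d_{n-1}d_i$), and your added attention to the coherence of $\varsigma$ addresses a point the paper leaves implicit.
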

\begin{proof}
Given the content of Lemma \ref{rels}, it suffices to demonstrate only the simplicial relations on the functors.  More precisely, we need:
\begin{align}
d_{i}  d_{j} &= d_{j-1}d_{i},  \quad   i <j\label{r1}\\
s_{i}  s_{j} &= s_{j} s_{i-1},    \quad i > j\notag\\
d_{i} s_{j} &=
 \begin{cases}\label{r3}
s_{j-1}d_i,   &
 i<j\\
Id,   &   i=j \,\,\text{or}\,\, i=j+1\\
s_{j}d_{i-1} ,  & i>j+1
\end{cases}
\end{align} and most of these are immediate since they are obtained by applying $(-)^*$ to algebra maps that classically satisfy these relations.  The non-obvious ones are only those that involve $d_n$.  For \eqref{r1} we have (using the definition and applying the classic relations followed by Lemma \ref{rels}) $d_i d_n=d_id_0\tau_n=d_0d_{i+1}\tau_n=d_0\tau_{n-1}d_i=d_{n-1}d_i$. For \eqref{r3} we get $d_ns_{n-1}=d_0\tau_n s_{n-1}=d_0 s_{-1}=Id$ and $d_n s_j=d_0\tau_n s_j=d_0s_{j+1}\tau_{n-1}=s_jd_0\tau_{n-1}=s_j d_{n-1}$.  Remark \ref{taun} finishes the proof.

\end{proof}

\begin{definition}\label{HHHC:def}
Let $H$ be a Hopf algebra in $\Bc$, set $\Cc=H_\Bc\mod$.  Let $\Mc$ be a  braided $\Bc$-module.  Let $$HH_\Mc(\Cc)=\varinjlim_{\Delta^{op}}\Cc_\bullet$$ and if $\Mc$ is stable braided, let $$HC_\Mc(\Cc)=\varinjlim_{\Lambda^{op}}\Cc_\bullet.$$
\end{definition}

\subsection{Dualizing  the diagrams over $\Lambda$}\label{lambdadiagram}
To compute the limits in Definition \ref{HHHC:def} we will use the right adjoints of the functors of Definition \ref{cyclicdefinition}. Let us change notation and use $d_i,s_i,\tau_n$ for arrows in $\Lambda$. Rename what we called by such names in Section \ref{seccyclic}, which will now be denoted by $d_i^*,s_i^*,\tau_n^*$.  Their right adjoints, that we will describe below, will now be denoted by $d_{i*},s_{i*},\tau_{n*}$.

It is immediate that in order to understand the right adjoints, it suffices to describe $\Delta_*, \epsilon_*$, and $\tau_{n*}$.  Let us start with $\Delta_*$ which is the right adjoint of $\Delta^*$ and in this case both functors are literally what the notation suggests them to be, namely, we have the coproduct $\Delta: H\to H\ot^\tau H$, and the functors are obtained from it as in Section \ref{sec:adjunction}.  More precisely, by \eqref{adjunction}, with $A=H$, $B=H\ot^\tau H$, $C=1$, and $M=H\ot^\tau H$ with the left $H$-action via $\Delta$ and the right $H\ot^\tau H$-action via right multiplication in $H\ot^\tau H$, we have that since $\Delta^*(-)=M\cdot_B -$, so \begin{equation}\label{radjoint}
\Delta_*(-)=-\ra_H H\ot^\tau H.
\end{equation} In order to simplify \eqref{radjoint} we need the following lemma.

\begin{lemma}\label{simplifylem}Let $H$ be a Hopf algebra in $\Bc$.  Then the maps  \begin{equation}\label{twomaps}
\includegraphics[height=.7in]{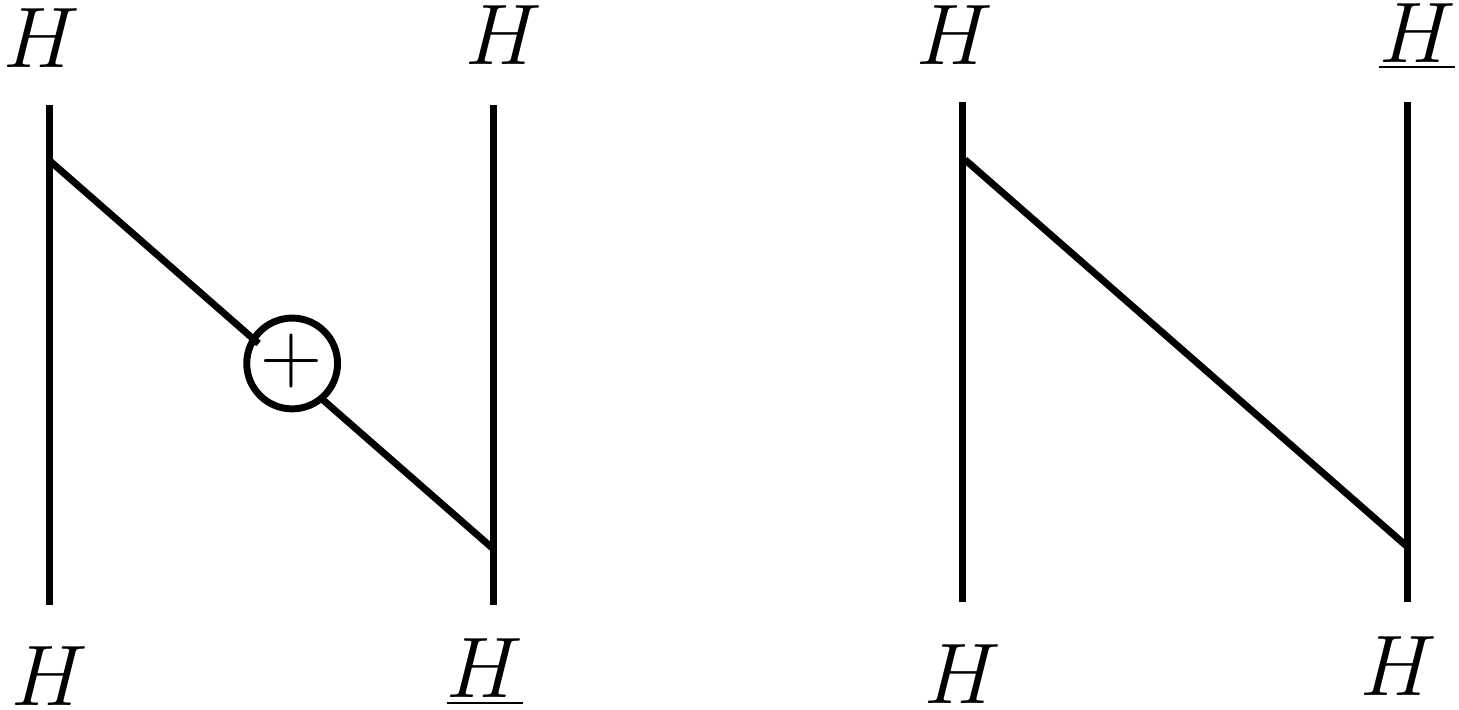}
\end{equation} are inverses of each other and identify the two $(H, H\ot^\tau H)$-bimodules, where 

\begin{enumerate}[a)]
\item the left action of $H$ on $H\ot H$ is via $\Delta: H\to H\ot^\tau H$ and multiplication in $H\ot^\tau H$, while on $H\ot\underline{H}$ it is via multiplication on the first $H$-factor only.

\item the right action of $H\ot^\tau H$ on $H\ot H$ is via multiplication in $H\ot^\tau H$, while on $H\ot\underline{H}$ it is given via the diagram: \begin{equation}\label{act}
\includegraphics[height=.8in]{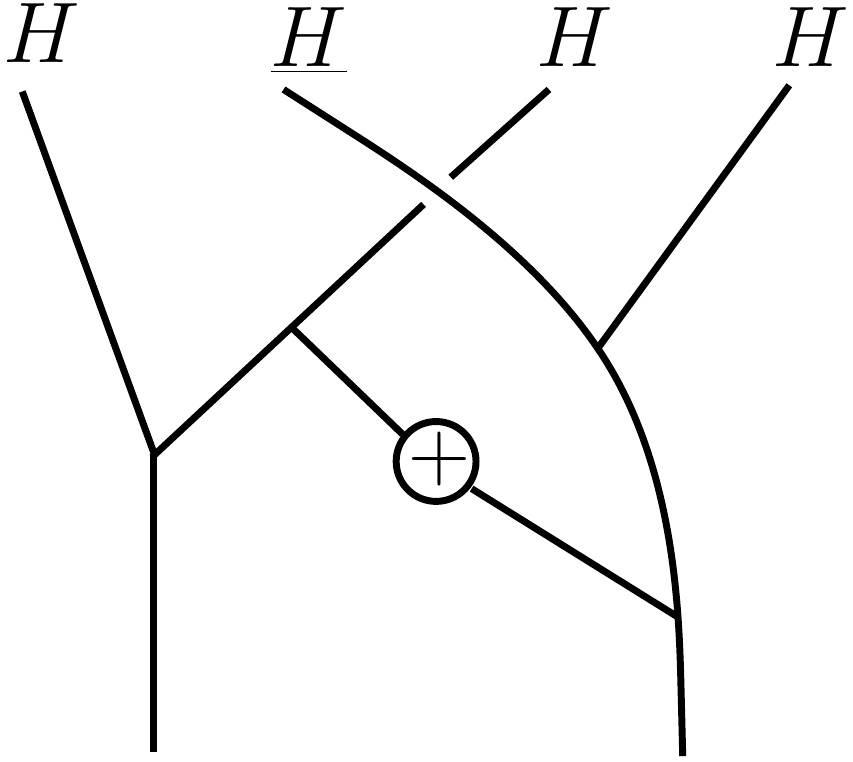}
\end{equation}
\end{enumerate}
Note that the maps in \eqref{twomaps} are still $H$-linear if $V\in H_\Bc\mod$ replaces the second $H$.

\end{lemma}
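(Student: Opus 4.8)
The plan is to first read off the two morphisms of \eqref{twomaps} as the composites in $\Bc$
$$\Phi=(Id_H\ot m_H)(\Delta\ot Id_H),\qquad \Psi=(Id_H\ot m_H)(Id_H\ot S\ot Id_H)(\Delta\ot Id_H),$$
i.e. informally $\Phi(a\ot b)=a_{(1)}\ot a_{(2)}b$ and $\Psi(a\ot b)=a_{(1)}\ot S(a_{(2)})b$, where in each case the second tensor factor is fed into $m_H$. Note that $a_{(2)}$ and $b$ are already adjacent, so $\Phi$ and $\Psi$ themselves carry no braiding; this is also precisely the spot where an arbitrary $V\in H_\Bc\mod$ with action $\rho_V$ may be substituted for the second $H$. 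I would then establish the three assertions separately: that $\Phi$ and $\Psi$ are mutually inverse, that they intertwine the two left $H$-actions of (a), and that they intertwine the two right $H\ot^\tau H$-actions of (b).

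The mutual-inverse claim is the easiest. Composing, say, $\Phi\Psi$ produces a configuration in which the output of $S$ meets the neighbouring coproduct leg; by coassociativity and the antipode axiom $m(Id_H\ot S)\Delta=u\epsilon$ (and $m(S\ot Id_H)\Delta=u\epsilon$ for the opposite composition) this contracts to a counit, and the counit axiom collapses the diagram to the identity. Graphically this is the standard zig-zag cancellation and involves no braidings, so it is immediate. The left $H$-linearity of (a) is the braided analogue of the classical Sweedler computation: pushing a left multiplication by $g$ on the first strand up through the coproduct node of $\Phi$ is exactly the statement that $\Delta\colon H\to H\ot^\tau H$ is an algebra map, i.e. the bialgebra axiom $\Delta m=(m\ot m)(Id_H\ot\tau_{H,H}\ot Id_H)(\Delta\ot\Delta)$, after which the diagonal ($\Delta$-)action on $H\ot H$ and the first-factor action on $H\ot\underline{H}$ are matched. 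This step uses only coassociativity, the bialgebra axiom, and naturality of $\tau$, and it is the step that persists verbatim when the second $H$ is replaced by a module $V$, since there the second factor only sees $\rho_V$ through the module-associativity axiom; this justifies the closing remark of the lemma.

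The right $H\ot^\tau H$-linearity of (b) is the main obstacle. Here the right action on $H\ot H$ is multiplication in $H\ot^\tau H$, which carries a braiding $\tau_{H,H}$ between the inner legs, whereas the transported action on $H\ot\underline{H}$ displayed in \eqref{act} is built from $\Delta$, $S$, $m$ and several braidings; verifying that $\Phi$ intertwines them forces one to commute the braiding of the $H\ot^\tau H$-product past the coproduct leg of $\Phi$ and the antipode leg of its inverse. The necessary moves are the naturality and hexagon identities for $\tau$ together with the anti-multiplicativity $Sm=m(S\ot S)\tau_{H,H}$ and anti-comultiplicativity $\Delta S=\tau_{H,H}(S\ot S)\Delta$ of the antipode; the computation mirrors the classical outcome $(a\ot v)\ra(h\ot k)=ah_{(1)}\ot S(h_{(2)})vk$ but with every crossing resolved by an explicit braiding, which is exactly what \eqref{act} records. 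Once right-linearity is checked for one of the two maps, the other is automatically a bimodule map as its two-sided inverse, completing the identification of the two $(H,H\ot^\tau H)$-bimodules.
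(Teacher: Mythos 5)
Your proposal is correct and follows essentially the same route as the paper: the paper's proof consists precisely of the two string-diagram verifications of the left $H$-linearity (via the bialgebra axiom) and the right $H\ot^\tau H$-linearity (via naturality of $\tau$ and the anti-multiplicativity of $S$), with the mutual-inverse claim being the standard antipode zig-zag cancellation you describe. Your observation that invertibility transfers bimodule-linearity from one map to the other, and that the second tensor factor only enters through $m_H$ (hence may be replaced by $\rho_V$ for $V\in H_\Bc\mod$), accounts for everything the lemma asserts.
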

\begin{proof} For $a)$ we have \begin{equation*}
\includegraphics[height=.8in]{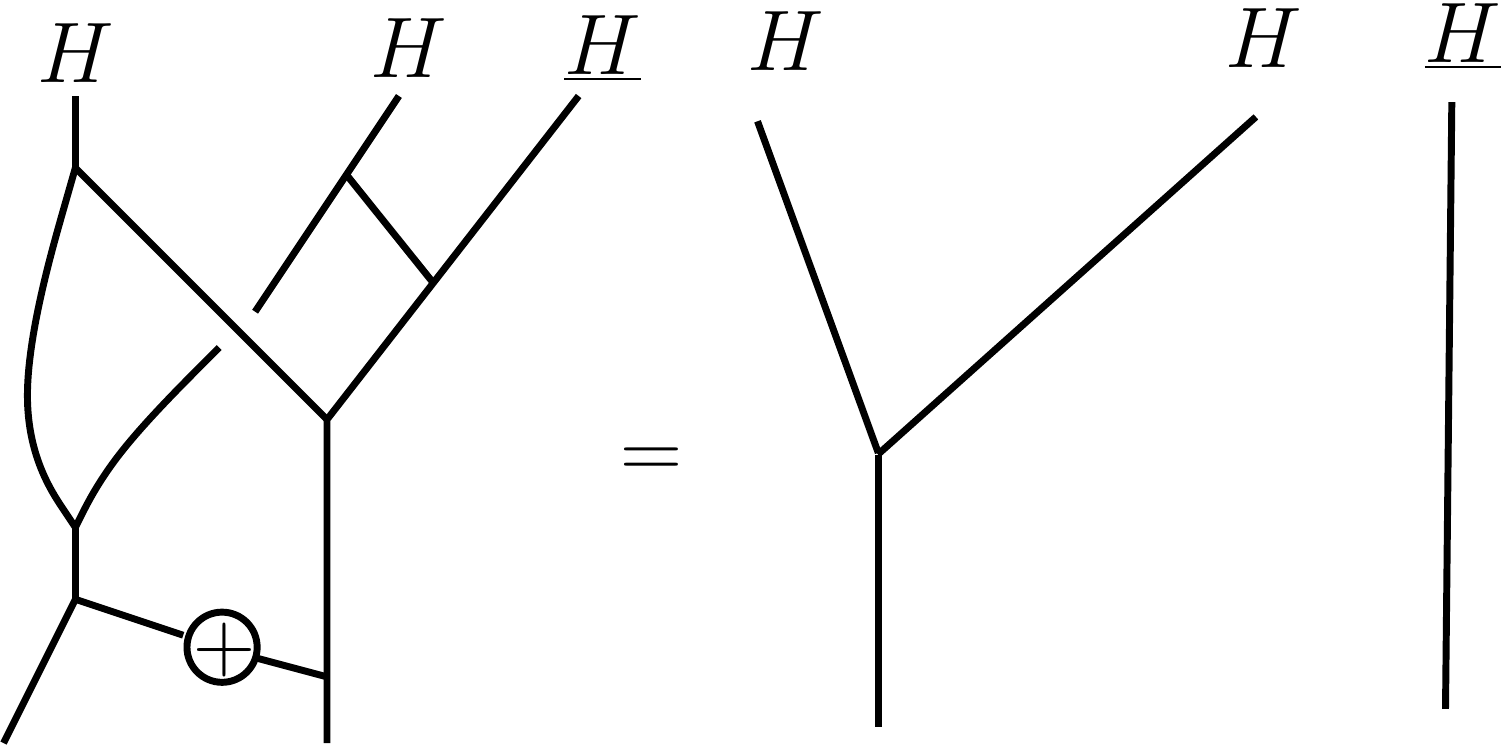}
\end{equation*}
and for $b)$ we have \begin{equation*}
\includegraphics[height=.8in]{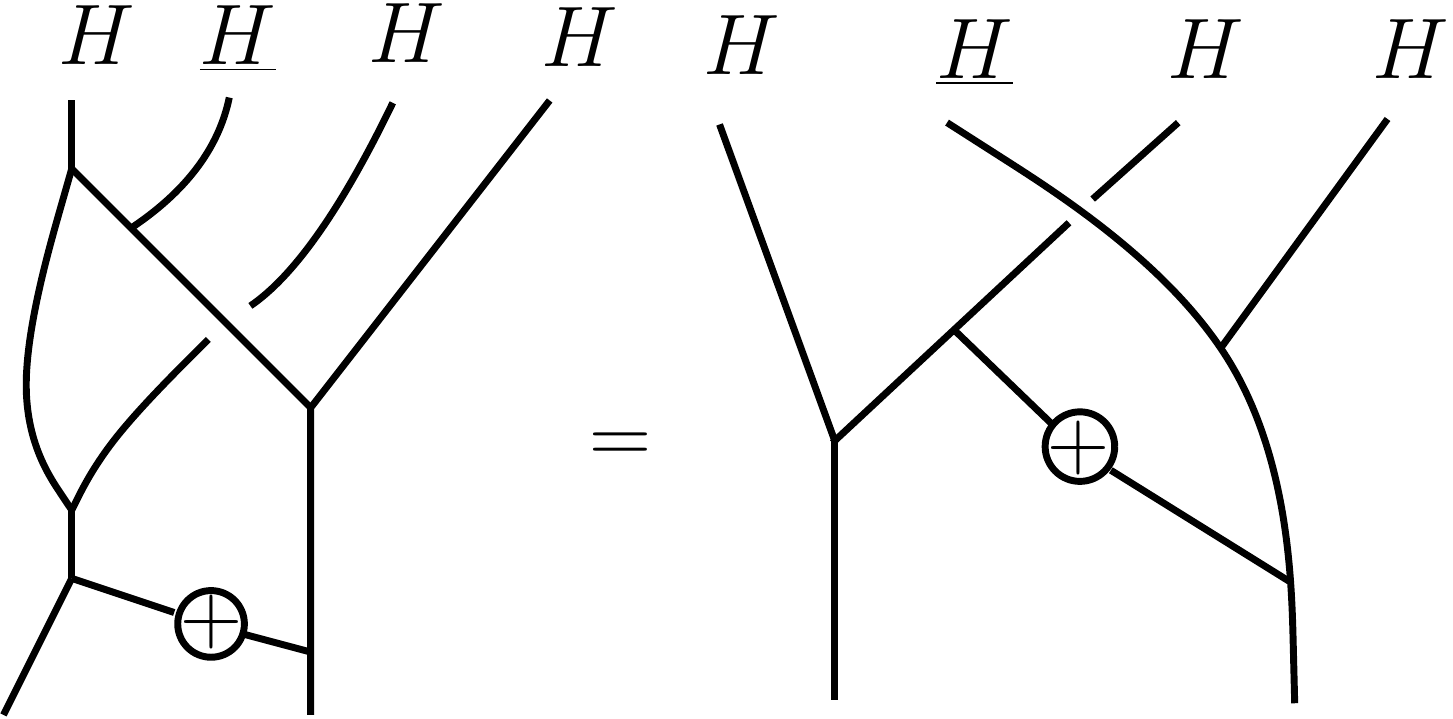}
\end{equation*}
\end{proof}

\begin{remark}
Note that \eqref{act} is a generalization of $x\ot y\cdot a\ot b= xa^1\ot S(a^2)yb$.
\end{remark}

\begin{corollary}\label{deltaright}
Let $M\in H_\Mc\mod$, then $\Delta_* M\simeq{}^*H\cdot M$ with the $H\ot^\tau H$-module structure as follows: \begin{equation*}
\includegraphics[height=.95in]{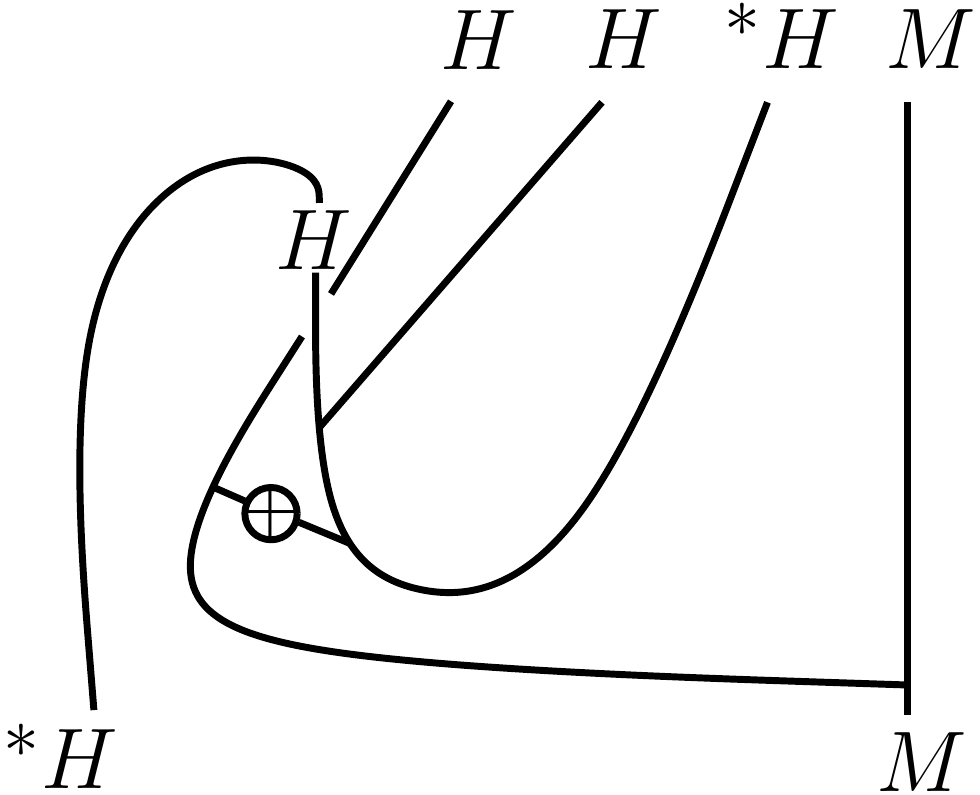} 
\end{equation*} which generalizes $a\ot b\cdot \chi\ot m=\chi(S(a^2)-b)\ot a^1m$.
\end{corollary}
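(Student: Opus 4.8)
The plan is to compute $\Delta_*M$ by specializing the abstract adjoint formula \eqref{radjoint} to the second $H$-factor being replaced by a module $M\in H_\Mc\mod$, and then apply the bimodule identification of Lemma \ref{simplifylem}. Starting from $\Delta_*(-)=-\ra_H H\ot^\tau H$, the idea is that the functor $\Delta_*$ acts on a module $M$ by the relative tensor product $M\otimes_H (H\ot^\tau H)$, where $H\ot^\tau H$ is viewed as the $(H,H\ot^\tau H)$-bimodule described in part (a) and (b) of the lemma (left $H$-action via $\Delta$, right $H\ot^\tau H$-action by multiplication). The crucial point is the final sentence of Lemma \ref{simplifylem}: the two maps in \eqref{twomaps} remain $H$-linear when the second tensor factor $H$ is replaced by an arbitrary $V\in H_\Bc\mod$ (and by naturality, by an object of $\Mc$). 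This lets me transport the computation to the simpler bimodule $H\ot\underline{H}$, where the left $H$-action is only on the first factor.

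First I would use this identification to rewrite $M\otimes_H(H\ot^\tau H)$ as $M\otimes_H(H\ot\underline{H})$ with the right $H\ot^\tau H$-action given by the diagram \eqref{act}. Because in the bimodule $H\ot\underline{H}$ the left $H$-action lives only on the first factor, the relative tensor product $M\otimes_H(H\ot\underline{H})$ collapses: tensoring over $H$ with a free-on-the-left factor amounts to cancelling that first $H$ against the module action, leaving behind the underlined factor tensored with $M$. Since the right dual ${}^*H$ appears as the object that pairs with the first $H$ under the coevaluation/evaluation of the rigid structure (this is exactly how the relative tensor product over $H$ is computed in $\Mc$), the result is ${}^*H\cdot M$, the action of ${}^*H\in\Bc$ on $M\in\Mc$. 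This produces the claimed underlying object $\Delta_*M\simeq {}^*H\cdot M$.

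The remaining work is to read off the $H\ot^\tau H$-module structure on ${}^*H\cdot M$ from the diagram \eqref{act} after the cancellation. I would do this by drawing the string diagram for the right action \eqref{act} on $H\ot\underline{H}$, precomposing with the isomorphism from \eqref{twomaps}, and then tracing what happens to the first factor once it is absorbed into $M$ via the module structure $\rho_M$; the underlined factor's action survives and, together with the antipode $S$ appearing in \eqref{act}, yields precisely the formula claimed, whose classical shadow is $a\ot b\cdot\chi\ot m=\chi(S(a^2)-b)\ot a^1m$. Here the braided module structure $E$ of $\Mc$ does not yet enter, since $\Delta_*$ is built purely from the adjunction and the coproduct; it is only the braiding $\tau$ of $\Bc$ and the antipode $S$ that organize the action.

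The main obstacle I anticipate is the bookkeeping of \emph{which} crossings appear and with what orientation once one works inside $\Mc$ rather than $\Bc$: the object $M$ lives in the module category, and the rigid cancellation producing ${}^*H$ (as opposed to $H^*$) must be tracked carefully, as must the placement of $S$ coming from \eqref{act}. Because the second factor has been replaced by $M\in\Mc$, I must confirm that the $H$-linearity clause at the end of Lemma \ref{simplifylem} genuinely applies — i.e. that every map used is an arrow of $\Mc$-modules and that no step secretly requires $M$ to be an object of $\Bc$. Once the diagrams are drawn and the relative tensor product is resolved by the evaluation arc, the verification that the resulting map is an $H\ot^\tau H$-action should be immediate from the associativity already encoded in \eqref{act}, so the genuine difficulty is entirely in the correct graphical transport of \eqref{act} through the cancellation, not in any new conceptual ingredient.
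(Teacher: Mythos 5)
Your overall strategy is the paper's: specialize \eqref{radjoint}, use Lemma \ref{simplifylem} to replace the bimodule $H\ot^\tau H$ (left action via $\Delta$) by $H\ot\underline{H}$ (left action free on the first factor), and then resolve the resulting construction by rigidity, transporting the right action \eqref{act} through the identification. But there is a genuine error in the mechanism you use for the middle step. The functor $\Delta_*$ is the \emph{right} adjoint of $\Delta^*$, and \eqref{radjoint} reads $\Delta_*(-)=-\ra_H H\ot^\tau H$, where $-\ra_A S$ is defined in Section \ref{sec:adjunction} as the right adjoint of $S\ot_A-$, constructed as an \emph{equalizer} (a cotensor/internal-hom), not as the relative tensor product $M\otimes_H(H\ot^\tau H)$ you write down, which is a coequalizer. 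If you genuinely computed the relative tensor product of $M$ against the left-free bimodule $H\ot\underline{H}$, the first $H$ would cancel and you would obtain (roughly) $\underline{H}\cdot M$ with no dual anywhere --- that is the formula for the left adjoint $\Delta_!$ (induction), whose classical shadow is $H\ot M$, not $Hom(H,M)\cong H^*\ot M$. The dual ${}^*H$ in the statement is precisely the signature of the right adjoint, and your attempt to recover it by saying that ${}^*H$ ``pairs with the first $H$'' is inconsistent with having already cancelled that factor; moreover it dualizes the wrong tensorand.

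The correct resolution, which is what the paper does, is Lemma \ref{hom:lem} with $A=H$, $X=\underline{H}$, $B=H\ot^\tau H$ and $L=L'=1$: for a left-free bimodule $A\ot X$ one has $N\ra_A(A\ot X)\simeq{}^*X\cdot N$, so the free first factor is absorbed by the subscript in $\ra_H$ and it is the \emph{underlined second factor} that gets dualized, ${}^*\underline{H}\cdot M$. This distinction is not cosmetic for the corollary: the two tensor factors of $H\ot\underline{H}$ carry different pieces of the right $H\ot^\tau H$-action \eqref{act} (plain left multiplication on the first, the $S$-twisted action on the second), so dualizing the wrong factor transposes the roles of $a$ and $b$ and would not reproduce $a\ot b\cdot\chi\ot m=\chi(S(a^2)-b)\ot a^1m$. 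Once you replace your coequalizer picture by Lemma \ref{hom:lem}, the rest of your plan (tracking crossings, the placement of $S$, and the observation that $E$ does not yet enter) is sound and matches the paper.
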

\begin{proof}
	We use Lemma \ref{simplifylem} to identify $H\ot H$ with $H\ot \underline{H}$ as $(H, H\ot^\tau H)$-bimodules.  This is followed by Lemma \ref{hom:lem} with $A=H$, $X=\underline{H}$, and $B=H\ot^\tau H$.  Note that $L,L'$ are trivial here. 
\end{proof}

\begin{proposition}\label{prop:delta}
The functor $d_{i*}:\Cc_{n-1}\to \Cc_n$ for $0\leq i\leq n-1$ is as follows: for $M\in H^i\ot^\tau H\ot^\tau H^{n-1-i}_\Mc\mod$, $$d_{i*}(M)={}^*H\cdot M$$ with the $H^i\ot^\tau H\ot^\tau H\ot^\tau H^{n-1-i}$-module structure given by Lemma \ref{hom:lem} with $L=H^i$, $B=H\ot^\tau H$, and $L'=H^{n-1-i}$.  The action of $H\ot^\tau H$ on ${}^*H\cdot M$ is given by Corollary \ref{deltaright}.
\end{proposition}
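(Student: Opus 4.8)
The plan is to recognize $d_{i*}$ as a coinduction functor and then reduce its computation to the already-settled middle case of Corollary \ref{deltaright}, treating the outer tensor factors as spectators governed by Lemma \ref{hom:lem}. First I would observe that $d_i^*$ is restriction of scalars along the algebra map
$$\phi = Id_H^i \ot \Delta \ot Id_H^{n-1-i} : H^n \longrightarrow H^{n+1},$$
where $H^n = H^i \ot^\tau H \ot^\tau H^{n-1-i}$ and $H^{n+1} = H^i \ot^\tau (H \ot^\tau H) \ot^\tau H^{n-1-i}$. Consequently its right adjoint $d_{i*}$ is the coinduction functor $\Hom_{H^n}(H^{n+1},-)$, which by the adjunction framework of Section \ref{sec:adjunction} (cf. \eqref{adjunction} and \eqref{radjoint}) is realized as the right action $M \mapsto M \ra_{H^n} H^{n+1}$ on the $(H^n, H^{n+1})$-bimodule $H^{n+1}$, with the left $H^n$-action via $\phi$ and the right action by multiplication.

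Next I would exploit the block structure of $\phi$: it is the identity on the left block $H^i$ and on the right block $H^{n-1-i}$, and equals $\Delta$ only on the middle $H$-factor. Thus the bimodule $H^{n+1}$ agrees with the regular bimodule away from the middle, so the only genuinely nontrivial coinduction occurs across $\Delta: H \to H \ot^\tau H$. For that middle piece Corollary \ref{deltaright} already supplies the answer: the underlying object is ${}^*H \cdot M$ and the $H \ot^\tau H$-action is the displayed one (generalizing $a\ot b\cdot \chi\ot m=\chi(S(a^2)-b)\ot a^1 m$). I would then invoke Lemma \ref{hom:lem} with $A = H$, $X = \underline{H}$, $B = H \ot^\tau H$, $L = H^i$, $L' = H^{n-1-i}$ to assemble the left and right blocks around this middle computation into the full $H^{n+1}$-module structure. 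This is precisely the analogue of the proof of Corollary \ref{deltaright}, which was the special case $L = L' = 1$.

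The step I expect to be the main obstacle is the bookkeeping of the braidings introduced by the $\ot^\tau$ structures when the spectator actions of $L = H^i$ and $L' = H^{n-1-i}$ are transported past the coinduced dual factor ${}^*H$. Concretely, one must check that the crossings produced by Lemma \ref{hom:lem} for nontrivial $L, L'$ match those in the target algebra $H^i \ot^\tau H \ot^\tau H \ot^\tau H^{n-1-i}$, keeping careful track of whether a $\tau$ or a $\tau^{-1}$ appears at each crossing; a naive tensor decomposition of the coinduction is not available, which is exactly why the general-$L,L'$ form of Lemma \ref{hom:lem} is needed. Since that lemma is stated for arbitrary $L, L'$, once the parameters are matched the verification is the mechanical graphical check that the side actions commute appropriately with the middle action of Corollary \ref{deltaright}, and no new idea beyond these two results is required.
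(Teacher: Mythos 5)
Your proposal is correct and follows essentially the same route as the paper: the paper offers no separate argument for this proposition beyond pointing to Lemma \ref{hom:lem} with $L=H^i$, $A=H$, $X=\underline{H}$, $B=H\ot^\tau H$, $L'=H^{n-1-i}$ and to Corollary \ref{deltaright} for the middle action, which is exactly the reduction you carry out (your citation of Corollary \ref{deltaright} implicitly supplies the needed identification of bimodules from Lemma \ref{simplifylem}). Your added remarks on coinduction along $Id^i\ot\Delta\ot Id^{n-1-i}$ and on the braiding bookkeeping are accurate glosses on why the general-$L,L'$ form of Lemma \ref{hom:lem} is required.
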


\begin{remark}\label{adjunctiondelta}
Explicitly, for $M\in H\ot^\tau H_\Mc\mod$, we have $M\to \Delta_*\Delta^*M$ given by \begin{equation}\label{adjdelta}
\includegraphics[height=.7in]{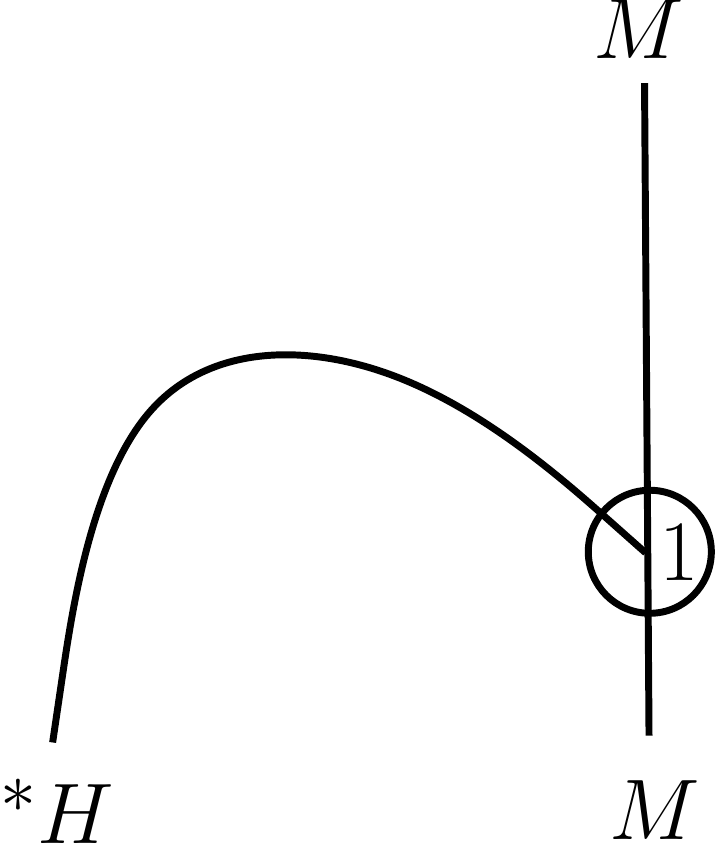}
\end{equation} where $1$ signifies the use of the second $H$-action on $M$.  Note that $0$ is the first action and corresponds to $Id\to d_{1*}d^*_1$.  The map \eqref{adjdelta} generalizes $M\to Hom(H,M)$ with $m\mapsto \phi_m(h)=(1\ot h)\cdot m$.  On the other hand for $M\in H_\Mc\mod$, we have $\Delta^*\Delta_*M\to M$ given by ${}^*1\ot Id_M: {}^*H\cdot M\to M$ as usual.
\end{remark}

On to the $\epsilon_*$; again by \eqref{adjunction} we have, for $M\in H_\Mc\mod$, that $$\epsilon_*(M)=M\ra_H 1=:M^{inv\, H}\in\Mc$$ where the left $H$-action on $1$ is via $\epsilon$.  More precisely, $M^{inv\, H}$ is the equalizer of the two maps $M\to M\ra H={}^*H\cdot M$, namely, ${}^*\epsilon\ot Id_M$ and  the adjoint of the action map $H\cdot M\to M$.  Recall \eqref{tt} that $\tau^*_n=\sigma^\Mc_{H^n,H}$ and so we will use its inverse for $\tau_{n*}$.  

The above demonstrates that the diagram of categories $\Cc_n$ associated to a Hopf algebra $H$ in a rigid braided $\Bc$, and a stable braided module category $\Mc$ involves functors that possess right adjoints.  Considering the adjoints we obtain a dual diagram whose inverse limit computes the direct limit of the original.  The inverse limit of the dual diagram is easy to describe;  note that the simplex category $\Delta$ is a subcategory of $\Lambda$ and we will begin with describing the inverse limit over it. 

Suppose that $M_n\in\Cc_n$ is an object in the inverse limit, then let $M:=M_0\in\Cc_0$, and without loss of generality we may assume that $M_n=d_{n*}d_{n-1 *}\cdots d_{1*}M$.  As part of the structure, see Section \ref{lim:sec}, we have an isomorphism $g_{d_0}:d_{0*}M_0\to M_1$ in $\Cc_1$.  Let $\gamma=g_{d_0}$ so that \begin{equation}\label{actionstr}\gamma:d_{0*}M\to d_{1*}M\end{equation} is the only    structure needed to define an object in the inverse limit.  It is necessary and sufficient that it satisfy two conditions:
the unit condition in $\Cc_0$: \begin{equation}\label{unitalityofact}s_{0*}(\gamma)=Id_M,\end{equation} and the associativity condition in $\Cc_2$: \begin{equation}\label{associativityofstr}d_{2*}(\gamma)d_{0*}(\gamma)=d_{1*}(\gamma).\end{equation} 

To obtain the inverse limit over $\Lambda$ itself, we need one more condition (see \eqref{center}) in $\Cc_0$: \begin{equation}\label{stabilitystr}s_{-1*}(\gamma)=(\varsigma_M)^{-1}.\end{equation}  Note that the last condition can not be formulated unless $\Mc$ is stable; the source of the maps is $M$ and the target is  $(\sigma^\Mc_{1,H})^{-1}(M)=\tau_{0*}M=M^H$.

\subsection{The monad}
Following \cite{chern} we can describe the inverse limit in terms of a monad on $\Cc_0$.  More precisely, the data of \eqref{actionstr} is encoded via adjunction into an action:
 \begin{equation}\label{monactstr}a:\Ac(M):=d^*_1d_{0*}M=\Delta^*\sigma^\Mc_{H,H}\Delta_*M\to M.\end{equation} 
 
 \begin{remark}
 We recall that for a braided $\Bc$-module $\Mc$ with $\Cc=H_\Bc\mod$, and $\Ac$ as above we have $$HH_\Mc(\Cc)=\Ac_{\Cc_0}\mod$$ and if $\Mc$ is stable then $HC_\Mc(\Cc)$ is the full subcategory of $\Ac_{\Cc_0}\mod$ consisting of $M$ with the trivial $\mathfrak{z}\in\Zc(\Ac)$ action.    The unit and $\mathfrak{z}$ in $\Ac$ are described in \eqref{u:eq} and \eqref{sigmaact:eq} respectively.
 \end{remark}

 We begin by describing $\Ac$ explicitly as an endofunctor of $\Cc_0=H_\Mc\mod$.

\begin{lemma}\label{monadact} Let $M\in H_\Mc\mod$, then the action of $H$ on $\Ac(M)\simeq{}^*H\cdot M$ is as follows:
\begin{equation}\label{monact}
\includegraphics[height=1.1in]{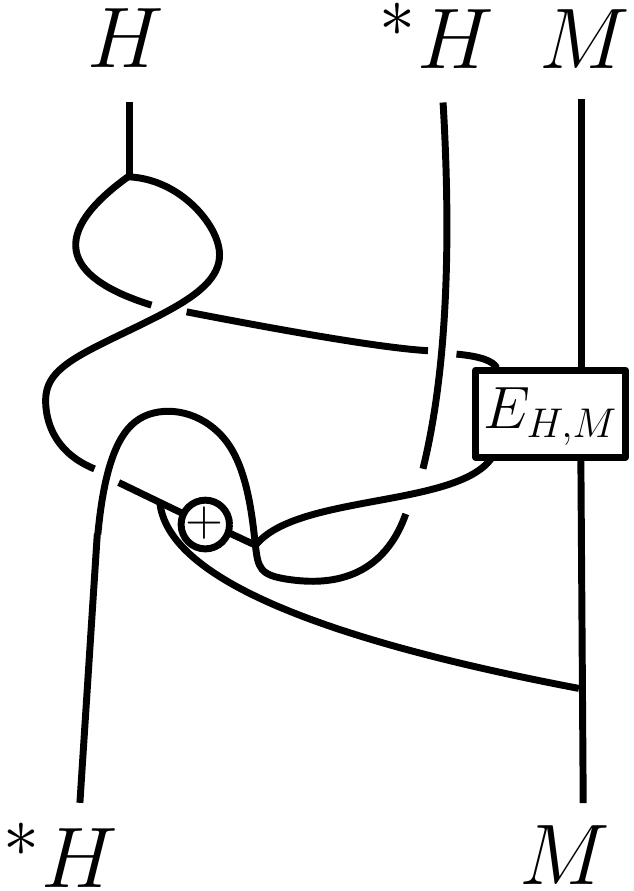}
\end{equation} which generalizes $a\cdot\chi\ot m=\chi(S(a^3)-a^1)\ot a^2m$.
\end{lemma}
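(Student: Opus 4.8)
The plan is to unwind the composite $\Ac(M)=\Delta^*\sigma^\Mc_{H,H}\Delta_*M$ of \eqref{monactstr} one functor at a time and then fuse the three resulting string diagrams into a single one. The first ingredient is already available: Corollary \ref{deltaright} replaces $\Delta_*M$ by ${}^*H\cdot M$ equipped with its explicit $H\ot^\tau H$-module structure, the one generalizing $a\ot b\cdot\chi\ot m=\chi(S(a^2)-b)\ot a^1m$. This is the only place where the antipode and the dual ${}^*H$ enter, so recording its diagram precisely is what the remaining two steps act upon.

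Second, I would apply $\sigma^\Mc_{H,H}$. By Lemma \ref{L1} and Lemma \ref{L2} this keeps the first $H$-action of ${}^*H\cdot M$, replaces the second ($b$) action $\rho$ by $\hat\rho=\rho\,E_{H,{}^*H\cdot M}$, and reinterprets the outcome in $H\ot^\tau H_\Mc\mod$ with the two factors exchanged. The crucial manipulation is to express the braided structure on the object ${}^*H\cdot M$ through that on $M$: condition C1, with acting object $H$ and $X={}^*H$, gives
$$E_{H,{}^*H\cdot M}=\tau^{-1}_{{}^*H,H}\,E_{H,M}\,\tau^{-1}_{H,{}^*H},$$
which pushes $E$ past the dual factor ${}^*H$ at the cost of two braidings.

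Third, I would apply $\Delta^*$, i.e. precompose the modified and swapped $H\ot^\tau H$-action with the coproduct $\Delta:H\to H\ot^\tau H$, so that a single $a\in H$ acts through $a^1\ot a^2$. Superimposing the three diagrams and simplifying — using coassociativity, the antipode axioms, and the naturality of $E$ together with the relation above — should collapse everything to the asserted action \eqref{monact}. As a sanity check at the level of vector-space formulas: $\Delta_*$ produces $\chi(S(a^2)-b)\ot a^1m$, the swap turns the pair $(a\ot b)$ into $(b\ot a)$, and feeding in $\Delta(a)=a^1\ot a^2$ yields $\chi(S(a^3)-a^1)\ot a^2m$, which is exactly the claimed shadow.

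I expect the main obstacle to be the diagrammatic bookkeeping of $E_{H,M}$ as it is conjugated by the braidings coming from C1 and then threaded through the coproduct and the antipode of ${}^*H$; in particular one must keep the two kinds of crossings straight, as emphasized before Definition \ref{def:ocross}, so that the $E$-crossing is not inadvertently simplified as an ordinary $\tau$. Everything else reduces to axioms that already hold in the classical case, so the genuine content is verifying that the braided structure $E$ lands in precisely the position displayed in \eqref{monact}.
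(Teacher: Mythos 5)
Your proposal follows exactly the paper's own route: start from Corollary \ref{deltaright} for $\Delta_*M\simeq{}^*H\cdot M$, apply $\sigma^\Mc_{H,H}$ using property $C1$ (to rewrite $E_{H,{}^*H\cdot M}$ in terms of $E_{H,M}$) together with naturality of $E$ in the second component, and finish by restricting along $\Delta$. The decomposition, the key inputs, and even the classical sanity check all match the paper's (equally terse) argument, so the proposal is correct and essentially identical in approach.
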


\begin{proof}
Starting with Corollary  \ref{deltaright} that describes $\Delta_*M$ as an $H\ot^\tau H$-module, we apply $\sigma^\Mc_{H,H}$.  Using naturality of $E_{-,-}$ in the second component and property $C1$, we obtain, essentially, the result after simplifying.  Following up with $\Delta^*$ finishes the proof.
\end{proof}

On to the monadic versions of \eqref{unitalityofact} and \eqref{stabilitystr}.  Namely, consider the natural maps \begin{equation}\label{u:eq}u:Id=\Delta^*\sigma^\Mc_{H,H}s_0^*s_{0*}\Delta_*\to\Ac\end{equation} and $$\varsigma':\sigma^\Mc_{1,H}=\tau_0^*=\Delta^*\sigma^\Mc_{H,H}s_{-1}^*s_{-1*}\Delta_*\to\Ac;$$ let \begin{equation}\label{sigmaact:eq}\mathfrak{z}=\varsigma'\circ\varsigma: Id\to\Ac,\end{equation}  then  \eqref{unitalityofact} is equivalent to $$(a\circ u)_M=Id_M$$ and \eqref{stabilitystr}  is equivalent to $$(a\circ \mathfrak{z})_M=Id_M.$$  In our particular case we have:

\begin{lemma}\label{unitcentral}
Let $M\in H_\Mc\mod$, then $$u_M={}^*\epsilon\ot Id_M:M\to\Ac(M)\simeq{}^*H\cdot M$$  and  \begin{equation}\label{sigma:equ}
\includegraphics[height=.7in]{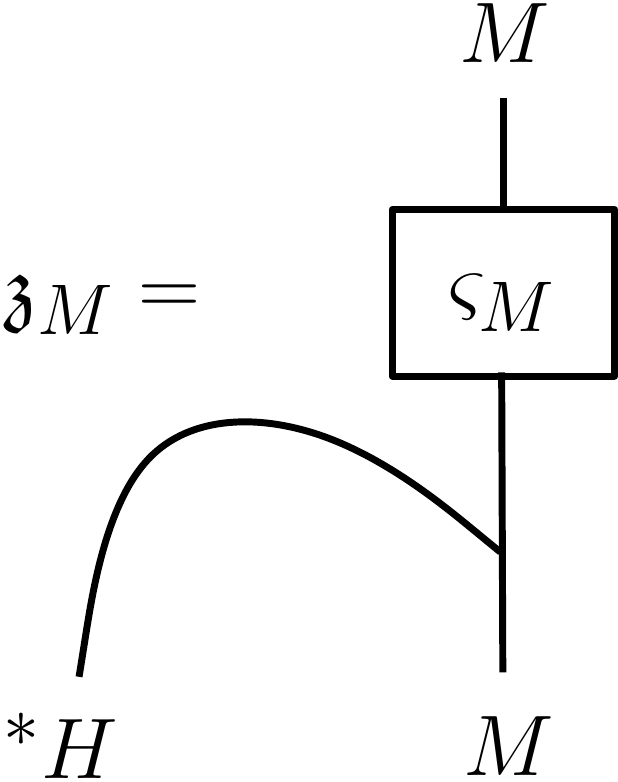}
\end{equation} where the former is exactly as usual, i.e., $M\to Hom(H,M)$ with $m\mapsto \epsilon(-)m$, and the latter generalizes  $m\mapsto (-)m$.
\end{lemma}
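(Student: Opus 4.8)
The plan is to unwind \eqref{u:eq} and the definition of $\varsigma'$ directly, since each is a whiskered adjunction counit and all the functors occurring have already been described explicitly. First I would record the two functor identities hidden in these definitions. Writing $d_1^*=\Delta^*\sigma^\Mc_{H,H}$ (which holds since $d_1=d_0\tau_1$, see \eqref{ddef}) and $d_{0*}=\Delta_*$, the source of \eqref{u:eq} becomes $d_1^*s_0^*s_{0*}d_{0*}$. The simplicial identity $d_1 s_0=\Id$ gives $d_1^*s_0^*=\Id$, while taking right adjoints in $d_0 s_0=\Id$ gives $s_{0*}d_{0*}=\Id$ (both cases of \eqref{r3}); hence the source is $(d_1^*s_0^*)(s_{0*}d_{0*})=\Id$ and $u=d_1^*\star\varepsilon^{s_0}\star d_{0*}$ is the counit $\varepsilon^{s_0}\colon s_0^*s_{0*}\to\Id$ whiskered by $d_1^*$ and $d_{0*}$. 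The analogous bookkeeping for $\varsigma'$ uses $d_1 s_{-1}=\tau_0$ from Remark \ref{taun}, giving $d_1^*s_{-1}^*=\tau_0^*=\sigma^\Mc_{1,H}$, and $d_0 s_{-1}=\Id$, giving $s_{-1*}d_{0*}=\Id$; thus the source is $\tau_0^*$ and $\varsigma'=d_1^*\star\varepsilon^{s_{-1}}\star d_{0*}$.

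Next I would evaluate $u_M$. Because $s_{0*}d_{0*}=\Id$, the counit at $\Delta_*M\simeq{}^*H\cdot M$ (Corollary \ref{deltaright}) is an $(H\ot^\tau H)$-linear map $s_0^*M\to\Delta_*M$, and since the degeneracy $s_0$ collapses its second tensor slot through $\epsilon$, this counit picks out $M$ along the $\epsilon$-slot of ${}^*H\cdot M$. Applying $d_1^*$ and comparing with the $H$-action of Lemma \ref{monadact}, the $\epsilon$ coming from $s_0$ is precisely dualized into ${}^*\epsilon\colon 1\to{}^*H$, so that $u_M={}^*\epsilon\ot\Id_M$, which in $\vect$ is $m\mapsto\epsilon(-)m$. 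This half is essentially formal: $s_0$ trivializes a slot and introduces no crossing, so no braided module structure intervenes.

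The substance is the computation of $\varsigma'$, which I would carry out in the graphical calculus of Section \ref{sec:b4}. Now $s_{-1}$ trivializes the \emph{other} slot, so after the cyclic shift encoded in $d_1=d_0\tau_1$ and in $\tau_0=d_1 s_{-1}$ the surviving copy of $H$ is routed to the module rather than to $\epsilon$. Pushing $\varepsilon^{s_{-1}}_{\Delta_*M}$ through $\sigma^\Mc_{H,H}$ introduces the braided module structure $E_{H,M}$ (by naturality of $E$ and property $C1$, exactly as in the proof of Lemma \ref{monadact}), and the rigid coevaluation for ${}^*H$ then couples this strand with the action $\rho$ on $M$; reassembling yields the diagram \eqref{sigma:equ}, which generalizes $m\mapsto(h\mapsto hm)$. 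I expect this identification to be the main obstacle: one must track the single remaining $H$-strand through $\sigma^\Mc_{H,H}$, the duality of ${}^*H$, and the module action, and check that the accumulated crossings assemble into the stated coevaluation-then-action map rather than a braided variant of it. Once this is verified the lemma follows, with the $\varsigma'$ case the only one needing the full braided module axioms.
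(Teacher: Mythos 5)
The paper offers no proof of this lemma at all --- it is stated as an immediate consequence of the definitions \eqref{u:eq} and \eqref{sigmaact:eq} together with the explicit descriptions already obtained (Corollary \ref{deltaright}, Lemma \ref{monadact}, Remark \ref{adjunctiondelta}), so your unwinding is the "same approach" by default. Your formal bookkeeping is correct: the identifications $d_1^*s_0^*=\Id$, $s_{0*}d_{0*}=\Id$, $d_1^*s_{-1}^*=\tau_0^*$ and $s_{-1*}d_{0*}=\Id$ do follow from \eqref{r3} and \eqref{center} exactly as you say, and $u$ and $\varsigma'$ are indeed the counits of $s_0^*\dashv s_{0*}$ and $s_{-1}^*\dashv s_{-1*}$ whiskered by $d_1^*$ and $d_{0*}$. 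Your computation of $u_M$ is complete and correct: $s_{0*}\Delta_*M$ is the equalizer for the second ($b$-slot) action of Corollary \ref{deltaright}, which is right translation on the ${}^*H$ factor, so it is $\epsilon\ot M\cong M$, the counit is the inclusion ${}^*\epsilon\ot\Id_M$, and applying $d_1^*$ changes only the module structure, not the underlying morphism.

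The gap is in the second half. You correctly identify what must be computed for \eqref{sigma:equ} --- track the surviving $H$-strand through the counit of $s_{-1}^*\dashv s_{-1*}$ at $\Delta_*M$, then through $\sigma^\Mc_{H,H}$ (where naturality of $E$ and property C1 enter, as in Lemma \ref{monadact}), and couple it to $M$ via $coev_{{}^*H,H}$ and the action --- but you do not carry this out; you explicitly defer the verification ("Once this is verified the lemma follows"). Since \eqref{sigma:equ} is the entire content of the second assertion, the proof as written establishes only the first. Two further points to watch when you do execute it: the sanity check is that the invariants for the \emph{first} action of Corollary \ref{deltaright} contain exactly the elements generalizing $h\mapsto hm$ (classically $(a\cdot\phi)(h)=a^1\phi(S(a^2)h)=\epsilon(a)\phi(h)$ for $\phi(h)=hm$), which pins down the image of the counit; and \eqref{sigma:equ} may well depict $\mathfrak{z}=\varsigma'\circ\varsigma$ rather than $\varsigma'$ alone (compare \eqref{varsigma:eq}, which carries a $\varsigma_M$), in which case your diagram chase must also account for the precomposition with the stable structure coming from \eqref{taucycliciso}; your outline is silent on this.
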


\begin{lemma}\label{multiplication}
The multiplication $m:\Ac^2\to\Ac$ is as follows: let $M\in H_\Mc\mod$ then $m:{}^*H \cdot{}^*H\cdot M\to {}^*H\cdot M$ is given by  \begin{equation}\label{hstarmult}
\includegraphics[height=.7in]{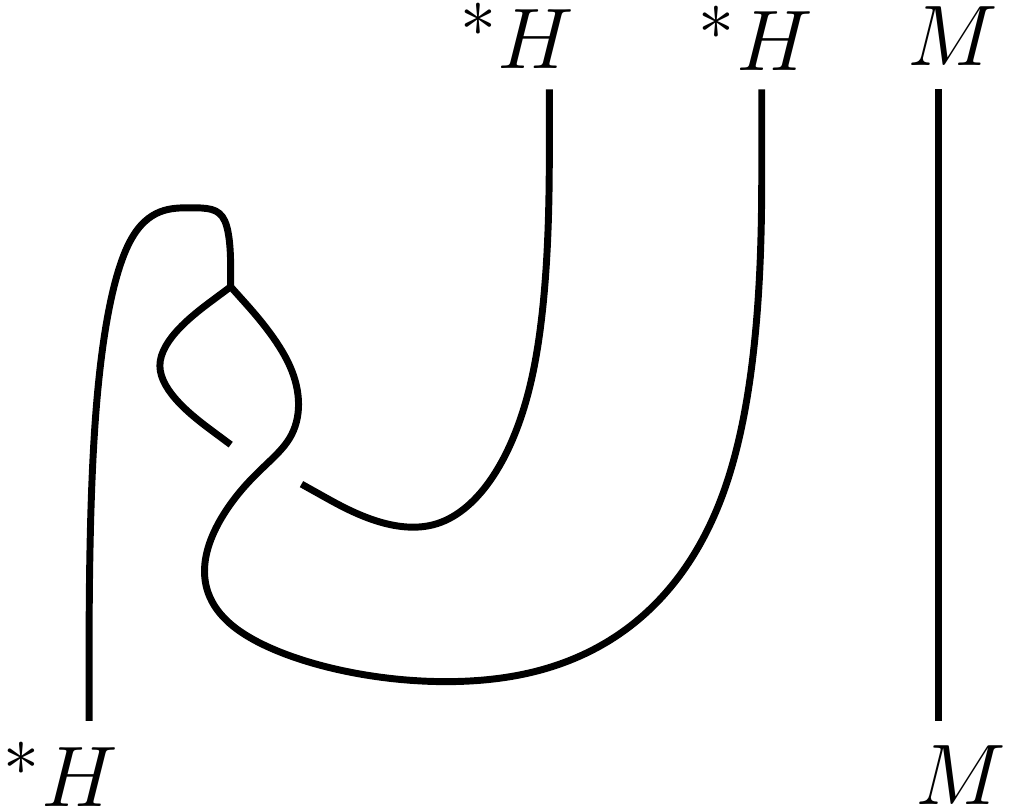}
\end{equation}
\end{lemma}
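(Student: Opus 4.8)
The plan is to read off $m$ directly from the explicit pieces already computed, rather than re-derive the monad structure from scratch. Recall from \eqref{monactstr} that $\Ac=\Delta^*\sigma^\Mc_{H,H}\Delta_*$, so $\Ac^2=\Delta^*\sigma^\Mc_{H,H}\Delta_*\Delta^*\sigma^\Mc_{H,H}\Delta_*$, and by applying Lemma \ref{monadact} twice we identify $\Ac^2(M)\simeq{}^*H\cdot{}^*H\cdot M$; thus $m$ becomes a map ${}^*H\cdot{}^*H\cdot M\to{}^*H\cdot M$ as in the statement. The multiplication is governed by the associativity condition \eqref{associativityofstr}, and, following the limit-to-monad construction of \cite{chern}, it is assembled from the counit of the adjunction $\Delta^*\dashv\Delta_*$ — which under Corollary \ref{deltaright} and Remark \ref{adjunctiondelta} is the evaluation ${}^*1\ot Id:{}^*H\cdot(-)\to(-)$ collapsing an inner $\Delta_*\Delta^*$ — together with the coproduct of $H$ built into $\Delta_*$. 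On the dual side this coproduct reappears as the map ${}^*\Delta$ that fuses the two ${}^*H$ factors into one, so that in the classical case $m$ is simply precomposition with $\Delta$.

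Concretely I would proceed in the following order. First, assemble $\Ac^2(M)$ as an $H$-module by stacking the two copies of the twisted action of Lemma \ref{monadact}, keeping careful track of which crossings are ordinary braidings $\tau$ and which come from the shift $\sigma^\Mc_{H,H}$, whose effect is to twist the $H$-action by the braided-module isomorphism $E_{H,-}$. Second, write the multiplication as the evaluation on the inner ${}^*H$ factor, which collapses the middle $\Delta_*\Delta^*$ via the counit; the coproduct carried by $\Delta_*$ then fuses the two duals as ${}^*\Delta$. Third, use naturality of $E$ in its second variable together with $C1$ to slide the $\sigma^\Mc_{H,H}$-crossing past the coproduct and the antipode, and use $C2$ to merge the two occurring shifts into one. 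Finally, simplify with coassociativity and the anti-comultiplicativity $\Delta S=\tau_{H,H}(S\ot S)\Delta$ of the antipode to arrive at the displayed diagram.

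The main obstacle will be the bookkeeping of crossings: because ${}^*H$ enters $\Ac$ both through the rigid duality and through the shift $\sigma^\Mc_{H,H}$, the same strand can cross another in two inequivalent ways (an ordinary $\tau$ versus an $E$), and the entire content of the lemma is that these reorganize into the single displayed diagram after the simplifications above. I would guard against error with two independent checks. First, the classical specialization $\Bc=\vect$: under $\Ac\simeq\Hom(H,-)$ the multiplication must reduce to $\psi\mapsto\psi\circ\Delta$, so every braiding and antipode factor surviving the computation must disappear in the symmetric case. Second, compatibility with the unit $u$ of Lemma \ref{unitcentral}: the two monad unit axioms relating $m$ and $u$ pin down both the slot in which the counit is inserted and the overall normalization, giving a consistency check on the final formula.
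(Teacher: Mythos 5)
Your plan to identify $\Ac^2(M)\simeq{}^*H\cdot{}^*H\cdot M$ and to check the result against the classical case and the unit axioms is sensible, but the mechanism you give for producing $m$ is not the right one, and the error is directional. In $\Ac^2=\Delta^*\sigma^\Mc_{H,H}\Delta_*\,\Delta^*\sigma^\Mc_{H,H}\Delta_*$ the inner composite is $\Delta_*\Delta^*$, i.e.\ $RL$ for the adjunction $\Delta^*\dashv\Delta_*$. The adjunction supplies a \emph{unit} $Id\to\Delta_*\Delta^*$, not a map $\Delta_*\Delta^*\to Id$; the counit ${}^*1\ot Id$ of Remark \ref{adjunctiondelta} collapses $\Delta^*\Delta_*$, which is the opposite order and does not occur in the middle of $\Ac^2$. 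More to the point, $\Ac=d^*_1d_{0*}$ is a composite of adjoints of \emph{different} face maps ($d^*_1$ is left adjoint to $d_{1*}$, while $d_{0*}$ is right adjoint to $d^*_0$), so its monad structure is not an adjunction monad that one can "collapse in the middle" at all. It comes from the limit-to-monad construction via the zig-zag $d_{0*}d^*_1d_{0*}\leftarrow d^*_2d_{0*}d_{0*}\simeq d^*_2d_{1*}d_{0*}\to d_{1*}d^*_1d_{0*}$, whose essential ingredients you do not touch: the exchange equality $d^*_2d_{0*}=d_{0*}d^*_1$ (which makes the first arrow invertible), the cosimplicial associativity isomorphism $d_{0*}d_{0*}\simeq d_{1*}d_{0*}$ (which is precisely where the nontrivial content of \eqref{hstarmult} lives once $\Ac(M)$ is identified with ${}^*H\cdot M$), and finally the counit of $d^*_1\dashv d_{1*}$, which under these identifications amounts to applying ${}^*\epsilon$ to the left-most ${}^*H$ and is what makes the otherwise complicated last arrow disappear.

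Your proposed safeguards do not close this gap. The two unit axioms constrain the normalization of a candidate formula but do not single it out, and neither they nor the classical specialization verify that your candidate is the multiplication of the monad actually computing the limit — which is the content of the lemma. You have correctly guessed the shape of the answer (a braided, antipode-corrected version of precomposition with $\Delta$, fusing the two ${}^*H$ factors), but the derivation as written starts from a natural transformation that does not exist. To repair it, replace the "collapse the inner $\Delta_*\Delta^*$" step with the zig-zag above, verify the equality $d^*_2d_{0*}=d_{0*}d^*_1$ in this setting, and track the associativity isomorphism through the identification $\Ac(M)\simeq{}^*H\cdot M$ of Lemma \ref{monadact}; the braiding/antipode bookkeeping you describe then applies to that computation.
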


\begin{proof}
Recall that the product on $\Ac$ is obtained via adjunction from \begin{equation}\label{big}d_{0*}d^*_1d_{0*}\leftarrow d^*_2d_{0*}d_{0*}\simeq d^*_2d_{1*}d_{0*}\to d_{1*}d^*_1d_{0*}\end{equation} if the first arrow is invertible. In our case the map $d^*_2d_{0*}\to d_{0*}d^*_1$ is equality, and thus, so is our first arrow.  The associativity isomorphism becomes non-trivial under our identifications that produced $\Ac(M)={}^*H\cdot M$.  More precisely, we have:  \begin{equation*}\label{assoc}
\includegraphics[height=.8in]{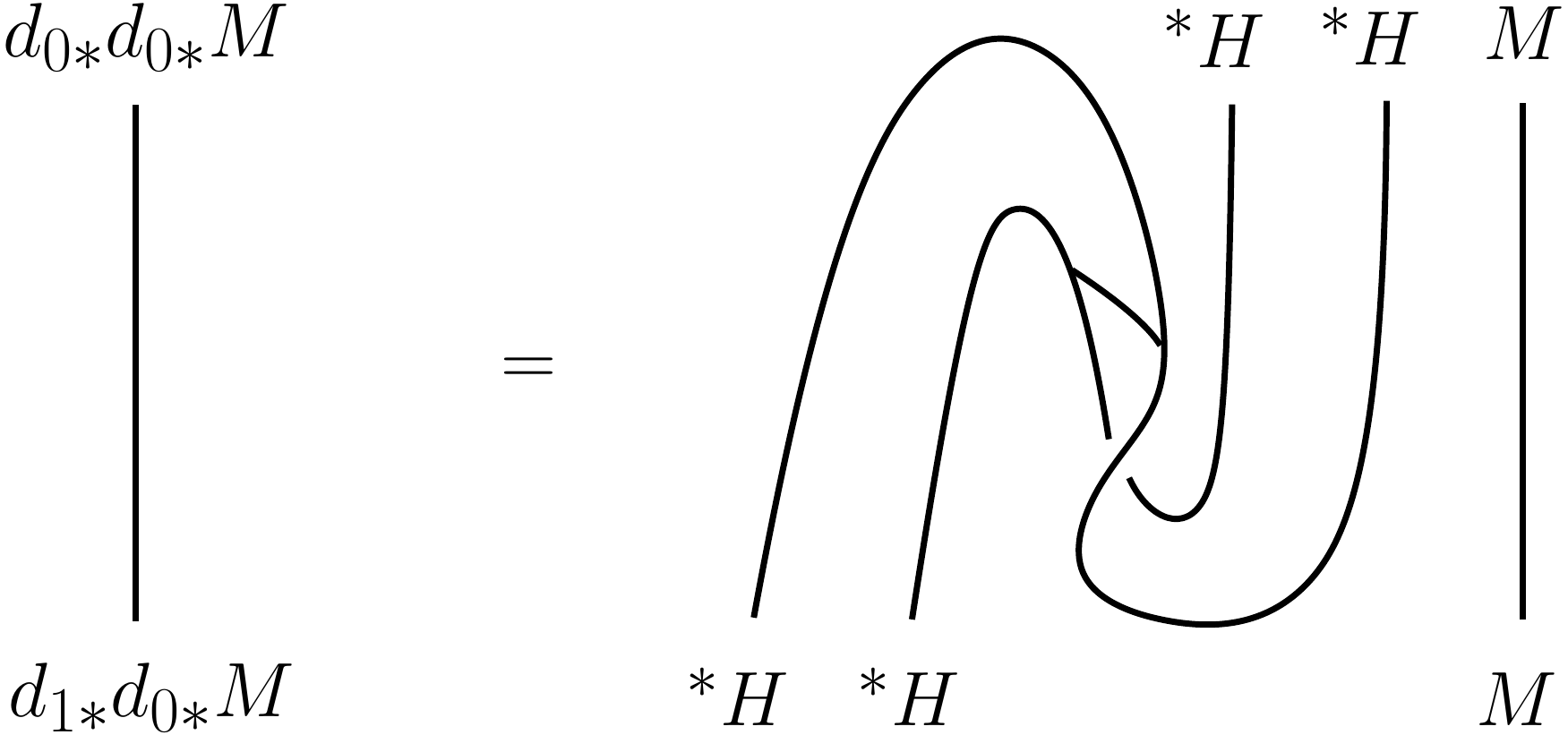}
\end{equation*} The last arrow is complicated: \begin{equation*}
\includegraphics[height=.8in]{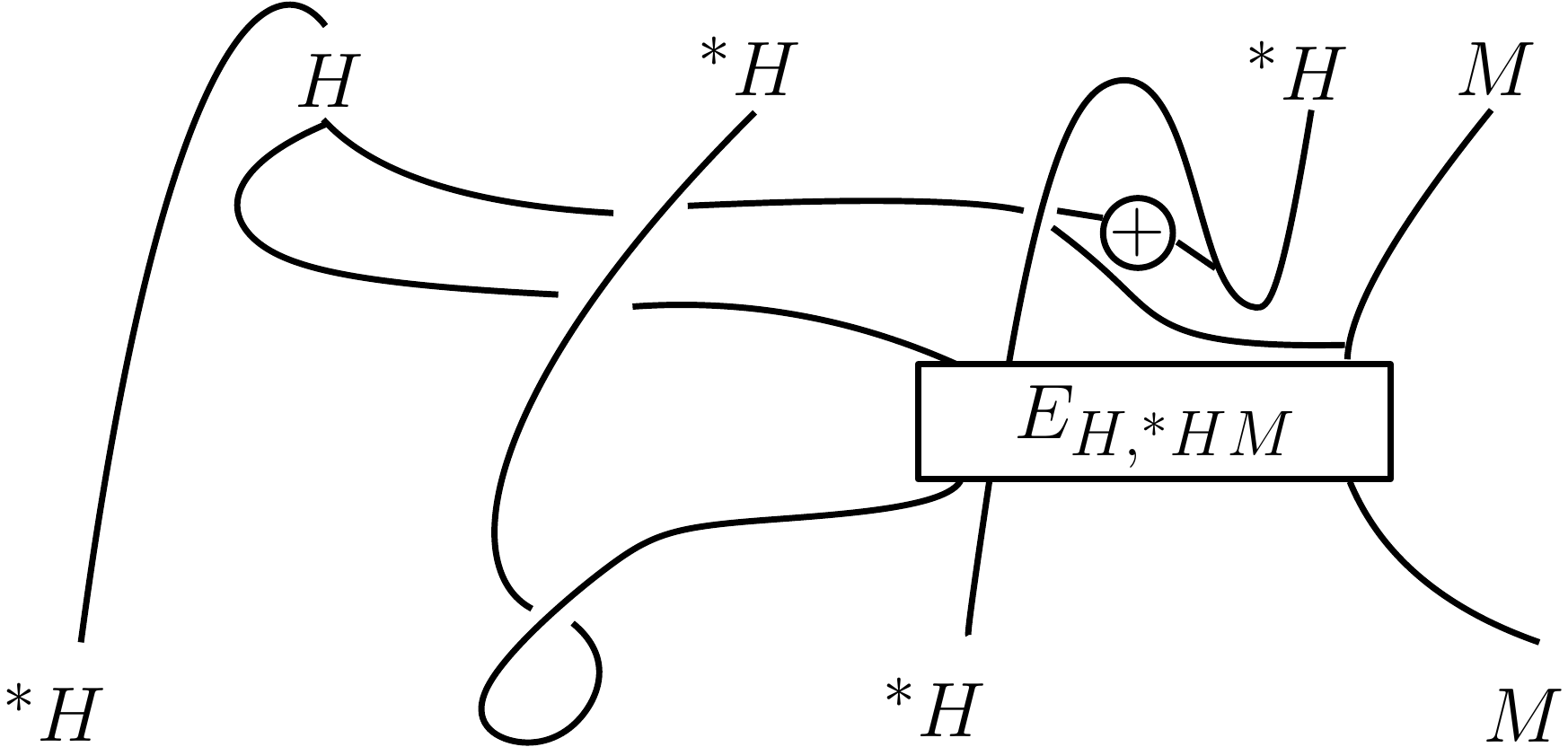}
\end{equation*} but as we do not need \eqref{big} but rather its adjoint that yields $d^*_1d_{0*}d^*_1d_{0*}\to d^*_1d_{0*}$, we need to apply ${}^*\epsilon$ to the left-most ${}^*H$ in the above.  This results in the complicated diagram turning into identity on the ${}^*H\cdot M$, with ${}^*\epsilon$ on the left-most ${}^*H$.  Which turns the associativity diagram into what we claimed.
\end{proof}

\begin{lemma}\label{centralact}
Let $M\in H_\Mc\mod$, then $\Ac(M)$ is a free $\Ac$-module and the action of $\mathfrak{z}\in\Ac$   on $\Ac(M)$ is given  by \begin{equation}
\includegraphics[height=.8in]{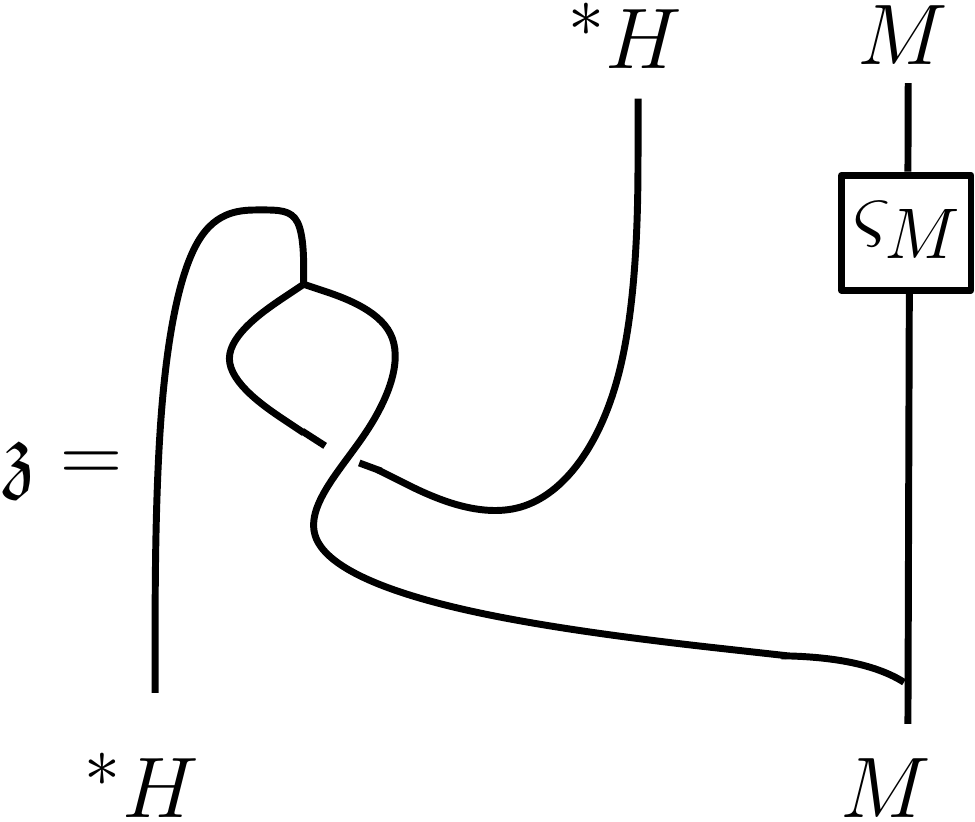}
\end{equation} which generalizes $Hom(H,M)\to Hom(H,M)$ with $\phi(-)\mapsto (-)^2\phi((-)^1)$.
\end{lemma}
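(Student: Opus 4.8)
The plan is to treat this as a two-part statement and dispatch the freeness immediately from general monad theory, reserving all the work for the explicit identification of the $\mathfrak{z}$-action. For the first part, $\Ac(M)$ equipped with the monad multiplication $m_M:\Ac^2(M)\to\Ac(M)$ of Lemma \ref{multiplication} is by construction the free $\Ac$-module on $M$ in the Eilenberg--Moore sense used in \cite{chern}; nothing further is needed. So the entire content is to compute the action of the central element $\mathfrak{z}$ on this free module and match it to the displayed diagram.

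For the second part I would use that, for any $\Ac$-module $(N,a)$, the $\mathfrak{z}$-action is by definition $a\circ\mathfrak{z}_N:N\to\Ac(N)\to N$, where $\mathfrak{z}=\varsigma'\circ\varsigma$ as in \eqref{sigmaact:eq}. Specializing to the free module $N=\Ac(M)$, whose structure map is $a=m_M$, the action I must evaluate is $m_M\circ\varsigma'_{\Ac(M)}\circ\varsigma_{\Ac(M)}$. The first simplification is to rewrite $\varsigma_{\Ac(M)}$: since $\Ac(M)={}^*H\cdot M$ as an object of $\Mc$ (with ${}^*H\in\Bc$ acting on $M$), Definition \ref{def:stablebraided} gives $\varsigma_{\Ac(M)}=\varsigma_{{}^*H\cdot M}=E_{{}^*H,M}\,\varsigma_M$, converting the stable structure on the free module into the braided structure $E$ plus the stable structure on $M$. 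I then substitute the explicit diagram \eqref{sigma:equ} for $\varsigma'$ (evaluated on the object $\Ac(M)$, so that it invokes the $H$-action of Lemma \ref{monadact}, with its $S(a^3)$, $a^1$, $a^2$ legs) and the multiplication diagram \eqref{hstarmult} for $m_M$, producing a single string diagram on ${}^*H\cdot M$.

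The remaining task, and the genuine obstacle, is the string-diagram reduction of this composite to the claimed form generalizing $\phi(-)\mapsto(-)^2\phi((-)^1)$. The target has a bare coproduct and \emph{no} antipode, whereas both $\varsigma'_{\Ac(M)}$ (through Lemma \ref{monadact}) and $m_M$ carry antipodes; the crux is that these cancel. I would push the $E$ and $\varsigma$ past the Hopf-algebra legs using naturality of $E_{-,-}$ in the second variable together with condition $C1$ (exactly as in the proof of Lemma \ref{monadact}), so that the two distinct crossing types are kept straight, and then invoke the antipode axiom $m(S\ot Id_H)\Delta=u\epsilon$ and coassociativity to collapse the surplus antipode and the internal ${}^*H$-leg created by $\varsigma'$. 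Once the antipodes annihilate, the counit from $m_M$ removes the leftmost ${}^*H$ and what survives is precisely one coproduct feeding into the evaluation, i.e. the diagram asserted in the statement. The delicate bookkeeping is ensuring the braidings introduced by $E_{{}^*H,M}$ and those already present in Lemma \ref{monadact} realign so that the antipode cancellation is legitimate; everything else is a routine application of the Hopf and rigidity axioms.
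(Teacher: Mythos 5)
Your proposal is correct and follows essentially the same route as the paper, whose entire proof is to combine Lemmas \ref{monadact}, \ref{unitcentral}, and \ref{multiplication} and simplify; you have merely spelled out that combination (the $\mathfrak{z}$-action on the free module is $m_M\circ\mathfrak{z}_{\Ac(M)}$, evaluated via the explicit diagrams) together with the expected antipode cancellations. The freeness observation is likewise the intended one.
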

\begin{proof}
Combine Lemmas \ref{monadact}, \ref{unitcentral}, \ref{multiplication}, and simplify.
\end{proof}

\subsection{The module/comodule description of $HH_\Mc(\Cc)$ and $HC_\Mc(\Cc)$}
We can now describe $\Ac_{\Cc_0}\mod$  more in line with the usual definition of anti-Yetter-Drinfeld modules.   

\begin{definition}\label{alg:def:hstr}
Let $H$ be a Hopf algebra in $\Bc$ and $\Mc$ a stable braided $\Bc$-module. \begin{itemize}
\item Define the algebra structure on ${}^*H$ by $u={}^*\epsilon: 1\to {}^*H$ and $m:{}^*H\ot{}^*H\to {}^*H$ given by \eqref{hstarmult}.
\item For $M\in H_\Mc\mod$ and $M\in {}^*H_\Mc\mod$ define \begin{equation}\label{varsigma:eq}\varsigma^H_M=\rho_{{}^*\!H,M}\circ\rho_{H,M}\circ coev_{{}^*H,H}\circ\varsigma_M\end{equation}  Note that if the stable structure $\varsigma$ is rescaled, then $\varsigma^H$ is similarly rescaled.
\end{itemize}
\end{definition}

\begin{definition}\label{def:saYD}
Suppose that $\Mc$ is a braided $\Bc$-module. Let  $aYD^H_\Mc$ denote the category of $M\in\Mc$ such that $$M\in H_\Mc\mod\quad\text{and}\quad M\in {}^*H_\Mc\mod$$ with morphisms compatible with both structures.  The two actions are compatible as follows: \begin{equation}\label{main:eq}
\includegraphics[height=1in]{compatibilityandact.pdf}
\end{equation} If $\Mc$ is stable, let $saYD^H_\Mc$ denote the full subcategory of $aYD^H_\Mc$ with $M$ such that  $$\varsigma^H_M=Id_M.$$  Thus, $saYD^H_\Mc=(aYD^H_\Mc)^{\varsigma^H_M}$. 
\end{definition}

The following is immediate from the interpretation  of the limits in Definition \ref{HHHC:def} as modules over a monad $\Ac$ and the subsequent description of $\Ac$ in the above.

\begin{theorem}\label{thm:aYDHH}
Let $H\in\Bc$ be a Hopf algebra and $\mathcal{M}$ be a  braided $\Bc$-module.  Then $$HH_{\mathcal{M}}(H_{\mathcal{B}}\mod)\simeq aYD^H_\Mc$$  and if $\Mc$ is stable then $$HC_{\mathcal{M}}(H_{\mathcal{B}}\mod)\simeq saYD^H_\Mc.$$ 
\end{theorem}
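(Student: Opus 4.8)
The plan is to reduce everything to the monadic description already set up. By the remark following \eqref{monactstr} we know that $HH_\Mc(\Cc)\simeq\Ac_{\Cc_0}\mod$ with $\Cc_0=H_\Mc\mod$, and that when $\Mc$ is stable $HC_\Mc(\Cc)$ is the full subcategory of $\Ac_{\Cc_0}\mod$ on which $\mathfrak{z}\in\Zc(\Ac)$ acts trivially, i.e. on which $a\circ\mathfrak{z}_M=Id_M$ for the $\Ac$-action $a$. Thus the entire task is to translate ``$\Ac$-module'' into the module/comodule language of Definition \ref{def:saYD}, and ``trivial $\mathfrak{z}$-action'' into the condition $\varsigma^H_M=Id_M$, using the explicit computations of $\Ac$ already carried out in Lemmas \ref{monadact}--\ref{centralact}.

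First I would spell out an $\Ac$-module. An object of $\Ac_{\Cc_0}\mod$ is an $M\in\Cc_0=H_\Mc\mod$ --- this is exactly the datum $M\in H_\Mc\mod$ of Definition \ref{def:saYD} --- together with an action $a\colon\Ac(M)\to M$ \emph{in $\Cc_0$} obeying the unit and associativity axioms of a monad action. By Lemma \ref{monadact} we identify $\Ac(M)\simeq{}^*H\cdot M$, so $a$ becomes a map ${}^*H\cdot M\to M$ in $\Mc$. The claim is that the monad structure on $\Ac$ is precisely the algebra structure on ${}^*H$ from Definition \ref{alg:def:hstr}. Indeed, Lemma \ref{unitcentral} identifies the monad unit $u_M$ with ${}^*\epsilon\ot Id_M$, the unit $u={}^*\epsilon$ of ${}^*H$, so $a\circ u_M=Id_M$ is the ${}^*H$-unit axiom; and Lemma \ref{multiplication} identifies the monad multiplication $m\colon\Ac^2\to\Ac$ with the multiplication \eqref{hstarmult} on ${}^*H$, so $a\circ\Ac(a)=a\circ m_M$ is the ${}^*H$-associativity axiom. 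Hence an $\Ac$-action on $M$ is exactly a structure $M\in{}^*H_\Mc\mod$.

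The remaining ingredient is the requirement that $a$ be a morphism in $\Cc_0$, i.e. $H$-linear. Here the $H$-module structure on $\Ac(M)={}^*H\cdot M$ is the one computed in Lemma \ref{monadact} (generalizing $a\cdot\chi\ot m=\chi(S(a^3)-a^1)\ot a^2 m$); demanding that $a$ intertwine this action with the given $H$-action on $M$ is, after unwinding that string diagram, exactly the compatibility \eqref{main:eq}. Since morphisms on both sides are by definition the maps respecting both structures, this yields a functorial isomorphism $\Ac_{\Cc_0}\mod\simeq aYD^H_\Mc$, proving the first statement. For the stable statement I would pass to the cutting-out condition $a\circ\mathfrak{z}_M=Id_M$. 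Writing $\mathfrak{z}=\varsigma'\circ\varsigma$ as in \eqref{sigmaact:eq} and using the description of $\varsigma'$ from Lemma \ref{unitcentral} (built from $coev_{{}^*H,H}$ and the $H$-action), the composite $a\circ\mathfrak{z}_M$ unwinds to $\rho_{{}^*\!H,M}\circ\rho_{H,M}\circ coev_{{}^*H,H}\circ\varsigma_M$, which is precisely $\varsigma^H_M$ of \eqref{varsigma:eq} with $a=\rho_{{}^*\!H,M}$. Hence triviality of $\mathfrak{z}$ is the condition $\varsigma^H_M=Id_M$ defining $saYD^H_\Mc$, giving $HC_\Mc(\Cc)\simeq saYD^H_\Mc$.

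The genuine content sits entirely in the already-proved Lemmas \ref{monadact}--\ref{centralact}; given those, the theorem is a bookkeeping exercise, which is why it is asserted to be immediate. The step most prone to error --- and the one I would check most carefully --- is the claim that $H$-linearity of $a$ reproduces \eqref{main:eq} exactly: one must confirm that the $H$-action on $\Ac(M)$ supplied by Lemma \ref{monadact} is the one implicitly drawn in \eqref{main:eq}, with no stray antipode, braiding, or $E_{H,M}$ discrepancy. The analogous care is needed in matching $a\circ\mathfrak{z}_M$ against \eqref{varsigma:eq} for the stable case.
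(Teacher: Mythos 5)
Your proposal is correct and follows exactly the route the paper intends: the paper's "proof" is the single sentence that the theorem is immediate from the monadic description of the limit together with Lemmas \ref{monadact}, \ref{unitcentral}, \ref{multiplication}, and \ref{centralact}, and your write-up simply fills in that bookkeeping (monad structure of $\Ac$ equals the ${}^*H$-algebra structure, $H$-linearity of the action equals \eqref{main:eq}, triviality of $\mathfrak{z}$ equals $\varsigma^H_M=Id_M$). No discrepancy with the paper's argument.
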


Let us compare our Definition \ref{def:saYD} to the special case found in \cite{balanced}.  More precisely, we recall the definition of a $\theta$-twisted pair in  involution.  Note that we dropped the modular part ($\delta(\sigma)=1$), as it is not general enough.

\begin{definition}
Let $\theta$ be a twist on $\Bc$, $\delta$ a character of $H$, and $\sigma$ a group-like in $H$.  The pair $(\delta,\sigma)$ is a $\theta$-twisted pair in  involution if  \begin{equation*}
\includegraphics[height=1in]{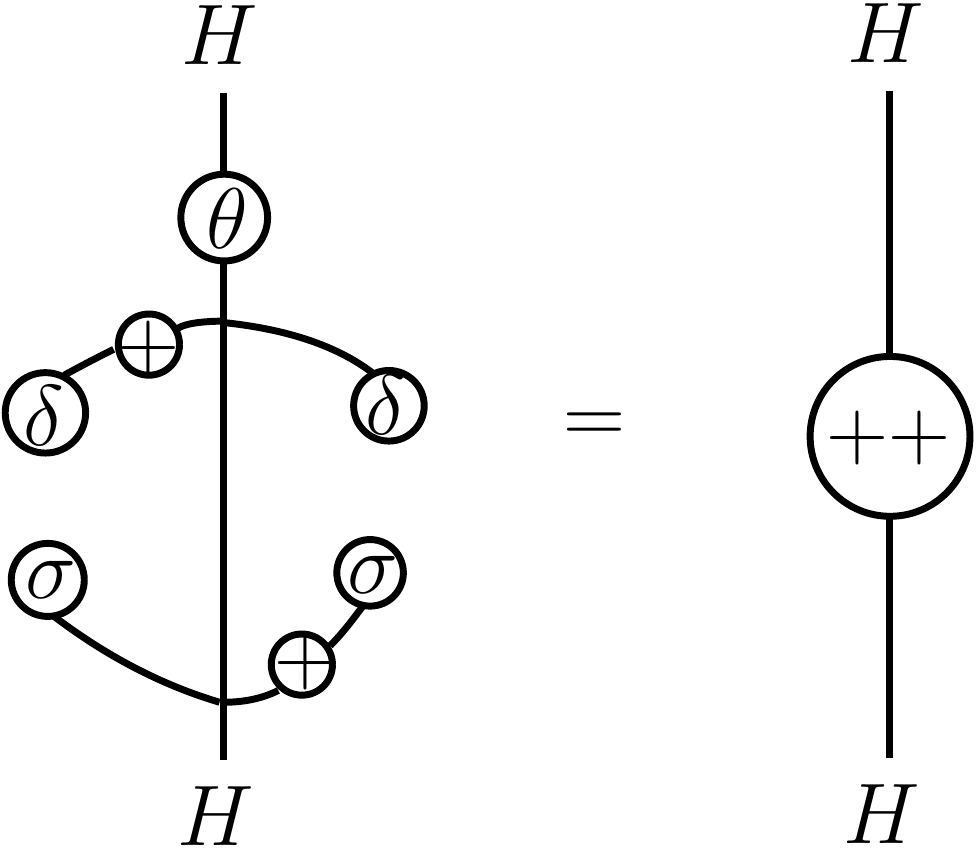}
\end{equation*}
\end{definition}

We will now consider ``1-dimensional" elements in $HH_{\Bc_\varsigma}(H_\Bc\mod)$ and their stable versions.  Namely, let $x\in\Bc^\times$ have the additional structure of an anti-Yetter-Drinfeld module with coefficients in $\Bc_\varsigma$, i.e., $x\in HH_{\Bc_\varsigma}(H_\Bc\mod)$.  Recall $\varsigma^\dagger$ from Remark \ref{pair:rem} and $\phi^x$ from \eqref{phidef}.

\begin{lemma}\label{lem:modpair}
A ``1-dimensional" $x\in HH_{\Bc_\varsigma}(H_\Bc\mod)$ corresponds to a triple $(x,\delta,\sigma)$ such that $x\in\Bc^\times$, $\delta$ is a character of $H$, $\sigma$ is a group-like in $H$ and $(\delta,\sigma)$ is a $\theta$-twisted pair in  involution, where $$\theta=(\varsigma^\dagger \phi^{x})^{-1}.$$  This element is stable if $$\delta(\sigma)=\varsigma_x^{-1}.$$ 
\end{lemma}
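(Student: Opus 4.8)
The plan is to unwind Definition \ref{def:saYD} applied to the specific stable braided module $\Mc=\Bc_\varsigma$, in the ``one-dimensional'' case where the underlying object $x$ is invertible, and match the resulting data with the notion of a $\theta$-twisted pair in involution. First I would use Theorem \ref{thm:aYDHH} to identify $HH_{\Bc_\varsigma}(H_\Bc\mod)$ with $aYD^H_{\Bc_\varsigma}$, so that an element is an $x\in\Bc_\varsigma$ carrying both an $H$-action and a ${}^*H$-action, satisfying the compatibility \eqref{main:eq}. For $x$ invertible, an $H$-module structure $H\ot x\to x$ is the same as a morphism $H\to x\ot x^{-1}\cong 1$, i.e.\ a character $\delta$ of $H$; dually, a ${}^*H$-module structure encodes (via the algebra structure on ${}^*H$ of Definition \ref{alg:def:hstr}, which is built from $m$ and $S$) a group-like $\sigma\in H$. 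The bulk of the routine work is to verify that these two identifications are the standard ones and to compute what the ${}^*H$-algebra structure \eqref{hstarmult} becomes on a one-dimensional object.

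The core step is to translate the compatibility condition \eqref{main:eq} into the twisted-pair-in-involution equation. Here I would substitute the explicit $\Bc_\varsigma$-structures: recall from Section \ref{sec:decompo} that on $\Bc_\varsigma$ the braided structure is $E_{X,M}=\tau^{-2}_{X,M}\varsigma_X$, and the ${}^*H$-action and $H$-action enter \eqref{main:eq} through the braided module structure $E_{H,x}$. Since $x$ is invertible, every crossing $\tau_{-,x}$ contributes a scalar via $\phi^x$ (as in \eqref{phidef}), and $\varsigma$ contributes $\varsigma_x$ and, after passing through the antipode and the duality maps built into ${}^*H$, a factor involving $\varsigma^\dagger_{X}={}^*\varsigma_{X^*}$. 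The goal is to show that after collecting these scalars, \eqref{main:eq} becomes exactly the defining diagram of a $\theta$-twisted pair in involution with $\theta=(\varsigma^\dagger\phi^x)^{-1}$; the appearance of the inverse and of $\varsigma^\dagger$ rather than $\varsigma$ comes from the fact that the relevant crossing in \eqref{main:eq} involves $x$ on the dualized side and is traversed in the opposite sense to the one defining $\theta$ in \eqref{eq:twist1}.

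For the stability claim I would specialize $\varsigma^H_x=Id_x$ from Definition \ref{def:saYD}, using the explicit formula \eqref{varsigma:eq}: $\varsigma^H_x=\rho_{{}^*H,x}\circ\rho_{H,x}\circ coev_{{}^*H,H}\circ\varsigma_x$. On the one-dimensional object the two actions $\rho_{H,x}$ and $\rho_{{}^*H,x}$ contract through $coev_{{}^*H,H}$ to produce precisely the pairing of the character $\delta$ against the group-like $\sigma$, i.e.\ the scalar $\delta(\sigma)$, while $\varsigma_x$ contributes the scalar $\varsigma_x$. Setting the composite equal to the identity then yields $\delta(\sigma)\varsigma_x=1$, i.e.\ $\delta(\sigma)=\varsigma_x^{-1}$, as claimed. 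I would also note, as recorded in Definition \ref{alg:def:hstr}, that rescaling $\varsigma$ rescales $\varsigma^H$ compatibly, so the stability condition is well-posed.

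The main obstacle I anticipate is bookkeeping the scalars and the placement of duals in the passage from \eqref{main:eq} to the twisted-pair diagram: one must be careful that the ${}^*H$-action, built from the antipode $S$ and the evaluation/coevaluation of the rigid structure, produces a genuine group-like (rather than merely a grouplike-up-to-the-antipode), and that the braided structure of $\Bc_\varsigma$ contributes $\varsigma^\dagger$ with the correct orientation so that $\theta=(\varsigma^\dagger\phi^x)^{-1}$ rather than some neighbouring variant. Verifying that $\theta$ so defined is genuinely a twist on $\Bc$ (satisfying \eqref{eq:twist1}) is a consistency check I would fold in, using that $\varsigma$ is an anti-twist (Lemma \ref{class:lem}), that $\phi^x\in\hat G$ is monoidal, and that the dagger involution of Remark \ref{pair:rem} sends anti-twists to anti-twists; these combine to show $(\varsigma^\dagger\phi^x)^{-1}$ satisfies the twist relation. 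All of these are string-diagram manipulations of the type already carried out in Lemmas \ref{monadact}--\ref{centralact}, so the difficulty is in precision rather than in any new idea.
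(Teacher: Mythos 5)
Your proposal follows essentially the same route as the paper: extract $\delta$ from the $H$-action and $\sigma$ from the ${}^*H$-action on the invertible object $x$, read off stability as $\delta(\sigma)\varsigma_x=1$ from \eqref{varsigma:eq}, and reduce the compatibility \eqref{main:eq} to the $\theta$-twisted-pair-in-involution diagram by tracking the scalars contributed by $\phi^x$ and $\varsigma^\dagger$. The paper's proof is exactly this, carried out by a short string-diagram simplification of \eqref{main:eq}, so there is nothing to add beyond the bookkeeping you already anticipate.
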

\begin{proof}
We obtain $\delta$ from the $H$-action and $\sigma$ from ${}^*H$-action.  Stability translates to $\delta(\sigma)\varsigma_x=1$.  Using \eqref{main:eq} we get, after simplifying, \begin{equation*}
\includegraphics[height=1in]{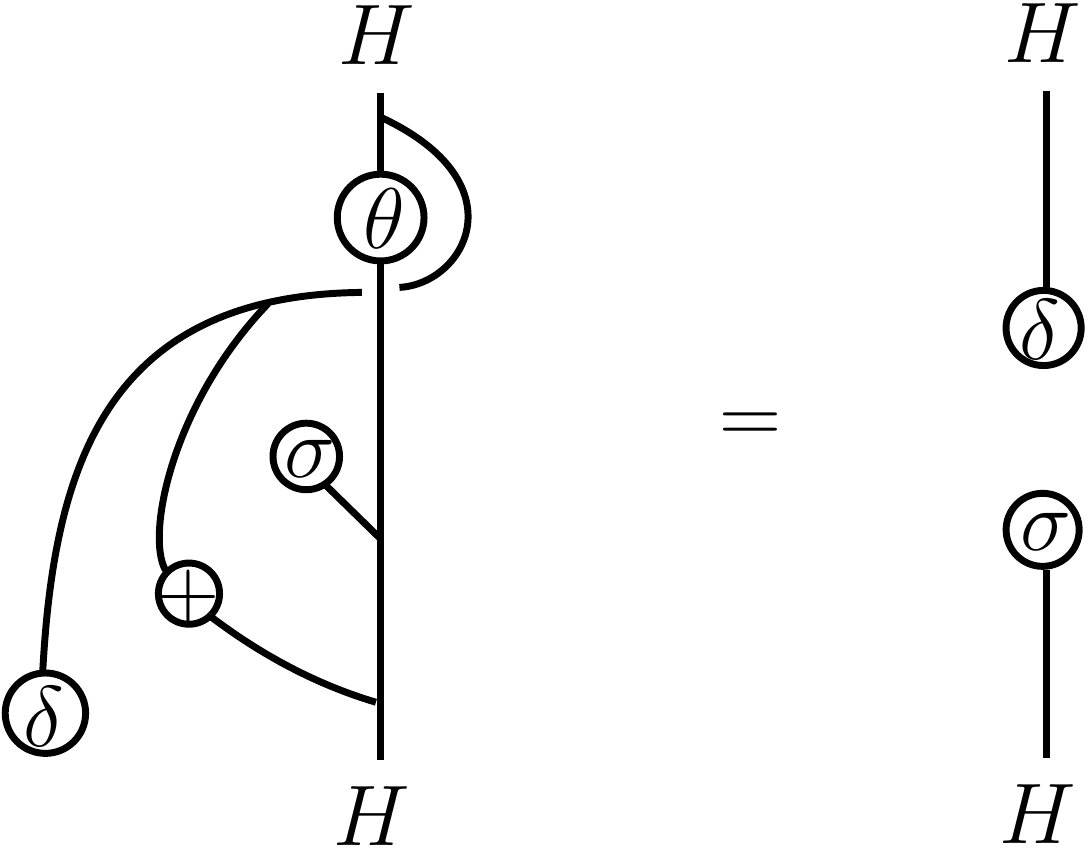}
\end{equation*}  Applying \begin{equation*}
\includegraphics[height=1in]{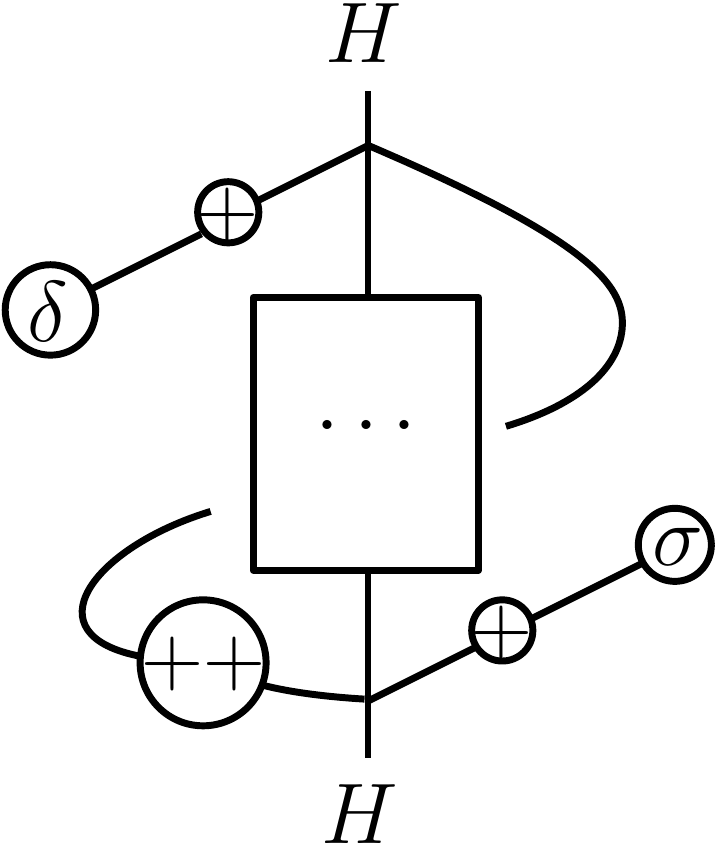}
\end{equation*} to both sides yields the result. 
\end{proof}
\begin{remark}
The original definition of a $\theta$-twisted modular pair in  involution thus covers the case of $x=1$.  If we drop the stability condition it does cover an arbitrary case as well since $x\in\Bc_\varsigma$ corresponds to $1\in\Bc_{\varsigma\phi^{x^{-1}}}$.  Unless $\varsigma_x=1$, the original definition misses the stable ``1-dimensional" elements based on $x\in\Bc^\times$. 
\end{remark}

\subsection{Cohomology of algebras from $M\in HC_\Mc(H_\Bc\mod)$ and $K\in \Mc^\varsigma$}\label{sec:frommtocoh}

The main use of coefficients is that they yield cohomology theories for algebras.  Let $A$ be an algebra in $H_\Bc\mod$.  In general,  $M\in HC_\Mc(H_\Bc\mod)$ will not yield a cohomology of $A$;  it does if, for example, $\Mc=HH(\Bc)$.   To obtain cohomology in all cases we also require a $K\in \Mc^\varsigma$.   While we do not focus on cohomology theories for algebras in this paper, the considerations presented here give us Proposition \ref{half:prop}, which is one half of the main result.

\begin{lemma}\label{isolemma}
Let $A,B\in Alg(\Bc)$, $\Mc$ a braided $\Bc$-module. Let  $V\in A_\Bc\mod$, $W\in B_\Bc\mod$, and $K\in \Mc$ then we have a canonical isomorphism in $B\ot^\tau A_\Mc\mod$: $$\sigma^K_{W,V}:=E_{W,K}\tau^{-1}_{W,V}:(W\ot^\tau V)\cdot K\to \sigma^\Mc_{A,B}((V\ot^\tau W)\cdot K).$$  Furthermore, if $\Mc$ is stable  and $K\in\Mc^\varsigma$ then $\sigma^K_{V,1}=\varsigma_{V\cdot K}$. 
\end{lemma}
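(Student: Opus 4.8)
The plan is to verify the two assertions in turn. For the isomorphism claim, I would first record that on underlying objects of $\Mc$ the map is the composite $(V\cdot E_{W,K})\circ\tau^{-1}_{W,V}$, sending $W\cdot(V\cdot K)$ to $V\cdot(W\cdot K)$ (the displayed $E_{W,K}$ acts on the $W,K$ strands with $V$ a spectator); since $\tau^{-1}_{W,V}$ and $E_{W,K}$ are both invertible, this is an isomorphism in $\Mc$, and once it is shown to be a module map its inverse is automatically one, so it will be an isomorphism in $B\ot^\tau A_\Mc\mod$. The substance is therefore the module-map property, and I would set up the two structures explicitly: on the source $(W\ot^\tau V)\cdot K$ the algebra $B$ acts through $\rho^W_B$ on the outer factor $W$ while $A$ acts on $V$ after being braided past $W$; on the target $\sigma^\Mc_{A,B}\big((V\ot^\tau W)\cdot K\big)={}^B\big((V\ot^\tau W)\cdot K\big)$ the $A$-action is unchanged (acting on the outer factor $V$), while by Lemmas \ref{L1} and \ref{L2} the $B$-action is the twisted action $\hat\rho_B=\rho_B\,E_{B,-}$, acting on $W$ after being braided past $V$.

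I would then check $A$-linearity and $B$-linearity separately. The $A$-linearity is the easy half: $\tau^{-1}_{W,V}$ turns ``braid $A$ past $W$, then act on $V$'' into ``act on $V$ directly'' by naturality of the braiding, and $V\cdot E_{W,K}$ commutes with the $A$-action on the outer factor $V$ by bifunctoriality of the $\Bc$-action, so no use of C1 or C2 is needed. The crux, and the step I expect to be the main obstacle, is $B$-linearity, where the twist $E_{B,-}$ hidden in $\hat\rho_B$ must be reconciled with the $E_{W,K}$ of our map. Here I would expand $E_{B,\,V\cdot(W\cdot K)}$ by \textbf{C1} into $\tau^{-1}_{V,B}\,E_{B,W\cdot K}\,\tau^{-1}_{B,V}$, moving it past the spectator $V$, and then apply naturality of $E_{-,K}$ along $\rho^W_B\colon B\ot W\to W$ together with \textbf{C2} in the form $E_{B\ot W,K}=E_{B,W\cdot K}\,E_{W,K}$; these combine into the single identity $E_{W,K}\circ(\rho^W_B\cdot K)=(\rho^W_B\cdot K)\circ E_{B,W\cdot K}\circ E_{W,K}$, which is exactly what is required to transport the source $B$-action onto the twisted target action, the leftover braidings matching by the hexagon coherence of $\tau$. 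I would carry this out as one string-diagram manipulation.

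For the stable specialization I would substitute $W=V$ and take the second slot to be the unit (so the ambient algebra there is $1$), giving $\sigma^K_{V,1}=E_{V,K}\,\tau^{-1}_{V,1}$. The braiding with the unit is trivial by unit coherence, so $\tau^{-1}_{V,1}=\mathrm{Id}$ and $\sigma^K_{V,1}=E_{V,K}$. Finally, Definition \ref{def:stablebraided} gives $E_{V,K}=\varsigma_{V\cdot K}\,\varsigma_K^{-1}$, and the hypothesis $K\in\Mc^\varsigma$ means $\varsigma_K=\mathrm{Id}_K$; hence $\sigma^K_{V,1}=\varsigma_{V\cdot K}$, as claimed.
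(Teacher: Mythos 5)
Your proposal is correct and follows essentially the same route as the paper: both reduce the claim to checking $A$- and $B$-linearity separately, with the $A$-case handled by naturality of the braiding, and the $B$-case handled by expanding the twisted action via C1, then combining naturality of $E_{-,K}$ along $\rho_{B,W}$ with C2 in the form $E_{B\ot W,K}=E_{B,W\cdot K}E_{W,K}$; the stable specialization $\sigma^K_{V,1}=E_{V,K}=\varsigma_{V\cdot K}\varsigma_K^{-1}=\varsigma_{V\cdot K}$ is identical.
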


\begin{proof}
It is clear that both sides are $B\ot^\tau A$-modules in $\Mc$ so it suffices to compare the $A$ and $B$ actions.  For $A$-action we have:\begin{equation*}
\includegraphics[height=.7in]{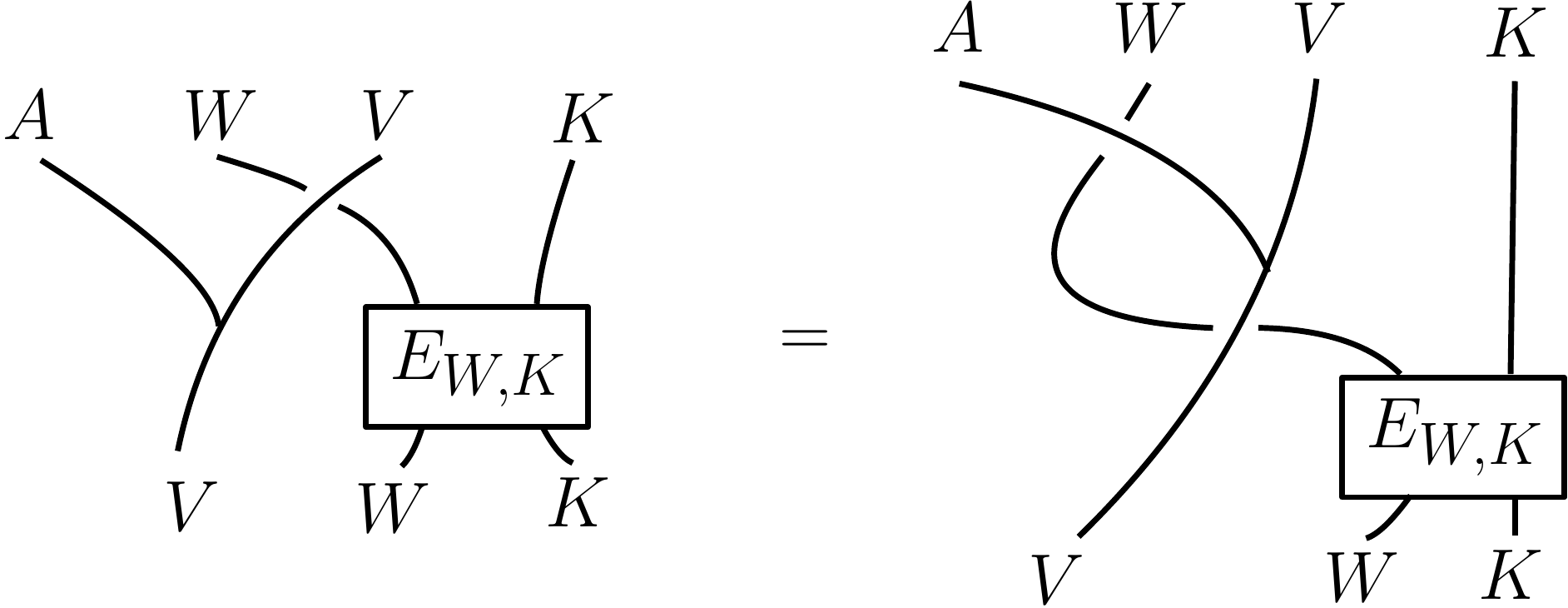}
\end{equation*}

For $B$-action we need to check that:\begin{equation*}
\includegraphics[height=.8in]{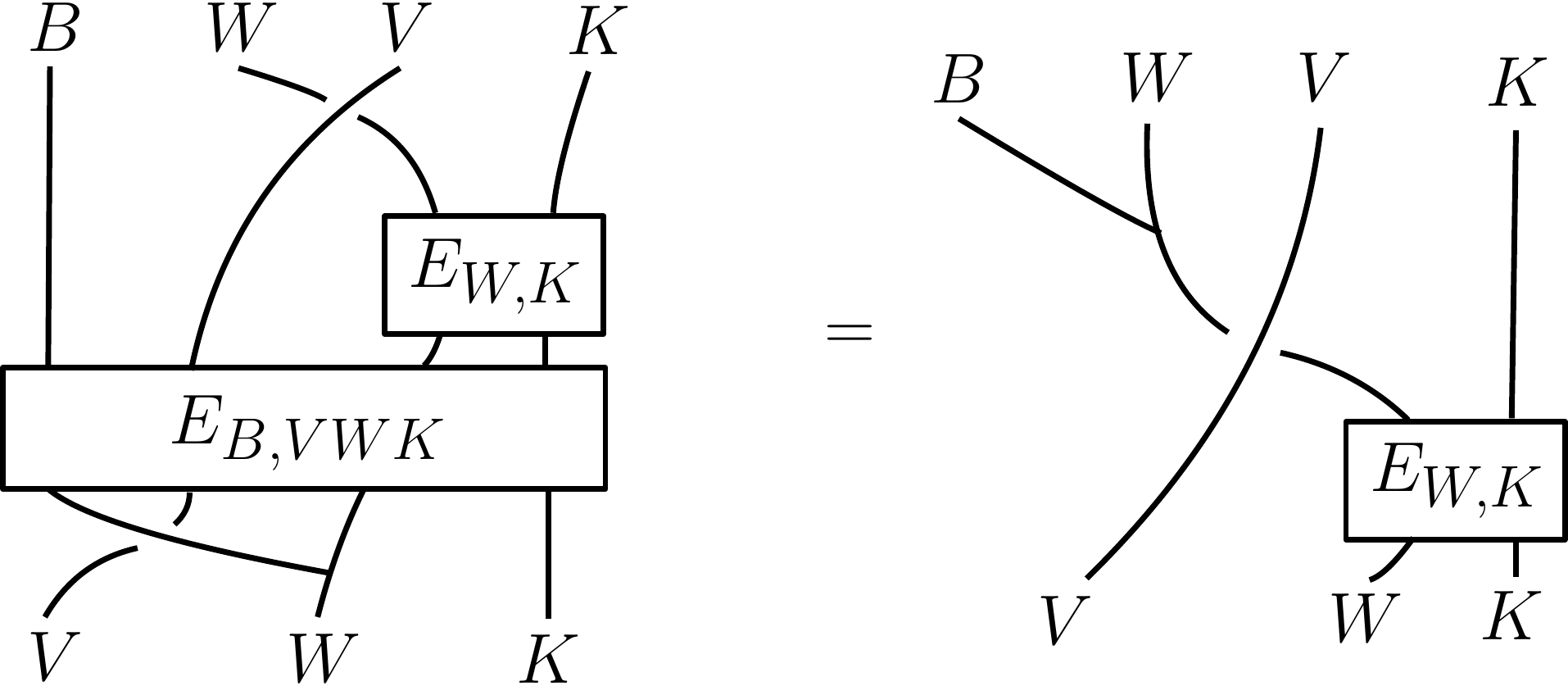}
\end{equation*} but the LHS is \begin{align*}
\rho_{B,W}&\tau_{B,V}E_{B,VWK}E_{W,K}\tau^{-1}_{W,V}=\rho_{B,W}\tau_{B,V}\tau^{-1}_{V,B}E_{B,WK}\tau^{-1}_{B,V}E_{W,K}\tau^{-1}_{W,V}\\
&=\rho_{B,W}E_{B,WK}E_{W,K}\tau^{-1}_{B,V}\tau^{-1}_{W,V}=\rho_{B,W}E_{BW,K}\tau^{-1}_{BW,V}=E_{W,K}\tau^{-1}_{W,V}\rho_{B,W}
\end{align*} which is the RHS.  For the last statement: $\sigma^K_{V,1}=E_{V,K}=\varsigma_{VK}\varsigma^{-1}_K=\varsigma_{VK}$.
\end{proof}

Let $M\in\Cc_1$, i.e., $M\in (H_0\ot^\tau {H_1})_\Mc\mod$, we have a natural isomorphism \begin{equation}\label{twist0}\sigma_M^\Ac:\Ac(\Delta^*\tau_1^*(M))\to \Ac(\Delta^* (M))\end{equation} in $\Ac_{\Cc_0}\mod$ that arises automatically by adjunctions, i.e., we follow the $Id$ through the sequence of isomorphisms below:\begin{align*}H&om_{\Ac}(\Ac(d^*_0 M),\Ac(d^*_0 M))\simeq Hom_{\Cc_0}(d^*_0 M,\Ac(d^*_0 M))\simeq Hom_{\Cc_1}(M,d_{0*}\Ac(d^*_0 M))\\
&\to Hom_{\Cc_1}(M,d_{1*}\Ac(d^*_0 M))\simeq Hom_{\Cc_0}(d^*_1 M,\Ac(d^*_0 M))\simeq Hom_{\Ac}(\Ac(d^*_1 M),\Ac(d^*_0 M))
\end{align*}

Explicitly:
\begin{equation}\label{tw1}
\includegraphics[height=.8in]{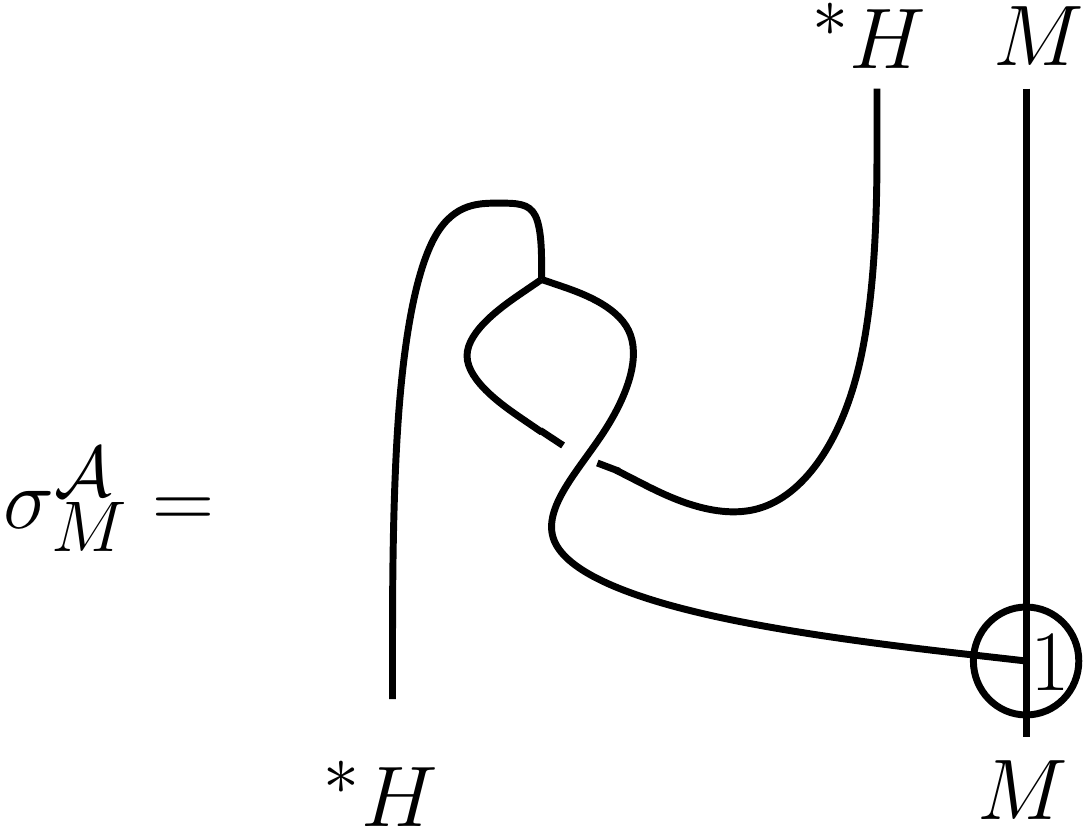} 
\end{equation} where $1$ indicates the use of the second $H$-action.

\begin{definition}\label{tracestructure}
Define for $V,W\in H_\Bc\mod$ and $K\in \Mc$ an isomorphism in $\Ac_{\Cc_0}\mod$: \begin{equation}\label{theflip}
\iota^K_{V,W}:=\sigma^\Ac_{W\ot^\tau V\cdot K}\circ \Ac\Delta^*(\sigma^K_{V,W}):\Ac(\Delta^*(V\ot^\tau W)\cdot K)\to \Ac(\Delta^*(W\ot^\tau V)\cdot K)
\end{equation} \end{definition}

\begin{lemma}\label{iota:lem}
If $K\in\Mc$ is stable, then the isomorphism  $\iota^K$ equips the functor $$\Ac^K:=\Ac(-\cdot K): H_\Bc\mod\to \Ac_{\Cc_0}\mod$$ with a  $\mathfrak{z}$-trace structure.  
\end{lemma}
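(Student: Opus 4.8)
The plan is to check, one at a time, the axioms making the natural family $\iota^K_{V,W}$ of Definition \ref{tracestructure} a $\mathfrak{z}$-trace structure on $\Ac^K$: that each $\iota^K_{V,W}$ is an isomorphism \emph{in} $\Ac_{\Cc_0}\mod$, that the family is natural in $V$ and $W$, that it obeys the trace/shadow hexagon (the cyclic cocycle coherence), and that the degenerate rotation $\iota^K_{V,\one}$ is forced to equal the $\mathfrak{z}$-action on the free module $\Ac^K(V)=\Ac(V\cdot K)$. The first point is bookkeeping: by \eqref{theflip}, $\iota^K_{V,W}=\sigma^\Ac_{(W\ot^\tau V)\cdot K}\circ\Ac\Delta^*(\sigma^K_{V,W})$, where $\sigma^\Ac$ of \eqref{twist0} is an $\Ac$-module map by its very construction, and $\Ac\Delta^*(\sigma^K_{V,W})$ is $\Ac$ applied to a $\Cc_0$-morphism and hence a morphism of free $\Ac$-modules; both are isomorphisms, so $\iota^K_{V,W}$ is an isomorphism in $\Ac_{\Cc_0}\mod$.

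Next I would dispatch naturality. Since $\sigma^K_{V,W}=E_{V,K}\tau^{-1}_{V,W}$ (Lemma \ref{isolemma}) is natural in $V$ and $W$ by naturality of the braided module structure $E$ and of $\tau$, its image $\Ac\Delta^*(\sigma^K_{V,W})$ is natural as well; combined with the naturality of $\sigma^\Ac$, which is immediate from its adjunction definition (cf. the naturality of $\sigma^\Mc$ in Remark \ref{nat:rem}), this yields naturality of $\iota^K$.

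The bulk of the work is the hexagon. For a triple $V,W,U$ I must compare the two cyclic routes out of $\Ac^K((V\pr W)\pr U)$, i.e. the compatibility of $\iota^K$ with the associativity constraints on the threefold product. The two layers of $\iota^K$ decouple here. On the $\sigma^K$-layer the required identity is the hexagon for $\sigma^K_{V,W}=E_{V,K}\tau^{-1}_{V,W}$, which follows from condition $C2$ for $E$ together with the braid relation for $\tau$. On the $\sigma^\Ac$-layer the required identity is precisely the associativity of $\sigma^\Mc$ in its second component, namely Lemma \ref{L3} (${}^B({}^C\!M)={}^{B\ot^\tau C}\!M$), transported through the adjunctions defining $\sigma^\Ac$; this is what makes iterating the twist over $W$ and then $U$ agree with the single twist over $W\ot^\tau U$. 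Assembling the two layers and commuting the associator through $\sigma^\Ac$ by naturality, the two routes coincide. I expect this to be the main obstacle: one must track the adjunction-defined $\sigma^\Ac$ (explicit in \eqref{tw1}) through a reassociation and verify its interaction with the explicit braided flip $\sigma^K$ is coherent. The cyclic relations already established for the diagram $\Cc_\bullet$ (Lemma \ref{rels}, Proposition \ref{fullcyc}) should streamline this, but the string-diagram matching guided by Lemma \ref{L3} is the delicate part.

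Finally, the $\mathfrak{z}$-compatibility, where stability of $K$ enters essentially. Putting $W=\one$ and using $\varsigma_K=\Id$, Lemma \ref{isolemma} gives $\sigma^K_{V,\one}=E_{V,K}=\varsigma_{V\cdot K}$, so $\iota^K_{V,\one}=\sigma^\Ac_{(\one\ot^\tau V)\cdot K}\circ\Ac\Delta^*(\varsigma_{V\cdot K})$. I would then identify this automorphism of $\Ac^K(V)=\Ac(V\cdot K)$ with the $\mathfrak{z}$-action by direct comparison of diagrams: the explicit $\sigma^\Ac$ of \eqref{tw1} precomposed with $\varsigma_{V\cdot K}$ is exactly the description of $\mathfrak{z}$ acting on the free module $\Ac(V\cdot K)$ furnished by Lemma \ref{centralact}. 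This matching shows the degenerate cyclic rotation is twisted precisely by $\mathfrak{z}$, completing the verification that $\iota^K$ is a $\mathfrak{z}$-trace structure.
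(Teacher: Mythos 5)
Your proposal follows essentially the same route as the paper: the substance is the trace composition identity $\iota^K_{V\pr W,T}=\iota^K_{W,T\pr V}\iota^K_{V,W\pr T}$, which the paper likewise reduces by string-diagram matching to $E_{W,K}\tau^{-1}_{W,V}E_{V,K}\tau^{-1}_{V,W}=E_{W,K}E_{V,WK}=E_{VW,K}$ (note you need $C1$ as well as $C2$ here, to absorb the $\tau^{-1}$'s into $E_{V,WK}$), followed by the identification of $\iota^K_{V,1}$ with the $\mathfrak{z}$-action via Lemmas \ref{centralact} and \ref{isolemma}, stability of $K$ entering only in that last step. Your additional checks (isomorphism in $\Ac_{\Cc_0}\mod$, naturality) are correct bookkeeping that the paper leaves implicit.
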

\begin{proof}
Namely, for $V,W,T\in H_\Bc\mod$ we need (recall \eqref{productinh}): $$\iota^K_{V\pr W, T}=\iota^K_{W, T\pr V}\iota^K_{V,W\pr T}$$ as isomorphisms from $\Ac^K(V\pr W\pr T)$ to $\Ac^K(T\pr V\pr W)$.  Note that the RHS is \begin{equation*}
\includegraphics[height=1in]{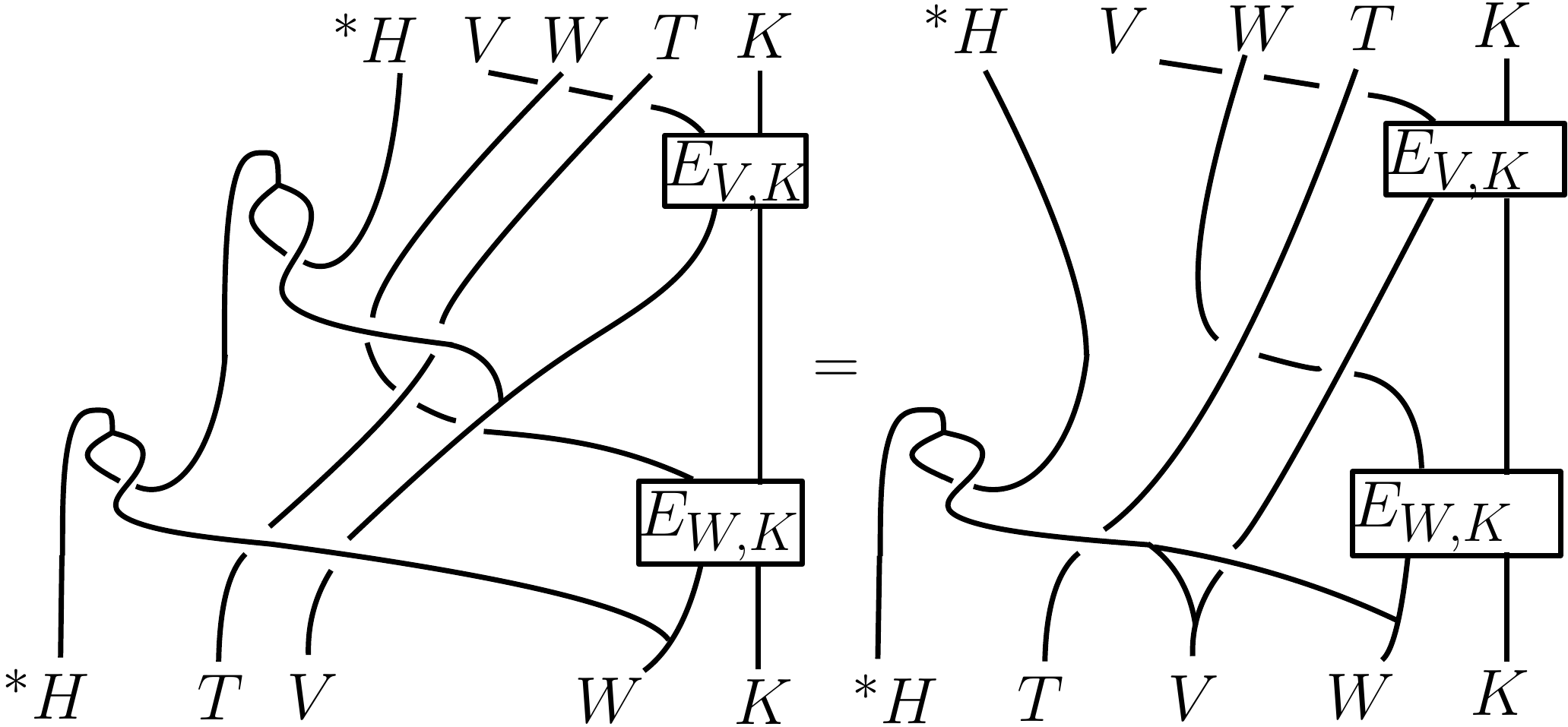} 
\end{equation*} while the LHS is \begin{equation*}
\includegraphics[height=1in]{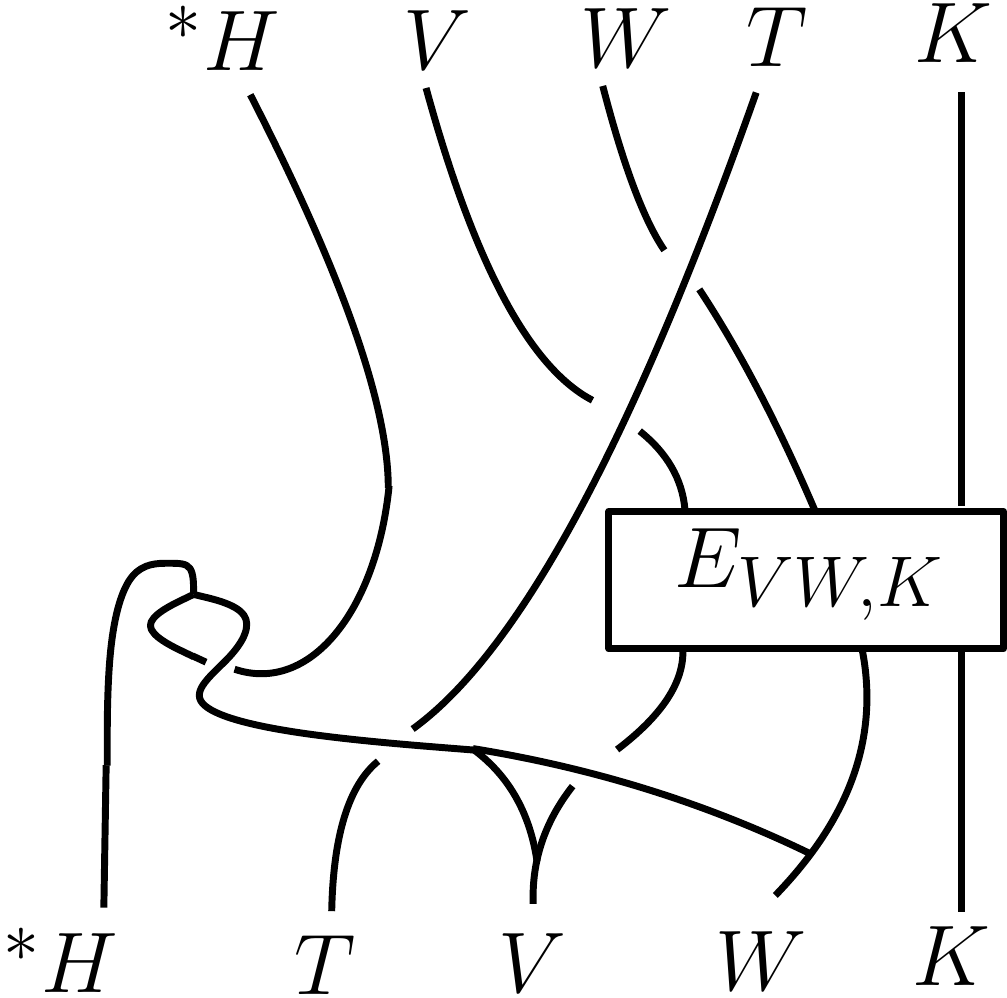} 
\end{equation*} so that it suffices to show that $E_{W,K}\tau^{-1}_{W,V}E_{V,K}\tau^{-1}_{V,W}=E_{W,K}E_{V,WK}=E_{VW,K}$.

It remains to show that  $$\iota^K_{V,1}:\Ac^K(V)\to \Ac^K(V)=\text{the action of } \mathfrak{z}\in\Ac$$ which follows from Lemmas \ref{centralact} and \ref{isolemma}.  Note that it is only the last statement that requires the stability of $K$. 
\end{proof}

\begin{proposition}\label{prop:contra}Let $K\in\Mc^\varsigma$ and $M\in HC_\Mc(\Cc)$.  Let $$F^M_K(-)=Hom_{\Cc_0}(-\cdot K,M):H_\Bc\mod\to\vect.$$   Then $F^M_K$ is a symmetric contratrace with $\iota_{V,W}$ as follows:
\begin{equation}\label{contrafk}
\includegraphics[height=1.2in]{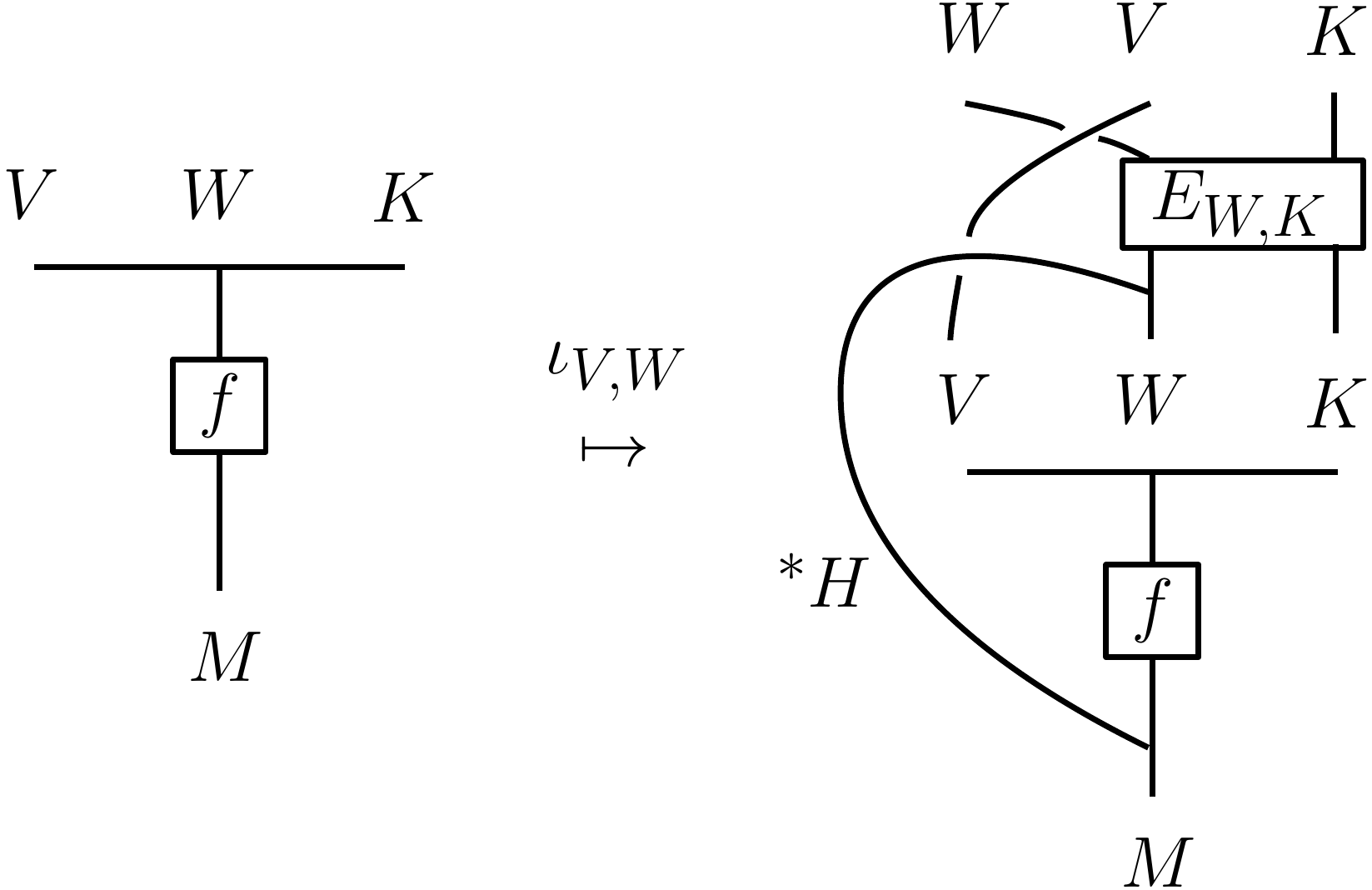}
\end{equation}Thus, $F^M_K(A^{\ot(\bullet+1)})$ is a cocyclic object in $\vect$.  If $M\in HH_\Mc(\Cc)$, then $F^M_K$ is a contratrace, and $F^M_K(A^{\ot(\bullet+1)})$ is a paracocyclic  object, with $\tau_n^{n+1}$ induced by $\varsigma^H_M$.
\end{proposition}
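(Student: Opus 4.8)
The plan is to realize $F^M_K$ as a composite of two structures that are already available, and then simply transport the trace structure through a contravariant Hom. By the free/forgetful adjunction of the monad $\Ac$ (the adjunction underlying $HH_\Mc(\Cc)\simeq\Ac_{\Cc_0}\mod$), and using that $M\in HH_\Mc(\Cc)$ is by definition an $\Ac$-module, we have a natural identification
$$F^M_K(V)=Hom_{\Cc_0}(V\cdot K,M)\simeq Hom_{\Ac_{\Cc_0}}(\Ac(V\cdot K),M)=Hom_{\Ac_{\Cc_0}}(\Ac^K(V),M).$$
Thus $F^M_K=Hom_{\Ac_{\Cc_0}}(-,M)\circ\Ac^K$, and by Lemma \ref{iota:lem} the functor $\Ac^K=\Ac(-\cdot K)$ carries a $\mathfrak{z}$-trace structure $\iota^K$. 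The entire statement will follow by pushing $\iota^K$ through the contravariant functor $Hom_{\Ac_{\Cc_0}}(-,M)$.

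Concretely, I would set $\iota_{V,W}:=Hom_{\Ac_{\Cc_0}}(\iota^K_{W,V},M)$, the precomposition by the isomorphism $\iota^K_{W,V}\colon\Ac^K(W\pr V)\to\Ac^K(V\pr W)$; this is an isomorphism $F^M_K(V\pr W)\to F^M_K(W\pr V)$, natural in $V,W$. The contratrace (hexagon) identity for $F^M_K$ is then obtained by applying the contravariant $Hom_{\Ac_{\Cc_0}}(-,M)$ to the associativity relation $\iota^K_{V\pr W,T}=\iota^K_{W,T\pr V}\,\iota^K_{V,W\pr T}$ of Lemma \ref{iota:lem}: contravariance reverses the order of the two factors, which is exactly the shape of the contratrace axiom. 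The explicit picture \eqref{contrafk} is nothing more than the transcription of $Hom(\iota^K_{W,V},M)$ after substituting Definition \ref{tracestructure}, namely $\iota^K_{W,V}=\sigma^\Ac_{V\ot^\tau W\cdot K}\circ\Ac\Delta^*(\sigma^K_{W,V})$, together with $\sigma^K_{W,V}=E_{W,K}\tau^{-1}_{W,V}$ from Lemma \ref{isolemma} and the explicit $\sigma^\Ac$ of \eqref{tw1}; this is a diagrammatic computation, not a conceptual step.

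The heart of the argument is the symmetry condition, which decides between cocyclic and paracocyclic. The full rotation $\tau_n^{n+1}$ of $F^M_K(A^{\ot(\bullet+1)})$ reduces, via the hexagon, to $Hom_{\Ac_{\Cc_0}}(\iota^K_{A^{\ot(\bullet+1)},1},M)$, and by Lemma \ref{iota:lem} the endomorphism $\iota^K_{V,1}\colon\Ac^K(V)\to\Ac^K(V)$ is exactly the action of $\mathfrak{z}\in\Zc(\Ac)$ (this is the one point where the stability $K\in\Mc^\varsigma$ is used, through $\sigma^K_{V,1}=\varsigma_{V\cdot K}$). Applying $Hom_{\Ac_{\Cc_0}}(-,M)$ and invoking $\Ac$-linearity, for $f\colon\Ac^K(V)\to M$ we get $f\circ\mathfrak{z}_{\Ac^K(V)}=\mathfrak{z}_M\circ f$. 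When $M\in HC_\Mc(\Cc)$, the central $\mathfrak{z}$ acts trivially on $M$, so this is $f$; hence $\tau_n^{n+1}=Id$, the object $F^M_K(A^{\ot(\bullet+1)})$ is cocyclic, and $F^M_K$ is a symmetric contratrace. When $M\in HH_\Mc(\Cc)$ is arbitrary, the same computation yields $\tau_n^{n+1}$ induced by $\mathfrak{z}_M$, which by \eqref{sigmaact:eq} and the normalization of Definition \ref{alg:def:hstr} (see \eqref{varsigma:eq}) is precisely $\varsigma^H_M$; so $F^M_K$ is a contratrace and $F^M_K(A^{\ot(\bullet+1)})$ is paracocyclic with $\tau_n^{n+1}$ induced by $\varsigma^H_M$.

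I expect the main obstacle to be bookkeeping rather than ideas: keeping the contravariance of $Hom_{\Ac_{\Cc_0}}(-,M)$ consistent through every axiom so that the order-reversal lands on the correct contratrace identities, and matching the diagram produced by $Hom(\iota^K_{W,V},M)$ with the claimed \eqref{contrafk}. A secondary care point is confirming that the cyclic operator of a (contra)trace functor on $A^{\ot(\bullet+1)}$ is indexed so that $\tau_n^{n+1}$ really collapses to $\iota^K_{-,1}$, and that the identification $\mathfrak{z}_M=\varsigma^H_M$ is applied with the correct normalization.
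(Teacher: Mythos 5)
Your proposal is correct and follows essentially the same route as the paper: identify $F^M_K$ with $Hom_{\Ac_{\Cc_0}\mod}(\Ac^K(-),M)$ via the free/forgetful adjunction, define $\iota_{V,W}$ by precomposition with the $\mathfrak{z}$-trace structure $\iota^K$ of Lemma \ref{iota:lem}, and deduce the (symmetric) contratrace property from that lemma, with stability of $M$ killing the $\mathfrak{z}$-action in the cyclic case. The paper's proof is just a terser version of the same argument.
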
 

\begin{proof}
We obtain the structure $\iota_{V,W}$ via the chain of isomorphisms: \begin{align*}
Hom_{\Cc_0}(W\pr V\cdot K,M)=&Hom_{\Ac_{\Cc_0}\mod}(\Ac(W\pr V\cdot K), M)\\ \stackrel{-\circ\iota^K_{V,W}}{\simeq} &Hom_{\Ac_{\Cc_0}\mod}(\Ac(V\pr W\cdot K), M)\\
=&Hom_{\Cc_0}(V\pr W\cdot K,M).
\end{align*}  The (symmetric) contratrace property of $\iota$ follows immediately from Lemma \ref{iota:lem}.
\end{proof}

\begin{corollary}\label{cor:contra}
Let $K\in\Mc^\varsigma$ and suppose that the functor $-\cdot K:\Bc\to\Mc$ above has a right adjoint $K\la -$.  Then \begin{equation}\label{themaphh}K\la -:HH_\Mc(\Cc)\to HH(\Cc)\end{equation} and the functor is compatible with $\varsigma^H$ on LHS and $\varsigma$ on RHS, so that, in particular, $K\la -:HC_\Mc(\Cc)\to HC(\Cc)$.
\end{corollary}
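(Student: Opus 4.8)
The plan is to realize $K\la M$, for $M\in HH_\Mc(\Cc)$, as the object of $HH(\Cc)$ that \emph{represents} the contratrace $F^M_K$ manufactured in Proposition \ref{prop:contra}, and then to extract the $\varsigma$-compatibility directly from the paracocyclic operator computed there. First I would upgrade the adjunction $(-\cdot K)\dashv(K\la-)$ from $\Bc,\Mc$ to the level of $H$-modules. Since $-\cdot K:\Bc\to\Mc$ is a strong $\Bc$-module functor, it restricts to $-\cdot K:\Cc=H_\Bc\mod\to H_\Mc\mod=\Cc_0$, sending $V\mapsto V\cdot K$ with $H$ acting through $V$; its right adjoint then lifts to $K\la-:\Cc_0\to\Cc$ (the standard lift of a module-functor adjunction to categories of algebras/modules), and one obtains the hom-isomorphism
\[
F^M_K(V)=\Hom_{\Cc_0}(V\cdot K,M)\cong\Hom_{\Cc}(V,K\la M),
\]
natural in $V\in\Cc$ and in $M$. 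Thus $F^M_K$ is the representable functor $\Hom_{\Cc}(-,K\la M)$, corepresented by $K\la M\in\Cc$.

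Next I would invoke Proposition \ref{prop:contra}: because $K\in\Mc^\varsigma$, the functor $F^M_K$ carries a contratrace structure $\iota_{V,W}$, which is a \emph{symmetric} contratrace when $M\in HC_\Mc(\Cc)$. The key categorical input, the representable instance of the general correspondence between contratraces and the cyclic homology category, is that a contratrace structure on $\Hom_\Cc(-,N)$ is, by the Yoneda lemma, the same datum as an object of $HH(\Cc)$ supported on $N$: the isomorphisms $\iota_{V,W}\colon\Hom_\Cc(V\pr W,N)\to\Hom_\Cc(W\pr V,N)$ transpose, using rigidity, to the half-braiding $\tau^\bullet_{N,X}\colon N\ot X\to {}^\#X\ot N$ of Section \ref{sec:b3}, while the cocyclic coherence of $\iota$ becomes the multiplicativity $\tau^\bullet_{N,XY}=\tau^\bullet_{N,Y}\tau^\bullet_{N,X}$. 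Transporting along the representation of the previous paragraph therefore endows $K\la M$ with a canonical $HH(\Cc)$-structure. Since a morphism $M\to M'$ in $HH_\Mc(\Cc)=\Ac_{\Cc_0}\mod$ induces a natural transformation $F^M_K\to F^{M'}_K$ of contratraces, this assignment is functorial and defines \eqref{themaphh}.

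For the compatibility of the two $\varsigma$'s I would use the final clause of Proposition \ref{prop:contra}: on the (para)cocyclic object $F^M_K(A^{\ot(\bullet+1)})$ the operator $\tau_n^{n+1}$ measuring the failure of symmetry is induced by $\varsigma^H_M$. Under the Yoneda correspondence above this operator is precisely the intrinsic automorphism $\varsigma_{K\la M}$ of $HH(\Cc)$, since the latter is built from $\tau^\bullet$ and the (co)evaluations exactly as recalled in Section \ref{sec:b3}. Hence $K\la(\varsigma^H_M)=\varsigma_{K\la M}$, i.e.\ the functor intertwines $\varsigma^H$ on $HH_\Mc(\Cc)$ with $\varsigma$ on $HH(\Cc)$. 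In particular, if $\varsigma^H_M=Id$, that is $M\in HC_\Mc(\Cc)=saYD^H_\Mc$, then $\varsigma_{K\la M}=Id$, so $K\la M\in HH(\Cc)^\varsigma=HC(\Cc)$, yielding the restricted functor $K\la-:HC_\Mc(\Cc)\to HC(\Cc)$.

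I expect the main obstacle to be the Yoneda transport of the middle step: making the identification of the contratrace coherence $\iota$ with the half-braiding $\tau^\bullet$ (and its induced $\varsigma$) completely precise, with the duals, the double dual $(-)^\#$, and the variances all bookkept correctly, and in particular confirming that the operator ``$\tau_n^{n+1}$ induced by $\varsigma^H_M$'' agrees on the nose with the intrinsic $\varsigma$ of $HH(\Cc)$ rather than merely up to a dualization or a scalar.
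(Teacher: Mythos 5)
Your proposal follows the same route as the paper: lift the adjunction to $H$-modules so that $F^M_K$ becomes representable by $K\la M$, then invoke the correspondence between representable (symmetric) contratraces and objects of $HH(\Cc)$ (resp.\ $HC(\Cc)$) — which the paper delegates to the cited reference and summarizes in the discussion immediately following the corollary — with the $\varsigma$-compatibility read off from the paracocyclic operator of Proposition \ref{prop:contra}. The ``main obstacle'' you flag (making the Yoneda transport of $\iota$ into $\tau^\bullet$ precise) is exactly the content the paper outsources, so your argument is correct and essentially identical in structure.
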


\begin{proof}
The existence of the right adjoint automatically implies that we have an adjoint pair: $-\cdot K:H_\Bc\mod\leftrightarrows H_\Mc\mod:K\la -$ which ensures that $F^M_K$ of Proposition \ref{prop:contra} is representable.  The corollary now follows by \cite{ks}.  See the discussion below for a summary.
\end{proof}

The correspondence between representable contratraces and objects in $HH(\Cc)$ can be summarized as follows.  Given $$\iota_{V,W}:Hom_H(V\ot W, M)\to Hom_H(W\ot V, M),$$ let $f=Id_M\ot ev_{W^*,W}: M\ot W^*\ot W\to M$, then  we obtain $\iota_{MW^*,W}(f): W\ot M\ot W^*\to M$ from which the anti-center structure on $M$ is obtained by adjunction: \begin{equation}\label{convert}\tau^\bullet_{M,W^*}: M\ot W^*\to {}^*W\ot M.\end{equation}  On the other hand, given $\tau^\bullet$, for $f\in Hom_H(V\ot W, M)$ define $$\iota_{V,W}(f)=ev_{W,{}^*W}\circ\tau^\bullet_{M,W^*}\circ f\circ coev_{W,W^*}.$$ 

Consider a particular case of Corollary \ref{cor:contra}, namely $\Mc=HH(\Bc)$ and $K=Tr(1)$; note that $K\in\Mc^\varsigma$ by \cite{chern}.  Recall that we have an adjoint pair of functors $$Tr:\Bc\to HH(\Bc): U$$ where $U$ forget the anti-center structure and $Tr$ is its left adjoint.  We would like an explicit description of \eqref{themaphh}.  This serves as one half of the result in the next section.

\begin{proposition}\label{half:prop}
Let $H\in\Bc$ be a Hopf algebra and $\Cc=H_\Bc\mod$, we have a  functor $$\Phi: HH_{HH(\Bc)}(\Cc)\to HH(\Cc)$$ that sends an $M\in aYD^H_{HH(\Bc)}$ to $M\in HH(\Cc)$ with the anti-center structure given by \begin{equation}\label{onehalf}
\includegraphics[height=1in]{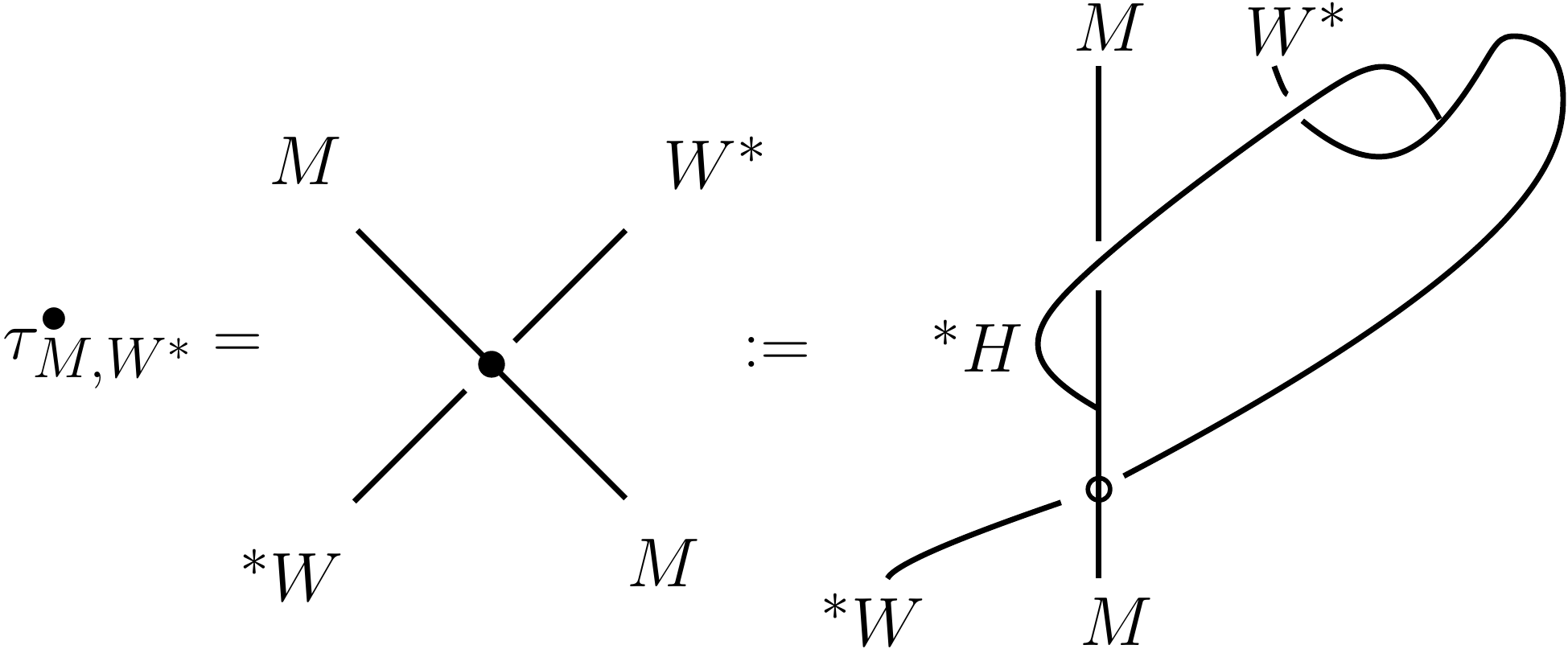}
\end{equation} This functor identifies $\varsigma^H$ on LHS with  $\varsigma$ on RHS.
\end{proposition}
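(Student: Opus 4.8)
The plan is to realize $\Phi$ as the special case of Corollary \ref{cor:contra} with $\Mc = HH(\Bc)$ and $K = Tr(1)$, and then to turn the abstract representable contratrace into the explicit anti-center structure \eqref{onehalf}; recall that by Theorem \ref{thm:aYDHH} the source $HH_{HH(\Bc)}(\Cc)$ is exactly $aYD^H_{HH(\Bc)}$. First I would check the two hypotheses of Corollary \ref{cor:contra}. That $K = Tr(1) \in HH(\Bc)^\varsigma$ is recorded in \cite{chern}. For the right adjoint, note that the forgetful functor $U : HH(\Bc) \to \Bc$ is a strict $\Bc$-module functor, since by \eqref{centerhh} the action of $X \in \Bc$ leaves the underlying object $X \ot (-)$ unchanged and only alters the anti-center datum; as $\Bc$ is rigid, its left adjoint $Tr$ is then a strong $\Bc$-module functor, so that $X \cdot Tr(1) \simeq Tr(X)$ naturally in $X$. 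Consequently $- \cdot Tr(1) \simeq Tr$ has right adjoint $U$, and the adjunction induced on $H$-modules gives $Tr(1) \la M \simeq M$ in $H_\Bc\mod$. This is precisely the assertion that $\Phi(M)$ has underlying object $M$, and it simultaneously identifies $F^M_{Tr(1)}(-) = Hom_{\Cc_0}(- \cdot Tr(1), M)$ with $Hom_H(-, M)$.

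With the underlying object pinned down, the remaining content of the proposition is the explicit formula for $\tau^\bullet_{M, -}$. I would obtain it by running the conversion recipe stated just after Corollary \ref{cor:contra} on the transported contratrace. Concretely, transporting the structure $\iota_{V,W}$ of Proposition \ref{prop:contra} (diagram \eqref{contrafk}) along the representability isomorphism $F^M_{Tr(1)} \simeq Hom_H(-, M)$, one sets $f = Id_M \ot ev_{W^*, W} \in Hom_H((M \ot W^*) \ot W, M)$, forms $\iota_{M W^*, W}(f) : W \ot M \ot W^* \to M$, and takes its duality adjoint to produce $\tau^\bullet_{M, W^*} : M \ot W^* \to {}^*W \ot M$. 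Substituting $K = Tr(1)$ into \eqref{contrafk} and simplifying, I expect the evaluation and coevaluation attached to $Tr(1)$ to collapse via the $Tr \dashv U$ counit, leaving precisely the string diagram \eqref{onehalf}, in which the $H$-action and the ${}^*H$-action (recorded in the $aYD^H_{HH(\Bc)}$ structure \eqref{main:eq}) appear alongside the central structure $\tau^{-1}_{-, -}$ of $\Bc$-objects inside $\Cc$ from Remark \ref{tau:rem}.

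The main obstacle is this string-diagram simplification. The delicate points are keeping the two kinds of crossing separate — the $\Bc$-crossing versus the $HH(\Bc)$-crossing of Definition \ref{def:ocross} — and correctly threading the ${}^*H$-action through the $Tr(1)$ evaluation and coevaluation so that the $K$-specific strands cancel. I would organize the computation by first writing $\iota_{M W^*, W}(f)$ from \eqref{contrafk}, using the counit of $Tr \dashv U$ to erase the $Tr(1)$ contributions, and then reading off the adjoint. The defining axiom $\tau^\bullet_{M, XY} = \tau^\bullet_{M,Y}\tau^\bullet_{M,X}$ need not be verified by hand, as it is automatic from the fact that $\iota$ is a (symmetric) contratrace, established in Proposition \ref{prop:contra}.

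Finally, the claim that $\Phi$ identifies $\varsigma^H$ on the left with $\varsigma$ on the right is not a separate computation: it is exactly the last compatibility clause of Corollary \ref{cor:contra}, transported through the identification $\Phi = Tr(1) \la -$. Thus once the underlying object and the anti-center structure are in hand, the $\varsigma$-statement follows for free.
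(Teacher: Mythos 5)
Your proposal is correct and follows essentially the same route as the paper: specialize Corollary \ref{cor:contra} to $\Mc=HH(\Bc)$, $K=Tr(1)$ (stable by \cite{chern}), use the adjunction $Tr\dashv U$ to identify $-\cdot Tr(1)\simeq Tr$ and hence the underlying object of $\Phi(M)$ with $M$, and then simplify \eqref{contrafk} via the conversion recipe \eqref{convert} to arrive at \eqref{onehalf}, with the $\varsigma$-compatibility inherited from the corollary. The paper's proof is exactly this, phrased slightly more tersely by interpreting $TrU(M)\to M$ as a right $Tr(1)$-action and the unit $1\to Tr(1)$ as giving $X\to UTr(X)$.
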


\begin{proof}
Observe that  for $M\in HH(\Bc)$ the natural map $Tr U(M)=M\ot Tr(1)\to M$ can be interpreted as a right action of $Tr(1)$.  For $X\in\Bc$ we have $X\to U Tr(X)=X\ot Tr(1)$ given by the unit map $1\to Tr(1)$.  With this \eqref{contrafk} becomes: \begin{equation*}
\includegraphics[height=1.1in]{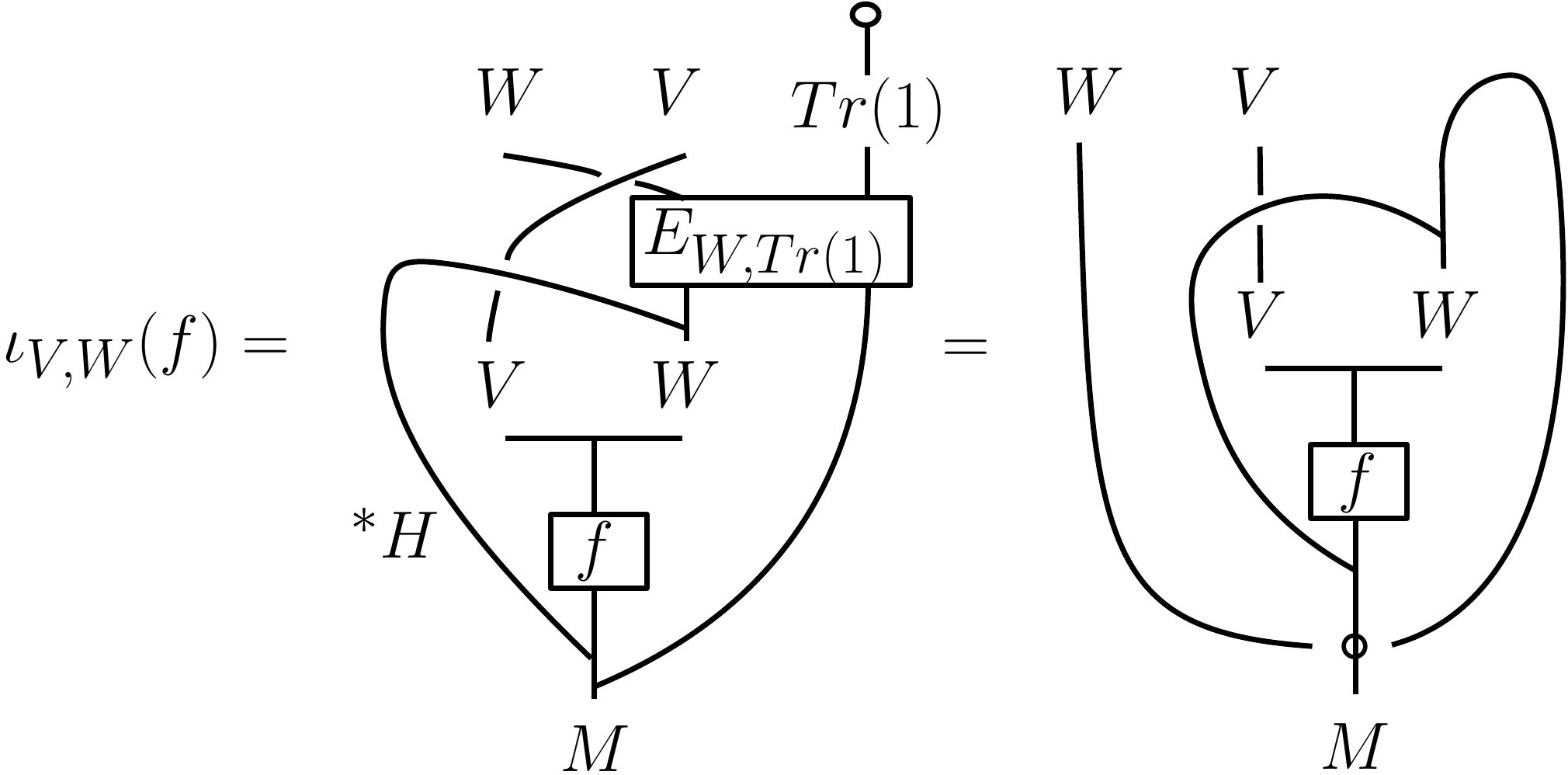}
\end{equation*} which, according to \eqref{convert} becomes \eqref{onehalf}.
\end{proof}

\subsection{The localization theorem}

As usual, assume that $H\in\Bc$ is a Hopf algebra and $\Cc=H_\Bc\mod$. The goal of this section is to prove that the functor $\Phi$ in Proposition \ref{half:prop} is an isomorphism of categories by constructing its inverse.  The following definition extracts the $aYD^H_{HH(\Bc)}$-structure from $HH(\Cc)$-structure.

\begin{definition}\label{def:mainstr}
Let $M\in HH(\Cc)$, denote the structure by $\tau^\bullet$, then \begin{itemize}
\item $M$ is an $H$-module in $\Bc$, with action denoted by $\rho_{H,M}$.
\item Let $\rho_{{}^*H,M}$ be given by \begin{equation*}
\includegraphics[height=.7in]{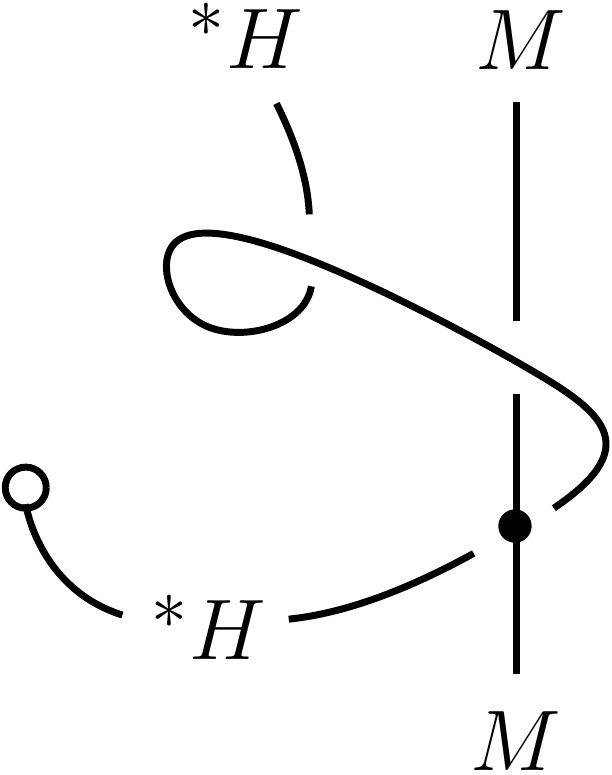}
\end{equation*} Note that the unit $1\to H$ is not $H$-linear, so the above is non-trivial.
\item $M\in HH(\Bc)$ since $\Bc$ fully embeds into $\Cc$, i.e., $X\in\Bc$ can be given $H$-structure by $\rho_{H,X}=\epsilon\ot Id_X$.  The resulting $HH(\Bc)$ structure is denoted by $\tau^\circ$, thus $$\tau^\circ=\tau^\bullet|_\Bc.$$  For $V\in\Cc$, we again denote by $\underline{V}$ an object in $\Bc$ obtained by forgetting the $H$-structure.  Note that $\tau^\bullet_{M, V^*}\neq\tau^\circ_{M,\underline{V}^*}$ in general, see Lemma \ref{bullet}; we will suppress the underline when using $\tau^\circ$.
\end{itemize}
\end{definition}

\begin{lemma}
With the algebra structure on ${}^*H$ given by Definition \ref{alg:def:hstr}, the  $\rho_{{}^*H,M}$ above defines an action.
\end{lemma}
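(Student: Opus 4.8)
The plan is to check the two module axioms for the algebra $({}^*H, u, m)$ of Definition \ref{alg:def:hstr}, namely unitality against $u = {}^*\epsilon$ and compatibility with the multiplication $m$ of \eqref{hstarmult}, and to deduce both from the single defining property of the anti-center structure $\tau^\bullet$ on $M$, its multiplicativity $\tau^\bullet_{M,XY} = \tau^\bullet_{M,Y}\,\tau^\bullet_{M,X}$. The guiding observation is that $\rho_{{}^*H,M}$ is, up to the rigid coevaluation and evaluation together with the $H$-action $\rho_{H,M}$, nothing but the adjoint form of $\tau^\bullet_{M,-}$ evaluated on a dual of $H$, as in \eqref{convert}; so I would unwind the definitions until the statement ``$\rho_{{}^*H,M}$ is an action'' is literally translated into a statement about $\tau^\bullet$, and then invoke the hypothesis $M \in HH(\Cc)$.

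For unitality I would precompose $\rho_{{}^*H,M}$ with ${}^*\epsilon \ot Id_M$. In the defining string diagram the ${}^*H$-strand is paired, via a zig-zag, with an $H$-strand, so feeding in ${}^*\epsilon$ inserts the counit $\epsilon$ into that $H$-strand. Using $\epsilon u = Id_1$ together with the normalization $\tau^\bullet_{M,1} = Id_M$ --- which itself follows from multiplicativity at the unit object, since $\tau^\bullet_{M,1} = \tau^\bullet_{M,1}\tau^\bullet_{M,1}$ forces it to be the identity --- the diagram straightens to $Id_M$. I expect this to be routine.

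The substance is associativity, $\rho_{{}^*H,M}\circ(m\ot Id_M) = \rho_{{}^*H,M}\circ(Id_{{}^*H}\ot\rho_{{}^*H,M})$. The right-hand side applies the anti-center structure twice, producing two crossings, while the left-hand side applies it once after the braided multiplication $m$ of \eqref{hstarmult}. Dualizing, $m$ is exactly the map that turns the coproduct $\Delta$ of $H$ (corrected by the antipode) into the multiplication of ${}^*H$; the required identity is therefore the image, under coevaluation and evaluation, of $\tau^\bullet_{M, H\ot^\tau H} = \tau^\bullet_{M,H}\,\tau^\bullet_{M,H}$, i.e.\ of multiplicativity at the regular object $H$. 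I would expand both sides as string diagrams and match them by sliding the two $\tau^\bullet$-crossings together into a single $\tau^\bullet$ on the product, reconciling the two antipodes present in $m$ with the two crossings by means of $\Delta S = \tau_{H,H}(S\ot S)\Delta$ and the antipode axiom. The main obstacle is precisely this bookkeeping: the multiplication \eqref{hstarmult} is an involved braided expression in $\Delta$ and $S$, so the nontrivial content lies in verifying that its dualization agrees with the doubled anti-center crossing, and it is here that rigidity (used to straighten the ${}^*H$-strands) must be combined with the braided Hopf axioms. Once this identification is made, associativity is simply the multiplicativity of $\tau^\bullet$, which holds because $M\in HH(\Cc)$.
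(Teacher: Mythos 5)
Your plan is correct and reaches the same mechanism as the paper, but the paper's entire proof is two sentences that put the weight in a different place: ``Unitality is obvious since $\epsilon: H\to 1$ is $H$-linear. Similarly, though less obviously, associativity follows from $H$-linearity of $\Delta: H\to H\ot H$.'' The point being made there is that $\tau^\bullet_{M,-}$ is natural only with respect to morphisms of $\Cc$, i.e.\ $H$-linear maps: the action is nontrivial because $u:1\to H$ is \emph{not} $H$-linear, and the module axioms hold because $\epsilon:H\to 1$ and $\Delta: H\to H\pr H$ \emph{are}. Your writeup assigns the work to multiplicativity of $\tau^\bullet$ and to the antipode bookkeeping in \eqref{hstarmult}; both genuinely enter (the two crossings on the right-hand side of associativity combine via $\tau^\bullet_{M,XY}=\tau^\bullet_{M,Y}\tau^\bullet_{M,X}$, and the antipodes in $m$ are exactly the cost of the identification ${}^*H\ot{}^*H\simeq{}^*(H\pr H)$), but the step that closes the argument --- sliding $\epsilon$, resp.\ $\Delta$, through the $\tau^\bullet$ crossing --- is naturality, and it is licensed only by their $H$-linearity; if you allow yourself to slide arbitrary maps of $\Bc$ through $\tau^\bullet$ the manipulation is unjustified. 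Two small corrections: multiplicativity is applied at $H^*$ (the action is built from $\tau^\bullet_{M,H^*}$ per \eqref{convert}), not at the regular object $H$; and the product at which it is applied is $\pr$ in $\Cc$, not $\ot^\tau$ in $\Bc$. With those points made explicit, your unitality and associativity computations go through as sketched.
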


\begin{proof}
Unitality is obvious since $\epsilon: H\to 1$ is $H$-linear.  Similarly, though less obviously, associativity follows from $H$-linearity of $\Delta: H\to H\ot H$.
\end{proof}

\begin{remark}\label{ahhb:rem}
Let $A\in\Bc$ be an algebra, suppose that $M\in HH(\Bc)$ and $M\in A_\Bc\mod$ then $M\in A_{HH(\Bc)}\mod$ if and only if we have: \begin{equation}\label{ahhb}
\includegraphics[height=.7in]{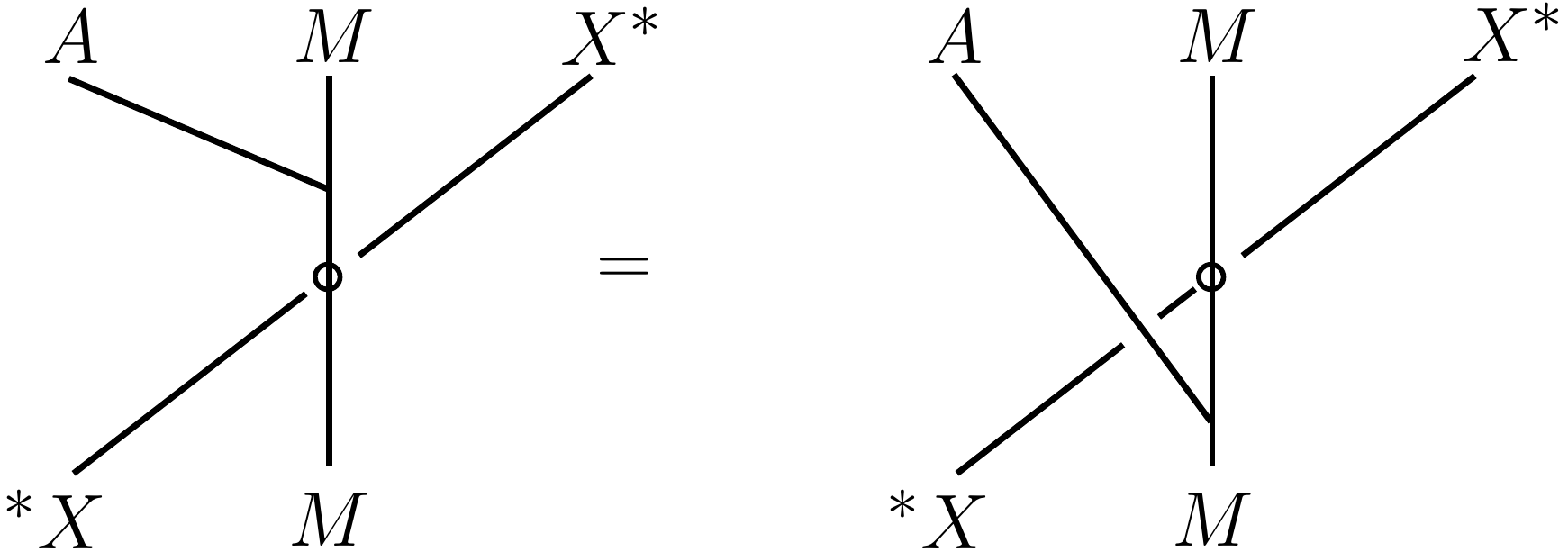}
\end{equation}
\end{remark}

\begin{lemma}\label{lem:hstr}
We have $M\in {}^*H_{HH(\Bc)}\mod$.
\end{lemma}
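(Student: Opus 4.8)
The plan is to reduce the claim to the single diagrammatic identity provided by Remark \ref{ahhb:rem}. Since ${}^*H$ is an algebra in $\Bc$ by Definition \ref{alg:def:hstr}, and the preceding lemma already exhibits $M$ as an object of ${}^*H_\Bc\mod$ via $\rho_{{}^*H,M}$, while $M \in HH(\Bc)$ with anti-center structure $\tau^\circ = \tau^\bullet|_\Bc$ by Definition \ref{def:mainstr}, the hypotheses of Remark \ref{ahhb:rem} are met with $A = {}^*H$. Hence it suffices to verify \eqref{ahhb} for this action, i.e. that $\rho_{{}^*H,M}$ is a morphism in $HH(\Bc)$ when ${}^*H \ot M$ is given the product structure coming from the canonical $\Bc$-module structure on $HH(\Bc)$.

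To carry this out I would first unfold $\rho_{{}^*H,M}$ according to Definition \ref{def:mainstr}: it is assembled from the full anti-center isomorphism $\tau^\bullet$, the antipode $S$, and the (co)evaluation of ${}^*H$, with the $H$-module input supplied by the trivial embedding $\Bc \hookrightarrow \Cc$ via $\epsilon$. The target identity \eqref{ahhb} compares, for an arbitrary $X \in \Bc$, transporting the $X$-strand across $M$ alone (via $\tau^\circ_{M,X}$) against transporting it across ${}^*H \ot M$ (braiding the ${}^*H$-strand past $X$ and then applying $\tau^\circ_{M,X}$). The strategy is to slide the $X$-strand through the entire diagram defining $\rho_{{}^*H,M}$ using naturality of $\tau^\bullet$ in its $\Cc$-argument, and then collapse the result using the multiplicativity $\tau^\bullet_{M,XY} = \tau^\bullet_{M,Y}\tau^\bullet_{M,X}$ together with the anti-(co)multiplicativity of $S$.

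The delicate point, and the main obstacle, is the discrepancy between $\tau^\bullet$ and $\tau^\circ$ flagged in Definition \ref{def:mainstr}: because $\rho_{{}^*H,M}$ is built from $\tau^\bullet$ applied to the \emph{genuine} $H$-module $H$, whereas the $HH(\Bc)$-compatibility in \eqref{ahhb} is phrased through $\tau^\circ$ (the structure on objects carrying the trivial $H$-action via $\epsilon$), one must convert between the two using Lemma \ref{bullet}. Concretely, the non-$H$-linear unit $1 \to H$ hidden inside $\rho_{{}^*H,M}$ interacts with the $H$-action encoded by $\tau^\bullet_{M,H}$, and it is precisely this interaction, mediated by the antipode, that reconciles the two sides of \eqref{ahhb}. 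Once the passage from $\tau^\bullet$ to $\tau^\circ$ is correctly installed, the remaining steps are routine applications of naturality and functoriality of the braiding, no harder than the associativity verification of the preceding lemma, which reduced to $H$-linearity of $\Delta$.
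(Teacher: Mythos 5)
Your reduction to Remark \ref{ahhb:rem} with $A={}^*H$ is exactly the paper's, and the intended mechanism (slide the $X$-strand through the diagram defining $\rho_{{}^*H,M}$ by naturality of $\tau^\bullet$, then collapse using multiplicativity of $\tau^\bullet$) is also the right one. However, the plan leaves unexecuted, and in fact misidentifies, the single non-routine step. ``Naturality of $\tau^\bullet$ in its $\Cc$-argument'' applies only to morphisms of $\Cc=H_\Bc\mod$, and the move you need is to braid the trivial object $X$ past the genuine $H$-modules built from $H$ that occur inside $\rho_{{}^*H,M}$. That braiding is $H$-linear in only one direction: this is precisely Remark \ref{tau:rem} ($\tau^{-1}_{X,V}$ is $H$-linear while $\tau_{X,V}$ is not), and it is the first and essentially only substantive line of the paper's proof. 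Without naming it, the ``routine applications of naturality'' could just as easily be attempted with the wrong crossing, which is not a morphism in $\Cc$ and to which naturality of $\tau^\bullet$ does not apply; note that \eqref{ahhb} itself is sensitive to which crossing of ${}^*H$ over $X$ appears.

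Second, the tool you flag as resolving the ``main obstacle'' --- Lemma \ref{bullet}, converting between $\tau^\bullet$ and $\tau^\circ$ --- is not used in the paper's proof of this lemma and is in fact established only afterwards (its role is Proposition \ref{prop:recover}). No such conversion is needed here: the compatibility \eqref{ahhb} only ever evaluates the anti-center structure on objects $X\in\Bc$ carrying the trivial $H$-action, where $\tau^\circ_{M,X}=\tau^\bullet_{M,X}$ by definition, while the $\tau^\bullet$ occurring inside $\rho_{{}^*H,M}$ is applied to a genuine module and never needs to be rewritten as a $\tau^\circ$. So the gap is concrete: replace the appeal to Lemma \ref{bullet} by an appeal to Remark \ref{tau:rem}, and carry out the verification of \eqref{ahhb} using the $H$-linear crossing $\tau^{-1}$. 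As written, the proposal does not contain the one fact that makes its ``routine'' sliding step legitimate.
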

\begin{proof}
By Remark \ref{tau:rem} we have, for $V\in\Cc$ and $X\in \Bc$: \begin{equation*}
\includegraphics[height=.7in]{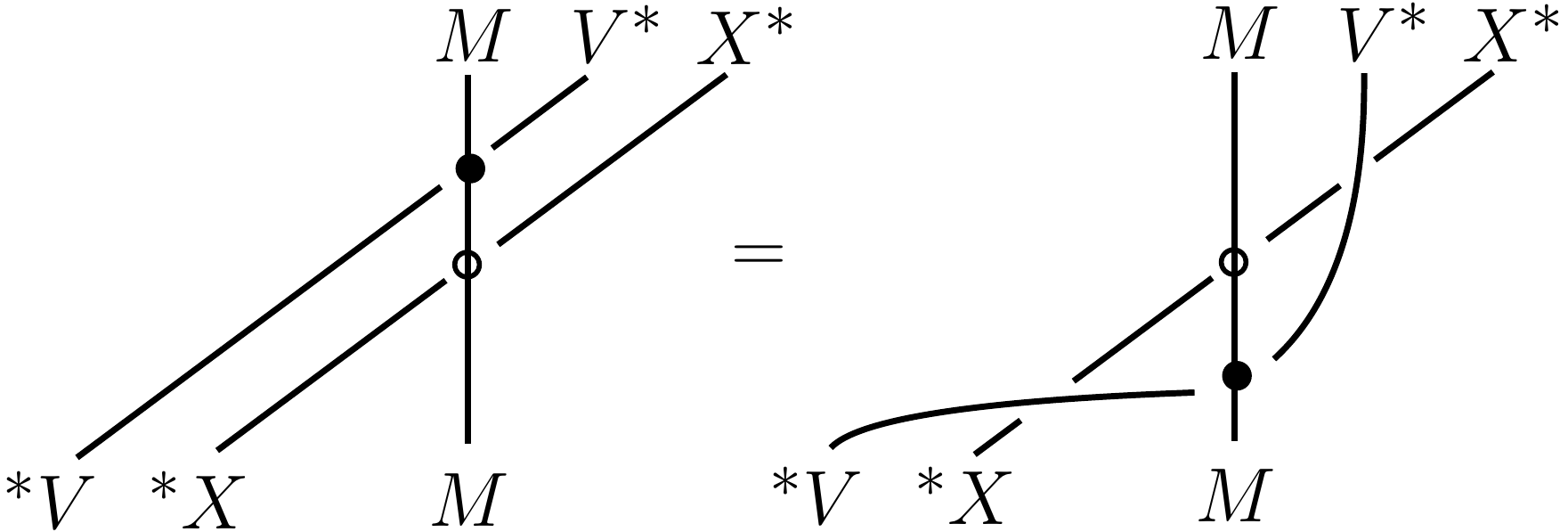}
\end{equation*} Thus, \begin{equation*}
\includegraphics[height=.7in]{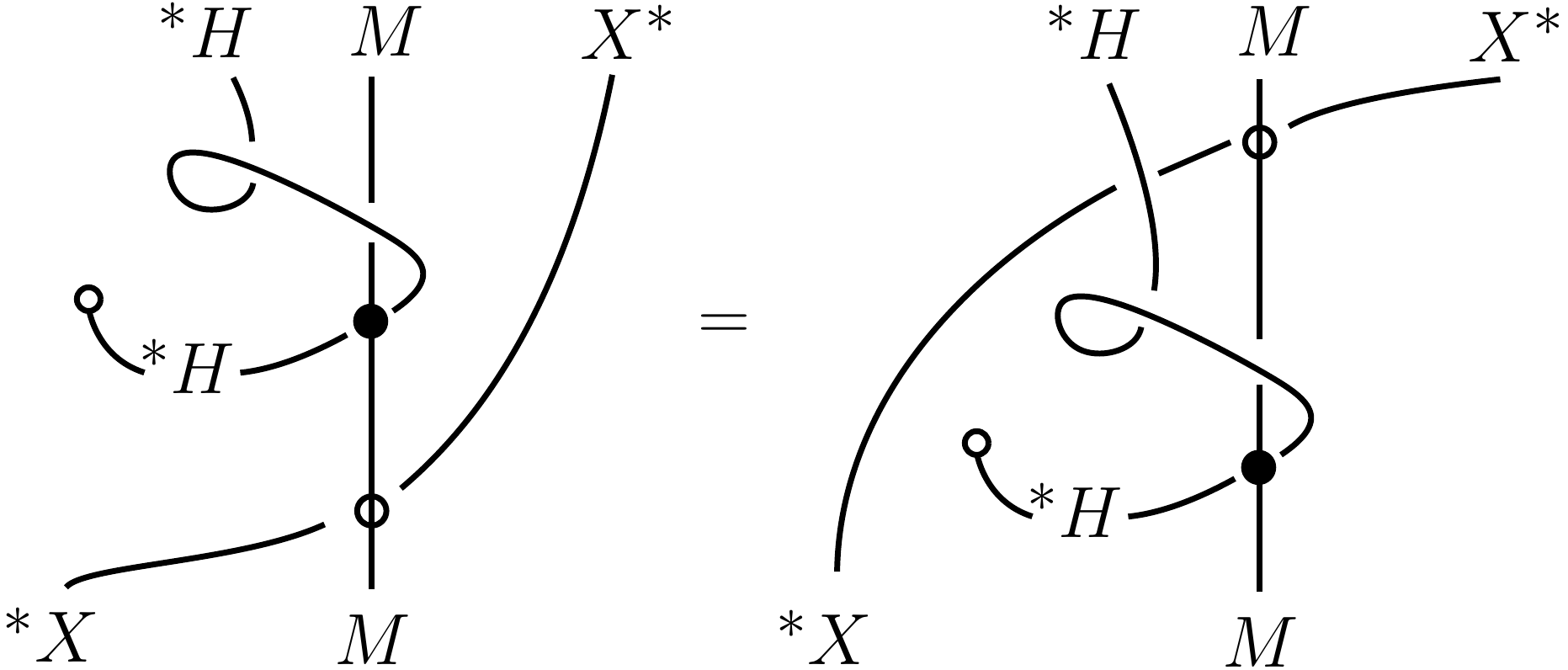}
\end{equation*} and we are done by Remark \ref{ahhb:rem}.
\end{proof}

\begin{lemma}\label{lem:h}
We have $M\in  H_{HH(\Bc)}\mod$.
\end{lemma}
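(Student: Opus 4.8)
The plan is to invoke Remark \ref{ahhb:rem} with $A=H$: since we already know $M\in HH(\Bc)$ through $\tau^\circ=\tau^\bullet|_\Bc$ (Definition \ref{def:mainstr}) and $M\in H_\Bc\mod$ through $\rho_{H,M}$, the only thing left is to verify the compatibility \eqref{ahhb} for the pair $(H,\rho_{H,M})$. Unlike the proof of Lemma \ref{lem:hstr}, where the ${}^*H$-action had to be assembled out of $\tau^\bullet$ and the rigid structure, here the $H$-action is the ambient module structure, so the verification should be more direct. I claim that \eqref{ahhb} for $(H,\rho_{H,M})$ is nothing but the $H$-linearity of $\tau^\circ_{M,X}=\tau^\bullet_{M,X}$, read off on objects $X\in\Bc$ carrying the trivial $H$-module structure $\epsilon\ot Id_X$.

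The input is essentially free: by the description of $HH(\Cc)$ recalled in Section \ref{sec:b3}, the anti-center structure $\tau^\bullet_{M,X}\colon M\ot X\to {}^\#X\ot M$ is a morphism in $\Cc=H_\Bc\mod$, hence $H$-linear. The content of the lemma therefore lies entirely in identifying the two induced $H$-actions and matching them to \eqref{ahhb}. I would compute the $H$-action on the source $M\ot X$ and on the target ${}^\#X\ot M$ separately, using that $X$ (and hence ${}^\#X$) is trivial and that the coproduct $\Delta\colon H\to H\ot^\tau H$ enters both.

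The crux — and the only place where care is needed — is an asymmetry between the two sides. On the source $M\ot X$ the coproduct feeds one leg of $H$ into $M$ and the other into $X$; the leg landing on the trivial $X$ first passes through a braiding $\tau_{H,M}$ before being killed by $\epsilon$, and by naturality of the braiding with respect to $\epsilon\colon H\to 1$ this crossing is absorbed, so the induced action collapses to $\rho_{H,M}\ot Id_X$. On the target ${}^\#X\ot M$, however, the counit hits the first leg of the coproduct while the second leg must cross ${}^\#X$ via $\tau_{H,{}^\#X}$ in order to reach $M$; here $\epsilon$ does \emph{not} absorb the crossing, and the induced action is $(Id_{{}^\#X}\ot\rho_{H,M})(\tau_{H,{}^\#X}\ot Id_M)$. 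This surviving crossing is exactly the central structure of $H$ as an object of $\Bc\subset\Zc(\Bc)$ that governs the $\Bc$-module structure of $HH(\Bc)$.

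Substituting these two actions into the $H$-linearity equation for $\tau^\bullet_{M,X}$ then reproduces the compatibility \eqref{ahhb} verbatim, and the lemma follows from Remark \ref{ahhb:rem}. Thus, once the two induced actions are pinned down, the statement is immediate; the main obstacle is purely bookkeeping, namely tracking which braidings coming from $\Delta$ are absorbed by the counit and which are not, and confirming that the surviving $\tau_{H,{}^\#X}$ is precisely the central crossing appearing in Remark \ref{ahhb:rem}.
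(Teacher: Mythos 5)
Your proposal is correct and follows the same route as the paper: the paper's proof is exactly the observation that for $X\in\Bc$ given the trivial $H$-action, $\tau^\circ_{M,X}=\tau^\bullet_{M,X}$ is $H$-linear, which yields \eqref{ahhb} with $A=H$. Your unpacking of the two induced $H$-actions — the counit absorbing the crossing on the source $M\ot X$ but leaving the surviving $\tau_{H,{}^\#X}$ on the target, which is precisely the central structure of $H$ in $\Bc\subset\Zc(\Bc)$ — is the bookkeeping the paper leaves implicit, and it is carried out correctly.
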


\begin{proof}
Let $X\in\Bc$ and consider it in $\Cc$ with the trivial $H$-action.  Then since $\tau^\circ_{M,X^*}=\tau^\bullet_{M,X^*}$ is $H$-linear, we obtain \eqref{ahhb} with $A=H$.
\end{proof}

\begin{lemma}\label{bullet}
With $M$ as above and $V\in\Cc$ we have \begin{equation*}
\includegraphics[height=.8in]{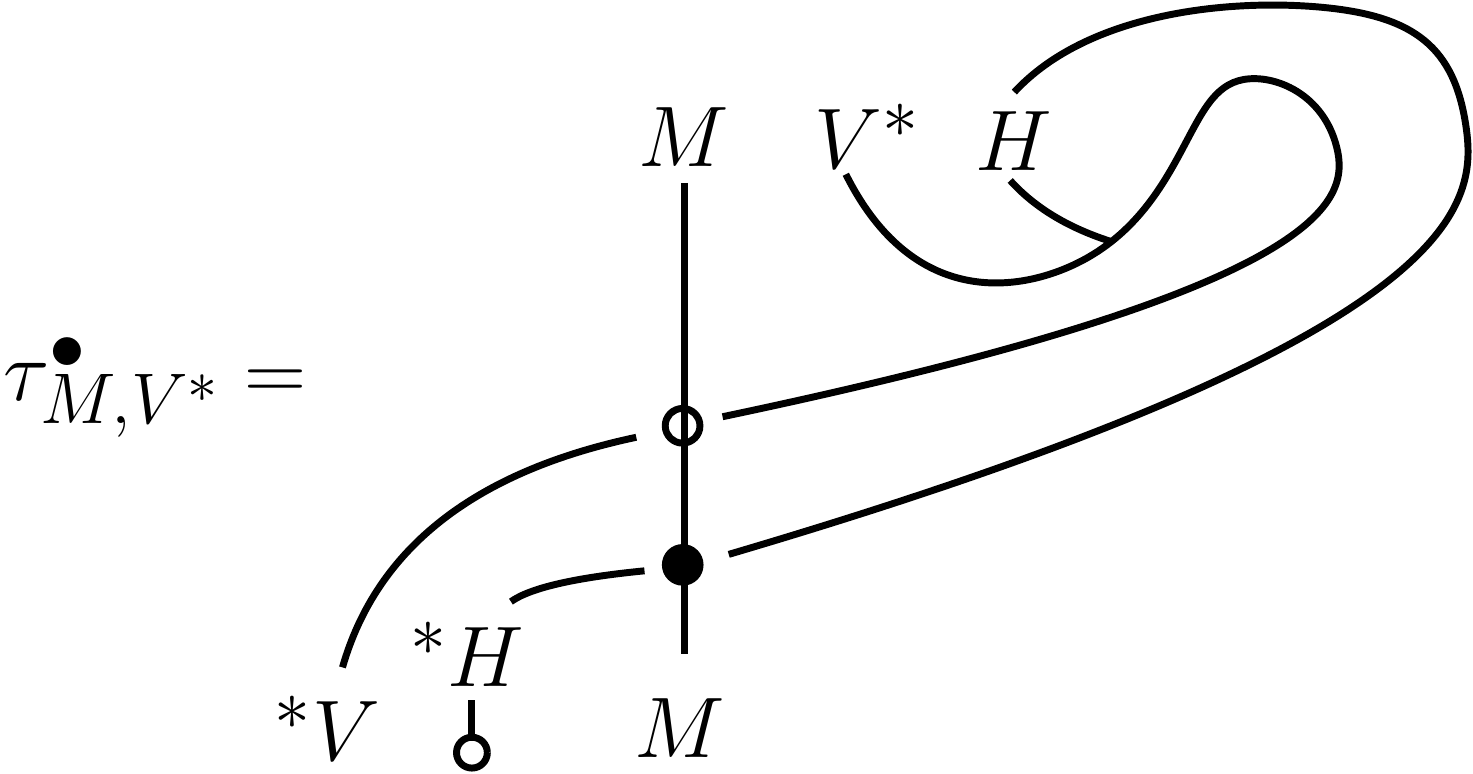}
\end{equation*}
\end{lemma}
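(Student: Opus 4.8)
The plan is to verify the identity by a string-diagram computation that reduces it to the $H$-linearity and multiplicativity of $\tau^\bullet$ together with the defining diagram of $\rho_{{}^*H,M}$ from Definition \ref{def:mainstr}. Observe first that $\tau^\bullet_{M,V^*}$ and $\tau^\circ_{M,\underline V^*}$ have the same underlying source and target in $\Bc$: the objects $V^*$ and $\underline V^*$ coincide in $\Bc$ and differ only in their $H$-module structure, since $V^*$ carries the genuine $\Cc$-dual action (built from the antipode $S$) while $\underline V^*$ is given the trivial action via $\epsilon$. By the definition $\tau^\circ=\tau^\bullet|_\Bc$ we have $\tau^\circ_{M,\underline V^*}=\tau^\bullet_{M,\underline V^*}$, so the whole content of the lemma is to account for the discrepancy produced by passing from the trivial action to the dual action.

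First I would record the two inputs I intend to use: that $\tau^\bullet$ is $H$-linear and satisfies $\tau^\bullet_{M,XY}=\tau^\bullet_{M,Y}\tau^\bullet_{M,X}$, and that the $\Cc$-dual $V^*$ sits in rigidity pairings whose underlying $\Bc$-maps are the pairings of $\underline V$. The crucial step is to slide the nontrivial part of the dual $H$-action on $V^*$ --- the strand carrying $S$ --- off $V^*$ and onto $M$. This is achieved using naturality of $\tau^\bullet_{M,-}$ and $H$-linearity, which let me rewrite $\tau^\bullet_{M,V^*}$ in terms of $\tau^\bullet_{M,H}$ evaluated against the $H$-structure of $V$. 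By Definition \ref{def:mainstr}, the value of $\tau^\bullet$ against the regular module $H$, composed with the non-$H$-linear unit $1\to H$, is exactly $\rho_{{}^*H,M}$; hence the slid-off strand is precisely the ${}^*H$-action on $M$. After the slide one is left with $\tau^\circ_{M,\underline V^*}$ corrected by $\rho_{{}^*H,M}$ interacting with the $H$-module structure of $V$, which is the right-hand side of the asserted diagram.

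Finally I would straighten the strings, cancelling the antipode that arose from the dual action against the evaluation and coevaluation by the antipode axiom and the rigidity zig-zags, to confirm the two sides agree. I expect the main obstacle to be bookkeeping rather than any new idea: one must simultaneously track the antipode $S$ hidden in the dual $H$-action on $V^*$, the braidings entering through the $\circledast$-product and through the $HH(\Bc)$-structure $\tau^\circ$, and --- most delicately --- the distinction between the crossing convention of $\tau^\bullet$ and that of $\tau^\circ$ introduced in Definition \ref{def:ocross}. Once these are aligned and the defining relation for $\rho_{{}^*H,M}$ is invoked, the identity follows by inspection of the resulting diagram, with no relation beyond $H$-linearity, multiplicativity, and the antipode axiom needed.
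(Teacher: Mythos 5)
Your proposal is correct and follows essentially the same route as the paper: the ``sliding'' of the antipode-carrying strand from $V^*$ onto $M$ is exactly the untwisting isomorphism of Lemma \ref{simplifylem}, and the paper likewise combines this with the $H$-linearity of $\epsilon$ (hence naturality of $\tau^\bullet$), the multiplicativity $\tau^\bullet_{M,XY}=\tau^\bullet_{M,Y}\tau^\bullet_{M,X}$, and pre/post-composition with $\epsilon^*$ and ${}^*u$ to identify the residual factor with $\rho_{{}^*H,M}$. The only presentational difference is that the paper packages the antipode cancellation once and for all in Lemma \ref{simplifylem} rather than redoing it inside this proof.
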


\begin{proof}
Since $\epsilon$ is $H$-linear we have:\begin{equation*}
\includegraphics[height=.6in]{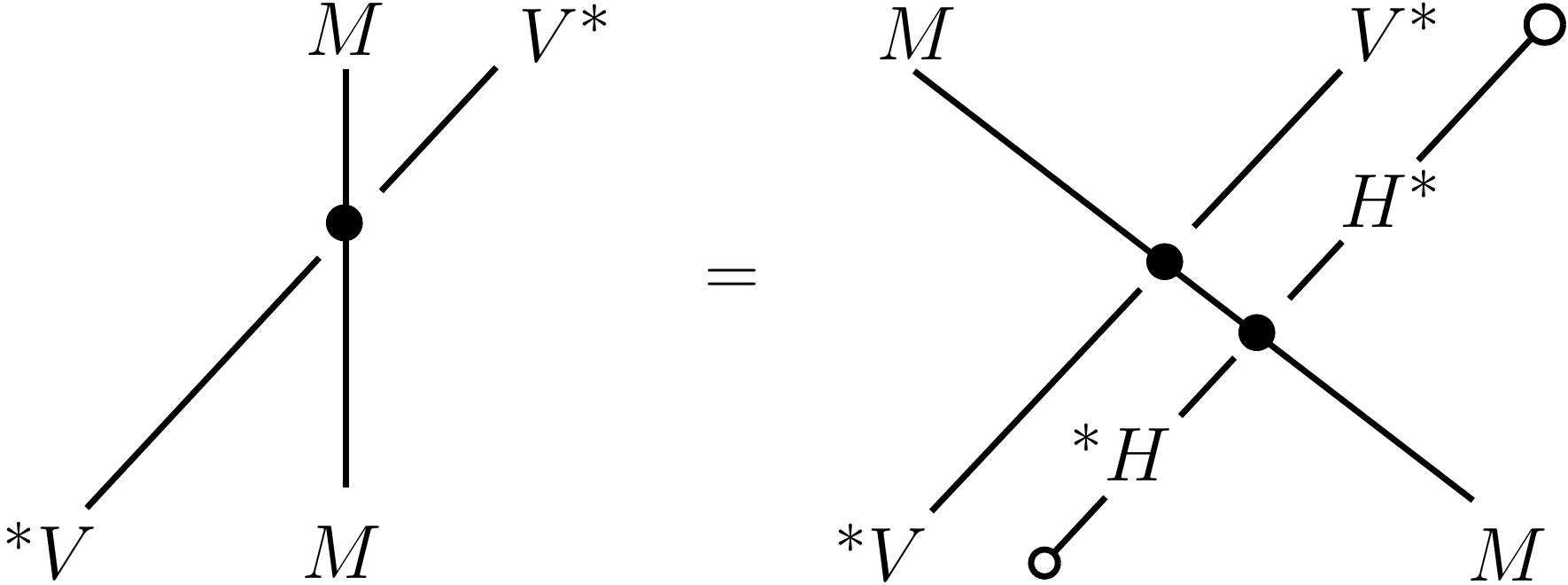}
\end{equation*} On the other hand, by Lemma \ref{simplifylem}: \begin{equation*}
\includegraphics[height=.8in]{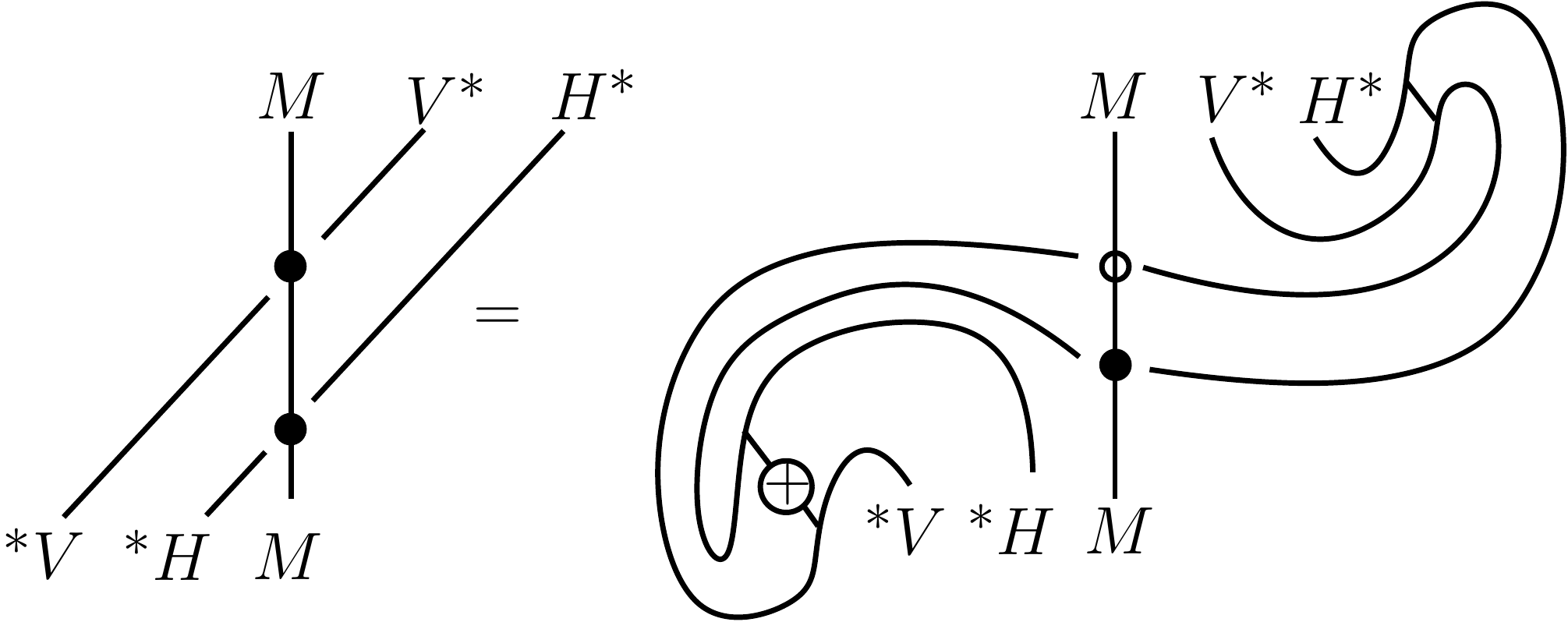}
\end{equation*} Now apply $-\circ \epsilon^*$  and ${}^*u\circ -$ to obtain the result.
\end{proof}

\begin{proposition}\label{prop:recover}
Let $M\in HH(\Cc)$ and let the $\tau^\circ$ and ${}^*H$-action be as in Definition \ref{def:mainstr}.  We can recover the $HH(\Cc)$ structure from $\tau^\circ$ and ${}^*H$-action.\begin{equation}\label{eq:recover}
\includegraphics[height=.9in]{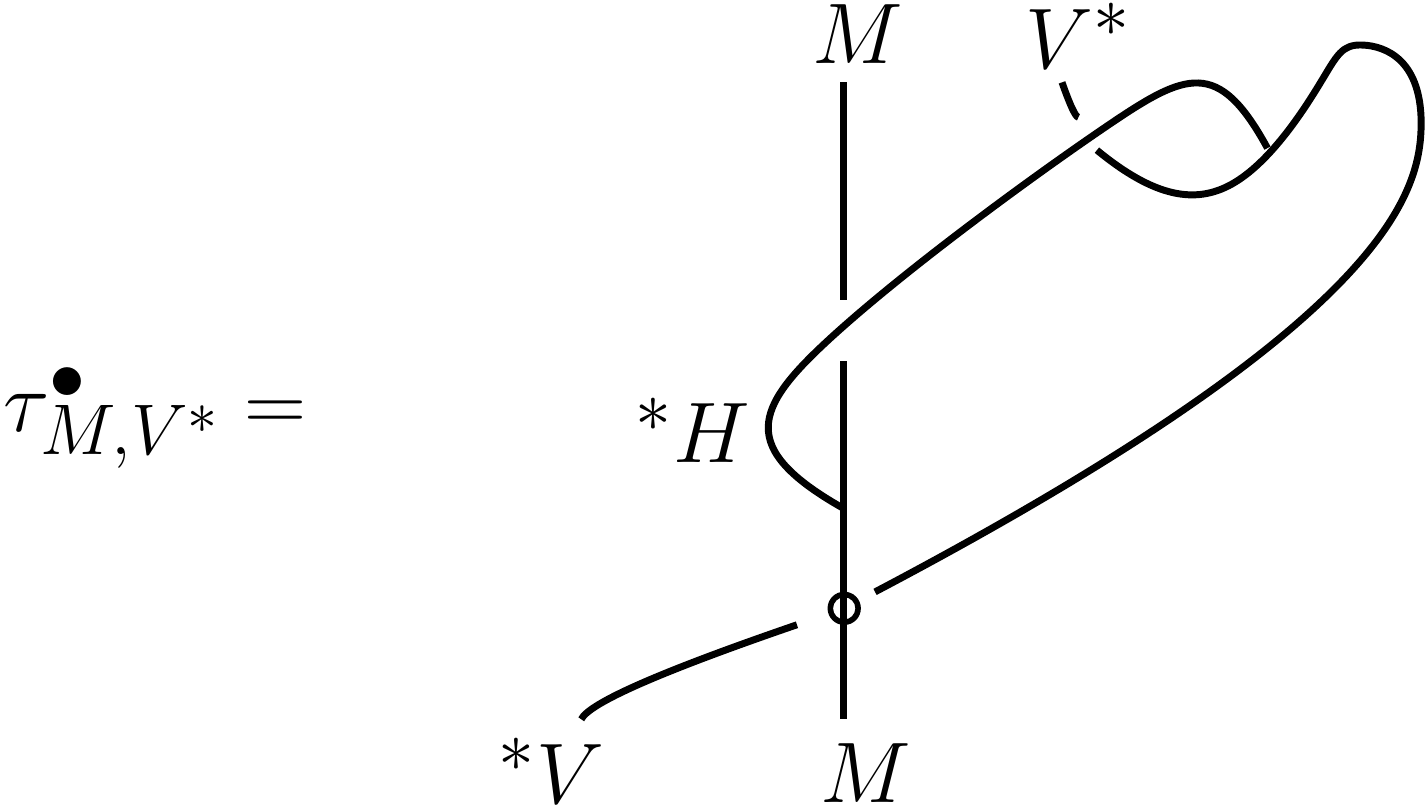}
\end{equation}
\end{proposition}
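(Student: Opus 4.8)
The statement to establish is the explicit identity \eqref{eq:recover}: for every $V\in\Cc$ the composite built on the right-hand side out of $\tau^\circ$, the $H$-action $\rho_{H,M}$, and the ${}^*H$-action $\rho_{{}^*H,M}$ reproduces the original anti-center structure $\tau^\bullet_{M,V}$. Since by Definition \ref{def:mainstr} all three of these ingredients were themselves extracted from $\tau^\bullet$, the proposition is really the assertion that this extraction loses no information, i.e. that plugging the pieces back in is the identity on structures. My plan is therefore to verify \eqref{eq:recover} as a graphical identity, working $V$ by $V$ and using naturality to reduce to the defining data.

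The engine of the argument is Lemma \ref{bullet}, which already records the precise discrepancy between $\tau^\bullet_{M,V^*}$ and $\tau^\circ_{M,\underline V^*}$ in terms of the $H$-module structure on $V$ and the ${}^*H$-action on $M$. First I would start from that identity and rearrange it so that $\tau^\bullet$ stands alone on one side. The passage from the $V^*$-version appearing in Lemma \ref{bullet} to the $V$-version appearing in \eqref{eq:recover} is carried out using the rigid structure of $\Cc$: inserting the evaluation and coevaluation for $V$ (and straightening the resulting strings) converts a statement about $\tau^\bullet_{M,V^*}$ into one about $\tau^\bullet_{M,V}$, exactly as in the conversion \eqref{convert} used elsewhere. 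Throughout, the only non-$\Bc$-level input is the ${}^*H$-action, so substituting its definition (as in Definition \ref{def:mainstr}) makes the ``coaction'' piece explicit and lets me match it against the correction term produced by Lemma \ref{bullet}.

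Once $\tau^\bullet$ is isolated, what remains is bookkeeping: I would substitute the explicit $\rho_{{}^*H,M}$ and then use the $H$-linearity facts already invoked in the proof of Lemma \ref{bullet} — namely that $\epsilon\colon H\to 1$ and $\Delta\colon H\to H\ot^\tau H$ are $H$-linear (equivalently the counit and coproduct normalizations), together with the antipode axiom $m(S\ot Id_H)\Delta=u\epsilon$ — to cancel the extra $H$-strand and recover precisely the stated composite. The main obstacle I anticipate is threading the antipode $S$ and the two crossing types correctly: because $\tau^\bullet$ is $H$-linear whereas $\tau^\circ$ only sees the underlying braided structure of $\Bc$, the correction must route the $H$-action on $V$ through the coevaluation of $H$ and the ${}^*H$-action on $M$, and getting the side on which $S$ sits, and which strand ``does the work'' in each braiding, to reproduce $\tau^\bullet$ itself (rather than an inverse or variant) is the delicate point. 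Checking the scalar/vector-space specialization against the formula $a\ot b\cdot\chi\ot m=\chi(S(a^2)-b)\ot a^1 m$ from Corollary \ref{deltaright} and its relatives provides a useful consistency test that the placements are correct.
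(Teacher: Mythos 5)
Your proposal is correct and follows essentially the same route as the paper: the paper's proof is precisely to combine Lemma \ref{bullet} (the discrepancy between $\tau^\bullet_{M,V^*}$ and $\tau^\circ_{M,\underline{V}^*}$) with a rearrangement that isolates $\tau^\bullet$, which is what you describe. The only ingredient you leave implicit is Lemma \ref{lem:hstr} (that $\rho_{{}^*H,M}$ is a morphism in $HH(\Bc)$, i.e.\ satisfies \eqref{ahhb}), which the paper uses to slide the ${}^*H$-action past the $\tau^\circ$ crossing during that rearrangement; your planned ``bookkeeping'' with the explicit $\rho_{{}^*H,M}$ would have to reprove exactly this compatibility.
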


\begin{proof}
This follows immediately from Lemma \ref{lem:hstr} and Lemma \ref{bullet}.
\end{proof}

\begin{theorem}\label{thm:compat}
Let $M\in HH(\Cc)$ and let the $\tau^\circ$, $H$ and ${}^*H$-actions be as in Definition \ref{def:mainstr}.  We have the following compatibility between $H$ and ${}^*H$-actions: \begin{equation*}
\includegraphics[height=1in]{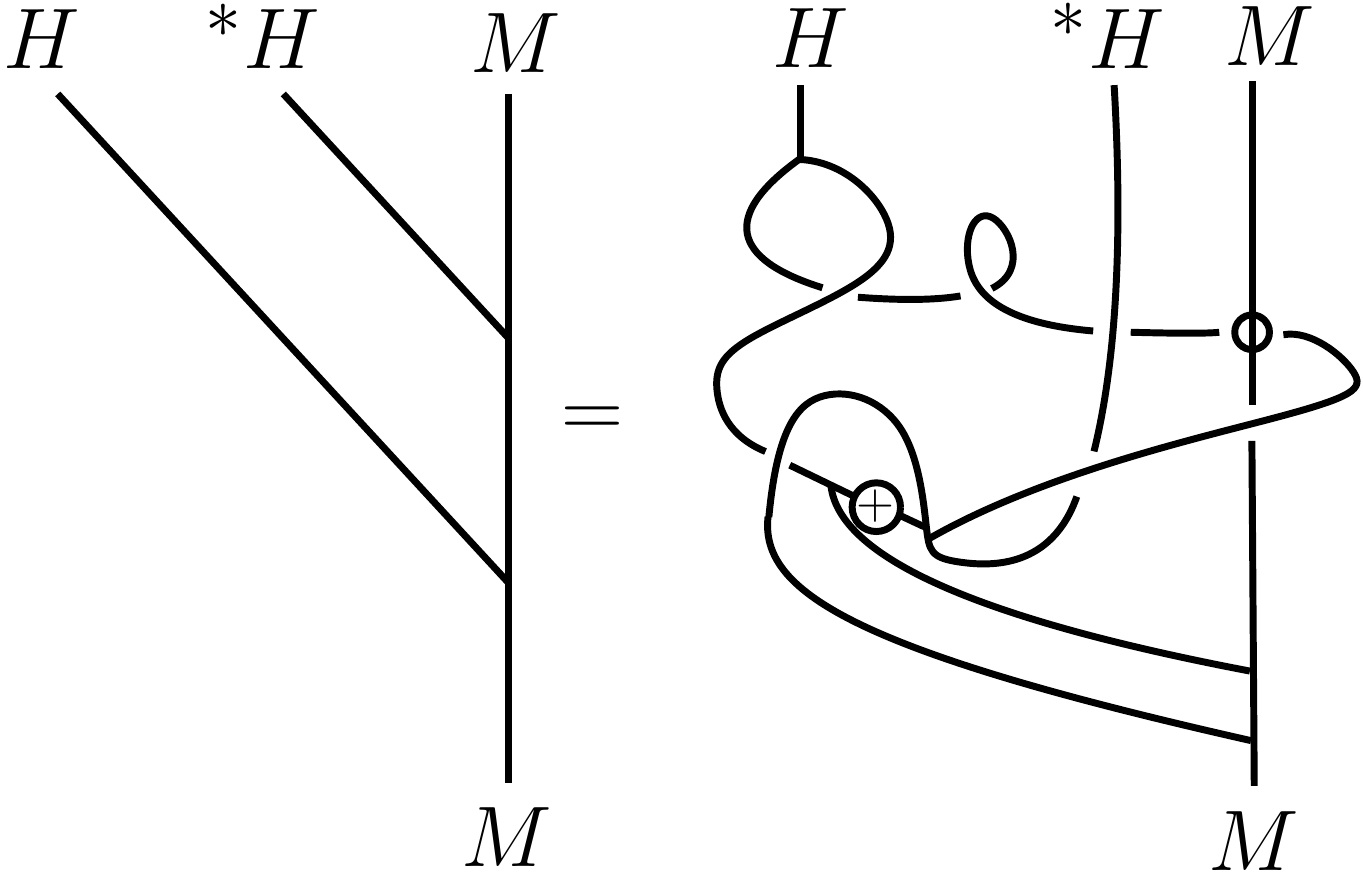}
\end{equation*}
\end{theorem}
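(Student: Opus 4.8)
The plan is to verify the compatibility square \eqref{main:eq} as a string-diagram identity, reducing it to the single structural fact that makes $M$ an object of $HH(\Cc)$: every component $\tau^\bullet_{M,V}$ is a morphism of $\Cc=H_\Bc\mod$, hence $H$-linear. In \eqref{main:eq} the action $\rho_{H,M}$ is the intrinsic module structure of $M$, whereas $\rho_{{}^*H,M}$ was built from $\tau^\bullet$ at the regular representation in Definition \ref{def:mainstr}, and the braided module structure $E_{H,M}$ of $HH(\Bc)$ is itself expressible through $\tau^\circ$ by \eqref{centerhhbraid}. Thus, once unwound, the whole diagram lives entirely among $\tau^\bullet$, $\tau^\circ$, and the Hopf maps of $H$, and the task is to see that $H$-linearity forces the identity.

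First I would replace $\rho_{{}^*H,M}$ and $E_{H,M}$ by their defining diagrams and use Lemma \ref{bullet} together with Proposition \ref{prop:recover} to rewrite each occurrence of $\tau^\bullet_{M,V^*}$ in terms of $\tau^\circ_{M,\underline V^*}$ and the ${}^*H$-action; this puts both sides in a normal form with a single crossing type decorated by insertions of $\Delta$ and $S$. Next I would invoke the $H$-linearity of $\tau^\bullet_{M,V^*}$ for the relevant $V\in\Cc$: pushing $\rho_{H,M}$ through $\tau^\bullet$ forces $H$ to act on the dual $V^*$ as well, and the module structure on that dual is exactly where the antipode appears, producing the conjugation of the ${}^*H$-leg by $S(h^{(3)})$ and $h^{(1)}$ with the surviving middle leg $h^{(2)}$ acting on $M$ --- the pattern already visible in the generalization $a\cdot\chi\ot m=\chi(S(a^3)-a^1)\ot a^2m$ of Lemma \ref{monadact}. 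Finally I would collapse what remains using coassociativity of $\Delta$, the antipode axiom $m(S\ot Id_H)\Delta=u\epsilon$, the anti-comultiplicativity $\Delta S=\tau_{H,H}(S\ot S)\Delta$, and the multiplicativity $\tau^\bullet_{M,XY}=\tau^\bullet_{M,Y}\tau^\bullet_{M,X}$, reconciling the leftover braidings with $E_{H,M}$ by means of C1--C2 of the braided module $HH(\Bc)$.

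The main obstacle I expect is bookkeeping rather than conceptual: one must keep the braided crossing $\tau$ rigorously separate from the anti-central crossing $\tau^\circ$ at every stage, as flagged in Definition \ref{def:ocross}, and thread the antipode through so that the induced $H$-module structure on the relevant dual is reproduced precisely. A secondary subtlety is that $\rho_{{}^*H,M}$ is manufactured with the non-$H$-linear unit $1\to H$, whereas $\epsilon\colon H\to 1$ and $\Delta\colon H\to H\ot^\tau H$ are $H$-linear; the reductions must keep each of these on the correct side of every contraction, since that is precisely where $H$-linearity of $\tau^\bullet$ may legitimately be applied. Once the normal form is reached, Lemmas \ref{lem:h} and \ref{lem:hstr} guarantee that both sides of \eqref{main:eq} are morphisms in $HH(\Bc)$, so that the verification reduces to the underlying Hopf-theoretic identity in $\Bc$.
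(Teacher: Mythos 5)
Your proposal is correct and follows essentially the same route as the paper: the paper's proof likewise rests entirely on the $H$-linearity of $\tau^\bullet_{M,V^*}$ specialized to the regular representation $V=H$, pre-composed with the (non-$H$-linear) unit and an isomorphism ${}^*H\ot M\to M\ot H^*$, and then rewritten via Proposition \ref{prop:recover} and simplified with the Hopf axioms. The points you flag --- the antipode entering through the dual module structure, and the care needed because $u:1\to H$ is not $H$-linear while $\epsilon$ and $\Delta$ are --- are exactly the ingredients of the paper's argument.
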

\begin{proof}
By assumption $\tau^\bullet_{M,V^*}$ is $H$-linear, so that $ev_{V,{}^*V}\tau^\bullet_{M,V^*}:V\ot M\ot V^*\to M$ is $H$-linear, so $\rho_{H,M}ev_{V,{}^*V}\tau^\bullet_{M,V^*}=ev_{V,{}^*V}\tau^\bullet_{M,V^*}\rho_{H,VMV^*}:H\ot V\ot M \ot V^*\to M$.   Replace  $V$ by $H$ and pre-compose with $u:1\to H$ to obtain  $$\rho_{H,M}ev_{H,{}^*H}\tau^\bullet_{M,H^*}Id_H\ot u\ot Id_{MH^*}=ev_{H,{}^*H}\tau^\bullet_{M,H^*}\rho_{H,HMH^*}Id_H\ot u\ot Id_{MH^*}$$ as maps from $H \ot M \ot H^*$ to  $M$.  Finally, pre-compose with $$\tau_{{}^*H,M}\alpha:=\tau_{{}^*H,M}ev_{H,{}^*H}\tau_{H^*,{}^*H}coev_{H,H^*}:{}^*H\ot M\to M\ot H^*$$ to obtain that $$\rho_{H,M}ev_{H,{}^*H}\tau^\bullet_{M,H^*}u\tau_{{}^*H,M}\alpha=ev_{H,{}^*H}\tau^\bullet_{M,H^*}\rho_{H,HMH^*}u\tau_{{}^*H,M}\alpha$$ as maps from  $H\ot {}^*H\ot M$ to $M$. By Proposition \ref{prop:recover} this results in:\begin{equation*}
\includegraphics[height=.9in]{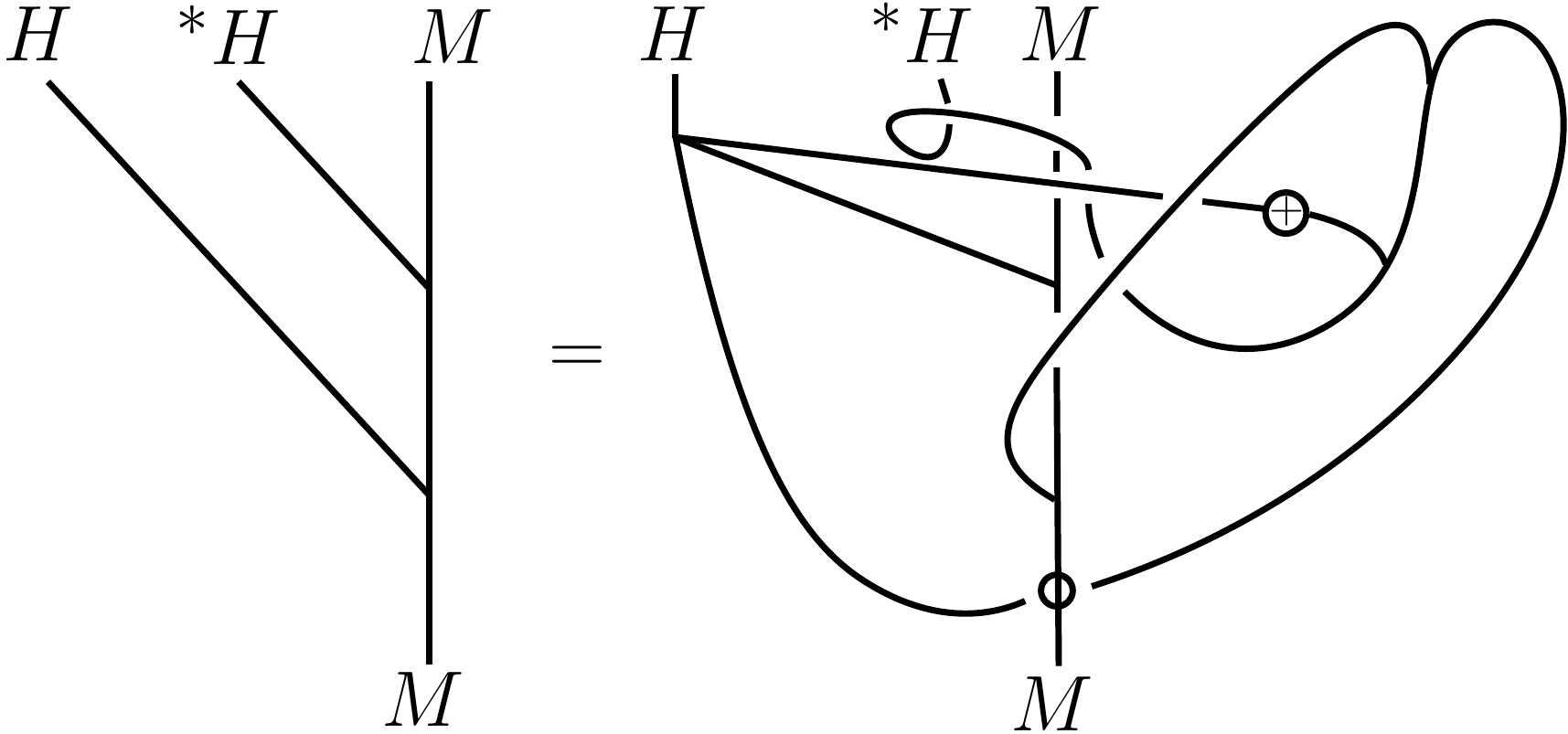}
\end{equation*} and the latter eventually simplifies to the result.
\end{proof}

\begin{theorem}\label{thm:localization}
Let $H\in\Bc$ be a Hopf algebra, let $\Cc=H_\Bc\mod$, then $$HH_{HH(\Bc)}(\Cc)\simeq HH(\Cc)$$ and the isomorphism identifies $\varsigma^H$ on LHS with  $\varsigma$ on RHS.
\end{theorem}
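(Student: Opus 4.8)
The plan is to show that the functor $\Phi$ of Proposition \ref{half:prop} is an isomorphism of categories by exhibiting its two-sided inverse, namely the assignment packaged in Definition \ref{def:mainstr}. First I would reduce the statement to the module/comodule picture. By the results of Section \ref{firstex}, $\Mc=HH(\Bc)$ is a stable braided $\Bc$-module, so Theorem \ref{thm:aYDHH} applies and identifies $HH_{HH(\Bc)}(\Cc)\simeq aYD^H_{HH(\Bc)}$, carrying the $\varsigma^H$ on the left to the corresponding structure on $aYD^H_{HH(\Bc)}$. Thus it suffices to produce mutually inverse functors between $aYD^H_{HH(\Bc)}$ and $HH(\Cc)$ respecting $\varsigma^H$ and $\varsigma$. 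Since Proposition \ref{half:prop} already supplies $\Phi$ in one direction, together with the fact that it matches $\varsigma^H$ with $\varsigma$, the only remaining tasks are to build the inverse $\Psi$ and to check that the two composites are identities.

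Next I would take $\Psi:HH(\Cc)\to aYD^H_{HH(\Bc)}$ to be the assignment of Definition \ref{def:mainstr}: an $M\in HH(\Cc)$ with anti-center isomorphism $\tau^\bullet$ is sent to $M$ equipped with its ambient $H$-action $\rho_{H,M}$, the ${}^*H$-action of Definition \ref{def:mainstr}, and the $HH(\Bc)$-structure $\tau^\circ=\tau^\bullet|_\Bc$. That this genuinely lands in $aYD^H_{HH(\Bc)}$ is exactly the content of the supporting results already proved: the lemma following Definition \ref{def:mainstr} shows $\rho_{{}^*H,M}$ is an action, Lemmas \ref{lem:h} and \ref{lem:hstr} show that both actions are morphisms in $HH(\Bc)$ (i.e.\ $M\in H_{HH(\Bc)}\mod$ and $M\in {}^*H_{HH(\Bc)}\mod$), and Theorem \ref{thm:compat} supplies the compatibility \eqref{main:eq} defining $aYD^H_{HH(\Bc)}$. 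Functoriality of $\Psi$ is automatic, since every extracted structure is built naturally from $\tau^\bullet$ and the $H$-module structure, so any morphism in $HH(\Cc)$ (an $H$-linear map commuting with $\tau^\bullet$) commutes with the extracted data as well.

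It then remains to verify that $\Phi$ and $\Psi$ are mutually inverse. The composite $\Phi\circ\Psi=Id$ is immediate from Proposition \ref{prop:recover}: feeding the $\tau^\circ$ and ${}^*H$-action produced by $\Psi$ into the defining formula \eqref{onehalf} for $\Phi$ reproduces $\tau^\bullet$, since \eqref{onehalf} is precisely the reconstruction \eqref{eq:recover}. For $\Psi\circ\Phi=Id$ I would start from an object of $aYD^H_{HH(\Bc)}$, form $\tau^\bullet$ via \eqref{onehalf}, and re-extract via Definition \ref{def:mainstr}: the $H$-action is untouched; the identity $\tau^\circ=\tau^\bullet|_\Bc$ is recovered by restricting \eqref{onehalf} to objects of $\Bc$ carrying the trivial $H$-action, where the ${}^*H$-contribution collapses through the counit; and the re-extracted ${}^*H$-action is matched with the original by inverting the string-diagram relation \eqref{onehalf}, using Lemma \ref{bullet} relating $\tau^\bullet$ and $\tau^\circ$ together with the defining identities for the antipode $S$ and the (co)evaluation maps.

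The hard part will be this last verification, $\Psi\circ\Phi=Id$, and specifically the claim that the ${}^*H$-action recovered from $\tau^\bullet=\Phi(\cdots)$ coincides with the input ${}^*H$-action. This is the single point where the reconstruction \eqref{onehalf} and the extraction of Definition \ref{def:mainstr} must be shown to be genuinely inverse operations at the level of string diagrams, resting on the interaction of $S$ with the braiding and on Lemma \ref{bullet}. Once both composites are identities, the matching of $\varsigma^H$ on the left with $\varsigma$ on the right is inherited directly from Proposition \ref{half:prop}, which records exactly this compatibility for $\Phi$, completing the proof.
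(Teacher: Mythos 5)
Your proposal is correct and follows essentially the same route as the paper's own proof: the paper likewise takes $\Phi$ from Proposition \ref{half:prop}, defines the inverse (there called $\Theta$) via Definition \ref{def:mainstr} using Lemmas \ref{lem:hstr}, \ref{lem:h} and Theorem \ref{thm:compat}, gets $\Phi\Theta=Id$ from Proposition \ref{prop:recover}, and for $\Theta\Phi=Id$ checks exactly the three points you list, with the ${}^*H$-action comparison carried out by the concluding string-diagram computation you correctly flag as the remaining nontrivial step.
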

\begin{proof}
The $\varsigma$-compatible functor $\Phi:HH_{HH(\Bc)}(\Cc)\to HH(\Cc)$ was given in Proposition \ref{half:prop}.  On the other hand, we can define $\Theta: HH(\Cc)\to HH_{HH(\Bc)}(\Cc)$ by  using Definition \ref{def:mainstr}.  Namely, we have Lemma \ref{lem:hstr}, Lemma \ref{lem:h}, and Theorem \ref{thm:compat} that prove that for $M\in HH(\Cc)$, we have $\Theta(M)\in aYD^H_{HH(\Bc)}=HH_{HH(\Bc)}(\Cc)$.  By Proposition \ref{prop:recover} we have $\Phi\Theta=Id$, and for $\Theta\Phi=Id$ note that if $M\in aYD^H_{HH(\Bc)}$, then \eqref{eq:recover} evaluated on $X\in\Bc$ recovers the original $HH(\Bc)$ structure.  Furthermore, the $H$-structure is the same, and for the ${}^*H$-structure observe that: \begin{equation*}
\includegraphics[height=.9in]{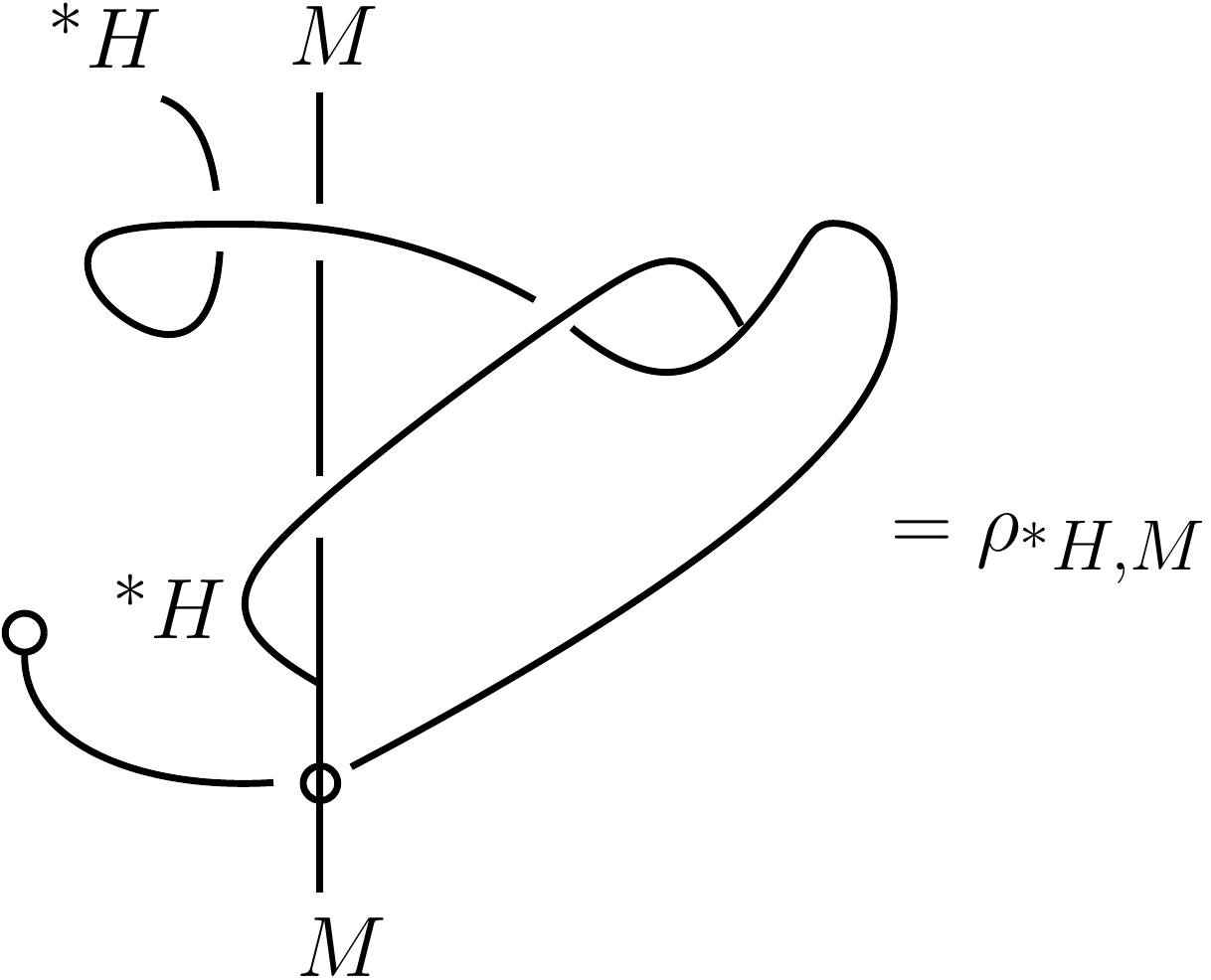}
\end{equation*}
\end{proof}

\begin{definition}
Let $\rho\in NatIso^\ot(Id_\Bc,(-)^\#)$ be a pivot. Define $HH_\rho(\Cc)$ to be the full subcategory of $M\in HH(\Cc)$ such that for $X\in\Bc\subset H_\Bc\mod$ we have $\tau^\bullet_{X,M}=\rho_X\tau^{-1}_{X,M}$.  
\end{definition}

The following is immediate from Theorem \ref{thm:localization}.
\begin{corollary}\label{cor:aydbs}
Let $\varsigma$ and $\rho$ be related as in \eqref{hhstr1}.  Then $$HH_{\Bc_\varsigma}(\Cc)\simeq HH_\rho(\Cc)$$ compatibly with $\varsigma^H$ on the LHS and the restriction of $\varsigma$ on $HH(\Cc)$ to $HH_\rho(\Cc)$ on the RHS.
\end{corollary}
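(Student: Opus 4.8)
The plan is to obtain the stated isomorphism by simply \emph{restricting} the isomorphism of Theorem \ref{thm:localization} along the inclusion $\Bc_\varsigma\hookrightarrow HH(\Bc)$, which is a fully faithful stable braided embedding by Remark \ref{rem:decomp}. First I would rewrite both sides of Theorem \ref{thm:localization} using Theorem \ref{thm:aYDHH}, so that the mutually inverse functors $\Phi$ (Proposition \ref{half:prop}) and $\Theta$ (assembled from Definition \ref{def:mainstr}) become an isomorphism $aYD^H_{HH(\Bc)}\simeq HH(\Cc)$. The first observation is that, because $\Bc_\varsigma\hookrightarrow HH(\Bc)$ is \emph{braided}, fully faithful, and $\Bc$-equivariant, the actions of $H$ and ${}^*H$ and the braided structure $E_{H,M}$ appearing in the compatibility \eqref{main:eq} all transport verbatim; hence $HH_{\Bc_\varsigma}(\Cc)=aYD^H_{\Bc_\varsigma}$ is identified with the full subcategory of $aYD^H_{HH(\Bc)}$ whose objects have their underlying anti-center object of $HH(\Bc)$ lying in the summand $\Bc_\varsigma$.

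It then remains to see that $\Theta$ and $\Phi$ carry this full subcategory to $HH_\rho(\Cc)$ and back. For this I would recall that the essential image of $\Bc_\varsigma$ in $HH(\Bc)$ is exactly the collection of objects $M$ whose anti-center structure $\tau^\circ$ is the one cut out by the pivot $\rho$ via \eqref{hhstr1}, i.e. $\tau^\circ_{X,M}=\rho_X\tau^{-1}_{X,M}$. On the other side, by Definition \ref{def:mainstr} the functor $\Theta$ equips $M\in HH(\Cc)$ with the $HH(\Bc)$-structure $\tau^\circ=\tau^\bullet|_\Bc$, the restriction of the anti-center structure of $M$ to $\Bc\subset\Cc$. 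Hence $\Theta(M)$ lands in $aYD^H_{\Bc_\varsigma}$ precisely when $\tau^\bullet_{X,M}=\rho_X\tau^{-1}_{X,M}$ for all $X\in\Bc$, which is by definition the condition $M\in HH_\rho(\Cc)$. Thus $\Theta$ and $\Phi$ restrict to mutually inverse isomorphisms $HH_\rho(\Cc)\simeq HH_{\Bc_\varsigma}(\Cc)$.

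For the compatibility of the invariants, I would use that the embedding of Remark \ref{rem:decomp} is \emph{stable}, so the stable structure $\varsigma$ of $\Bc_\varsigma$ is the restriction of that of $HH(\Bc)$; consequently the map $\varsigma^H$ of Definition \ref{alg:def:hstr} computed in $\Bc_\varsigma$ equals the one computed in $HH(\Bc)$, and Theorem \ref{thm:localization} already matches the latter with $\varsigma$ on $HH(\Cc)$. Restricting this matching to $HH_\rho(\Cc)$ gives the claim. The only genuine (and entirely routine) point to verify is the one used in the second paragraph: that the $HH(\Bc)$-structure carried by an object of $\Bc_\varsigma$ via \eqref{hhstr1} is literally $\tau^\circ_{X,M}=\rho_X\tau^{-1}_{X,M}$, so that the summand condition and the defining condition of $HH_\rho(\Cc)$ coincide on the nose. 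This is a direct unwinding of the graphical calculus, taking care only to distinguish the $\tau$-crossing from the $\tau^\circ$-crossing, and requires no new ingredient.
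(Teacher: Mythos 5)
Your proposal is correct and follows exactly the route the paper intends: the paper's own proof is simply ``immediate from Theorem \ref{thm:localization}'', meaning one restricts the isomorphism $\Phi,\Theta$ along the fully faithful stable braided embedding $\Bc_\varsigma\hookrightarrow HH(\Bc)$ of Remark \ref{rem:decomp} and observes that the summand condition matches the defining condition $\tau^\bullet_{X,M}=\rho_X\tau^{-1}_{X,M}$ of $HH_\rho(\Cc)$. You have merely spelled out the details the paper leaves implicit, and all the steps (including the stability argument for the $\varsigma^H$ compatibility) check out.
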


We now apply the theorem to the case considered in Section \ref{sec:decompo}.  Namely, $\Bc=(\vect_G,\chi)$ with $H\in\Bc$  a Hopf algebra and $\Cc=H_\Bc\text{-mod}$.  We assume that $\omega:G\to\hat{G}$ is an isomorphism. Recall that  $\theta(x)=\chi(x,x)=\varsigma^{-1}(x)$; let $$I=\theta(G)\subset k^\times\quad\text{and}\quad n_i=\#\{x|\theta(x)=i\}.$$ 

\begin{corollary}\label{cor:hcdecomp}
Let $X=HH_{\Bc_\varsigma}(\Cc)$, then \begin{equation*}\label{localrem}HC(\Cc)=\bigoplus_{i\in I}X^{i\varsigma^H}\boxtimes\vect^{n_i}.\end{equation*}
\end{corollary}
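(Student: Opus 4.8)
The plan is to assemble three facts that are already in place: the identification $HH(\Cc)\simeq HH_{HH(\Bc)}(\Cc)$ of Theorem \ref{thm:localization}, the additivity of the assignment $\Mc\mapsto HH_\Mc(\Cc)=aYD^H_\Mc$ (Theorem \ref{thm:aYDHH}) in the stable braided module variable, and the decomposition of $(HH(\Bc),\varsigma)$ furnished by the third item of Corollary \ref{cor:decompmore}. Since the equivalence of Theorem \ref{thm:localization} matches $\varsigma$ on $HH(\Cc)$ with $\varsigma^H$ on the right-hand side, it identifies $HC(\Cc)=HH(\Cc)^\varsigma$ with the full subcategory of $aYD^H_{HH(\Bc)}$ cut out by $\varsigma^H=Id$. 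Everything is thereby reduced to understanding $aYD^H_{HH(\Bc)}$ together with its automorphism $\varsigma^H$.

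First I would record the additivity. If a stable braided $\Bc$-module splits as $\Mc=\bigoplus_j\Mc_j$, meaning a direct sum of $\Bc$-module subcategories on which $E$ and $\varsigma$ are block diagonal (Definition \ref{def:stablebraided}), then for $M=\bigoplus_j M_j$ the $H$- and ${}^*H$-actions of Definition \ref{def:saYD} are morphisms in $\Mc$ with sources $H\cdot M=\bigoplus_j H\cdot M_j$ and ${}^*H\cdot M=\bigoplus_j{}^*H\cdot M_j$, hence are forced to be block diagonal; and the compatibility \eqref{main:eq}, being built from the block-diagonal $E_{H,M}$, holds if and only if it holds on each $M_j$. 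Thus $aYD^H_\Mc=\bigoplus_j aYD^H_{\Mc_j}$, and by \eqref{varsigma:eq} the automorphism $\varsigma^H$ is the direct sum of the $\varsigma^H$ on the summands. I would pair this with two observations about scalar rescaling of the stable structure: replacing $\varsigma$ by $c\,\varsigma$ for a scalar $c$ leaves $E_{X,M}=\varsigma_{XM}\varsigma_M^{-1}$ unchanged, so the underlying braided module and hence the category $aYD^H$ are unaffected; whereas by \eqref{varsigma:eq} (and the remark in Definition \ref{alg:def:hstr}) $\varsigma^H$ is multiplied by $c$.

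Next I would feed in Corollary \ref{cor:decompmore}: under the hypothesis that $\omega$ is an isomorphism, $(HH(\Bc),\varsigma)=\bigoplus_{y\in G}(\Bc_\varsigma,\varsigma^{-1}(y)\varsigma)$, each summand being the braided module $\Bc_\varsigma$ with its canonical stable structure rescaled by the scalar $\varsigma^{-1}(y)=\theta(y)$. Combining this with the additivity gives $aYD^H_{HH(\Bc)}=\bigoplus_{y\in G}aYD^H_{\Bc_\varsigma}$, where every summand is $X=HH_{\Bc_\varsigma}(\Cc)$ as a category but carries $\varsigma^H$ rescaled by $\theta(y)$. Imposing $\varsigma^H=Id$ to pass to $HC$ then reads, on the $y$-summand, $\theta(y)\,\varsigma^H_X=Id$, i.e.\ $\varsigma^H_X=\theta(y)^{-1}$; this is precisely the eigen-subcategory named $X^{\theta(y)\varsigma^H}$. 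Grouping the $y\in G$ by the value $i=\theta(y)\in I$, of which there are $n_i$, yields $\bigoplus_{y}X^{\theta(y)\varsigma^H}=\bigoplus_{i\in I}X^{i\varsigma^H}\boxtimes\vect^{n_i}$, as claimed.

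The computations are routine once this framework is assembled; the main obstacle is not any single calculation but getting the additivity and the $\varsigma^H$-bookkeeping exactly right. Concretely, one must verify that Corollary \ref{cor:decompmore} is invoked as a decomposition of \emph{stable} braided modules, so that $\varsigma^H$ (and not merely the underlying category) decomposes, and one must track that the scalar $\theta(y)$ enters $\varsigma^H$ with the correct exponent, so that the eigenvalue $\theta(y)^{-1}$ of $\varsigma^H$ is correctly recorded by the superscript $X^{\theta(y)\varsigma^H}$.
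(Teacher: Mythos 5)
Your argument is correct and follows the same route as the paper's proof: invoke Corollary \ref{cor:decompmore} for the decomposition of $(HH(\Bc),\varsigma)$ into copies of $\Bc_\varsigma$ with stable structures differing by the scalars $\theta(y)$, transport this through Theorem \ref{thm:localization}, and then sort the summands by the eigenvalue condition $\varsigma^H=Id$. The paper states this in two sentences and leaves the additivity of $\Mc\mapsto aYD^H_\Mc$ and the final bookkeeping implicit; you have simply made those steps explicit, and correctly.
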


\begin{proof}
By Corollary \ref{cor:decompmore} all the summands of $HH(\Bc)$ are identical as braided modules, with only the stable structure differing by a specified scalar multiple.  Thus, by Theorem \ref{thm:localization}, all the summands of $HH(\Cc)$ are identical, with only the $\varsigma^H$'s differing  by a scalar multiple.  
\end{proof}

\section{An application: Taft Hopf algebras}\label{sec:blah}
Let $\xi$ be a primitive $p$th root of unity in $k$, where $p$ is a prime.  The Taft Hopf algebra \cite{taft} $T_p(\xi)$ is generated as a $k$-algebra by $g$ and $x$ with the relations \begin{equation}g^p=1,\quad x^p=0, \end{equation} \begin{equation} gx=\xi xg.\end{equation}  Thus it is $p^2$ dimensional over $k$.  Furthermore, the coalgebra structure is \begin{equation}
\Delta(g)=g\ot g,\quad \Delta(x)=x\ot 1+g\ot x
\end{equation} with $\epsilon(g)=1$, $\epsilon(x)=0$, and thus $S(g)=g^{-1}$, while $S(x)=-g^{-1}x$.  Note that $$S^2(x)=\xi^{-1}x\neq x,$$ making $T_2(-1)$ the smallest Hopf algebra with $S^2\neq Id$.  The Taft algebra $T_2(-1)$ is somewhat different from the other $T_p(\xi)$ and has its own name: Sweedler’s Hopf algebra.

\subsection{Taft algebra as a Hopf algebra in a braided monoidal category.}\label{hopfbraidtaft}
It is interesting to observe, especially in the case of $T_2(-1)$, that Taft Hopf algebras, or rather their monoidal categories of modules, can be better understood via related, simpler, Hopf algebras in braided monoidal categories \cite{majidbraidhopf}.  This process is called transmutation, the reverse process is called bosonization. 

Consider an abelian group $G=\zp$ of integers modulo $p$, with a bicharacter  $$\chi(i,j)=\xi^{ij}.$$  Let $\Bc=(\vect_{\zp},\xi)$ denote the resulting, by Section \ref{sec:decompo}, braided category.  Note that this braiding is not symmetric unless $p=2$, in which case the braided monoidal category we get is just $s\vect$, the category of super vector spaces with the usual sign conventions braiding.  What is true for higher $p$ is that the braided category $(\vect_{\zp},\xi)$ is a ribbon category with the ribbon element $$\theta(v)=\xi^{i^2}v,\quad\text{for}\quad v\in V_i.$$  

\begin{definition}
For $p$ prime, and $\mu\in\zp$ consider the anti-twist on $\Bc$ given by $$\varsigma_\mu(v)=\xi^{-i^2-\mu i}v,\quad\text{for}\quad v\in V_i.$$  Denote by  $\Bc_\mu$ the resulting stable braided $\Bc$-module.  Recall from Section \ref{sec:decompo} that $$HH(\Bc)=\bigoplus_{\mu\in\zp}\Bc_\mu$$ as stable braided $\Bc$-modules.
\end{definition}

The following lemma is immediate, the Hopf algebra is called the one-dimensional anyonic enveloping algebra or anyonic line:

\begin{lemma}\label{taftlemma1}
Let $H=k[x]/x^p$ with $x$ of degree $1$ be a Hopf algebra in $(\vect_{\zp},\xi)$, where $\Delta(x)=x\ot 1+1\ot x$, $S(x)=-x$, and $\epsilon(x)=0$.  Then as monoidal categories: $$H_{\vect_{\zp}}\text{-mod}=T_p(\xi)_\vect\text{-mod},$$ i.e., modules over $H$ in $\vect_{\zp}$ are the same as modules over $T_p(\xi)$.

\end{lemma}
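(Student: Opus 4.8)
The plan is to recognize this statement as an instance of \emph{bosonization} (the Radford biproduct): I claim that $T_p(\xi)$ is exactly the bosonization $H\rtimes k[\zp]$ of the anyonic line $H$ by the group algebra of the grading group, and that for such a biproduct the two module categories coincide monoidally. First I would make the grading explicit. An object of $H_{\vect_{\zp}}\mod$ is a $\zp$-graded space $V=\bigoplus_i V_i$ together with an $H$-action that is a morphism in $\vect_{\zp}$, i.e.\ degree-preserving; since $\deg x=1$, the operator $x$ raises degree by one and satisfies $x^p=0$. I would then encode the grading as the action of a single operator $g$ defined by $g|_{V_i}=\xi^i\,\mathrm{Id}$. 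Because $k$ contains a primitive $p$-th root of unity, giving such a semisimple $g$ with $g^p=1$ is precisely the same datum as a $\zp$-grading, so this passage is invertible.

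Next I would read off the Taft relations. Automatically $g^p=1$ and $x^p=0$, while the degree-one condition on $x$ is exactly the commutation $gx=\xi xg$: for $v\in V_i$ one computes $g(xv)=\xi^{i+1}xv=\xi\,x(gv)$. Hence the data of an $H$-module in $\vect_{\zp}$ is literally the data of a $T_p(\xi)$-module, and a degree-preserving $H$-linear map is the same as a $g$- and $x$-equivariant, hence $T_p(\xi)$-linear, map. This establishes an isomorphism of the underlying abelian categories in both directions.

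The substantive point is the monoidal structure. In $H_{\vect_{\zp}}\mod$ the product is $V\pr W=\Delta^*(V\ot^\tau W)$ with $\Delta(x)=x\ot 1+1\ot x$, and the $H\ot^\tau H$-action on $V\ot^\tau W$ (Definition \ref{algprod}) is computed through the braiding $\tau_{H,V}$. For $v\in V_i$, moving $x$ past $v$ produces the braiding scalar $\chi(1,i)=\xi^i$, so the term $1\ot x$ contributes $\xi^i\,v\ot xw$, giving
\begin{equation*}
x\cdot(v\ot w)=xv\ot w+\xi^i\,v\ot xw=xv\ot w+(gv)\ot xw,
\end{equation*}
while $g$ acts by $\xi^{i+j}$ on $V_i\ot W_j$. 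These are precisely the actions dictated by the Taft coproduct $\Delta(x)=x\ot 1+g\ot x$ and $\Delta(g)=g\ot g$ on a tensor product of $T_p(\xi)$-modules; the unit object (trivial grading, resp.\ trivial module) and the associativity constraint (inherited from $\vect$ on both sides, as $\vect_{\zp}$ carries the standard associator) match on the nose, so the isomorphism of categories is monoidal. I expect the main obstacle to be exactly this braiding bookkeeping: one must verify that the factor $\chi(1,i)=\xi^i$ created by moving $x$ past a degree-$i$ vector coincides with the eigenvalue of the group-like $g$, so that the braided, cocommutative-looking $\Delta_H$ becomes the genuinely non-cocommutative Taft coproduct after bosonization. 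Once the identification of $1\ot x$ with $g\ot x$ is in hand, unitality, associativity, and functoriality are routine.
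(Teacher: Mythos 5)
Your proposal is correct and follows essentially the same route as the paper: identify the $\zp$-grading with the semisimple action of $g$ (eigenvalue $\xi^i$ on degree $i$), read off $g^p=1$, $x^p=0$, $gx=\xi xg$, and then check that the braiding factor $\chi(1,i)=\xi^i$ arising in $\Delta^*(V\ot^\tau W)$ turns $1\ot x$ into $g\ot x$, recovering the Taft coproduct. The paper's proof is just a terser version of this same verification (and it likewise flags the $\Delta(x)=x\ot 1+g\ot x$ convention as the point where the braiding bookkeeping shows up), so there is nothing to add.
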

\begin{proof}
Both are $\zp$-graded vector spaces, with the grading on the right given by the eigenspaces, $E_{\xi^i}$, of $g$.  The action of $x$ gives a degree $1$ operator with $x^p=0$.  The monoidal structure agrees as well and explains our slightly unusual convention for $\Delta(x)=x\ot 1+ g\ot x$ and not $\Delta(x)=1\ot x+ x\ot g$ as is found in the majority of the literature.
\end{proof}

If $p=2$ then $T_2(-1)$-modules are the same, according to the above observation, as sheaves on $(k^{0|1},+)$ (with convolution), the odd analogue of the additive group $(k,+)$.  As the group is abelian, the category is actually itself braided.  This observation does not generalize to higher primes.  In fact according to \cite{nobraid}, $T_p(\xi)$ are not quasi-triangular for $p>2$.

\begin{remark}
It is often mentioned that $T_2(-1)$ is the smallest non-trivial Hopf algebra, i.e., not obtained from a finite group $G$.  We see above that if one allows supergroups then it is trivial still.
\end{remark}

\subsection{Some calculations}
In this section we let $H=k[x]/x^p$ as in Lemma \ref{taftlemma1}.  It is a Hopf algebra in the braided category $\Bc=(\vect_{\zp},\xi)$.  Recall that, for $\xi$ a root of unity and $n\in\mathbb{Z}_{\geq 0}$, one writes $$(n)_\xi=1+\cdots +\xi^{n-1}\quad\text{and}\quad (n)_\xi !=(n)_\xi\cdots(1)_\xi.$$  Let ${}^i e={}^*(x^i)\in {}^*H$, so that the degree of ${}^i e$ is $-i$ and they form a basis of ${}^*H$.  Note that under the algebra structure on ${}^*H$ obtained from Lemma \ref{multiplication}, we have $${}^i e\cdot {}^j e=\xi^{-ij}\dfrac{(i+j)_\xi !}{(i)_\xi ! (j)_\xi !}\,{}^{i+j}e$$ using the observation that $\Delta(x^n)=(1\ot x+ x\ot 1)^n=\displaystyle\sum_{i+j=n}\dfrac{(i+j)_\xi !}{(i)_\xi ! (j)_\xi !}\, x^i\ot x^j$.

\begin{lemma}\label{isolemma2}
Let $z$ be of degree $-1$, then $${}^*H\simeq k[z]/z^p.$$
\end{lemma}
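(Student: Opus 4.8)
The plan is to exhibit an explicit generator rather than argue abstractly. I would set $z:={}^1 e\in{}^*H$, which has degree $-1$, and claim that $1,z,\dots,z^{p-1}$ form a basis of ${}^*H$ while $z^p=0$; this immediately produces the degree-preserving algebra isomorphism $k[z]/z^p\to{}^*H$ sending $z\mapsto{}^1 e$ in $\Bc$. The key is to compute the powers of $z$ using the multiplication formula for ${}^*H$ just recorded, namely ${}^i e\cdot{}^j e=\xi^{-ij}\frac{(i+j)_\xi!}{(i)_\xi!(j)_\xi!}\,{}^{i+j}e$.

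First I would run an induction. Writing $z^n=c_n\,{}^n e$ and specializing the product formula to $i=1$, $j=n$, one has ${}^1 e\cdot{}^n e=\xi^{-n}(n+1)_\xi\,{}^{n+1}e$, giving the recursion $c_{n+1}=\xi^{-n}(n+1)_\xi\,c_n$ with $c_1=1$, whose closed form is $c_n=\xi^{-\binom{n}{2}}(n)_\xi!$. The case $n=0$ is covered by the fact that the unit of ${}^*H$ is $u={}^*\epsilon={}^0 e$, so $z^0=1={}^0 e$.

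Next comes the only substantive input, the assumption that $\xi$ is a primitive $p$th root of unity. For $1\leq k\leq p-1$ we have $(k)_\xi=(\xi^k-1)/(\xi-1)\neq 0$, hence $c_n\neq 0$ for $1\leq n\leq p-1$ and each $z^n$ is a nonzero scalar multiple of ${}^n e$. Since $\{{}^0 e,\dots,{}^{p-1}e\}$ is a basis of the $p$-dimensional object ${}^*H$, so is $\{z^0,\dots,z^{p-1}\}$, which gives surjectivity and (by dimension count) bijectivity. On the other hand $z^p=c_{p-1}\,({}^1 e\cdot{}^{p-1}e)=0$, because the coefficient $\frac{(p)_\xi!}{(1)_\xi!(p-1)_\xi!}$ contains the vanishing factor $(p)_\xi=0$; equivalently ${}^{p}e$ would be dual to $x^p=0$. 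As $\deg z^n=-n$, the map is degree preserving, so $z\mapsto{}^1 e$ is an isomorphism of algebras in $\Bc$.

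The hard part is essentially bookkeeping: there is no deep obstacle, and the computation is light once the product formula is in hand. The only point demanding genuine care is tracking the $\xi$-factorial coefficients $c_n$ and correctly invoking the dichotomy $(p)_\xi=0$ versus $(k)_\xi\neq 0$ for $k<p$ — the same root-of-unity phenomenon responsible for $H=k[x]/x^p$ being a finite-dimensional truncation in the first place.
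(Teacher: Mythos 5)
Your proof is correct and follows essentially the same route as the paper: the paper simply asserts that $z^i\mapsto \xi^{-\frac{(i-1)i}{2}}(i)_\xi!\,{}^i e$ is an algebra isomorphism, and your induction on powers of ${}^1 e$ produces exactly this map (since $\binom{n}{2}=\frac{n(n-1)}{2}$), together with the verifications of nonvanishing of the coefficients for $n<p$ and of $z^p=0$ via $(p)_\xi=0$ that the paper leaves implicit.
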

\begin{proof}
It is immediate that $z^i\mapsto \xi^{-\frac{(i-1)i}{2}}(i)_\xi !\,{}^i e$ is an algebra isomorphism.
\end{proof}

We can now describe $aYD^H_{\Bc_\mu}$ and $\varsigma^H$:

\begin{proposition}\label{prop:taft}
Let $\Bc=(\vect_{\zp},\xi)$ and $H=k[x]/x^p$, then $HH_{\Bc_\mu}(H_\Bc\mod)$ consists of $M\in\Bc$ equipped with two operators: $x$ of degree $1$, and $z$ of degree $-1$ such that $x^p=0$, $z^p=0$, and for $m\in M_i$ we have \begin{equation}\label{reltaft} (xz-\xi zx)m=(\xi^{-2i+1-\mu}-1)m. 
\end{equation} Furthermore, the action of $\varsigma^H$ is given by \begin{equation}\label{acttaft01}
m\mapsto \xi^{-i^2-\mu i}\left(\displaystyle\sum_{j=0}^{p-1}\dfrac{\xi^{\frac{(j-1)j}{2}}}{(j)_\xi !}z^jx^j\right)m.
\end{equation}
\end{proposition}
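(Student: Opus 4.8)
The strategy is to unwind the abstract description of $HH_{\Bc_\mu}(H_\Bc\mod) \simeq aYD^H_{\Bc_\mu}$ from Definition \ref{def:saYD} into the two explicit operators $x$ and $z$ in the concrete setting $\Bc = (\vect_{\zp},\xi)$, $H = k[x]/x^p$. The $H$-action supplies the degree $+1$ operator $x$ (with $x^p=0$ automatic), and the ${}^*H$-action, via the algebra isomorphism ${}^*H \simeq k[z]/z^p$ of Lemma \ref{isolemma2}, supplies the degree $-1$ operator $z$ (with $z^p=0$). The substance of the proposition is twofold: first, that the compatibility condition \eqref{main:eq} relating the $H$- and ${}^*H$-actions collapses to the single commutation relation \eqref{reltaft}; and second, that the formula \eqref{varsigma:eq} for $\varsigma^H_M$ evaluates to \eqref{acttaft01}.

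\textbf{Deriving the relation \eqref{reltaft}.} First I would feed the braided-module structure $E_{H,M}$ of $\Bc_\mu$ into the compatibility diagram \eqref{main:eq}. By Lemma \ref{class:lem} the braided structure on $\Bc_\mu$ is $E_{X,M} = \tau^{-2}_{X,M}\varsigma_{\mu,X}$, and on homogeneous pieces both $\tau^{\pm 2}$ and $\varsigma_\mu$ act by the explicit scalars $\omega(i,j)^{\mp 1} = \xi^{\mp 2ij}$ and $\xi^{-i^2-\mu i}$ respectively. The plan is to evaluate \eqref{main:eq} on a basis element $m \in M_i$, tracking the $H$-action (multiplication by $x$, raising degree) against the ${}^*H$-action (the generator $z$, lowering degree). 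The compatibility condition, which classically reads like an anti-Yetter--Drinfeld relation $\rho\circ(\text{action and coaction})$, should produce an identity comparing $xz\cdot m$ with $zx\cdot m$ up to the braiding scalars; since $xz$ and $zx$ both preserve degree, the braiding factors $\xi^{-2i+1}$ (from the degree shift by $x$ against $z$) combine with the anti-twist scalar $\xi^{-\mu}$ to give exactly the right-hand side $\xi^{-2i+1-\mu}-1$ of \eqref{reltaft}. I expect this to be the main obstacle: one must carefully normalize the generators $x \in H$ and $z \in {}^*H$ (using the correspondence ${}^ie = {}^*(x^i)$ and the scaling $z^i \mapsto \xi^{-(i-1)i/2}(i)_\xi!\, {}^ie$) so that the accumulated root-of-unity factors from the braiding, the antipode $S(x)=-x$, and the coproduct $\Delta(x) = x\ot 1 + 1\ot x$ all land with the stated exponents, since a single misplaced power of $\xi$ corrupts the relation.

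\textbf{Deriving the action of $\varsigma^H$.} For the final formula \eqref{acttaft01}, I would substitute the explicit data into Definition \ref{alg:def:hstr}, $\varsigma^H_M = \rho_{{}^*H,M}\circ\rho_{H,M}\circ coev_{{}^*H,H}\circ\varsigma_M$. On $m \in M_i$ the stable structure $\varsigma_M = \varsigma_{\mu,M}$ contributes the overall scalar $\xi^{-i^2-\mu i}$, which matches the prefactor in \eqref{acttaft01}. The coevaluation $coev_{{}^*H,H}$ produces the canonical element $\sum_i {}^ie \ot x^i$ (dual bases), and applying $\rho_{H,M}$ then $\rho_{{}^*H,M}$ turns this into $\sum_i {}^ie \cdot (x^i m)$, i.e. applying $x^i$ and then the dual basis element ${}^ie$. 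Rewriting ${}^ie$ in terms of $z^i$ via Lemma \ref{isolemma2}, namely ${}^ie = \xi^{(i-1)i/2}\big/(i)_\xi!\cdot z^i$, converts the sum into $\sum_{j=0}^{p-1}\frac{\xi^{(j-1)j/2}}{(j)_\xi!}z^j x^j$, which is precisely the operator in \eqref{acttaft01}. The remaining work is bookkeeping: verifying that the dual pairing $coev$ pairs ${}^ie$ with $x^i$ (and not with a scalar multiple), and confirming the degree-matching forces only the diagonal terms $z^j x^j$ to survive, both of which follow from the degree conventions ($\deg x = 1$, $\deg z = -1$) and the fact that $M$ is $\zp$-graded.
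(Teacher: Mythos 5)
Your proposal is correct and follows essentially the same route as the paper, whose proof is simply the one-line instruction to unpack the compatibility condition \eqref{main:eq} and the formula \eqref{varsigma:eq} using Lemma \ref{isolemma2} (with $z={}^*x$). Your more detailed unwinding of $\varsigma^H$ via the coevaluation element $\sum_j {}^j e\ot x^j$ and the rescaling ${}^j e=\xi^{(j-1)j/2}/(j)_\xi!\cdot z^j$ checks out, and the scalar bookkeeping you flag for \eqref{reltaft} is consistent with the equivalent form $xz-\xi zx=\xi^{1-\mu}g^{-2}-1$ recorded in Definition \ref{reladd}.
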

\begin{proof}
This is obtained by unpacking \eqref{main:eq}, \eqref{varsigma:eq}, and using Lemma \ref{isolemma2}.  Note that $$z={}^*x\in {}^*H.$$
\end{proof}
\begin{definition}\label{reladd}
Proposition \ref{prop:taft} can be used to define an algebra $D^a_\mu(T_p(\xi))$ generated by $x,z,g$ that satisfy $$x^p=z^p=g^p-1=0$$ and $$gxg^{-1}=\xi x,\quad gzg^{-1}=\xi^{-1}z,\quad xz-\xi zx=\xi^{1-\mu} g^{-2}-1.$$  Thus, $$HH_{\Bc_\mu}(H_\Bc\mod)=D^a_\mu(T_p(\xi))\mod.$$
\end{definition}

\begin{remark}\label{rem:p2}
Recall that if $p=2$ then $\Bc=s\vect$ has two anti-twists: the $\varsigma_0$ above, and $\varsigma_1$ which is trivial; the category is  symmetric.  We note that \eqref{reltaft} and \eqref{acttaft01} yield ($y=-z$): $$xy+yx=2,\quad \varsigma^H m=(-1)^i(1-yx)m$$ for the $\varsigma_0$  structure, while we would get $$xz+zx=0,\quad \varsigma^H m=(1+zx)m$$ for the trivial one. Thus, from  \cite{cyctaft} we see that $$HC_{\Bc_0}(H_\Bc\mod)=\vect,$$  while $$HC_{\Bc_1}(H_\Bc\mod)=\left(k[x,z]/(zx)\right)_{s\vect}\mod$$ where $k[x,z]$ is a free super-commutative algebra on two odd generators  (this was the interesting part of $HH(H_\Bc\mod)^{S^1}$ in \cite{cyctaft}).  We obtain the decomposition:  $$saYD^{T_2(-1)}=HC(T_2(-1)\mod)=\vect\oplus \left(k[x,z]/(zx)\right)_{s\vect}\mod.$$  
\end{remark}

\subsection{The case of  $p>2$}  In this case there is a $p$th root of unity $q$ such that $q^2=\xi^{-1}.$ If we let $p=2m+1$ then $$q=\xi^m.$$ Let us write, for $n\in\mathbb{Z}_{\geq 0}$, $$[n]_q=\dfrac{q^n-q^{-n}}{q-q^{-1}}\quad \text{and} \quad [n]_q!=[n]_q[n-1]_q\cdots [1]_q.$$  Note that $$(n)_\xi !=q^{-n(n-1)/2}[n]_q !.$$  

\begin{proposition}\label{reltaftpnot2}
Let $p>2$.  We have an identification of categories: \begin{equation}\label{taftid}HH_{\Bc_\mu}(H_\Bc\mod)=u_q(sl_2)\mod.
\end{equation}  In particular, the choice of $\mu$ is not yet relevant, as expected since $\omega=\xi^{2ij}$ is non-degenerate.
\end{proposition}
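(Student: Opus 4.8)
The plan is to pass to the algebra presentation already obtained: by Definition \ref{reladd} (which itself rests on Proposition \ref{prop:taft} and the computation of ${}^*H$ in Lemma \ref{isolemma2}) we have $HH_{\Bc_\mu}(H_\Bc\mod)=D^a_\mu(T_p(\xi))\mod$, so it suffices to produce an isomorphism of algebras $D^a_\mu(T_p(\xi))\cong u_q(sl_2)$. Once this is in hand, the identification \eqref{taftid} of module categories is immediate.

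First I would justify the assertion that $\mu$ is irrelevant. Since $\omega(i,j)=\xi^{2ij}$ is non-degenerate, i.e.\ an isomorphism $G\to\hat G$, Corollary \ref{cor:decompmore} gives $\Bc_\lambda\simeq\Bc_\mu$ as braided modules for all $\lambda,\mu\in\zp$, whence by Theorem \ref{thm:localization} the category $HH_{\Bc_\mu}(\Cc)$ is independent of $\mu$ up to equivalence. Concretely this is realized on presenting algebras by rescaling the grouplike: because $p$ is odd, $2$ is invertible mod $p$, so $\xi^{\mu-1}$ admits a $p$-th root of unity square root $\lambda:=\xi^{(\mu-1)(p+1)/2}$, and the assignment $x\mapsto x$, $z\mapsto z$, $g\mapsto\lambda g$ is an algebra isomorphism $D^a_1(T_p(\xi))\xrightarrow{\sim}D^a_\mu(T_p(\xi))$: it preserves $g^p=1$ and the $g$-commutations, and converts $g^{-2}-1$ into $\lambda^{-2}g^{-2}-1=\xi^{1-\mu}g^{-2}-1$. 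Thus a single value of $\mu$ suffices, and the general isomorphism below specializes cleanly at $\mu=1$.

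The core step is the explicit isomorphism. Recall $q^2=\xi^{-1}$ with $q=\xi^m$ again a primitive $p$-th root of unity, so $u_q(sl_2)$ is the Frobenius--Lusztig kernel with $K^p=1$, $E^p=F^p=0$, $KEK^{-1}=q^2E$, $KFK^{-1}=q^{-2}F$, and $EF-FE=(K-K^{-1})/(q-q^{-1})$, of dimension $p^3$. I would define a map $u_q(sl_2)\to D^a_\mu(T_p(\xi))$ by
\[
E\mapsto x,\qquad K\mapsto \lambda^{-1}g^{-1},\qquad F\mapsto \frac{\lambda}{q-q^{-1}}\,zg,
\]
with $\lambda^2=\xi^{\mu-1}$ as above, and verify the defining relations from those of $D^a_\mu$. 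The relations $E^p=0$ and $K^p=1$ are clear; $F^p=0$ holds because $(zg)^p$ is a scalar multiple of $z^pg^p=0$; the $K$-commutations reduce to $gx=\xi xg$ and $gz=\xi^{-1}zg$ with the factors of $\lambda$ cancelling; and the crucial relation becomes
\[
EF-FE=\frac{\lambda}{q-q^{-1}}(xz-\xi zx)g=\frac{\lambda}{q-q^{-1}}\bigl(\xi^{1-\mu}g^{-1}-g\bigr)=\frac{\lambda^{-1}g^{-1}-\lambda g}{q-q^{-1}}=\frac{K-K^{-1}}{q-q^{-1}},
\]
the middle equality using $\lambda^2=\xi^{\mu-1}$. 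This assignment is visibly invertible on generators, with inverse $x=E$, $g=\lambda^{-1}K^{-1}$, $z=(q-q^{-1})FK$, and both algebras carry the PBW basis $\{x^iz^jg^k\}$ resp.\ $\{E^iF^jK^k\}$ of dimension $p^3$; hence it is an isomorphism. Together with Definition \ref{reladd} this proves \eqref{taftid}.

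I expect the only genuine subtlety to be bookkeeping rather than any deep obstacle: getting the direction of the grouplike right ($K=\lambda^{-1}g^{-1}$, not $g$), pinning down the scalar $(q-q^{-1})^{-1}$ together with the twist $\lambda$ forced by general $\mu$, and keeping the convention $q^2=\xi^{-1}$ consistent throughout. The conceptual content is simply that the apparent $\mu$-dependence of the relation $xz-\xi zx=\xi^{1-\mu}g^{-2}-1$ is absorbed by rescaling the grouplike, a phenomenon dictated by the non-degeneracy of $\omega$ via Corollary \ref{cor:decompmore}.
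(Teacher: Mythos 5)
Your proposal is correct and takes essentially the same route as the paper: an explicit matching of generators, where the paper sets $E=q^{1-\mu}x$, $F=zg$, $K=q^{\mu-1}g^{-1}$ (note $q^{\mu-1}=\lambda^{-1}$, so your $K$ agrees with the paper's). The only discrepancy is that the paper works with the convention $[E,F]=K-K^{-1}$ (as stated in the remark immediately following), so it needs no $(q-q^{-1})^{-1}$ factor on $F$; your rescaled $F$ is harmless for the category identification but would shift the normalization in the subsequent ribbon-element computation.
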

\begin{proof}
For $M\in\Bc$, let $g$ act on $M_i$ by $\xi^i$.  It is clear that with this convention we have an identification of categories $\Bc=k[g]/(g^p-1)\mod$.  Let $$E=q^{1-\mu}x,\quad F=zg,\quad K=q^{\mu-1}g^{-1}$$ then $$E^p=F^p=K^p-1=0$$ and $$KEK^{-1}=q^2 E,\quad KFK^{-1}=q^{-2}F, \quad [E,F]=K-K^{-1}.$$
\end{proof}

\begin{remark}
Note our use (to be consistent with \cite{ker1}) of the $[E,F]=K-K^{-1}$ convention, instead of the $[E,F]=\dfrac{K-K^{-1}}{q-q^{-1}}$ convention.
\end{remark}

Recall (from \cite{ker1} for example) that $u_q(sl_2)$ is a ribbon Hopf algebra with the ribbon element $$v_0=Ku_K u_0$$ where $$u_K=\dfrac{\sum_{i=0}^{p-1}q^{mi^2}K^i}{\sum_{i=0}^{p-1}q^{mi^2}},\quad u_0=\displaystyle\sum_{j=0}^{p-1}\dfrac{q^{\frac{(j+3)j}{2}}}{[j]_q !}K^jF^jE^j.$$

\begin{proposition}\label{reltaftpnot2sigma}
Under the identification of \eqref{taftid} we have:  $$\varsigma^H_\mu= q^{m(\mu^2-1)}v_0$$ where $v_0$ is the ribbon element of $u_q(sl_2)$. In particular, $\varsigma^H_\mu$ depends only on the square of $\mu$, as expected by Remark \ref{pair:rem}.
\end{proposition}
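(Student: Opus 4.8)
The plan is to start from the explicit operator \eqref{acttaft01} for $\varsigma^H_\mu$ and transport everything through the dictionary $E=q^{1-\mu}x$, $F=zg$, $K=q^{\mu-1}g^{-1}$ of Proposition \ref{reltaftpnot2} (equivalently $x=q^{\mu-1}E$, $g^{-1}=q^{1-\mu}K$), organizing $\varsigma^H_\mu$ as a product of a diagonal ``Cartan'' scalar, namely the prefactor $\xi^{-i^2-\mu i}$ acting on $M_i$, and a degree-preserving ``unipotent'' sum. The target $v_0=Ku_Ku_0$ has exactly this shape, with $u_0$ unipotent and $Ku_K$ diagonal, so I would match the two pieces separately and then recombine, the scalar $q^{m(\mu^2-1)}$ being produced entirely by the Cartan piece. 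Throughout I will use $\xi=q^{-2}$, the translation $(n)_\xi!=q^{-n(n-1)/2}[n]_q!$ recorded before the proposition, and the congruence $2m\equiv-1\pmod p$ coming from $p=2m+1$.

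First I would identify the sum $\sum_{j}\tfrac{\xi^{(j-1)j/2}}{(j)_\xi!}z^jx^j$ with $u_0$. A short computation with the commutation relations of Proposition \ref{reltaftpnot2} (pushing the $g$'s to the right and reindexing) gives $z^jx^j=q^{j^2+\mu j}F^jE^jg^{-j}$; substituting $g^{-j}=q^{(1-\mu)j}K^j$ and using $F^jE^jK^j=K^jF^jE^j$ (the $q$-powers incurred when moving $K^j$ leftward cancel) yields $z^jx^j=q^{j^2+j}K^jF^jE^j$. Converting the coefficient via $\xi^{(j-1)j/2}=q^{-j(j-1)}$ and $(j)_\xi!=q^{-j(j-1)/2}[j]_q!$, all $q$-exponents collapse to $q^{j(j+3)/2}$, so that
\[
\sum_{j=0}^{p-1}\frac{\xi^{(j-1)j/2}}{(j)_\xi!}\,z^jx^j=\sum_{j=0}^{p-1}\frac{q^{j(j+3)/2}}{[j]_q!}\,K^jF^jE^j=u_0.
\]

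The harder half is the Cartan piece, which I expect to be the main obstacle. On $M_i$ the generator $K$ has eigenvalue $q^{\mu-1+2i}$, so I must match the diagonal operator $\xi^{-i^2-\mu i}=q^{2i^2+2\mu i}$ against $q^{m(\mu^2-1)}Ku_K$. Writing $a=\mu-1+2i$, evaluating $u_K$ on $M_i$ reduces to the quadratic Gauss sum $\sum_{l}q^{ml^2+al}$. Completing the square modulo $p$ using $2m\equiv-1\pmod p$ gives $\sum_l q^{ml^2+al}=q^{-ma^2}\sum_l q^{ml^2}$, whence $u_K$ acts by $q^{-ma^2}$ and $Ku_K$ by $q^{\,a-ma^2}$; a second application of $2m\equiv-1$ then verifies $m(\mu^2-1)+a-ma^2\equiv 2i^2+2\mu i\pmod p$, which is the desired equality of scalars. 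The delicate point is precisely this modular completing-the-square and the bookkeeping of the quadratic exponents, where the congruence $2m\equiv-1$ does all the work and must be tracked carefully.

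Finally I would recombine: since the unipotent sum is degree-preserving and the prefactor is diagonal, the two identifications multiply to give $\varsigma^H_\mu=q^{m(\mu^2-1)}\,Ku_K\,u_0=q^{m(\mu^2-1)}v_0$ as operators on every weight space of every module. Because $\varsigma^H_\mu$ lies in $Aut(Id)$, i.e. is central, matching its action on the regular representation upgrades this to the asserted identity of central elements of $u_q(sl_2)$. The resulting $\mu$-dependence is carried solely by $q^{m(\mu^2-1)}$, hence depends only on $\mu^2$, in agreement with the involution of Remark \ref{pair:rem}.
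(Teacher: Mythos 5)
Your proposal is correct and follows essentially the same route as the paper: split $\varsigma^H_\mu$ into the degree-preserving sum (matched with $u_0$ via the dictionary and $(n)_\xi!=q^{-n(n-1)/2}[n]_q!$) times the diagonal prefactor $\xi^{-i^2-\mu i}$ (matched with $q^{m(\mu^2-1)}Ku_K$ by completing the square with $2m\equiv-1\pmod p$ and a Gauss-sum cancellation). The only cosmetic difference is that the paper identifies the Cartan piece as an element of $k[K]$ by Fourier-expanding $\xi^{-i^2-\mu i}$ over characters, whereas you verify the equality of eigenvalues weight space by weight space; both reduce to the same modular completing-the-square computation.
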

\begin{proof}
Consider the action of $\varsigma^H$ on $M_i$, then starting with  \eqref{acttaft01} we have: \begin{align*}\varsigma^H_\mu &=\xi^{-i^2-\mu i}\left(\displaystyle\sum_{j=0}^{p-1}\dfrac{\xi^{\frac{(j-1)j}{2}}}{(j)_\xi !}z^jx^j\right)=\xi^{-i^2-\mu i}\left(\displaystyle\sum_{j=0}^{p-1}\dfrac{q^{-\frac{(j-1)j}{2}}}{[j]_q !}z^jx^j\right)\\&=\xi^{-i^2-\mu i}\left(\displaystyle\sum_{j=0}^{p-1}\dfrac{q^{\frac{(j+3)j}{2}}}{[j]_q !}K^jF^jE^j\right)=\xi^{-i^2-\mu i}u_0.
\end{align*} We now focus on $\xi^{-i^2-\mu i}$, expanding it as a linear combination of characters and completing the square twice we obtain: \begin{align*}\xi^{-i^2-\mu i}&=\frac{K^\mu q^{(1-\mu)\mu}}{p}\displaystyle\sum_{k,s=0}^{p-1}\xi^{-s^2+sk}q^{(1-\mu)k} K^k=\frac{q^{(1-\mu^2)/2}K}{p}\left(\displaystyle\sum_{s=0}^{p-1}q^{2s^2}\right)\left(\displaystyle\sum_{k=0}^{p-1}q^{-k^2/2}K^k\right)\\
\intertext{using the properties of Gauss sums, see \cite{lang} we get:}
&=q^{(1-\mu^2)/2} K\left(\displaystyle\sum_{k=0}^{p-1}q^{mk^2}K^k\right)/\left(\displaystyle\sum_{s=0}^{p-1}q^{ms^2}\right)=q^{m(\mu^2-1)}Ku_K.
\end{align*}
\end{proof}

\begin{theorem}\label{reltaftthm}
Let $p$ be a prime, $H$ as in Lemma \ref{taftlemma1}  then: $$HC_{\Bc_0}(H_\Bc\mod) \simeq\vect.$$
\end{theorem}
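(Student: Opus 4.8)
The plan is to treat the two regimes $p=2$ and $p>2$ separately, the first being already settled. For $p=2$ the category $\Bc = s\vect$ is symmetric, and Remark \ref{rem:p2} records (on the strength of \cite{cyctaft}) that $HC_{\Bc_0}(H_\Bc\mod) = \vect$; so I only need to address $p > 2$. For that range I would start from the two identifications already in hand: Proposition \ref{reltaftpnot2} gives $HH_{\Bc_0}(H_\Bc\mod) = u_q(sl_2)\mod$, while Proposition \ref{reltaftpnot2sigma} specializes at $\mu = 0$ to $\varsigma^H_0 = q^{-m} v_0$, where $v_0 = K u_K u_0$ is the ribbon element. Since $HC_{\Bc_0}(H_\Bc\mod)$ is by definition the full subcategory of $HH_{\Bc_0}(H_\Bc\mod)$ on which $\varsigma^H_0 = Id$, the task reduces to describing the full subcategory of $u_q(sl_2)\mod$ on which the central element $v_0$ acts as the honest scalar $q^m$.

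Next I would run through the (standard) simple modules. For $p$ odd and $q$ a primitive $p$-th root of unity, $K^p = 1$ forces every highest weight to lie in $\{q^n\}$, giving exactly the simples $L(q^n)$ of dimension $n+1$ for $0 \le n \le p-1$. On a highest weight vector $v$ of $L(q^n)$ one has $u_0 v = v$ (every $j \ge 1$ term annihilates $v$ because $E^j v = 0$), $K v = q^n v$, and a Gauss-sum evaluation of $u_K$ — completing the square using $(2m)^{-1} \equiv -1 \pmod p$ together with the reindexing invariance of $\sum_i q^{mi^2}$ — gives $u_K v = q^{-mn^2} v$. Hence the central $v_0$ acts on $L(q^n)$ by the scalar $q^{n - mn^2}$, and it suffices to read this off any nonzero vector.

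I would then solve the eigenvalue condition. The requirement $q^{-m} v_0 = Id$ on $L(q^n)$ reads $m n^2 - n + m \equiv 0 \pmod p$; multiplying by $-2$ and using $2m \equiv -1 \pmod p$ turns this into $(n+1)^2 \equiv 0 \pmod p$, whose only solution is $n = p-1$. Thus the unique simple satisfying $\varsigma^H_0 = Id$ is the $p$-dimensional Steinberg module $\mathrm{St} = L(q^{p-1})$.

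Finally I would assemble the category. If $M$ satisfies $v_0 = q^m Id_M$ then every composition factor $L$ of $M$ satisfies $v_0|_L = q^m$, so by the previous step every factor equals $\mathrm{St}$. Since $u_q(sl_2)$ is a finite-dimensional Hopf algebra it is self-injective, and $\mathrm{St}$ is its familiar projective-and-injective simple module; in particular $\Ext^1(\mathrm{St},\mathrm{St}) = 0$, so any such $M$ splits as $\mathrm{St}^{\oplus r}$. With $\End(\mathrm{St}) = k$ by Schur's lemma, the assignment $\mathrm{St}^{\oplus r} \leftrightarrow k^r$ yields the equivalence $HC_{\Bc_0}(H_\Bc\mod) \simeq \vect$. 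The main obstacle is the middle step: correctly evaluating the Gauss sum defining $u_K$ on weight vectors and then solving the resulting congruence — everything afterwards is formal once one invokes projectivity of the Steinberg module.
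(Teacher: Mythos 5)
Your argument is correct, and for $p>2$ it takes a somewhat different route from the paper's. The paper works block-by-block: it imports from \cite{ker1} the decomposition of the center $Z(u_q(sl_2))\simeq k\times\prod_{j=0}^{m-1}k[x_j,y_j]/(x_j^2,y_j^2,x_jy_j)$ together with Kerler's closed formula $v_0=q^mP_m+\sum_j q^{2j(j+1)}(P_j+\alpha_jN_j^++\beta_jN_j^-)$, observes that $1-q^{-m}v_0$ acts invertibly on every non-Steinberg block because $1-q^{2(j-m)^2}\neq 0$ for $0\le j\le m-1$, and then cites \cite{mgs} to identify the surviving block with copies of the projective irreducible. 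You instead compute the scalar $q^{\,n-mn^2}$ of $v_0$ on each simple $L(q^n)$ directly from $v_0=Ku_Ku_0$ via a second Gauss-sum evaluation, solve $(n+1)^2\equiv 0\pmod p$ to isolate the Steinberg, and then use projectivity/injectivity of $\mathrm{St}$ (so $\Ext^1(\mathrm{St},\mathrm{St})=0$) plus Schur to get $\mathrm{St}^{\oplus r}\leftrightarrow k^r$. Both reductions are sound; yours is more self-contained (it needs only the classification of simples of $u_q(sl_2)$ and projectivity of the Steinberg, not the full central decomposition), at the cost of redoing a Gauss-sum computation that the paper sidesteps by quoting \cite{ker1}, while the paper's block argument is more uniform and is what generalizes directly to the subsequent proposition on $HH_{\Bc_\mu}$ for $\mu\neq 0$. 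Two points worth making explicit in your write-up: the quadratic Gauss sum $\sum_i q^{mi^2}$ in the denominator of $u_K$ is nonzero (it has absolute value $\sqrt{p}$), and the passage from ``$v_0$ acts by $q^m$ on every composition factor'' to ``$v_0=q^m\mathrm{Id}_M$ on the nose'' is exactly where projectivity of $\mathrm{St}$ is used, since Definition \ref{def:saYD} requires the strict equality $\varsigma^H_M=Id_M$ rather than mere unipotence of $\varsigma^H_M$.
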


\begin{proof}
The case of $p=2$ is addressed in Remark \ref{rem:p2}.  For $p$ odd, we recall from \cite{ker1} that the center $Z$ of $u_q(sl_2)$ decomposes: \begin{equation}\label{centdec}Z\simeq k\times\prod_{j=0}^{m-1}k[x_j,y_j]/(x_j^2,y_j^2,x_jy_j)\end{equation} and using the notation of the original: $P_m=1\in k$ and $P_j=1\in k[x_j,y_j]/(x_j^2,y_j^2,x_jy_j)$ while $N^+_j=x_j$ and $N^-_j=y_j$ we have an expression for the ribbon element: $$v_0=q^mP_m+\sum_{j=0}^{m-1}q^{2j(j+1)}(P_j+\alpha_jN^+_j+\beta_jN^-_j)$$ where $\alpha_j$ and $\beta_j$ are some known constants that we do not need here.  
Let $$u_q(sl_2)=U\times\prod_{j=0}^{m-1}U_j$$ be the decomposition induced by \eqref{centdec}, and set $\overline{U}_j=U_j/(x_j,y_j)$.  Note that $$1-\varsigma^H_0=1-q^{-m}v_0$$ acts by $0$ on $U$ and by $1-q^{-m+2j(j+1)}=1-q^{2(j-m)^2}$ on $\overline{U}_j$.  Since $1-q^{2(j-m)^2}\neq 0$ so $1-\varsigma^H_0$ acts invertibly on $U_j$; thus, as in \cite{cyctaft} we have that $HC_{\Bc_0}(H_\Bc\mod)=\vect.$ 

Using  \cite{mgs}, we see that the only part of the representation theory of $u_q(sl_2)$ that is supported at the $Spec (k)$ part of the center consists of copies of the unique projective irreducible representation of $u_q(sl_2)$, which is  the unique irreducible of dimension $p$.
\end{proof}

Theorem \ref{reltaftthm} dealt with the case of $\varsigma_0$, there remain cases $\varsigma_\mu$ for $\mu=1,\cdots, p-1$.  Recall that  $HC_{\Bc_\mu}(H_\Bc\mod)$,  depends only on the square of $\mu$ and so we may assume that $\mu$ is odd (as $-\mu$ is then even).  Furthermore, let \begin{equation}\label{muj}
2j+1=\mu,
\end{equation} such a $j$ is unique among $0,\cdots, m-1$.

\begin{proposition}
Let $p$ be an odd prime, $H$ as in Lemma \ref{taftlemma1}, and  $\mu$ as in \eqref{muj}, then: $$HH_{\Bc_\mu}(H_\Bc\mod)=U_j/(-\alpha_jN^+_j-\beta_jN^-_j)\text{-mod}.$$
\end{proposition}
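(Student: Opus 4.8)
The plan is to build on the identification $HH_{\Bc_\mu}(H_\Bc\mod)=u_q(sl_2)\mod$ of Proposition \ref{reltaftpnot2} together with the formula $\varsigma^H_\mu=q^{m(\mu^2-1)}v_0$ of Proposition \ref{reltaftpnot2sigma}, and then to cut out the full subcategory on which this central element acts as the identity. Since $\varsigma^H_\mu$ is a scalar multiple of the ribbon element $v_0$, I would test it block by block against the central decomposition $u_q(sl_2)=U\times\prod_{\ell=0}^{m-1}U_\ell$ recalled in the proof of Theorem \ref{reltaftthm}, where on the block $U$ one has $v_0=q^m$ and on $U_\ell$ one has $v_0=q^{2\ell(\ell+1)}(1+\alpha_\ell N^+_\ell+\beta_\ell N^-_\ell)$ with $N^\pm_\ell$ nilpotent. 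The goal is to show that every block is annihilated except $U_j$, on which the stability condition reads off the stated quotient.

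First I would reduce the matter to scalar arithmetic in the cyclic group generated by $q$, which is a primitive $p$th root of unity, so $q^a=1$ iff $p\mid a$. Using $2m\equiv-1\pmod p$, hence $4m\equiv-2\pmod p$, and writing $\mu=2j+1$ so that $m(\mu^2-1)=4mj(j+1)\equiv-2j(j+1)\pmod p$, I compute $\varsigma^H_\mu$ on each block. On $U$ the scalar is $q^{m(\mu^2-1)+m}=q^{m\mu^2}$; since $1\le\mu\le 2m-1=p-2$ gives $\mu\not\equiv0\pmod p$, this scalar is $\ne1$, so $1-\varsigma^H_\mu$ is invertible on $U$ and this block contributes nothing. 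This is precisely where the present case parts ways with $\mu=0$ in Theorem \ref{reltaftthm}, which retained the $\vect$ summand. On $U_\ell$ modulo its nilpotents the scalar is $q^{e_\ell}$ with $e_\ell\equiv-2j(j+1)+2\ell(\ell+1)=2(\ell-j)(\ell+j+1)\pmod p$. The crux is to verify that $e_\ell\equiv0\pmod p$ iff $\ell=j$: from $0\le\ell,j\le m-1$ one has $|\ell-j|\le m-1<p$ and $1\le\ell+j+1\le 2m-1<p$, so neither factor can vanish modulo $p$ unless $\ell=j$.

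For $\ell\ne j$ it then follows that $\varsigma^H_\mu$ is a scalar $\ne1$ times a unipotent element, whence $1-\varsigma^H_\mu$ is a nonzero scalar plus a nilpotent and is therefore invertible in $U_\ell$; any object of the subcategory supported on $U_\ell$ must vanish. On the surviving block the scalar equals $1$, so $\varsigma^H_\mu=1+\alpha_jN^+_j+\beta_jN^-_j$ and the condition $\varsigma^H_\mu=Id$ is exactly $\alpha_jN^+_j+\beta_jN^-_j=0$; hence the subcategory is $U_j/(\alpha_jN^+_j+\beta_jN^-_j)\mod=U_j/(-\alpha_jN^+_j-\beta_jN^-_j)\mod$, as claimed. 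The main obstacle is the modular bookkeeping of these exponents — keeping $4m\equiv-2\pmod p$ straight and bounding the two factors of $e_\ell$ so that vanishing forces $\ell=j$; once this is settled, the scalar-plus-nilpotent invertibility argument (the same one used in Theorem \ref{reltaftthm}) closes the proof.
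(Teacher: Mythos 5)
Your proof is correct and follows the same route as the paper's, which simply declares the argument ``analogous to that of Theorem \ref{reltaftthm}'' and asserts that $1-\varsigma^H_\mu$ acts invertibly on all blocks except $U_j$, where it acts by the square-zero element $-\alpha_jN^+_j-\beta_jN^-_j$; your modular bookkeeping ($4m\equiv -2\pmod p$, the factorization $e_\ell\equiv 2(\ell-j)(\ell+j+1)$, and the bounds forcing $\ell=j$) is exactly the computation the paper leaves implicit. Note that, like the paper's own proof, you are really computing the stable subcategory cut out by $\varsigma^H_\mu=\mathrm{Id}$, i.e.\ $HC_{\Bc_\mu}$ rather than the $HH_{\Bc_\mu}$ literally appearing in the statement (which by Proposition \ref{reltaftpnot2} is all of $u_q(sl_2)\mod$), so the statement's $HH$ is evidently a typo for $HC$ and your reading is the intended one.
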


\begin{proof}
The proof is analogous to that of Theorem \ref{reltaftthm}, i.e., we use Proposition \ref{reltaftpnot2sigma} to compute that $1-\varsigma^H_\mu$ acts invertibly on all but $U_j$ where the action is not $0$ as in Theorem \ref{reltaftthm}, but is $-\alpha_jN^+_j-\beta_jN^-_j$ which is square zero. 
\end{proof}

\section{Appendix}

The reader is invited to peruse  this section if needed.  It is  referred to in the body of the paper on occasion.

\subsection{Monoidal functors and algebras}\label{appx}
If $F$ is a (strongly) monoidal functor between two monoidal categories $\Cc$ and $\Dc$ then the extra data consisting of (iso)morphisms: \begin{equation}\label{monstr}f_{A,B}:F(A)\ot F(B)\to F(A\ot B),\end{equation} for all $A,B\in\Cc$ is the monoidal part of the structure.  For our purposes all monoidal functors are assumed to be strongly monoidal.   

\begin{lemma}\label{alg}
Let $F:\Cc\to\Dc$ be a monoidal functor and $A$ an algebra and $C$ a coalgebra in $\Cc$.  Then $F(A)$ and $F(C)$ are also an algebra and a coalgebra, respectively, in $\Dc$.  If $\alpha:A\to B$ is an algebra map, then so is $F(\alpha)$ (similarly for coalgebras).  If $M$ is an $A$-module, then $F(M)$ is an $F(A)$-module (similarly for coalgebras).
\end{lemma}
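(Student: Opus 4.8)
The plan is to transport each piece of structure through $F$ using the monoidal comparison isomorphisms $f_{A,B}$ of \eqref{monstr} together with the unit comparison $f_0\colon 1_\Dc\to F(1)$, and then to verify the defining axioms by reducing them, via naturality of $f$, to the corresponding axioms in $\Cc$ modulo the monoidal coherence of $F$ (recall $F$ is strongly monoidal, so every $f$ is invertible). Concretely, for an algebra $(A,m,u)$ I would equip $F(A)$ with multiplication $m_{F(A)}=F(m)\circ f_{A,A}\colon F(A)\ot F(A)\to F(A)$ and unit $u_{F(A)}=F(u)\circ f_0\colon 1_\Dc\to F(A)$; dually, for a coalgebra $(C,\Delta,\epsilon)$ I would take comultiplication $f_{C,C}^{-1}\circ F(\Delta)$ and counit $f_0^{-1}\circ F(\epsilon)$.

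The heart of the argument is associativity of the induced multiplication, and this is where the coherence axioms of a monoidal functor enter. Expanding $m_{F(A)}\circ(m_{F(A)}\ot Id)$ and pulling the inner $F(m)$ past the outer $f_{A,A}$ by naturality of $f$, I would rewrite it as $F(m\circ(m\ot Id))\circ f_{A\ot A,A}\circ(f_{A,A}\ot Id)$; likewise $m_{F(A)}\circ(Id\ot m_{F(A)})=F(m\circ(Id\ot m))\circ f_{A,A\ot A}\circ(Id\ot f_{A,A})$. The $\Dc$-associativity of $m_{F(A)}$ then amounts to comparing these after inserting $\alpha_{F(A),F(A),F(A)}$: the associativity coherence (hexagon) for $F$ rewrites $f_{A\ot A,A}\circ(f_{A,A}\ot Id)\circ\alpha_{F(A),F(A),F(A)}$ as $F(\alpha_{A,A,A})\circ f_{A,A\ot A}\circ(Id\ot f_{A,A})$, and the identity $m\circ(m\ot Id)\circ\alpha_{A,A,A}=m\circ(Id\ot m)$ in $\Cc$ absorbs the surviving $F(\alpha_{A,A,A})$, so the two sides agree. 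Unitality follows similarly from naturality of $f$ and the two unit coherence triangles for $F$ relating $f_{1,A}$, $f_{A,1}$, and $f_0$ to the unit constraints. The coalgebra statement is obtained by running the same diagram chase with all arrows reversed and the $f$'s inverted.

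For the remaining assertions I would argue purely by naturality. If $\alpha\colon A\to B$ is an algebra map, the naturality square of $f$ on the pair $(\alpha,\alpha)$ gives $f_{B,B}\circ(F(\alpha)\ot F(\alpha))=F(\alpha\ot\alpha)\circ f_{A,A}$; composing with $F(m_B)$ and using $F(m_B)\circ F(\alpha\ot\alpha)=F(\alpha)\circ F(m_A)$ (just $F$ applied to $m_B\circ(\alpha\ot\alpha)=\alpha\circ m_A$) shows $F(\alpha)$ is multiplicative, while compatibility with units is immediate from $F(\alpha)\circ u_{F(A)}=F(\alpha\circ u_A)\circ f_0=F(u_B)\circ f_0=u_{F(B)}$. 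Likewise, for an $A$-module $(M,\rho)$ I would define the $F(A)$-action on $F(M)$ by $F(\rho)\circ f_{A,M}$ and check the module axioms exactly as in the associativity argument above, the associativity-type axiom again invoking the hexagon coherence (now with third object $M$) and the unit axiom the unit triangle; the coalgebra and comodule cases are dual. I expect the only genuine obstacle to be bookkeeping — arranging the coherence diagrams so that each $f$ is moved past the appropriate $F(\cdot)$ in the right order — but no idea beyond naturality of $f$ and the monoidal coherence of $F$ is required, so the verification is routine once the diagrams are laid out.
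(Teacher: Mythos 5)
Your proposal is correct and follows exactly the route the paper intends: the paper's proof consists of the single remark that $m_{F(A)}=F(m_A)\circ f_{A,A}$ and that the verification is straightforward, which is precisely the construction you give, with the coherence-diagram chase you describe being the standard (and correct) way to fill in the details.
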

\begin{proof}
The proof is straightforward, we mention that the multiplication $m_{F(A)}$ on $F(A)$ is $F(m_A)\circ f_{A,A}$.
\end{proof}

Now suppose that $\Cc$ and $\Dc$ are both braided and let $\tau$ denote the braiding in both.  Furthermore, we use $\tau$ to denote the monoidal functor \begin{equation}\label{tauprod}
\tau:\Cc^{\boxtimes 2}\to \Cc
\end{equation} such that  $\tau(A\boxtimes B)=A\ot B$ and the monoidal structure isomorphism $$\tau(A\boxtimes B)\ot \tau(C\boxtimes D)\to \tau((A\boxtimes B)\ot(C\boxtimes D))$$ is $$Id_A\ot \tau_{B,C}\ot Id_D:A\ot B\ot C\ot D\to A\ot C\ot B\ot D.$$

\begin{definition}\label{algprod}
Let $A,B$ be algebras in $\Cc$ braided.  Then $$A\ot^\tau B=\tau(A\boxtimes B)$$  is an algebra in $\Cc$ by Lemma \ref{alg}. Explicitly, $$m_{A\ot^\tau B}=(m_A\ot m_B)(Id_A\ot \tau_{B,A}\ot Id_B),$$ or using strings:\begin{equation*}
\includegraphics[height=.7in]{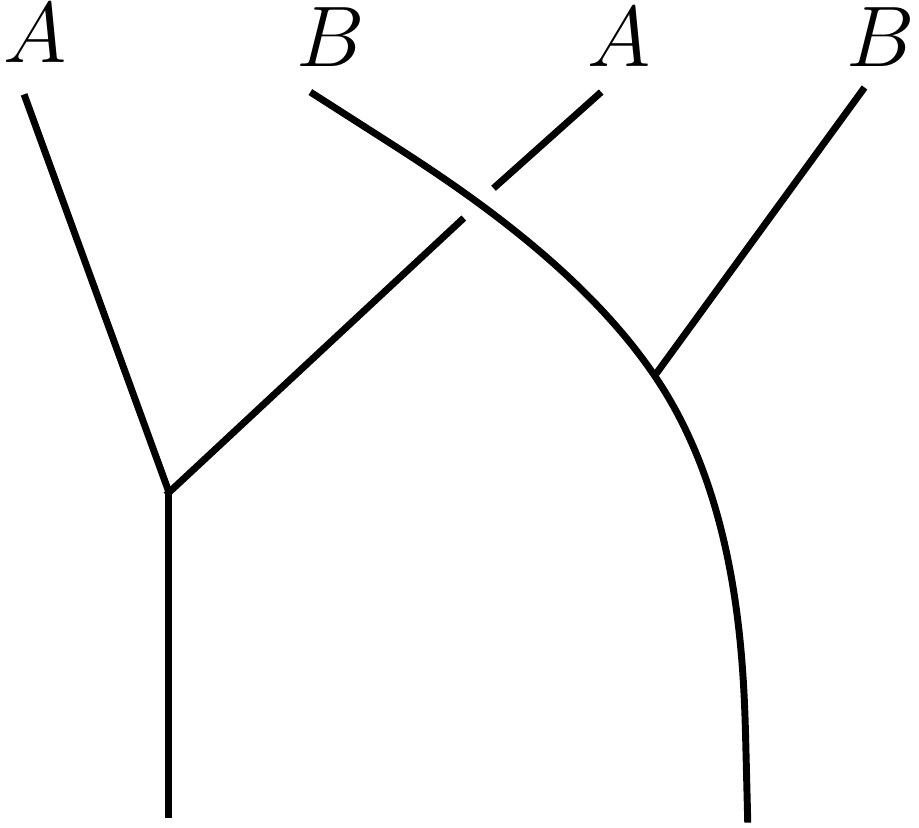}
\end{equation*} Furthermore, if $f:A\to A'$ and $g:B\to B'$ are algebra maps then so is $$f\ot g:A\ot^\tau B\to A'\ot^\tau B'.$$

Note that if  $M, N$ are $A$ and $B$-modules respectively then we have an $A\ot^\tau B$-module $M\ot^\tau N$ with $$\rho_{M\ot^\tau N}=(\rho_M\ot \rho_N)(Id_A\ot \tau_{B,M}\ot Id_N).$$  Similarly, if $M, N$ are right $A$ and $B$-modules respectively then we have a right $A\ot^\tau B$-module $M\ot^\tau N$ with $$\rho_{M\ot^\tau N}=(\rho_M\ot \rho_N)(Id_A\ot \tau_{N,A}\ot Id_N).$$
\end{definition}

\begin{remark}
Considering the reverse braiding $\tau^{-1}$,  let $A\ot^{\tau^{-1}} B=\tau^{-1}(A\boxtimes B)$, which is also an algebra in $\Cc$.
\end{remark}

Let $\Mc$ be a $\Cc$-module category. The following characterization of modules over $A\ot^{\tau} B$, i.e.,  $M\in A\ot^{\tau} B_\Mc\mod$ will be useful.  Observe that $M$ is an $A\ot^{\tau} B$-module if and only if it is an $A$-module ($\rho_A:A\cdot M\to M$) and a $B$-module ($\rho_B:B\cdot M\to M$) such that the two actions satisfy the compatibility condition: \begin{equation}\label{tensor}
\rho_A\circ(Id_A\ot\rho_B)=\rho_B\circ(Id_B\ot\rho_A)\circ(\tau^{-1}_{A,B}\ot Id_M)
\end{equation} best understood as an equality of string diagrams:

\begin{equation}\label{f1}
\includegraphics[height=.7in]{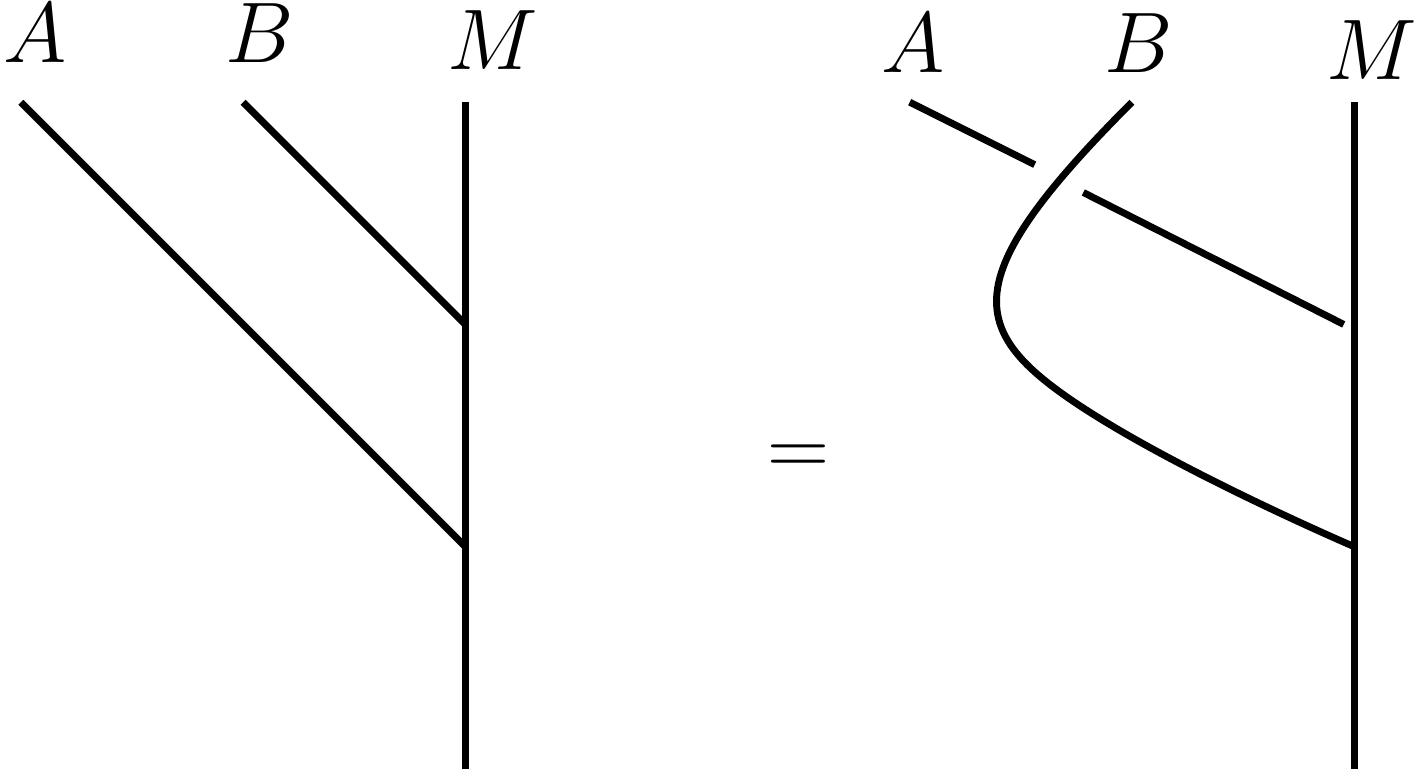}
\end{equation} so that the $A$-string crosses under the $B$-string.

\subsection{Some adjunctions}\label{sec:adjunction}

Let $\Cc$ be a closed monoidal category, namely we have right adjoints to the functors $-\ot M$ and $M\ot -$ (for all $M\in\Cc$) which we denote by $M\la -$ and $-\ra M$ respectively.  If $A$ is an algebra in $\Cc$ then $\bimod_\Cc A$, the category of $A$-bimodules in $\Cc$, is also a closed category.  The product is $-\ot_A-$, i.e., for $S,T\in \bimod_\Cc A$, we have that $S\ot_A T$ is the coequalizer of the two maps from $S\ot A\ot T$ to $S\ot T$.  We denote the right adjoints of $-\ot_A S$ and $S\ot_A -$ by $S\la_A -$ and $-\ra_A S$  respectively.  They can be constructed as equalizers, namely, $S\la_A T$ is the equalizer of $S\la T\to (S\ot A)\la T$ and $S\la T\to S\la(A\la T)$ where the targets are identified.  Note that this construction uses up the right actions on $S$ and $T$.  

More generally, let $A,B,C\in Alg(\Cc)$, let $\bimod_\Cc(A,B)$ denote the category of left $A$ and right $B$-modules in $\Cc$, and write $Hom_A(-,-)_B$ to denote the morphisms in $\bimod_\Cc(A,B)$.  Suppose that $M\in \bimod_\Cc(A,B)$, $S\in\bimod_\Cc(B,C)$, and $T\in\bimod_\Cc(A,C)$.  Then we have an adjunction \begin{equation}\label{adjunction}
Hom_A(M\ot_B S,T)_C\simeq Hom_B(S, T\ra_A M)_C.
\end{equation}  We also have $Hom_A(M\ot_B S,T)_C\simeq Hom_A(M, S\la_C T)_B$, but it is not what we need.

More generally, let $\Mc$ be a $\Cc$-module category.  But now suppose that $\Cc$ is rigid; then the right adjoint of $X\cdot -$ is ${}^*X\cdot -$.  Again let $A,B\in Alg(\Cc)$ and $M\in \bimod_\Cc(A,B)$.  For $S\in B_\Mc\mod$ and $T\in A_\Mc\mod$ we have \begin{equation}\label{adjunctionstar}
Hom_A(M\cdot_B S,T)\simeq Hom_B(S, {}^*M\cdot^A T),
\end{equation} where the coequalizer and equalizer are as above.

The following is immediate.

\begin{lemma}\label{hom:lem}
Let $A, B\in Alg(\Bc)$, with $\Bc$ a rigid braided category and $\Mc$ be a $\Bc$-module category.  With  $X\in\Bc$, let $A\ot X$ be an $(A,B)$-bimodule with $\rho_{A,A\ot X}=m_A\ot Id_X$.  Let $L, L'\in Alg(\Cc)$ so that $L\ot^\tau(A\ot X)\ot^\tau L'$ is a $(L\ot^\tau A\ot^\tau L',L\ot^\tau B\ot^\tau L')$-bimodule.  Then for $N\in L\ot^\tau A\ot^\tau L'_\Mc\mod$  we have $$N\ra_{L\ot^\tau A\ot^\tau L'}L\ot^\tau(A\ot X)\ot^\tau L'\simeq N\ra_A A\ot X\simeq {}^*X\cdot N$$ and the $L\ot^\tau B\ot^\tau L'$-module structure is as follows:
\begin{equation}\label{act5}
	\includegraphics[height=.9in]{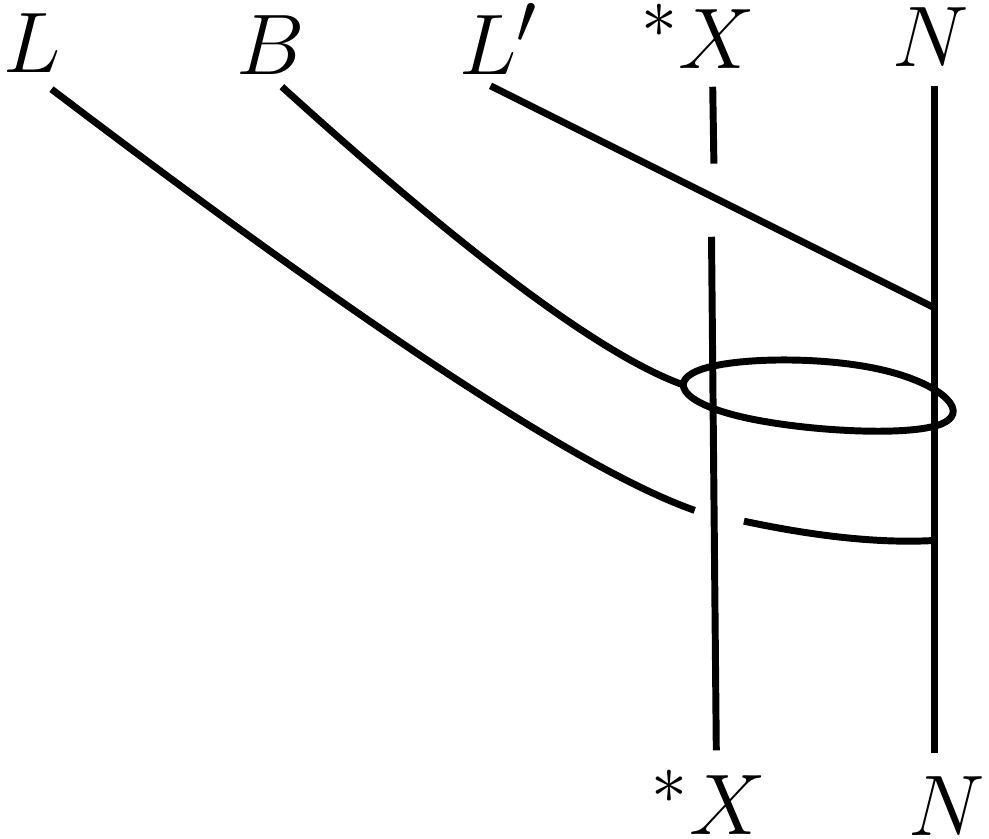} 
\end{equation}
where  \begin{equation}\label{act5a}
	\includegraphics[height=.9in]{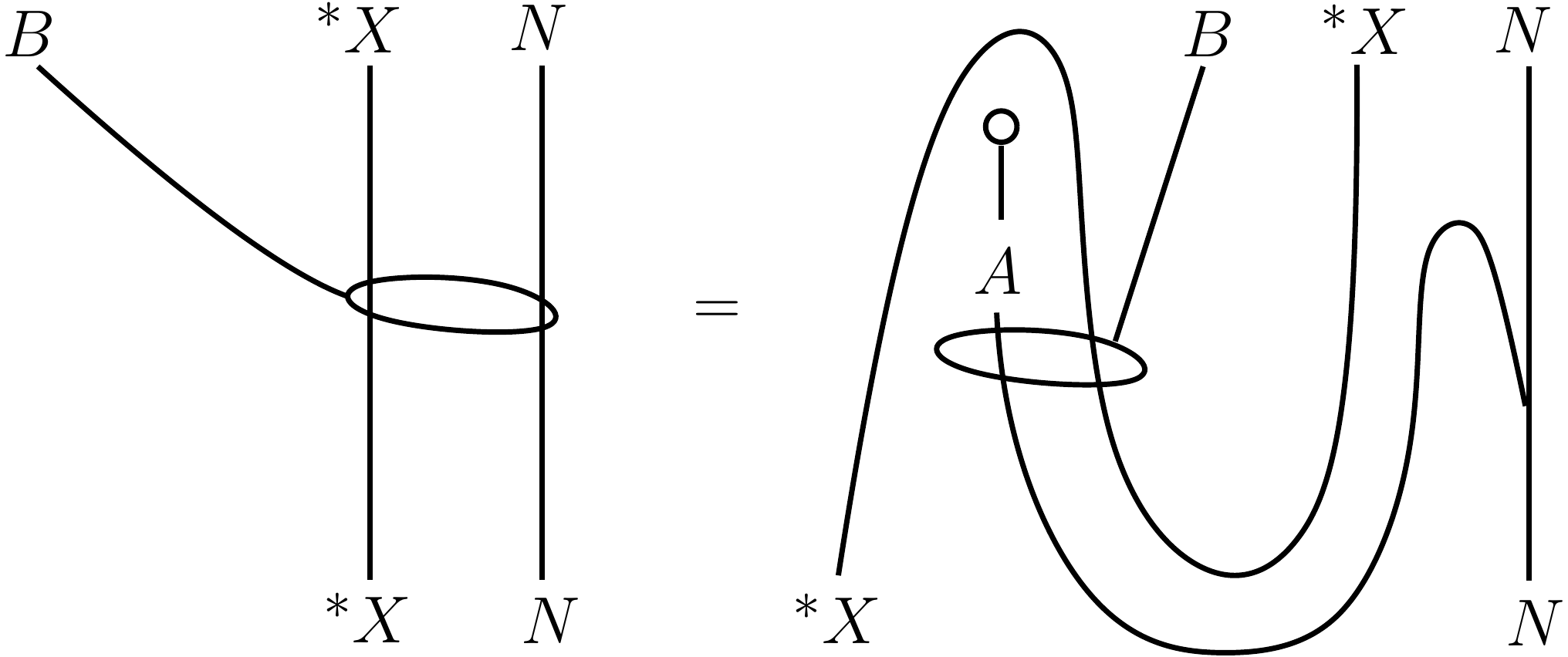} 
\end{equation}
\end{lemma}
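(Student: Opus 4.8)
The plan is to prove the two isomorphisms in the displayed chain separately at the level of underlying objects of $\Mc$, and then to transport the bimodule structure across them to obtain \eqref{act5}. First I would dispose of the outer isomorphism $N\ra_{L\ot^\tau A\ot^\tau L'}L\ot^\tau(A\ot X)\ot^\tau L'\simeq N\ra_A A\ot X$, whose whole content is that the factors $L$ and $L'$ are \emph{inert}. As an $(L\ot^\tau A\ot^\tau L',\,L\ot^\tau B\ot^\tau L')$-bimodule, $L\ot^\tau(A\ot X)\ot^\tau L'$ is obtained from the $(A,B)$-bimodule $A\ot X$ by tensoring on the left with $L$, viewed as the free $(L,L)$-bimodule, and on the right with $L'$. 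Since $L$ and $L'$ act on themselves by multiplication, $-\ra_L L$ and $-\ra_{L'} L'$ are identity functors, so by the functoriality and composition properties of the relative homs recalled in Section \ref{sec:adjunction} the right adjoint over $L\ot^\tau A\ot^\tau L'$ agrees on underlying objects with the right adjoint over $A$; the $L$- and $L'$-actions lost in this reduction are exactly what reappear as the outer strands of \eqref{act5}.

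For the inner isomorphism $N\ra_A A\ot X\simeq{}^*X\cdot N$ I would use that $A\ot X$, with left action $m_A\ot Id_X$, is the \emph{free} left $A$-module on $X\in\Bc$. Via \eqref{adjunction} the object $N\ra_A(A\ot X)$ represents the functor $S\mapsto Hom_A((A\ot X)\ot_B S,N)$ on left $B$-modules; using $(A\ot X)\ot_B S\simeq A\ot(X\ot_B S)$ and the free--forgetful adjunction for left $A$-modules this simplifies to $S\mapsto Hom_\Mc(X\ot_B S,N)$. Rigidity of $\Bc$, i.e.\ that the right adjoint of $X\cdot-$ is ${}^*X\cdot-$, then represents the latter by ${}^*X\cdot N$, with its left $B$-module structure induced from the right $B$-action on $X$. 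This is the general form of the identification used in \eqref{radjoint} and Corollary \ref{deltaright}, here stated for an arbitrary object $X$ and an arbitrary $\Bc$-module category $\Mc$, which is why the statement is asserted to be immediate.

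The remaining and only genuinely computational step is to carry the right $L\ot^\tau B\ot^\tau L'$-action through these two identifications and check that it equals the diagram \eqref{act5}, with its constituent \eqref{act5a}. Here I would chase the counit of the adjunction $(X\cdot-)\dashv({}^*X\cdot-)$ together with the right $B$-action on $A\ot X$, reading off each crossing from the $\tau$-twisted multiplications of Definition \ref{algprod}: the braidings built into the three factors of $L\ot^\tau B\ot^\tau L'$ and the action on the $X$-strand produce the crossings in \eqref{act5}, while \eqref{act5a} records the induced action once the evaluation and coevaluation of ${}^*X$ are inserted. I expect this bookkeeping of braidings---rather than the existence of the isomorphism, which is formal---to be the main obstacle; once the structure maps are placed in the correct order, \eqref{act5} follows from the straightening rules of the graphical calculus of Section \ref{sec:b4}.
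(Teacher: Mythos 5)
Your overall plan---freeness of $A\ot X$ as a left $A$-module gives the underlying object ${}^*X\cdot N$, the factors $L,L'$ are inert, and the only real content is transporting the right action into \eqref{act5}---is the right one, and indeed the paper offers no argument beyond ``immediate.'' But your middle paragraph contains a genuine error: you write $(A\ot X)\ot_B S\simeq A\ot(X\ot_B S)$ and conclude that the $B$-module structure on ${}^*X\cdot N$ is ``induced from the right $B$-action on $X$.'' There is no right $B$-action on $X$: the lemma fixes only the left $A$-action on $A\ot X$ to be $m_A\ot Id_X$, and in the intended application (Lemma \ref{simplifylem}, Corollary \ref{deltaright}) the right action \eqref{act} genuinely moves $B$ into the $A$-slot (it generalizes $x\ot y\cdot a\ot b=xa^1\ot S(a^2)yb$). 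Consequently $(A\ot X)\ot_B S$ is a coequalizer of two left-$A$-linear maps between free modules only one of which is induced from the base, and it is not of the form $A\ot(\text{anything})$; more importantly, a $B$-structure coming only from an action on $X$ could never produce \eqref{act5a}, whose output visibly involves the $A$-action on $N$ (the ``$a^1m$'' in Corollary \ref{deltaright}).

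The repair is easy and makes the argument cleaner: $N\ra_A(A\ot X)$ depends only on the \emph{left} $A$-module structure of $A\ot X$---it is the equalizer of the two maps $N\ra(A\ot X)\rightrightarrows N\ra(A\ot A\ot X)$ induced by $m_A\ot Id_X$ and by $\rho_{A,N}$, and the unit of $A$ splits this equalizer, giving ${}^*X\cdot N$ (your freeness argument, applied internally rather than through $\ot_B S$). The right $L\ot^\tau B\ot^\tau L'$-structure is then obtained purely by functoriality from the full right action on $L\ot^\tau(A\ot X)\ot^\tau L'$, and chasing it through the evaluation and coevaluation of ${}^*X$---as you in fact do in your third paragraph, which quietly contradicts the second---is exactly what \eqref{act5} and \eqref{act5a} record. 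With that correction your proof is the intended one.
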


\subsection{Inverse limits and Hochschild and cyclic homology categories}\label{lim:sec}
Let $D$ be a small category and suppose we have a diagram of categories over $D$.  Namely, for every object $x\in D$ we get a category  $\Cc_x$ and for every arrow $\alpha:x\to y$ in $D$ we get a functor $\alpha_*:\Cc_x\to\Cc_y$.  Furthermore, this data is complemented with associative isomorphisms $f_{\beta,\alpha}:\beta_*\alpha_*\to(\beta\alpha)_*$.  One may then consider the limit (inverse limit) category of this diagram.  Explicitly, the objects in the limit consist of the following data: objects $M_x\in\Cc_x$ for every $x\in D$ and isomorphisms $g_\alpha:\alpha_*(M_x)\to M_y$ for every $\alpha:x\to y$ in $D$.  These must satisfy the compatibility: $$g_{\beta\alpha}f_{\beta,\alpha}=g_\beta\beta_*(g_\alpha).$$

For us the important $D$'s are the simplex category $\Delta$ yielding the Hochschild homology and the Connes' cyclic category $\Lambda$ yielding the cyclic homology (see \cite{loday}).  More details are available in \cite{survey}.

\bibliography{taft}{}
\bibliographystyle{plain}
\bigskip

\noindent Department of Mathematics and Statistics,
University of Windsor, 401 Sunset Avenue, Windsor, Ontario N9B 3P4, Canada

\noindent\emph{E-mail address}:
\textbf{ishapiro@uwindsor.ca}

\end{document}